\documentclass{amsart}
\title{Stable Centres II: Finite Classical Groups}
\author{Arun S. Kannan}
\address{Department of Mathematics, Massachusetts Institute of Technology, Cambridge, MA 02139, USA}
\email{akannan@mit.edu}

\author{Christopher Ryba}
\address{Department of Mathematics, University of California, Berkeley, CA 94720, USA}
\email{ryba@math.berkeley.edu}
\usepackage{hyperref}
\usepackage{fullpage}
\usepackage{amsmath}
\usepackage{mathtools}
\usepackage{bm}
\usepackage[bibliography=common]{apxproof}
\usepackage{amsfonts}
\usepackage{amssymb}
\usepackage{ytableau}
\usepackage{subfig}
\usepackage{graphicx, caption}
\usepackage{todonotes}
\usepackage[sorting=nyt,style=alphabetic,backend=bibtex,hyperref=true,doi=false,maxbibnames=9,maxcitenames=4,url=false]{biblatex}
\addbibresource{bib.bib}
\DeclareRobustCommand{\qbinom}{\genfrac{\lbrack}{\rbrack}{0pt}{}}
\DeclareMathOperator{\Id}{Id}
\DeclareMathOperator{\codim}{codim}
\newcommand{\bs}{\boldsymbol}

\begin{document}
\maketitle
\begin{abstract}
Farahat and Higman constructed an algebra $\mathrm{FH}$ interpolating the centres of symmetric group algebras $Z(\mathbb{Z}S_n)$ by proving that the structure constants in these rings are ``polynomial in $n$''. Inspired by a construction of $\mathrm{FH}$ due to Ivanov and Kerov, we prove for $G_n = GL_n, U_n, Sp_{2n}, O_n$, that the structure constants of $Z(\mathbb{Z}G_n(\mathbb{F}_q))$ are ``polynomial in $q^n$'', allowing us to construct an equivalent of the Farahat-Higman algebra in each case.
\end{abstract}
\newtheorem{theorem}{Theorem}[section]
\newtheorem{lemma}[theorem]{Lemma}
\newtheorem{proposition}[theorem]{Proposition}
\newtheoremrep{corollary}[theorem]{Corollary}

\newtheorem{definition}[theorem]{Definition}
\newtheorem{example}[theorem]{Example}
\newtheorem{remark}[theorem]{Remark}

\section{Introduction}
\noindent
Let $G_n = GL_n, U_n, Sp_{2n}, O_n$ be one of the families of general linear, unitary, symplectic, and orthogonal groups. In this paper we consider the centres of the group algebras of $G_n(\mathbb{F}_q)$. Our main results are certain ``stability'' properties for the multiplication in $Z(\mathbb{Z}G_n(\mathbb{F}_q))$ which allow us to define a ``universal'' algebra $\mathrm{FH}_q^{G}$ interpolating these centres of group algebras in the parameter $n$.
\newline \newline \noindent
To illustrate the nature of these results, let us first consider the case of symmetric groups. The centre $Z(\mathbb{Z}S_n)$ has a basis consisting of conjugacy-class sums. Suppose that $g \in S_n$ has cycle type $\mu = (\mu_1, \mu_2, \ldots)$. The \emph{reduced cycle type} of $g$ is the partition $(\mu_1-1, \mu_2-1, \ldots)$ obtained by deleting the first column of the Young diagram of $\mu$. For any partition $\mu$, let $X_{\mu}$ denote the sum of all cycles of reduced cycle type $\mu$ in $S_{n}$ (which is zero if there are no such elements); the nonzero $X_\mu$ form a basis of $Z(\mathbb{Z}S_n)$. For example, $X_{(1)}$ is the sum of all transpositions. Then the square of the sum of all transpositions in $S_n$ is
\[
X_{(1)}^2 = 2 X_{(1,1)} + 3 X_{(2)} + {n \choose 2} X_{\varnothing},
\]
which is valid for any $n \geq 0$. The key feature of the above equation is that the coefficients are polynomials in $n$. Farahat and Higman \cite{FarahatHigman} proved in general that $X_\mu X_\nu$ is a linear combination of $X_{\lambda}$ with coefficients in $\mathcal{R}$, the ring of integer-valued polynomials (which has a $\mathbb{Z}$-basis consisting of binomial coefficients). They constructed a $\mathcal{R}$-algebra, $\mathrm{FH}$, with an $\mathcal{R}$-basis given by symbols $K_\mu$, and multiplicative structure constants given by the polynomials we have just described. By construction, there are surjective homomorphisms $\mathrm{FH} \to Z(\mathbb{Z}S_n)$ given by evaluating the polynomials in $\mathcal{R}$ at $n$ and sending $K_\mu$ to $X_\mu$. Farahat and Higman used this to study the modular representation theory of symmetric groups. The algebra $\mathrm{FH}$ turns out to be isomorphic to $\mathcal{R} \otimes \Lambda$, where $\Lambda$ is the ring of symmetric functions, which leads to a theory of ``content evaluation'' character formulae for the symmetric group \cite{CorteelGoupilSchaeffer}. Wang \cite{Wang2002} defined an analogous version of $\mathrm{FH}$ for the wreath products $G \wr S_n$, where $G$ is a fixed finite group, and used an associated graded version to study the cohomology of certain Hilbert schemes of points. We direct the reader to the prequel to the present paper, \cite{Ryba}, for more details regarding the algebra $\mathrm{FH}$.
\newline \newline \noindent
In this paper we construct an analogous version of $\mathrm{FH}$ for classical groups $G_n$, interpolating the centres $Z(\mathbb{Z}G_n(\mathbb{F}_q))$. For the case $G_n = GL_n$, an analogue of reduced cycle type for the general linear group called \emph{modified type} was defined by Wan and Wang \cite{Wan_Wang}. If $X_{\bm\mu}$ is the sum of all elements of $GL_n(\mathbb{F}_q)$ of modified type $\bs\mu$, let the structure constants $a_{\bm\mu\bm\nu}^{\bm{\lambda}}(n)$ be defined by
\[
X_{\bm\mu} X_{\bs\nu} = \sum_{\bs\lambda} a_{\bm\mu\bm\nu}^{\bm{\lambda}}(n) X_{\bm\lambda}.
\]
Modified types have a notion of size denoted $| \cdot |$, which puts a filtration on $Z(\mathbb{Z}GL_n(\mathbb{F}_q))$. Wan and Wang show that if $\bm\mu,\bm\nu,\bm\lambda$ are modified types, the structure constants $a_{\bm\mu\bm\nu}^{\bm{\lambda}}(n)$ are nonzero only when $| \bm\lambda | \leq | \bm\mu | + | \bm\nu |$, and are independent of $n$ when equality is achieved. In particular, this enables them to construct an algebra that specialises to the associated graded algebra of $Z(\mathbb{Z}GL_n(\mathbb{F}_q))$ for each $n$. The equivalent result for $G_n = Sp_{2n}$ was obtained by \"{O}zden in \cite{OZDEN2021263}. We prove that for any $\bm\mu,\bm\nu,\bm\lambda$ the structure constants $a_{\bm\mu\bm\nu}^{\bm{\lambda}}(n)$ are polynomials in $q^n$ (and provide a bound on the degree), thus generalising the results mentioned above. This allows us to define an algebra $\mathrm{FH}_q^{G}$ with a basis $K_{\bm\mu}$ indexed by modified types and whose coefficients lie in the \emph{ring of quantum integer-valued polynomials}, $\mathcal{R}_q$, defined by Harman and Hopkins \cite{HarmanHopkins}. This comes equipped with ``specialisation'' maps $\mathrm{FH}_q^{G} \to Z(\mathbb{Z}G_n(\mathbb{F}_q))$, analogous the case of $\mathrm{FH}$ for symmetric groups.
\newline \newline \noindent
In \cite{ivanovKerov2001symmetric}, Ivanov and Kerov construct the algebra $\mathrm{FH}$ in a very elegant way using a construction they call ``partial permutations''. These are pairs $(g, I)$ where $g \in S_\infty = \varinjlim_n S_n$ and $I$ is a finite subset of $\mathbb{Z}_{> 0}$ such that if $i \notin I$, then $i$ is a fixed point of $g$. Thus $I$ controls which symmetric groups $S_n \subseteq S_\infty$ can contain $g$. Inspired by the approach of Ivanov and Kerov, we define \emph{bounding pairs} (\emph{bounding triples} in the $GL_n$ case). These are pairs $(g, V)$ where $g \in G_\infty(\mathbb{F}_q) = \varinjlim_n G_n(\mathbb{F}_q)$ and $V$ is a finite codimension subspace of the natural representation $\mathbb{F}_q^\infty$ of $G_\infty$ ($\mathbb{F}_{q^2}^\infty$ in the case of unitary groups) on which $g$ acts as the identity. Analogously, $V$ controls which $G_n(\mathbb{F}_q) \subseteq G_\infty(\mathbb{F}_q)$ can contain $g$. A similar idea is considered for general linear groups in \cite{Meliot_GLnFq}, whose Theorem 3.7\footnote{Issues have been raised with the proof which we do not know how to address; see Remark 3.6 of \cite{Wan_Wang}.} is very similar to our Theorem \ref{gl_structure_constant_theorem}.
\newline \newline \noindent
The structure of the paper is as follows. In Section 2, we review some basic facts about quantum integer-valued polynomials and conjugacy classes in the general linear group $GL_n(\mathbb{F}_q)$. In Section 3, we recall the definitions of the classical groups and prove some basic facts about them and their conjugacy classes; a reader who is only interested in the $GL_n$ case may skip this section. In Section 4, we consider the general linear group and introduce bounding triples, which serve as an analogue to the partial permutations introduced in \cite{ivanovKerov2001symmetric}. We show that bounding triples form an algebra with an action of $GL_\infty(\mathbb{F}_q)$, and hence obtain an analogue of the construction of Ivanov and Kerov. In Section 5, we show that there are specialisation morphisms from the general linear Ivanov-Kerov algebra that surject on to $Z(\mathbb{Z}GL_n(\mathbb{F}_q))$. In Section 6, we explicitly construct the general linear Farahat-Higman algebra $\mathrm{FH}_q^{GL}$. In Section 7, we introduce bounding pairs, which are adapted for classical groups. Finally in Section 8, we construct the Farahat-Higman algebras $\mathrm{FH}_q^{G}$ for unitary, symplectic, and orthogonal groups.
\newline \newline \noindent
Note that this paper is not the paper mentioned in \cite{Ryba} that will address Iwahori-Hecke algebras of type $A$ and connections to $GL_n(\mathbb{F}_q)$. That paper will appear in due course.
\newline \newline \noindent
\textbf{Acknowledgements.} The first author would like to thank his undergraduate advisor, Weiqiang Wang, for introducing him to this problem. Both authors would like to thank Pavel Etingof for helpful discussions. This paper is based upon work supported by The National Science Foundation Graduate Research Fellowship Program under Grant No. 1842490 awarded to the first author.

\section{Background}
\subsection{Quantum Integer-Valued Polynomials} \label{qivp_subsection}
Let $q$ be a formal variable, which later on we will often specialize to be a power of a prime number. For any integer $n$, we let $[n]_q$ denote the $q$-integer
\[
[n]_q = 1 + q + \cdots + q^{n-1} = \frac{q^n - 1}{q - 1}.
\] 
We can then define the $q$-factorial $[n]_q!$ of a $q$-integer $[n]_q$. Let $[0]_q! = 1$ and define
\[
[n]_q! = [n]_q[n-1]_q \cdots [1]_q = \frac{(q^n-1)(q^{n-1} - 1)\cdots (q-1)}{(q-1)^n}.
\]
Finally, we can define the $q$-binomial coefficient, or Gaussian binomial coefficient, by
\[
\qbinom{n}{k}_q = \frac{[n]_q!}{[k]_q![n-k]_q!}.
\]
for $0 \leq k \leq n$, which turns out to be an element of $\mathbb{Z}[q,q^{-1}]$ (in fact of $\mathbb{Z}_{\geq 0}[q]$). Notice that at $q = 1$, we recover the usual definitions of an integer, factorial, and binomial coefficient. These $q$-integers are useful for describing various quantities associated to finite fields.
Let us briefly set $q$ to be the size of the finite field $\mathbb{F}_q$. It is well known that
\[
|GL_n(\mathbb{F}_q)| = (q^n-1)(q^n-q)\cdots (q^{n} - q^{n-1}) =  q^{\frac{n(n-1)}{2}}(q-1)^n[n]_q!.
\]
Similarly, the number of $k$-dimensional subspaces of $\mathbb{F}_q^n$ is $\qbinom{n}{k}_{q}$.
\newline \newline \noindent
Returning to the case where $q$ is a formal variable, we now discuss the ring of \emph{quantum integer valued polynomials} introduced by Harman and Hopkins in \cite{HarmanHopkins}.
\begin{definition}[Section 4 \cite{HarmanHopkins}] \label{qivp_definition}
Let $\mathcal{R}_q$ be the set of polynomials $f(x) \in \mathbb{Q}(q)[x]$ such that
\[
f([n]_q) \in \mathbb{Z}[q,q^{-1}]
\]
for all $n \in \mathbb{Z}$.
\end{definition}
\noindent
It is not difficult to see that $\mathcal{R}_q$ is a ring (in fact a $\mathbb{Z}[q,q^{-1}]$-algebra). Much like the usual ring of integer-valued polynomials, this ring admits a concise explicit description.
\begin{theorem}[Propositions 1.2, 4.3, \cite{HarmanHopkins}] \label{qivp_structure_theorem}
As a $\mathbb{Z}[q,q^{-1}]$-module, $\mathcal{R}_q$ is free with basis
\[
\qbinom{x}{k} = \frac{x(x-[1]_q) \cdots(x-[k-1]_q)}{q^{k \choose 2} [k]_q!}
\]
where $k \in \mathbb{Z}_{\geq 0}$. (Here $\qbinom{x}{0} = 1$.)
\end{theorem}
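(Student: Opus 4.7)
The plan is to adapt the classical proof that the binomial coefficients $\binom{x}{k}$ form a $\mathbb{Z}$-basis of the ring of integer-valued polynomials, with the appropriate $q$-modifications. The argument splits into three steps: verify that each $\qbinom{x}{k}$ lies in $\mathcal{R}_q$; establish that $\{\qbinom{x}{k}\}_{k \geq 0}$ is a $\mathbb{Q}(q)$-basis of $\mathbb{Q}(q)[x]$ via degree considerations; and invert a triangular system of evaluations to conclude that the expansion coefficients of any $f \in \mathcal{R}_q$ in this basis lie in $\mathbb{Z}[q,q^{-1}]$.

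For the first step, the essential identities are $[n]_q - [j]_q = q^j [n-j]_q$ for integers $n \geq j \geq 0$ and $[n]_q - [j]_q = -q^n [j-n]_q$ for $n < 0 \leq j$. Substituting $x = [n]_q$ with $n \geq k$ into the numerator $\prod_{j=0}^{k-1}(x - [j]_q)$ of $\qbinom{x}{k}$ yields $q^{\binom{k}{2}} \tfrac{[n]_q!}{[n-k]_q!}$, so after dividing by $q^{\binom{k}{2}}[k]_q!$ one recovers the Gaussian binomial $\qbinom{n}{k}_q \in \mathbb{Z}[q]$. When $0 \leq n < k$, the factor $[n]_q - [n]_q = 0$ forces $\qbinom{[n]_q}{k} = 0$. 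When $n < 0$, a short computation using the second identity gives $\qbinom{[n]_q}{k} = (-1)^k q^{nk - \binom{k}{2}} \qbinom{k-n-1}{k}_q$, which lies in $\mathbb{Z}[q, q^{-1}]$.

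Since $\qbinom{x}{k}$ has degree exactly $k$ in $x$, the family $\{\qbinom{x}{k}\}_{k \geq 0}$ is a $\mathbb{Q}(q)$-basis of $\mathbb{Q}(q)[x]$, so any $f \in \mathcal{R}_q$ of degree at most $d$ admits a unique expansion $f(x) = \sum_{k=0}^d c_k \qbinom{x}{k}$ with $c_k \in \mathbb{Q}(q)$. Evaluating at $x = [n]_q$ for $n = 0, 1, \ldots, d$ and using the previous paragraph gives the lower triangular system
\[
f([n]_q) = \sum_{k=0}^{n} c_k \qbinom{n}{k}_q
\]
with ones on the diagonal. Back-substitution $c_n = f([n]_q) - \sum_{k=0}^{n-1} c_k \qbinom{n}{k}_q$ combined with $f([n]_q) \in \mathbb{Z}[q,q^{-1}]$ and $\qbinom{n}{k}_q \in \mathbb{Z}[q]$ gives by induction on $n$ that every $c_k \in \mathbb{Z}[q,q^{-1}]$.

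The only genuinely delicate point is verifying $\qbinom{[n]_q}{k} \in \mathbb{Z}[q,q^{-1}]$ for $n < 0$, which requires careful bookkeeping of signs and negative powers of $q$ and amounts to a $q$-analogue of the upper-negation identity $\binom{-n}{k} = (-1)^k \binom{n+k-1}{k}$. Everything else is formal, mirroring the classical ($q=1$) argument, and is in fact made somewhat easier here by the clean interpolation identity $\qbinom{[n]_q}{k} = \qbinom{n}{k}_q$ for $n \geq k$.
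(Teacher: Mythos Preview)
Your argument is correct and complete. The paper itself does not prove this theorem; it is quoted from Harman and Hopkins \cite{HarmanHopkins} (Propositions 1.2 and 4.3 there) without proof, so there is no in-paper argument to compare against. Your approach is the natural $q$-deformation of the classical proof that $\{\binom{x}{k}\}$ is a $\mathbb{Z}$-basis of the ring of integer-valued polynomials: verify membership in $\mathcal{R}_q$ by direct evaluation (including the negative-$n$ upper-negation identity), note that the $\qbinom{x}{k}$ are a $\mathbb{Q}(q)$-basis by degree, and then solve a unitriangular system of evaluations at $x=[0]_q,\ldots,[d]_q$ to force the coefficients into $\mathbb{Z}[q,q^{-1}]$. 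This is essentially the same line of reasoning Harman and Hopkins use, so nothing is lost by your reconstruction.
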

\noindent
These basis elements obey
\[
\qbinom{[n]_q}{k} = \qbinom{n}{k}_q,
\]
and this is the main reason why $\mathcal{R}_q$ will be important for us. We will require a ring that includes $q^{-1}$ and also $\qbinom{n}{k}_q$ where $n$ is viewed as a variable. We achieve this by working with $\qbinom{x}{k}$ and evaluating at $x = [n]_q$ where necessary. Performing the change of variables $t = 1 + (q-1)x$, we see that $x = [n]_q$ is equivalent to $t = q^n$, and 
\[
\qbinom{x}{k} = \frac{x(x-[1]_q) \cdots(x-[k-1]_q)}{q^{k \choose 2} [k]_q!} = \frac{(t-1)(t-q) \cdots (t-q^{k-1})}{(q^k -1) (q^k - q) \cdots (q^k - q^{k-1})}.
\]
If we express $\mathcal{R}_q$ in terms of the variable $t$ instead of $x$, we get the ring of polynomials $f(t) \in \mathbb{Q}(q)[t]$ such that $f(q^n) \in \mathbb{Z}[q,q^{-1}]$. 
\newline \newline \noindent
The original definition of $\mathcal{R}_q$ using $\qbinom{x}{k}$ is better behaved at $q=1$ (our change of variables is singular at $q=1$). Working with the variable $t$ allows us to phrase statements in terms of evaluating at $q^n$, which may be slightly more transparent than evaluating at $[n]_q$. Ultimately we set $q$ to be the size of a finite field (obtaining a subring of $\mathbb{Q}[x]$ or $\mathbb{Q}[t]$), and the reader may choose whichever incarnation of $\mathcal{R}_q$ they prefer.

\begin{lemma} \label{shifted_qivp_lemma}
Let $d \in \mathbb{Z}$ and $h \in \mathbb{Z}_{\geq 0}$. Then there is an element $f(x) \in \mathcal{R}$ such that
\[
f([m]_q) = \qbinom{m+d}{h}_q
\]
\end{lemma}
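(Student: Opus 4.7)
The plan is to exhibit $f$ explicitly by working in the $t$-variable realization of $\mathcal{R}_q$ introduced just above the lemma, where $t = 1 + (q-1)x$ (so $x = [m]_q$ corresponds to $t = q^m$) and $\mathcal{R}_q$ consists of those $g(t) \in \mathbb{Q}(q)[t]$ with $g(q^n) \in \mathbb{Z}[q, q^{-1}]$ for every $n \in \mathbb{Z}$. The candidate is the obvious product
\[
g(t) = \prod_{i=0}^{h-1} \frac{q^{d-i}\, t - 1}{q^{h-i} - 1} \in \mathbb{Q}(q)[t],
\]
a polynomial in $t$ of degree $h$.

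First I would verify that $g(q^m) = \qbinom{m+d}{h}_q$ for every integer $m$, where the Gaussian binomial on the right is interpreted via its explicit product formula
\[
\qbinom{m+d}{h}_q = \frac{(q^{m+d}-1)(q^{m+d-1}-1)\cdots(q^{m+d-h+1}-1)}{(q^h-1)(q^{h-1}-1)\cdots(q-1)}.
\]
This is immediate by substitution, and it handles every integer value of $m+d$ on the nose without any case distinction at this stage.

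Second, to conclude $g \in \mathcal{R}_q$, I would check that $g(q^n) \in \mathbb{Z}[q, q^{-1}]$ by splitting on the sign of $n + d$. If $n + d \geq h$, the value is a standard Gaussian binomial, so lies in $\mathbb{Z}_{\geq 0}[q]$. If $0 \leq n + d < h$, then a factor $q^{n+d-i}-1$ in the numerator vanishes for $i = n+d$, forcing $g(q^n) = 0$. If $n + d < 0$, write $k = -(n+d) > 0$ and pull $-q^{-k-i}$ out of each numerator term $q^{-k-i} - 1$; this yields the classical negative-upper-index identity
\[
\qbinom{-k}{h}_q = (-1)^h q^{-hk - \binom{h}{2}} \qbinom{k+h-1}{h}_q,
\]
which manifestly lies in $\mathbb{Z}[q, q^{-1}]$. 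Finally, pulling back via $x = (t-1)/(q-1)$ gives the desired $f(x) \in \mathcal{R}_q$.

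The only step that is not pure bookkeeping is the negative case, which relies on the elementary identity for Gaussian binomial coefficients with negative upper index; everything else is direct substitution into the product formula.
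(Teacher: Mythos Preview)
Your proof is correct and exhibits exactly the same polynomial as the paper does; in the $x$-variable your $g(t)$ is precisely $\qbinom{q^d x + [d]_q}{h}$, which is what the paper writes down. The only difference is in the integrality step: the paper observes that $q^d[m]_q + [d]_q = [m+d]_q$, so the candidate is the composition of the affine map $x \mapsto q^d x + [d]_q$ with the basis element $\qbinom{x}{h}$, and integrality at every $[m]_q$ is then immediate from Theorem~\ref{qivp_structure_theorem} (since $m+d$ ranges over all of $\mathbb{Z}$ as $m$ does). You instead re-prove that $\qbinom{n}{h}_q \in \mathbb{Z}[q,q^{-1}]$ for negative $n$ via the negative-upper-index identity. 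Both are fine; the paper's route is a one-liner because it leverages the already-quoted Harman--Hopkins result, while yours is self-contained.
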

\begin{proof}
We have
\[
\qbinom{m+d}{h}_q = \qbinom{[m+d]_q}{h} = \qbinom{q^{d}[m]_q + [d]_q}{h}
\]
is an element of $\mathbb{Z}[q,q^{-1}]$ but also the evaluation of $\qbinom{q^{d}x + [d]_q}{h} \in \mathbb{Q}(q)[x]$ at $x = [m]_q$. So $\qbinom{q^{d}x + [d]_q}{h}$ satisfies Definition \ref{qivp_definition} and is hence an element of $\mathcal{R}_q$. We conclude that $\qbinom{m+d}{h}_q$ is obtained by evaluating an element of $\mathcal{R}_q$ at $[m]_q$. 
\end{proof}

\subsection{Multipartitions and Conjugacy Classes in \texorpdfstring{$GL_n$}{General Linear Groups}}
A \emph{partition} $\lambda$ is a non-increasing sequence in $\mathbb{Z}_{\geq 0}$, $\lambda = (\lambda_1, \lambda_2, \ldots)$ of which only finitely many terms are nonzero. We write $\mathcal{P}$ for the set of partitions. The numbers $\lambda_1, \lambda_2, \ldots$ are called the $\emph{parts}$ of $\lambda$. An alternative way of describing a partition is to specify the number of times a given element of $\mathbb{Z}_{>0}$ appears. So if $\lambda$ has $m_i$ parts of size $i$, we write $\lambda = (1^{m_1} 2^{m_2} \cdots )$. Similarly, we write $m_i(\lambda)$ to mean the number of times $i$ appears as a part of $\lambda$. The \emph{length} of $\lambda$, $l(\lambda)$, is the number of nonzero parts:
\[
l(\lambda) = \sum_{i \in \mathbb{Z}_{>0}} m_i(\lambda).
\]
The \emph{size} of $\lambda$, $|\lambda|$, is the sum of the parts:
\[
|\lambda| = \sum_{i \geq 1} \lambda_i.
\]
One statistic of partitions that turns out to be important is the following.
\begin{definition} \label{partition_n_definition}
If $\lambda \in \mathcal{P}$, let
\[
\mathbf{n}(\lambda) = \sum_{i \geq 1} (i-1) \lambda_i.
\]
\end{definition}
\noindent
We use boldface to avoid confusion with $n$, which will typically denote the size of a matrix.
\newline \newline \noindent
If $k$ is any field, conjugacy classes in $GL_n(k)$ are determined by primary rational canonical form. To an $n \times n$ matrix $M$, we associate a function $\bs\mu$, called a \emph{multipartition}, from the set of monic irreducible polynomials over $k$ to $\mathcal{P}$, such that $m_i(\bs\mu(r))$ is the multiplicity of $r(t)^i$ as an elementary divisor of the $k[t]$-module $k^n$, where $t$ acts by the matrix $M$. We refer to $\bs\mu$ as the \emph{type} of $M$. Thus two matrices are similar if and only if their corresponding types are equal.
\noindent
From now on, we will only consider invertible matrices. This amounts to excluding the irreducible polynomial $t$. Let $\Phi_q$ be the set of monic irreducible polynomials in $\mathbb{F}_q[t]$ that are different from $t$. Thus the types of matrices in $GL_n(\mathbb{F}_q)$ are functions $\bs\mu: \Phi_q \to \mathcal{P}$ such that
\[
\sum_{r \in \Phi_q} \deg(r) |\bs\mu(r)| = n.
\]
We refer to the above sum as the size of $\bs\mu$ and write $\bs\mu \vdash n$ to indicate this. We will sometimes write $\bs\mu \cup (1^d)_{t-1}$ to indicate a multipartition obtained from $\bs \mu$ by appending $d$ parts of size 1 to $\bs\mu(t-1)$. Concretely, if $g \in GL_n(\mathbb{F}_q)$ has type $\bs\mu$, then  $\bs\mu \cup (1^d)_{t-1}$ is the type of the image of $g$ under the embedding into $GL_{n+d}(\mathbb{F}_q)$ by adding $1$-s along the main diagonal.

\begin{example}\label{basic_example}
Consider the following matrix $g \in GL_{11}(\mathbb{Q})$:

\[ 
\setcounter{MaxMatrixCols}{11}
\begin{bmatrix}
1 & 1 & 0 & & & & & & & & \\
0 & 1 & 1 & & & & & & & & \\
0 & 0 & 1 & & & & & & & & \\
& & & 1& & & & & & & \\
& & & & 1& & & & & & \\
& & & & & 0 & 0 & 0 & -1 & & \\
& & & & & 1 & 0 & 0 & 0 & & \\
& & & & & 0 & 1 & 0 & -2 & & \\
& & & & & 0 & 0 & 1 & 0 & &\\
& & & & & & & & &  0& -1  \\
& & & & & & & & &  1 & 0  
\end{bmatrix},
\]
where empty entries are assumed to be zero. Then, the type $\bs\mu$ of $g$ is given by $\bs\mu(t-1) = (3,1,1)$, $\bs\mu(t^2 + 1) = (2,1)$, and $\bs\mu(f) = \varnothing$ for all other irreducible polynomials $f$ over $\mathbb{Q}$ (the fourth block in the matrix above is the companion matrix of $(t^2 + 1)^2$). Notice that $\deg(t-1)|(3,1,1)| + \deg(t^2+1)|(2,1)| = (1)(5) + (2)(3) = 11$, which confirms $\bs\mu \vdash 11$.
\end{example}
\noindent
There is a formula for the sizes of centralisers in $GL_n(\mathbb{F}_q)$.
\begin{proposition}[Section 4.2 \cite{Macdonald1995}] \label{gl_centraliser_prop}
If $M \in GL_n(\mathbb{F}_q)$ has type $\bs\mu$, then the size of the centraliser of $M$ is
\[
\prod_{r \in \Phi_q} q_r^{|\bs\mu(r)| + 2\mathbf{n}(\bs\mu(r))} \prod_{i \geq 1} \varphi_{m_i(\bs\mu(r))}(q_r^{-1}),
\]
where $q_r = q^{\deg(r)}$, $\varphi_k(t) = \prod_{i=1}^k (1-t^i)$, and $\mathbf{n}(\bs\mu(r))$ is the function defined in Definition \ref{partition_n_definition}.
\end{proposition}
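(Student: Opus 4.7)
The plan is to identify the centraliser as a module automorphism group and then count. Viewing $V := \mathbb{F}_q^n$ as an $\mathbb{F}_q[t]$-module via $t \cdot v = Mv$, the centraliser $Z_{GL_n(\mathbb{F}_q)}(M)$ equals $\mathrm{Aut}_{\mathbb{F}_q[t]}(V)$. By the structure theorem, $V = \bigoplus_{r \in \Phi_q} V_r$, where $V_r = \bigoplus_{i \geq 1}(\mathbb{F}_q[t]/r^i)^{m_i(\bs\mu(r))}$ is the $r$-primary component. Since $\mathrm{Hom}_{\mathbb{F}_q[t]}(V_r, V_{r'}) = 0$ for $r \neq r'$, the centraliser factors as $\prod_{r \in \Phi_q} \mathrm{Aut}_{\mathbb{F}_q[t]}(V_r)$, reducing the problem to a single prime.

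Fix $r$ and write $\lambda = \bs\mu(r)$, $m_i = m_i(\lambda)$, $q_r = q^{\deg r}$. Let $R = \mathbb{F}_q[t]_{(r)}$, a DVR with residue field $\mathbb{F}_{q_r}$ and uniformiser $\pi$; any $\mathbb{F}_q[t]$-linear endomorphism of $V_r$ is automatically $R$-linear, since every prime-to-$r$ element of $\mathbb{F}_q[t]$ acts invertibly on the $r$-primary $V_r$. So it suffices to compute $|\mathrm{Aut}_R(V_\lambda)|$ for $V_\lambda = \bigoplus_i (R/\pi^i)^{m_i}$. An $R$-linear endomorphism $\phi$ of $V_\lambda$ decomposes into blocks $\phi_{ij}:(R/\pi^j)^{m_j} \to (R/\pi^i)^{m_i}$, and a direct analysis shows that each block may be chosen freely, giving $q_r^{\min(i,j) m_i m_j}$ possibilities. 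Using $\sum_k (\lambda'_k)^2 = \sum_{i,j} \min(i,j) m_i m_j = |\lambda| + 2\mathbf{n}(\lambda)$ (which follows from $\mathbf{n}(\lambda) = \sum_k \binom{\lambda'_k}{2}$), the total endomorphism count is $q_r^{|\lambda| + 2\mathbf{n}(\lambda)}$.

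The key step is the criterion that $\phi$ is an automorphism if and only if each diagonal block $\phi_{ii} \in M_{m_i}(R/\pi^i)$ has reduction $\overline\phi_{ii} \in GL_{m_i}(\mathbb{F}_{q_r})$. The forward direction holds because for $j < i$ the block $\phi_{ij}$ has image in $\pi(R/\pi^i)^{m_i}$ (its domain is annihilated by $\pi^j$, so its image lies in the $\pi^j$-torsion $\pi^{i-j}(R/\pi^i)^{m_i} \subseteq \pi(R/\pi^i)^{m_i}$), so the induced endomorphism of $V_\lambda/\pi V_\lambda \cong \bigoplus_i \mathbb{F}_{q_r}^{m_i}$ is block upper-triangular, and its invertibility reduces to invertibility of the diagonal blocks. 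The converse follows from Nakayama's lemma applied to the finitely generated $R$-module $V_\lambda$: surjectivity modulo $\pi$ implies surjectivity of $\phi$, and the finiteness of $V_\lambda$ then gives bijectivity. I expect this criterion to be the main obstacle of the proof. Granting it, the fraction of $M_{m_i}(R/\pi^i)$ whose reduction lies in $GL_{m_i}(\mathbb{F}_{q_r})$ equals $|GL_{m_i}(\mathbb{F}_{q_r})|/q_r^{m_i^2} = \varphi_{m_i}(q_r^{-1})$, while off-diagonal blocks are unconstrained, so
\[
|\mathrm{Aut}_R(V_\lambda)| = q_r^{|\lambda| + 2\mathbf{n}(\lambda)} \prod_{i \geq 1} \varphi_{m_i}(q_r^{-1}),
\]
and multiplying over $r \in \Phi_q$ yields the stated formula.
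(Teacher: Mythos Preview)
The paper does not give its own proof of this proposition; it is stated with a citation to Macdonald's book and used only as input to Corollary~\ref{gln_centraliser_corollary}. Your argument is correct and is essentially the standard one found in that reference: identify the centraliser with $\mathrm{Aut}_{\mathbb{F}_q[t]}(V)$, reduce to a single primary component over the DVR $R = \mathbb{F}_q[t]_{(r)}$, count endomorphisms block by block to obtain $q_r^{|\lambda|+2\mathbf{n}(\lambda)}$, and then impose the invertibility condition via the reduction modulo $\pi$. One small wording issue: the induced map on $V_\lambda/\pi V_\lambda$ is block \emph{lower}-triangular (or upper, depending on ordering) rather than literally upper-triangular, since for $j>i$ the block $\phi_{ij}$ need not vanish modulo $\pi$; but this does not affect the conclusion, as triangularity in either direction reduces invertibility to the diagonal blocks $\overline{\phi_{ii}}$.
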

\noindent
We will only need this formula for the following result.
\begin{corollary} \label{gln_centraliser_corollary}
If $g \in GL_n(\mathbb{F}_q)$. Then the block matrix $\tilde{g} = \bigl( \begin{smallmatrix}g & 0 \\ 0 & \Id_d \end{smallmatrix}\bigr)$ may be viewed as an element of $GL_{n+d}(\mathbb{F}_q)$. The ratio of the sizes of the centralisers of these elements is
\[
\frac{|C_{GL_{n+d}(\mathbb{F}_q)}(\tilde{g})|}{|C_{GL_n(\mathbb{F}_q)}(g)|} = q^{d(2k +d)} \prod_{i=h+1}^{h+d} (1 - q^{-i}),
\]
where $\bs\mu$ is the type of $g$, $k=l(\bs\mu(t-1)$ and $h=m_1(\bs\mu(t-1))$.
\end{corollary}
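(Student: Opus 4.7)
The plan is to apply Proposition \ref{gl_centraliser_prop} to both $g$ and $\tilde{g}$, observe that almost every factor cancels in the ratio, and then compute the two factors that do not.

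First, I would identify the type of $\tilde{g}$: since $\tilde{g}$ is obtained from $g$ by appending $d$ diagonal $1$-s, its type is $\bs\mu \cup (1^d)_{t-1}$, so only the partition at $t-1$ changes, and it changes by adjoining $d$ parts of size $1$. Writing $\lambda = \bs\mu(t-1)$ and $\lambda' = \lambda \cup (1^d)$, for every $r \neq t-1$ the factors for $g$ and $\tilde{g}$ in Proposition \ref{gl_centraliser_prop} are identical and cancel; we only need to track how the $r = t-1$ factor (with $q_r = q$) changes.

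Next, for the exponential contribution I would compute the change in $|\lambda| + 2\mathbf{n}(\lambda)$. Since $\lambda$ has $k$ parts, the $d$ new parts of size $1$ occupy positions $k+1, \dots, k+d$ in $\lambda'$, so $|\lambda'| = |\lambda| + d$ and
\[
\mathbf{n}(\lambda') = \mathbf{n}(\lambda) + \sum_{i=k+1}^{k+d}(i-1) = \mathbf{n}(\lambda) + dk + \binom{d}{2}.
\]
Hence the increase in $|\lambda| + 2\mathbf{n}(\lambda)$ is $d + 2dk + d(d-1) = d(2k+d)$, producing exactly the factor $q^{d(2k+d)}$.

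For the $\varphi$-factor, note that $m_1(\lambda') = h + d$ while $m_i(\lambda') = m_i(\lambda)$ for $i \geq 2$, so only the $i=1$ term in the product $\prod_{i \geq 1} \varphi_{m_i(\lambda)}(q^{-1})$ changes. Its ratio is
\[
\frac{\varphi_{h+d}(q^{-1})}{\varphi_h(q^{-1})} = \prod_{i=h+1}^{h+d}(1 - q^{-i}),
\]
and multiplying the two contributions gives the claim. The calculation is essentially routine; the only subtlety is locating the new parts of $\lambda'$ correctly to obtain the right value of $\mathbf{n}(\lambda')$.
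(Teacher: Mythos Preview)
Your proposal is correct and follows essentially the same approach as the paper: identify the type of $\tilde{g}$ as $\bs\mu \cup (1^d)_{t-1}$, cancel all factors for $r \neq t-1$ and all $\varphi_{m_i}$ with $i \geq 2$, then compute the change in $|\lambda| + 2\mathbf{n}(\lambda)$ and the ratio $\varphi_{h+d}(q^{-1})/\varphi_h(q^{-1})$ exactly as you do.
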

\begin{proof}
The type of $\tilde{g}$ is $\bs\mu \cup (1^d)_{t-1}$. Applying Proposition \ref{gl_centraliser_prop}, for both $g$ and $\tilde{g}$, we find that the factors associated to $r \neq t-1$ are equal for both elements and cancel out in the fraction. Similarly $\varphi_{m_i(\bs\mu(t-1))}$ is unchanged for $i \geq 2$ and cancels out. Appending $d$ parts of size $1$ to $\bs\mu(t-1)$ increases $\mathbf{n}(\bs\mu(t-1))$ by $(k) + (k+1) + \cdots + (k+d-1) = kd + d(d-1)/2$. So we are left with
\[
q^{d + 2kd + d(d-1)} \frac{\varphi_{h+d}(q^{-1})}{\varphi_h(q^{-1})}
=
q^{d(2k + d)} \prod_{i=h+1}^{h+d} (1 - q^{-i}).
\]
\end{proof}

\section{Finite Classical Groups}\label{finite_classical_groups_section}
\noindent
In this section, we recall some basic properties about the finite classical groups, by which we mean to include unitary, symplectic, and orthogonal groups over a finite field. The main reference for this section is the paper \cite{Wall}, although Peter Cameron's notes \cite{CameronNotes} are also helpful. The classical groups are each defined as a symmetry group preserving a non-degenerate sesquilinear form. Up to a point, these may be treated simultaneously. However, in characteristic 2, orthgonal groups must be treated using quadratic forms instead, so whenever dealing with orthogonal groups or symmetric bilinear forms, we assume the characteristic is different from 2.
\newline \newline \noindent
We fix an automorphism $\sigma$ of the ground field $k$. For the symplectic and orthogonal groups, $\sigma$ will be the identity map on $k = \mathbb{F}_q$. For the unitary groups, we take  $k=\mathbb{F}_{q^2}$ and $\sigma$ will be the nontrivial element of $\mathrm{Gal}(\mathbb{F}_{q^2}/\mathbb{F}_q)$, which may be written as $x \mapsto x^q$. A $\sigma$-\emph{sesquilinear form} on a vector space $V$ is a map $B: V \times V \to k$ which is linear in the first argument, and $\sigma$-linear in the second argument, i.e. for $x_1, x_2 \in k$ and $v_1, v_2, v_3 \in V$,
\begin{eqnarray*}
B(x_1 v_1 + x_2 v_2, v_3) &=& x_1 B(v_1, v_3) + x_2 B(v_2, v_3), \\
B(v_1, x_1 v_2 + x_2 v_3) &=& \sigma(x_1) B(v_1, v_2) + \sigma(x_2) B(v_1, v_3).
\end{eqnarray*}
In the case where $\sigma$ is the identity map, this is simply a bilinear form. If $B(v,w) = \sigma(B(w,v))$, then $B$ is said to be $\sigma$-\emph{Hermitian}. Since a nonzero bilinear form takes every value in the ground field, the equation $B(v,w) = \sigma^2(B(v,w))$ shows that nontrivial cases only arise when $\sigma^2 = 1$. We shall at times refer to a $\sigma$-sesquilinear form as simply a sesquilinear form.
\newline \newline \noindent
A bilinear form $B$ is said to be \emph{reflexive} provided that $B(v,w) = 0$ if and only if $B(w,v) = 0$. This condition guarantees that if $U$ is a subspace of $V$, then the left and right orthogonal spaces to $U$ coincide. In particular, when $U$ is the trivial subspace, we see that the left and right radicals of $B$ coincide:
\[
\{ v \in V | B(v,w) = 0, \hspace{5mm} \forall w \in V\} = \{w \in V | B(v,w) = 0, \hspace{5mm} \forall v \in V\}.
\]
It is a standard fact that a reflexive bilinear $B$ form must be either symmetric (i.e. $B(v,w) = B(w,v)$) or alternating ($B(v,v) = 0$). Similarly, a reflexive $\sigma$-sesquilinear form is either alternating or a scalar multiple of a $\sigma$-Hermitian form. So we do not lose too much generality by restricting ourselves to symmetric, alternating, or Hermitian forms (each of which is automatically reflexive). Henceforth, we assume that we are in one of the following three cases:
\begin{enumerate}
\item $V$ is a vector space over $\mathbb{F}_{q^2}$ and $B$ is a $\sigma$-Hermitian form where $\sigma$ is the nontrivial element of $\mathrm{Gal}(\mathbb{F}_{q^2}/\mathbb{F}_q)$.
\item $V$ is a vector space over $\mathbb{F}_q$ and $B$ is an alternating bilinear form,
\item $V$ is a vector space over $\mathbb{F}_q$ which has characteristic different from 2, and $B$ is a symmetric bilinear form,
\end{enumerate}
\noindent
In the latter two cases we will sometimes write $\sigma$ for the identity map in order to treat all three cases simultaneously. So $\sigma$ will be an automorphism of the ground field obeying $\sigma^2 = \Id$.
\begin{remark}
The ``alternating $\sigma$-Hermitian'' constraint $\sigma(B(w,v)) = -B(v,w)$ is compatible with our setup, but not meaningfully different from the $\sigma$-Hermitian case. This is immediate in characteristic 2, so assume that we are in odd characteristic. Note that there is a solution to $x^{q-1} = -1$ in $\mathbb{F}_{q^2}$ because $\mathbb{F}_{q^2}^\times$ is cyclic of order $q^2-1$, so we may take $x$ to be any element of order $2(q-1)$. Then since $\sigma(x) = x^q = -x$, we have $\sigma(B(w,v)) = -B(v,w)$ if and only if $\sigma(xB(w,v)) = xB(v,w)$. So we may interconvert between ``alternating $\sigma$-Hermitian'' and $\sigma$-Hermitian forms by multiplying the forms by the scalar $x$.
\end{remark}
\noindent
Suppose that the vector space $V$ has two sesquilinear forms $B_1$ and $B_2$. We say that these forms are equivalent if there is $g \in GL(V)$ such that $B_1(gv, gw) = B_2(v,w)$.
\begin{definition}
Let $B: V \times V \to k$ be a non-degenerate sesquilinear form. The symmetry group of $B$ is
\[
G_B(V) = \{g \in GL(V) \mid B(gv,gw) = B(v,w), \hspace{2mm} v,w \in V\}.
\]
\end{definition}
\noindent
Concretely, we may choose a basis of $V$ and let $M_B$ be the matrix associated to the bilinear form $B$, so that $B(v,w) = v^T M_B \sigma(w)$. Then the forms $B_1$ and $B_2$ are equivalent when there is a matrix $g$ such that $g^T M_{B_1} \sigma(g) = M_{B_2}$. Similarly, the symmetry group $G_B(V)$ consists of matrices $g$ such that $g^T M_B \sigma(g) = M_B$. The orthogonal, symplectic, and unitary groups are the symmetry groups of non-degenerate symmetric, alternating, and Hermitian forms, respectively. However, in general, two forms $B_1$ and $B_2$ on $V$ of the same kind might not be equivalent. In particular, it might happen that $G_{B_1}(V)$ and $G_{B_2}(V)$ are not isomorphic, so the term orthogonal/symplectic/unitary group could be ambiguous if the form $B$ is not specified. This leads us to the next topic: a classification of such forms over finite fields.
\begin{definition}
Suppose that $B$ is a sesquilinear form on $V$. A \emph{hyperbolic plane} for $V$ is a two dimensional subspace of $V$ spanned by vectors $u_1,u_2$ such that $B(u_1,u_1) = B(u_2,u_2)= 0$ and $B(u_1,u_2) = 1$.
\end{definition}
\noindent
Of course, the value of $B(u_2,u_1)$ is determined by whether the form is symmetric, alternating, or Hermitian. The utility of hyperbolic planes rests on the following result.
\begin{lemma} \label{hyperbolic_plane_splitting_lemma}
Let $B$ be a non-degenerate sesquilinear form on $V$. Suppose that $u_1,u_2$ span a hyperbolic plane $U$ in $V$. Then $V = U \oplus U^\perp$.
\end{lemma}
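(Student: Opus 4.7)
The plan is to exhibit an explicit decomposition $v = u + w$ with $u \in U$ and $w \in U^\perp$ for each $v \in V$, which will simultaneously establish $V = U + U^\perp$ and $U \cap U^\perp = 0$, and so give the direct sum decomposition.

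First I would fix $v \in V$ and try to solve for $a, b \in k$ such that $w := v - a u_1 - b u_2$ satisfies $B(w, u_1) = B(w, u_2) = 0$. Expanding using linearity of $B$ in the first argument together with the defining relations $B(u_1, u_1) = B(u_2, u_2) = 0$ and $B(u_1, u_2) = 1$, these two equations decouple: one recovers $a = B(v, u_2)$ directly, and the other recovers $b = B(v, u_1) / B(u_2, u_1)$. The scalar $B(u_2, u_1)$ lies in $\{1, -1\}$ because $B$ is symmetric, alternating, or $\sigma$-Hermitian and $B(u_1, u_2) = 1$; in particular it is nonzero, so $b$ is well-defined.

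The existence of $(a,b)$ for every $v$ yields $V = U + U^\perp$, and the uniqueness of $(a,b)$ (applying the same computation to $v = 0$) forces $U \cap U^\perp = 0$, completing the direct sum decomposition. The main obstacle is largely bookkeeping: one has to be careful about which side of the sesquilinear form defines $U^\perp$, but as the text notes, reflexivity of symmetric, alternating, and Hermitian forms makes the left and right orthogonal complements coincide, so the choice of convention does not affect the conclusion. No appeal to dimension counting or to non-degeneracy of $B$ on all of $V$ is needed beyond the hyperbolic plane relations themselves.
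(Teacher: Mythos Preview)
Your argument is correct. The paper does not actually supply a proof of this lemma; it is stated without proof as a standard fact about hyperbolic planes, so there is nothing to compare against. Your explicit computation of $a = B(v,u_2)$ and $b = B(v,u_1)/B(u_2,u_1)$ is the standard direct verification, and your observation that non-degeneracy of $B$ on all of $V$ is not actually invoked is accurate: the restriction of $B$ to $U$ is already non-degenerate by the hyperbolic plane relations, and that is all the computation uses.
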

\noindent
Before we elaborate on how hyperbolic planes allow us to classify sesquilinear forms, we record some technical results.

\begin{lemma} \label{trace_value_lemma}
Suppose that $B$ is a $\sigma$-Hermitian form on $V$, where $\sigma$ is a field automorphism of $k$ of order $2$. Then for any $v \in V$ there exists $x \in k$ such that $B(v,v) = x + \sigma(x)$.
\end{lemma}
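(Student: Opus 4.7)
The plan is to reduce the statement to surjectivity of the trace map $\operatorname{Tr}: k \to k^{\sigma}$, $x \mapsto x + \sigma(x)$, where $k^{\sigma}$ denotes the fixed field of $\sigma$. First I would verify that $B(v,v)$ actually lies in $k^{\sigma}$: applying the $\sigma$-Hermitian identity $B(v,w) = \sigma(B(w,v))$ in the special case $w = v$ yields $B(v,v) = \sigma(B(v,v))$, so $B(v,v)$ is $\sigma$-fixed. Thus it suffices to show that every element of $k^{\sigma}$ is in the image of $\operatorname{Tr}$.

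Next I would establish this surjectivity. The map $\operatorname{Tr}$ is additive and $k^{\sigma}$-linear. In the bilinear and symmetric bilinear cases $\sigma = \operatorname{Id}$, so $k^{\sigma} = k$ and $\operatorname{Tr}(x) = 2x$, which is surjective (the characteristic $\neq 2$ hypothesis handling the symmetric case). In the Hermitian case $k = \mathbb{F}_{q^2}$ and $\sigma(x) = x^q$, so $\operatorname{Tr}(x) = x + x^q$. This is a polynomial map of degree $q$ in $x$, so its kernel (the $\mathbb{F}_{q^2}$-solutions of $x^q + x = 0$) has cardinality at most $q$. Therefore the image has cardinality at least $q^2/q = q = |k^{\sigma}|$, and since the image is contained in $k^{\sigma}$, it must equal $k^{\sigma}$.

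Combining these two observations then gives the lemma: pick any $y \in k$ with $\operatorname{Tr}(y) = B(v,v)$ and set $x = y$. The main conceptual content is the passage from ``$B(v,v)$ is $\sigma$-fixed'' to ``$B(v,v)$ is a trace,'' and the only step that really needs argument is the surjectivity of $\operatorname{Tr}$, which is essentially a counting argument using that a polynomial of degree $q$ has at most $q$ roots. I would not expect any genuine obstacle here; the result is a standard fact about the norm/trace form of the quadratic extension $\mathbb{F}_{q^2}/\mathbb{F}_q$, and the proof is a few lines.
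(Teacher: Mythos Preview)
Your proof is correct, and both arguments share the same first step: the Hermitian identity gives $B(v,v)\in k^{\sigma}$, reducing the lemma to surjectivity of the trace $x\mapsto x+\sigma(x)$ onto the fixed field. The approaches to surjectivity genuinely differ. The paper argues abstractly: the image $T$ is a $k^{\sigma}$-subspace of the one-dimensional $k^{\sigma}$-vector space $k^{\sigma}$, so either $T=k^{\sigma}$ or $T=0$; the latter would force $\sigma(x)=-x$ for all $x$, which is an automorphism only in characteristic $2$ where it is the identity, contradicting that $\sigma$ has order $2$. You instead give a counting argument specific to $k=\mathbb{F}_{q^2}$: the kernel of $x\mapsto x+x^{q}$ has at most $q$ elements (roots of a degree-$q$ polynomial), so the image has size at least $q^{2}/q=q=|k^{\sigma}|$. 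Your route is concrete and adequate for the finite-field setting the paper uses; the paper's argument has the advantage of working over any field. One minor point: the lemma's hypothesis is that $\sigma$ has order exactly $2$, so your discussion of the $\sigma=\mathrm{Id}$ cases is extraneous.
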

\begin{proof}
Let $k_0$ be the subfield of $k$ fixed pointwise by $\sigma$, and let $T$ denote the set of elements in $k$ of the form $x + \sigma(x)$. Because $\sigma$ is an involution, $T$ is contained in $k_0$. Now, notice that $T$ is closed under addition as well as by multiplication by elements of $k_0$. If we view $k_0$ as a one-dimensional vector space over itself, this means that $T$ is a subspace of $k_0$. Therefore, either $T=k_0$ or $T = 0$. If $T = 0$, then $x + \sigma(x) = 0$ for all $x$. But $\sigma(x) = -x$ only defines an automorphism if the characteristic of $k$ is 2, in which case $\sigma$ is the identity map, contradicting the assumption on $\sigma$. Therefore, $T = k_0$. By the $\sigma$-Hermitian property, $\sigma(B(v,v)) = B(v,v)$, so that $B(v,v) \in k_0 = T$.
\end{proof}
\begin{proposition} \label{hyperbolic_plane_splitting_prop}
Let $B$ be a nondegenerate sesquilinear form on a vector space $V$. Suppose that there is $u_1 \in V$ such that $B(u_1, u_1) = 0$. Then there exists $u_2 \in V$ such that $u_1, u_2$ span a hyperbolic plane.
\end{proposition}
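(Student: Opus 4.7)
The plan is to construct $u_2$ in the form $u_2 = w + \alpha u_1$, where $w$ will be chosen so that $B(u_1, w) = 1$ and the scalar $\alpha \in k$ will then be adjusted to force $B(u_2, u_2) = 0$. (Implicitly we use $u_1 \neq 0$; this is forced since a hyperbolic plane is two-dimensional and contains $u_1$.)

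First, I would use non-degeneracy of $B$ to produce some $w' \in V$ with $B(u_1, w') \neq 0$, and rescale to obtain $w$ with $B(u_1, w) = 1$. For any $\alpha$, the hypothesis $B(u_1, u_1) = 0$ gives $B(u_1, u_2) = B(u_1, w) + \sigma(\alpha) B(u_1, u_1) = 1$ automatically, so only the condition $B(u_2, u_2) = 0$ remains. Expanding by sesquilinearity, this reads
\[
B(u_2, u_2) = B(w, w) + \alpha \, B(u_1, w) + \sigma(\alpha) \, B(w, u_1) = B(w,w) + \alpha + \sigma(\alpha) B(w, u_1).
\]

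The remaining task is to solve this equation in each of the three classes of form. In the alternating case, $B(w, w) = 0$ and $B(w, u_1) = -B(u_1, w) = -1$, so (with $\sigma = \Id$) the right-hand side vanishes identically and one may simply take $u_2 = w$. In the symmetric case ($\sigma = \Id$, characteristic $\neq 2$) the equation reduces to $B(w, w) + 2\alpha = 0$, solved by $\alpha = -B(w, w)/2$. In the $\sigma$-Hermitian case, $B(w, u_1) = \sigma(B(u_1, w)) = \sigma(1) = 1$, so the equation becomes $\alpha + \sigma(\alpha) = -B(w, w)$; this is solvable because Lemma \ref{trace_value_lemma}, applied to $w$, guarantees that $B(w, w)$ lies in the image of the trace map $x \mapsto x + \sigma(x)$, and one takes $\alpha = -x$ for such an $x$.

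The only real subtlety is the Hermitian case, where the solvability of $\alpha + \sigma(\alpha) = -B(w,w)$ over a general pair $(k, \sigma)$ is not automatic; this is precisely why Lemma \ref{trace_value_lemma} was proved first. Once that lemma is in hand, the remaining steps are a direct computation using the defining identities of each type of form.
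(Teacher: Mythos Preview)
Your proof is correct and follows essentially the same approach as the paper: both arguments first use non-degeneracy to find a vector $w$ (the paper's $v$) with $B(u_1,w)=1$, then correct by a scalar multiple of $u_1$ to kill the self-pairing, handling the three form types separately and invoking Lemma~\ref{trace_value_lemma} in the Hermitian case. The only cosmetic difference is that the paper writes $u_2 = v - xu_1$ directly in each case, whereas you set up the equation $B(u_2,u_2)=0$ in the unknown $\alpha$ and then solve it.
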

\begin{proof}
Since $B$ is non-degenerate, the must exist some $v \in V$ with $B(u_1,v) \neq 0$, which we rescale to assume $B(u_1, v) = 1$. Then
\begin{itemize}
\item if $B$ is Hermitian, let $x \in k$ be such that $B(v,v) = x + \sigma(x)$ (such $x$ is guaranteed to exist by Lemma \ref{trace_value_lemma}). Then $B(v-xu_1, v-xu_1) = 0$, so we may take $u_2 = v - xu_1$.
\item if $B$ is alternating, $B(v,v) = 0$, so we may take $u_2 = v$.
\item if $B$ is symmetric, let $x = \frac{1}{2} B(v,v)$. Then $B(v-xu_1, v-xu_1) = 0$ and we may take $u_2 = v-xu_1$.
\end{itemize}
\end{proof}

\begin{lemma} \label{lagrangian_subspace_lemma}
Suppose that $V$ is a $2n$-dimensional vector space with a non-degenerate sesquilinear form $B$, and $W$ is an $n$-dimensional subspace of $V$ such that $B$ restricted to $W$ is zero. Then there is an $n$-dimensional subspace $W^\prime$ of $V$ such that $V = W^\prime \oplus W$ and $B$ restricted to $W^\prime$ is zero.
\end{lemma}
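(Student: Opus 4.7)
The plan is to induct on $n$, using a hyperbolic plane to reduce dimension. The base case $n = 0$ is trivial (take $W' = 0$). For the inductive step, the idea is to split off one hyperbolic plane containing a chosen nonzero vector of $W$, apply the inductive hypothesis on the orthogonal complement, and then reassemble the Lagrangian complement.

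For the step, pick a nonzero $u_1 \in W$. Since $W$ is totally isotropic, $B(u_1, u_1) = 0$, so Proposition \ref{hyperbolic_plane_splitting_prop} supplies a vector $u_2$ such that $U := \langle u_1, u_2 \rangle$ is a hyperbolic plane. By Lemma \ref{hyperbolic_plane_splitting_lemma}, we get $V = U \oplus U^\perp$. The restriction $B|_{U^\perp}$ is non-degenerate (a standard consequence of this decomposition together with non-degeneracy of $B|_U$), and $U^\perp$ has dimension $2n-2$.

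Next I set $W_1 := W \cap U^\perp$. Since every vector of $W$ is automatically orthogonal to $u_1 \in W$, membership of $w \in W$ in $U^\perp$ is equivalent to $B(w, u_2) = 0$; as $B(u_1, u_2) = 1 \neq 0$, the linear functional $w \mapsto B(w, u_2)$ on $W$ is surjective, so $W_1$ has dimension $n-1$. Clearly $B$ vanishes on $W_1 \subseteq W$, so $W_1$ is an $(n-1)$-dimensional totally isotropic subspace of the $2(n-1)$-dimensional space $U^\perp$. Applying the inductive hypothesis inside $(U^\perp, B|_{U^\perp})$ produces an $(n-1)$-dimensional totally isotropic $W_1' \subseteq U^\perp$ with $U^\perp = W_1 \oplus W_1'$.

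Finally I set $W' := W_1' \oplus \langle u_2 \rangle$. It has dimension $n$, and it is totally isotropic because $B(u_2, u_2) = 0$, $B$ vanishes on $W_1'$, and $W_1' \subseteq U^\perp$ is orthogonal to $u_2 \in U$. To see $V = W \oplus W'$, note $W = W_1 \oplus \langle u_1 \rangle$ by the dimension count above, and so
\[
W + W' = \langle u_1 \rangle + \langle u_2 \rangle + W_1 + W_1' = U + U^\perp = V,
\]
while a dimension count $\dim W + \dim W' = 2n = \dim V$ forces the sum to be direct. The only nonroutine step is the dimension computation for $W_1$, which hinges on the observation that isotropy of $W$ makes orthogonality to $U$ equivalent to orthogonality to $u_2$ alone; the rest is bookkeeping.
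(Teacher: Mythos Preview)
Your proof is correct and takes essentially the same approach as the paper: both split off a hyperbolic plane through a chosen vector of $W$, pass to the orthogonal complement, and induct. Your computation of $\dim W_1$ via the functional $w \mapsto B(w,u_2)$ is in fact a bit cleaner than the paper's version of that step.
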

\begin{proof}
Given any nonzero vector $w \in W$, we have $B(w, w) = 0$, so Proposition \ref{hyperbolic_plane_splitting_prop} shows there exists $w' \in V$ such that $w$ and $w'$ span a hyperbolic plane $U$. In particular, this means that $U^\perp$ is $2(n-1)$ dimensional and $w, w' \not\in U^{\perp}$ by Lemma \ref{hyperbolic_plane_splitting_lemma}, and $w' \not \in W$ because $B(w, w') = 1$. Therefore, $W \cap U^{\perp}$ is $n-1$ dimensional. The restriction of $B$ to $U^{\perp}$ is nondegenerate, and $B$ vanishes on $W \cap U^\perp$. Proceeding inductively on $U^{\perp}$ instead of $V$ and $W \cap U^{\perp}$ instead of $W$, we deduce that $V$ is a direct sum of $n$ hyperbolic planes spanned by pairs $\{w, w^\prime\}$, where the $w$ form a basis of $W$. We take $W^\prime$ to be spanned by the $w^\prime$.
\end{proof}
\noindent
The following proposition tells us when two subspaces are equivalent under the action of the symmetry group $G_B(V)$. It turns out that it is necessary and sufficient that $B$ should restrict to an equivalent sesquilinear form on the two subspaces. This result is sometimes known as Witt's Lemma.

\begin{proposition}[Theorem 1.2.1, \cite{Wall}] \label{witt_lemma_proposition}
Suppose that $V$ is equipped with a non-degenerate sesquilinear form $B$ and $W_1, W_2$ are subspaces of $V$ such that there is a bijective linear map $g: W_1 \to W_2$ preserving $B$, i.e. $B(gw, gw^\prime) = B(w,w^\prime)$ for $w,w^\prime \in W_1$. Then there exists $\tilde{g} \in G_B(V)$ such that the restriction of $\tilde{g}$ to $W_1$ is g.
\end{proposition}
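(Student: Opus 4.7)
The plan is to proceed by induction on $n = \dim W_1$, with the base case $n = 0$ immediate by taking $\tilde{g} = \mathrm{Id}_V$. The inductive step hinges on reducing to a single-vector extension: for a nonzero $w \in W_1$ with $w' = g(w)$, construct some $\phi \in G_B(V)$ with $\phi(w) = w'$. Once $\phi$ is found, $\phi^{-1} \circ g$ is an isometry $W_1 \to \phi^{-1}(W_2)$ that fixes $w$. Provided $w$ is non-isotropic, $\langle w \rangle^\perp$ is a nondegenerate subspace of $V$ of codimension $1$ that contains the image of $W_1 \cap \langle w \rangle^\perp$, and the inductive hypothesis applied inside $\langle w \rangle^\perp$ produces an isometry there, which combined with the identity on $\langle w \rangle$ gives the desired $\tilde{g}$.

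For the single-vector step, the strategy is to use a reflection-type isometry built from $w - w'$ or $w + w'$. Since $g$ preserves $B$, we have $B(w,w) = B(w',w')$. If $u = w - w'$ is non-isotropic, the map $\phi(v) = v - c \cdot B(v, u)\, u$, with $c$ calibrated from $B(u,u)$ (and $\sigma$-adjusted in the Hermitian case), is an isometry sending $w$ to $w'$; this is a direct verification. If $u$ is isotropic, the identity $B(w-w', w-w') + B(w+w', w+w') = 2(B(w,w) + B(w', w'))$ (with suitable sesquilinear corrections) ensures that $w + w'$ is non-isotropic as long as $B(w,w) \neq 0$, so the reflection can instead be built from $w + w'$ followed by a sign adjustment. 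In the alternating case every vector is isotropic, so one replaces reflections by transvections, but the overall scheme is unchanged.

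The remaining difficulty is when $w$ itself is isotropic in $V$, so that $\langle w \rangle^\perp$ is degenerate and the inductive step as stated does not apply. Here I would invoke Proposition \ref{hyperbolic_plane_splitting_prop} to complete $w$ and $w'$ to hyperbolic planes $H_1, H_2 \subseteq V$, define the single-vector extension as the obvious basis-matching isometry $H_1 \to H_2$ sending $w \mapsto w'$, and then appeal to Lemma \ref{hyperbolic_plane_splitting_lemma} for the decomposition $V = H_i \oplus H_i^\perp$ with nondegenerate restrictions. The induction can then be carried out on $H_1^\perp \to H_2^\perp$ rather than on $\langle w \rangle^\perp$.

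The main obstacle I anticipate is ensuring that after peeling off a hyperbolic plane or a non-isotropic line, the leftover orthogonal complements on the two sides are actually isometric, so that the inductive hypothesis genuinely produces an extension between them. This amounts to a Witt-type cancellation statement, which is usually folded into the same induction by suitably strengthening the inductive claim (``any isometry between subspaces extends, and equivalent forms have equivalent orthogonal complements'') so that both conclusions are proved together. The case analysis also has to be presented uniformly across the symmetric, alternating, and Hermitian cases enumerated in Section \ref{finite_classical_groups_section}, which is the other place where care is needed, particularly in handling characteristic $2$ exclusions and the $\sigma$-twisting in the Hermitian case.
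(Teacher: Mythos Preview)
The paper does not give its own proof of this proposition: it is stated with a citation to Wall (Theorem 1.2.1) and used as a black box, so there is nothing to compare your argument against in the paper itself.

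Your outline is the standard reflection/transvection-plus-induction proof of Witt's lemma, and the broad strokes are correct. The one place where your sketch is genuinely incomplete is the isotropic step. Completing $w$ and $w'$ to hyperbolic planes $H_1, H_2$ and writing down an isometry $H_1 \to H_2$ does not by itself reduce the dimension of the problem: after composing with $\phi^{-1}$ you still have an isometry between $n$-dimensional subspaces, merely one that fixes $w$, and since $w \in \langle w\rangle^{\perp}$ you cannot split off $\langle w\rangle$ as you did in the anisotropic case. The usual fix is either to choose $w$ non-isotropic whenever $W_1$ is not totally isotropic (so the isotropic case only arises when $W_1$ is totally isotropic, and then one extends $g$ to $W_1 \oplus W_1'$ for a suitable dual isotropic space $W_1'$ before inducting), or to run the induction on $\dim V - \dim W_1$ rather than $\dim W_1$. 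You correctly flagged that Witt cancellation has to be proved in tandem; in most treatments the two statements share a single induction.
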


\noindent
Now we resume our discussion of the classification of sesquilinear forms.
\begin{definition}
If every nonzero vector $v \in V$ obeys $B(v,v) \neq 0$, we say that $(V, B)$ is \emph{anisotropic}.
\end{definition}
\noindent
Anisotropic spaces are automatically non-degenerate. Note that a zero-dimensional space is considered to be anisotropic.
\begin{proposition} \label{sesquilinear_structure_prop}
If $V$ is a space with a sesquilinear form $B$, we may write $V = \ker(B) \oplus U^{\oplus r} \oplus W$, where $U$ is a hyperbolic plane, $W$ is anisotropic, and each summand is orthogonal to all the others under $B$.
\end{proposition}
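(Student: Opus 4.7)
The plan is to peel off $\ker(B)$ first, then inductively split off hyperbolic planes from the non-degenerate complement until what remains is anisotropic.

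First I would choose any vector space complement $V'$ to $\ker(B)$ in $V$, so $V = \ker(B) \oplus V'$. Since $\ker(B)$ pairs trivially with everything, the two summands are automatically $B$-orthogonal. Moreover, $B$ restricted to $V'$ is non-degenerate: if $v \in V'$ satisfies $B(v, w) = 0$ for all $w \in V'$, then because $B(v, w) = 0$ for $w \in \ker(B)$ is automatic, we get $v \in \ker(B) \cap V' = 0$. So the task reduces to proving: a non-degenerate space $V'$ decomposes orthogonally as $U^{\oplus r} \oplus W$ with $W$ anisotropic.

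The main step is induction on $\dim V'$. If $V'$ is anisotropic we are done (take $r = 0$ and $W = V'$). Otherwise there exists a nonzero $u_1 \in V'$ with $B(u_1, u_1) = 0$, so by Proposition \ref{hyperbolic_plane_splitting_prop} we can find $u_2 \in V'$ such that $U_1 = \mathrm{span}(u_1, u_2)$ is a hyperbolic plane. Lemma \ref{hyperbolic_plane_splitting_lemma} then gives $V' = U_1 \oplus U_1^\perp$ as an orthogonal direct sum. The form restricted to $U_1^\perp$ is still non-degenerate (otherwise a radical vector in $U_1^\perp$ would be in the radical of $V'$), so by the inductive hypothesis $U_1^\perp = U_2 \oplus \cdots \oplus U_r \oplus W$ with the $U_i$ hyperbolic planes and $W$ anisotropic, all orthogonal. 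Putting this together with $\ker(B) \oplus V'$ yields the claimed decomposition.

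The only subtlety is verifying that all the hyperbolic planes obtained this way are isomorphic as sesquilinear spaces, which justifies writing $U^{\oplus r}$ rather than a more general orthogonal sum of planes. This is routine: a hyperbolic plane is by definition spanned by a pair $u_1, u_2$ with $B(u_i, u_i) = 0$ and $B(u_1, u_2) = 1$, and this data determines $B$ on the plane completely (the value $B(u_2, u_1)$ is forced by the symmetry/alternating/Hermitian type of $B$). I expect the main obstacle to be essentially bookkeeping rather than genuine difficulty: the three cases (Hermitian, alternating, symmetric) have already been unified by the preceding lemmas, so no separate case analysis beyond what Proposition \ref{hyperbolic_plane_splitting_prop} already handles is needed.
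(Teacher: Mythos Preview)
Your proof is correct and follows essentially the same route as the paper: split off $\ker(B)$, then iteratively apply Proposition~\ref{hyperbolic_plane_splitting_prop} and Lemma~\ref{hyperbolic_plane_splitting_lemma} to peel off hyperbolic planes until an anisotropic remainder is reached. Your version is in fact slightly more careful, explicitly checking non-degeneracy of the complement and isomorphism of the hyperbolic planes.
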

\begin{proof}
We pick an arbitrary splitting $V = \ker(B) \oplus M$, which is automatically orthogonal. The restriction of $B$ to $M$ is non-degenerate. If $M$ is not anisotropic, there is a vector $u \in M$ such that $B(u,u) = 0$, and hence there is a hyperbolic plane $U$ contained inside $M$. Then Lemma \ref{hyperbolic_plane_splitting_lemma} shows we may split off this hyperbolic plane to get $M = U \oplus U^\perp$. Repeating this with $U^\perp$ in place of $M$, the dimension decreases until we eventually obtain an anisotropic space (possibly zero).
\end{proof}
\noindent
The decomposition $V = \ker(B) \oplus U^{\oplus r} \oplus W$ in Proposition \ref{sesquilinear_structure_prop} is not unique. However, the multiplicity $r$ and the equivalence class of $W$ (equipped with the restriction of $B$) are determined; $r$ is called the \emph{polar rank} of $V$ and $(W, B)$ is called the \emph{germ} or \emph{core} of $V$. The assertion that these are indeed invariants of $(V, B)$ is sometimes called Witt's Theorem. As a result, the classification of spaces $(V, B)$ is reduced to the classification of anisotropic forms. We merely state the result.
\begin{theorem} \label{anisotropic_space_classification_theorem}
Up to equivalence, the nonzero anisotropic spaces $(V,B)$ are as follows:
\begin{itemize}
\item if $B$ is Hermitian, the only nonzero anisotropic space is 1-dimensional (spanned by $v$, say, with form $B(v, v) = 1$).
\item if $B$ is alternating, there are no nonzero anisotropic spaces,
\item if $B$ is symmetric, there are two 1-dimensional anisotropic spaces, and one 2-dimensional space. The 1-dimensional spaces may be presented by the forms $B(x, y) = m x y$, where $m \in \mathbb{F}_q$ either is, or is not, a perfect square. A form representing the two dimensional anisotropic space is 
\[
B((x_1,x_2), (y_1, y_2)) = x_1y_1 - m x_2y_2,
\]
where $m$ is a non-square in $\mathbb{F}_q$. \end{itemize}
\end{theorem}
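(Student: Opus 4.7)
The plan is to handle the three kinds of forms in turn, in each case first establishing an upper bound on the dimension of an anisotropic space and then classifying the anisotropic spaces of each admissible dimension. The key external inputs are the surjectivity of the norm $N \colon \mathbb{F}_{q^2}^\times \to \mathbb{F}_q^\times$ (in the Hermitian case) and the fact that $[\mathbb{F}_q^\times : (\mathbb{F}_q^\times)^2] = 2$ in odd characteristic (in the symmetric case). Once an isotropic vector is produced in any subspace, Lemma \ref{hyperbolic_plane_splitting_lemma} and Proposition \ref{hyperbolic_plane_splitting_prop} split off a hyperbolic plane and contradict anisotropy, so the upper-bound parts of each case reduce to finding isotropic vectors by hand.

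The alternating case is immediate, since $B(v,v) = 0$ identically. For the Hermitian case, Lemma \ref{trace_value_lemma} forces $B(v,v) \in \mathbb{F}_q$, so on a $1$-dimensional space spanned by $v$ the value $a = B(v,v)$ lies in $\mathbb{F}_q^\times$ by non-degeneracy, and rescaling $v \mapsto \mu v$ multiplies $a$ by $N(\mu)$; surjectivity of $N$ then normalises $a = 1$, giving both existence and uniqueness in dimension $1$. To rule out dimension $\geq 2$ it suffices to find an isotropic vector in any $2$-dimensional subspace. Taking a basis $e_1, e_2$ and setting $a = B(e_1,e_1)$, $b = B(e_1,e_2)$, $c = B(e_2,e_2)$, if either $a = 0$ or $c = 0$ one of the basis vectors is already isotropic; otherwise I would solve $B(e_1 + \lambda e_2, e_1 + \lambda e_2) = c N(\lambda) + \mathrm{Tr}(\lambda \sigma(b)) + a = 0$ in $\mathbb{F}_{q^2}$. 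Substituting $\mu = c\lambda$ (using $c \in \mathbb{F}_q$) and completing the square via the identity $N(\mu + b) = N(\mu) + \mathrm{Tr}(\mu \sigma(b)) + N(b)$ converts this into $N(\mu + b) = N(b) - ac$, which always has a solution by surjectivity of the norm (with $\mu = -b$ handling the case where the right-hand side vanishes).

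For the symmetric case in odd characteristic, I would diagonalise the form as $Q = a_1 x_1^2 + \cdots + a_n x_n^2$ by Gram--Schmidt. In dimension $1$ the complete invariant is the class of $a_1$ in $\mathbb{F}_q^\times/(\mathbb{F}_q^\times)^2$, which yields exactly the two listed classes. In dimension $2$, anisotropy is equivalent to $-a_1/a_2$ being a non-square, and a short case analysis on the square classes of $a_1$ and $a_2$ (rescaling each basis vector so that $a_i \in \{1, m\}$ for a fixed non-square $m$) brings the form into the canonical shape $x_1^2 - m x_2^2$. To forbid dimensions $\geq 3$, it suffices to show that any three-variable diagonal form $a_1 x_1^2 + a_2 x_2^2 + a_3 x_3^2$ has a nontrivial zero; this follows because the sets $\{a_1 x_1^2 + a_2 x_2^2\}$ and $\{-a_3 x_3^2\}$ each contain more than $q/2$ elements of $\mathbb{F}_q$ and hence must intersect, producing an isotropic vector and triggering the hyperbolic plane splitting.

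The main obstacle will be the case analysis in the $2$-dimensional symmetric classification, where the four possible combinations of square classes for $a_1, a_2$ must all be shown to collapse onto the single canonical form $x_1^2 - m x_2^2$; a secondary but easier point is checking that the two $1$-dimensional symmetric classes listed are genuinely inequivalent, which reduces directly to the nontriviality of $\mathbb{F}_q^\times/(\mathbb{F}_q^\times)^2$.
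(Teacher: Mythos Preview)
The paper does not prove this theorem at all: immediately before the statement it says ``We merely state the result,'' treating the classification of anisotropic spaces over finite fields as standard background. So there is no argument in the paper to compare yours against.

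Your proof is essentially correct and follows the classical route. Two small points are worth tightening. First, in the three-variable step, intersecting $\{a_1 x_1^2 + a_2 x_2^2\}$ with $\{-a_3 x_3^2\}$ could in principle meet only at $0$, coming from the trivial vector on both sides; the standard fix is to set $x_3 = 1$ and instead intersect $\{a_1 x_1^2\}$ with $\{-a_3 - a_2 x_2^2\}$, each of size $(q+1)/2$, so that any common value yields a genuine nontrivial zero $(x_1,x_2,1)$. Second, the ``short case analysis'' in the two-dimensional symmetric case is correct but not entirely immediate: e.g.\ reducing $m x_1^2 + m x_2^2$ (with $-1$ a non-square) to $x_1^2 - m x_2^2$ needs either the discriminant invariant or the observation (via the three-variable result) that the form represents $1$, after which one re-diagonalises. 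Both are routine, but it would be worth writing one of them out.
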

\noindent
A consequence of this theorem is the following proposition.
\begin{proposition}
There is a unique Hermitian form (up to equivalence) on $V$ regardless of $\dim(V)$. A non-degenerate alternating form on $V$ can only exist when $\dim(V)$ is even and any two such forms on $V$ are equivalent. There are two equivalence classes of symmetric forms on $V$ in any dimension.
\end{proposition}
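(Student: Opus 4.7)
The plan is to combine Proposition \ref{sesquilinear_structure_prop} with Theorem \ref{anisotropic_space_classification_theorem} and the uniqueness of the invariants mentioned immediately after Proposition \ref{sesquilinear_structure_prop} (Witt's Theorem). Since $B$ is non-degenerate, $\ker(B) = 0$, so for any non-degenerate form on $V$ we have a decomposition $V = U^{\oplus r} \oplus W$ with $U$ a hyperbolic plane and $W$ anisotropic, and up to equivalence this data is determined by $\dim V$ and the equivalence class of the anisotropic core $W$. Therefore, classifying non-degenerate forms on $V$ reduces to enumerating the possible anisotropic cores of the appropriate dimension.

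Next I would case-split using Theorem \ref{anisotropic_space_classification_theorem}. In the Hermitian case, the only nonzero anisotropic form is $1$-dimensional, so the core $W$ is forced to be $0$-dimensional if $\dim V$ is even and $1$-dimensional if $\dim V$ is odd; either way $W$ is unique up to equivalence, giving a single equivalence class of Hermitian forms in every dimension. In the alternating case, there are no nonzero anisotropic spaces, so $W = 0$ and $\dim V = 2r$; this shows non-degenerate alternating forms exist only in even dimensions and are all equivalent.

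In the symmetric case, the possible anisotropic cores have dimensions $0$, $1$, or $2$. If $\dim V$ is even, $W$ must be $0$-dimensional or $2$-dimensional (the unique $2$-dimensional anisotropic core given in the theorem), producing exactly two equivalence classes. If $\dim V$ is odd, $W$ is $1$-dimensional and there are two choices depending on whether the discriminant $m$ is a square in $\mathbb{F}_q^\times$, again giving exactly two equivalence classes. In every dimension we obtain exactly two classes.

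The only nontrivial step is invoking Witt's Theorem to ensure that the decomposition $V = U^{\oplus r} \oplus W$ really does give well-defined invariants $r$ and $[W]$; everything else is a direct bookkeeping from Theorem \ref{anisotropic_space_classification_theorem}. Since Witt's Theorem was already asserted in the discussion following Proposition \ref{sesquilinear_structure_prop}, the argument is essentially immediate, and I do not anticipate any genuine obstacle beyond correctly handling the parity of $\dim V$ in the symmetric case.
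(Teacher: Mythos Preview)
Your proposal is correct and follows essentially the same approach as the paper: both arguments use the Witt decomposition $V = U^{\oplus r} \oplus W$ from Proposition \ref{sesquilinear_structure_prop}, observe that $\dim W$ has the same parity as $\dim V$, and then read off the number of equivalence classes from the list of anisotropic spaces in Theorem \ref{anisotropic_space_classification_theorem}. Your write-up is slightly more explicit about the parity case-split in each of the three types, but the underlying logic is identical to the paper's.
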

\begin{proof}
A non-degenerate space $(V, B)$ is obtained from its germ by adding some number of hyperbolic planes, which means that the dimension of the germ has the same parity as $\dim(V)$. In the Hermitian case, we see that there is only one anisotropic space whose dimension has a given parity, so the decomposition of a $V$ into hyperbolic planes and the germ is determined by $\dim(V)$. We also see that any alternating form is a direct sum of hyperbolic planes. On the other hand, in the symmetric case there are two anisotropic symmetric forms whose dimension has a given parity. So adding an appropriate number of hyperbolic planes gives two equivalence classes of forms.
\end{proof}
\noindent
The salient consequence of this classification is that the symmetry groups of non-degenerate Hermitian or alternating forms of a given dimension are all isomorphic, so we may unambiguously refer to their symmetry groups without specifying the sesquilinear form they preserve. Furthermore, we may pick a particular form without loss of generality. The case of a symmetric bilinear form is more subtle.

\begin{definition}
If $k$ is any field, the \emph{Witt ring} of $k$, denoted $W(k)$ consists of equivalence classes of anisotropic spaces over $k$. The the sum of $(V_1, B_1)$ and $(V_2, B_2)$ is the germ of $(V_1 \oplus V_2, B_1 \oplus B_2)$. (There is also a multiplication given by the tensor product, but we will not need it.)
\end{definition}
\noindent
Let $q$ be an odd prime power, and consider the Witt ring $W(\mathbb{F}_q)$. Let $\mathbf{0}$ be the zero-dimensional anisotropic space, while $\mathbf{1}$ and $\delta$ are the one-dimensional anisotropic spaces corresponding to the square and non-square cases respectively. Finally, let $\omega$ be the two-dimensional anisotropic space. Then $W(\mathbb{F}_q) = \{\mathbf{0}, \mathbf{1}, \delta, \omega\}$. Here $\mathbf{0}$ is the additive identity. We will denote the sum in $W(\mathbb{F}_q)$ with $\oplus$, so for example $\omega \oplus \omega = \mathbf{0}$. (It turns out that $W(k)$ is isomorphic to $\mathbb{Z}/2\mathbb{Z} \times \mathbb{Z}/2\mathbb{Z}$ if $q$ is $1$ modulo $4$, and $\mathbb{Z}/4\mathbb{Z}$ if $q$ is $3$ modulo $4$.)

\begin{lemma}
Let $V$ be an odd-dimensional space over $\mathbb{F}_q$. Then the symmetry groups of any two non-degenerate symmetric bilinear forms on $V$ are isomorphic.
\end{lemma}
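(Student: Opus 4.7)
The plan is to show that in odd dimension the two inequivalent symmetric bilinear forms on $V$ differ only by a scalar, and then invoke the elementary fact that rescaling a form does not change its symmetry group.

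First I would recall from the preceding proposition that there are exactly two equivalence classes of non-degenerate symmetric bilinear forms on $V$; let $B_1$ and $B_2$ be representatives. The key observation is that for any $c \in \mathbb{F}_q^\times$, the defining condition $B(gv,gw) = B(v,w)$ is equivalent to $(cB)(gv,gw) = (cB)(v,w)$, so $G_{cB_1}(V) = G_{B_1}(V)$ as subgroups of $GL(V)$. Thus it would suffice to exhibit a scalar $c$ such that $cB_1$ is equivalent to $B_2$.

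The main step is a discriminant calculation. Fix a basis of $V$ and let $M$ be the Gram matrix of $B_1$; the Gram matrix of $cB_1$ is then $cM$, with determinant $c^{\dim V}\det(M)$. On the other hand, two forms with Gram matrices related by $M' = g^T M g$ have determinants differing by $\det(g)^2$, so equivalent forms have the same determinant modulo $(\mathbb{F}_q^\times)^2$. Since $\dim V$ is odd, choosing $c$ to be a non-square makes $c^{\dim V}$ a non-square, which flips the square class of the Gram determinant. Hence $cB_1$ lies in a different equivalence class from $B_1$, which by the dichotomy from step one must be the class of $B_2$.

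To conclude, pick $h \in GL(V)$ realising the equivalence $B_2(v,w) = cB_1(hv,hw)$. A short substitution $g \mapsto h g h^{-1}$ shows $G_{B_2}(V) = h^{-1} G_{cB_1}(V)\, h = h^{-1} G_{B_1}(V)\, h$, so the two symmetry groups are conjugate inside $GL(V)$ and in particular isomorphic. The only genuine obstacle is the discriminant computation in step three; everything else is formal. Note that the odd-dimension hypothesis enters precisely at the point where we need $c^{\dim V}$ to remain a non-square, which fails in even dimension and accounts for the well-known fact that the two orthogonal groups in even dimension (split versus non-split) really are non-isomorphic.
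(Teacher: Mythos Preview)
Your argument is correct and is essentially the same as the paper's: both use that equivalent forms have Gram determinants agreeing modulo squares, that scaling the form by a non-square $c$ multiplies the determinant by $c^{\dim V}$ (a non-square, since $\dim V$ is odd), and that scaling leaves the symmetry group unchanged. Your write-up is a bit more explicit in deducing conjugacy of the two groups inside $GL(V)$, but the underlying idea is identical.
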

\begin{proof}
Suppose the form $B$ is represented by the matrix $M$ with respect to some choice of basis. For any $g \in GL(V)$, $\det(g^T M g) = \det(M) \det(g)^2$, so we may characterise the two equivalence classes of forms as having matrices whose determinant is or is not a square in $\mathbb{F}_q$. Multiplying the form by a non-square scalar $m$ does not change the symmetry group, but multiplies $\det(M)$ by $m^{\dim(V)}$ (which is a non-square), yielding a form in the other equivalence class.
\end{proof}
\noindent
It turns out that there is no such coincidence in the even-dimensional case; the two equivalence classes of symmetric forms have different symmetry groups when $\dim(V)$ is even.
\begin{definition}
We write $U_n(\mathbb{F}_q)$ for the symmetry group of a Hermitian bilinear form on an $n$-dimensional vector space over $\mathbb{F}_{q^2}$.
Similarly, we write $Sp_{2n}(\mathbb{F}_q)$ for the symmetry group of an alternating bilinear form on a $2n$-dimensional vector space over $\mathbb{F}_q$. It is standard to write $O_{2n+1}(\mathbb{F}_q)$ for the symmetry group of a symmetric form on a $(2n+1)$-dimensional vector space over $\mathbb{F}_q$, as well as $O_{2n}^+(\mathbb{F}_q)$ and $O_{2n}^-(\mathbb{F}_q)$ for the symmetry groups of symmetric forms on a space of dimension $2n$ over $\mathbb{F}_q$, whose germs are $\mathbf{0}$ and $\omega$, respectively. When necessary, we will also write $O_{2n+1}^{+}(\mathbb{F}_q)$ and $O_{2n+1}^{-}(\mathbb{F}_q)$ for the symmetry groups for the symmetric bilinear forms on $\mathbb{F}_q^{2n+1}$ with germs $\mathbf{1}$ and $\delta$ respectively, even though these groups are isomorphic.
\end{definition}
\noindent
In Section \ref{classical_groups_section}, we will need some numerical information about finite classical groups. We recall the sizes of the groups, as well as the sizes of centralisers of (elements of) conjugacy classes.
\begin{proposition}[Subsection 2.6, \cite{Wall}] \label{classical_group_sizes_proposition}
We have
\begin{eqnarray*}
|U_n(\mathbb{F}_q)| &=& q^{n \choose 2} \prod_{i=1}^n (q^i - (-1)^i), \\
|Sp_{2n}(\mathbb{F}_q) &=& q^{n^2} \prod_{i=1}^n (q^{2i}-1).
\end{eqnarray*}
If $q$ is odd, then
\begin{eqnarray*}
|O_{2n+1}(\mathbb{F}_q)| &=& 2 q^{n^2} \prod_{i=1}^n (q^{2i}-1), \\
|O_{2n}^{\pm}(\mathbb{F}_q)| &=& 2 q^{n^2-n}(q^n \mp 1) \prod_{i=1}^{n-1} (q^{2i}-1).
\end{eqnarray*}
\end{proposition}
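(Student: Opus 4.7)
The plan is to prove each formula by induction on the dimension, exhibiting transitive actions of $G_B(V)$ on suitable sets of vectors via Witt's Lemma (Proposition \ref{witt_lemma_proposition}) and applying orbit-stabilizer. In each case, the stabilizer will be identified with the classical group of the same kind and Witt germ on a codimension-2 subspace, using Lemma \ref{hyperbolic_plane_splitting_lemma} together with the uniqueness of Witt decomposition (Proposition \ref{sesquilinear_structure_prop}).

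For $Sp_{2n}(\mathbb{F}_q)$, every nonzero vector is isotropic, so the group acts transitively on the $q^{2n}-1$ nonzero vectors by Witt's Lemma (each extends to a hyperbolic pair by Proposition \ref{hyperbolic_plane_splitting_prop}, and any two hyperbolic pairs are equivalent). The stabilizer of a nonzero $u$ is an extension of $Sp_{2n-2}(\mathbb{F}_q)$ (acting on the non-degenerate symplectic space $u^\perp/\langle u\rangle$) by an abelian group of order $q^{2n-1}$ consisting of symplectic transvections along $u$. Orbit-stabilizer gives $|Sp_{2n}(\mathbb{F}_q)| = (q^{2n}-1)\, q^{2n-1}\, |Sp_{2n-2}(\mathbb{F}_q)|$, and induction from $|Sp_0|=1$ yields the formula.

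For $U_n(\mathbb{F}_q)$ I use the transitive action on unit vectors $\{v \in V : B(v,v) = 1\}$, which is nonempty by Theorem \ref{anisotropic_space_classification_theorem}; the stabilizer of such a $v$ is $U_{n-1}(\mathbb{F}_q)$ acting on the non-degenerate Hermitian hyperplane $v^\perp$ (Lemma \ref{hyperbolic_plane_splitting_lemma} applied to the decomposition $V = \langle v \rangle \oplus v^\perp$). The key sub-task is the count
\[
|\{v \in \mathbb{F}_{q^2}^n : B(v,v) = 1\}| = q^{n-1}(q^n - (-1)^n),
\]
which I would prove by induction: choosing a unit vector $e$ and splitting $V = \langle e \rangle \oplus e^\perp$, writing $v = ae + w$ gives $B(v,v) = a\sigma(a) + B(w,w)$, so the count becomes $\sum_{c \in \mathbb{F}_q}|\{w : B(w,w) = 1-c\}|\cdot|\{a : a\sigma(a) = c\}|$. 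Since the norm map $\mathbb{F}_{q^2}^\times \to \mathbb{F}_q^\times$ is surjective with fibers of size $q+1$, combined with the inductive hypothesis applied to $e^\perp$ (for both $B(w,w) = 0$ and $B(w,w) \neq 0$), the formula emerges. The recursion $|U_n| = q^{n-1}(q^n - (-1)^n)\,|U_{n-1}|$ then telescopes from $|U_0|=1$.

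For the orthogonal groups I similarly use orbit-stabilizer on nonzero isotropic vectors: the stabilizer of such a $u$ is an extension of $O_{N-2}^\epsilon(\mathbb{F}_q)$ by a unipotent group of order $q^{N-2}$ (the Eichler-Siegel transformations). The number of nonzero isotropic vectors is $(q^n-1)(q^{n-1}+1)$ for $O_{2n}^+$, $(q^n+1)(q^{n-1}-1)$ for $O_{2n}^-$, and $q^{2n}-1$ for $O_{2n+1}$, derivable either by induction (stripping off hyperbolic planes as in Proposition \ref{sesquilinear_structure_prop}) or by character-sum evaluation in a standard hyperbolic basis together with the germ. Base cases are computed by hand: $|O_1| = 2$, $|O_2^+| = 2(q-1)$, and $|O_2^-| = 2(q+1)$ (the last being the symmetry group of the two-dimensional anisotropic germ $\omega$, realisable via norm-one elements of $\mathbb{F}_{q^2}^\times$ together with Galois). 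The factor of $2$ in each orthogonal order ultimately comes from the $(q^0+1) = 2$ appearing in the isotropic-vector count at the bottom of the induction, or equivalently from the $\pm 1$ symmetries of the germ. \emph{The main obstacle} is precisely the isotropic- or unit-vector counting: the sign $(-1)^n$ for $U_n$ and the $\pm$ distinction for $O_{2n}^\pm$ both trace back to the arithmetic of the norm map $\mathbb{F}_{q^2}^\times \to \mathbb{F}_q^\times$ and the Witt class of the restricted form.
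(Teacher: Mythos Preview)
The paper does not prove this proposition; it is stated with a citation to Wall \cite{Wall}, Subsection 2.6, and used as a black box. Your orbit--stabiliser induction is a correct and standard way to derive these formulae, and the recursions you write down do telescope to the stated orders, so there is nothing to compare against in the paper itself.
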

\noindent
Now we state some facts about the ratios of the sizes of certain centralisers in classical groups. These sizes were first calculated by Wall in Subsection 2.6 of \cite{Wall}, but the results are restated more transparently in \cite{Fulman_cycle_indices}. We defer the proofs of these facts to the appendix due to technical details that we will not need later in the paper. In the statements below, when we refer to the type of an element in a classical group, we mean the the type of that element when viewed as an element of the ambient general linear group.
\par

\begin{toappendix}
\noindent
Let us retain the notation from Section \ref{finite_classical_groups_section}.
\begin{definition}
If $r(t) = \sum_i a_i t^i \in \Phi_{q^2}$ is a monic irreducible polynomial with coefficients in $\mathbb{F}_{q^2}$ other than $t$, define
\[
r^*(t) = \frac{t^{\deg(r)}}{\sigma(r(0))} \sum_i \sigma(a_i)t^{-i}
\]
(where $\sigma(x) = x^q$), noting that $r^*(t) \in \Phi_{q^2}$ also.
\end{definition}
\noindent
The only example that will be relevant is that this operation sends $t-1$ to itself. It turns out that the conjugacy class of an element of $U_n(\mathbb{F}_q)$ is determined by its conjugacy class as an element of $GL_n(\mathbb{F}_{q^2})$, i.e. its type. However only types $\bs\mu$ with $\bs\mu(r) = \bs\mu(r^*)$ correspond to elements of the unitary group.

\begin{definition}
We define two functions, $A$ and $B$, that implicitly depend upon $g \in U_n(\mathbb{F}_q)$. Let $\bs\mu$ be the type of $g$. Let $A$ be the following function defined on $\Phi_{q^2} \times \mathbb{Z}_{>0}$.
\[
A(r, i) =
\left\{
        \begin{array}{ll}
            |U_{m}(\mathbb{F}_Q)| & \quad \mbox{if } r = r^* \\
            |GL_{m}(\mathbb{F}_{Q^2})|^{\frac{1}{2}} & \quad \mbox{if } r \neq r^*
        \end{array}
    \right.
\]
where $m = m_i(\bs\mu(r))$ and $Q = q^{\deg(r)}$. Now define
\[
B(r) = Q^{\sum_{i < j} 2i m_i(\bs\mu(r))m_j(\bs\mu(r)) + \sum_{i \geq 1} (i-1) m_i(\bs\mu(r))^2} \prod_{i \geq 1} A(r, i).
\]
\end{definition}
\begin{proposition}[Section 2.6, \cite{Wall}]
The centraliser of an element $g \in U_n(\mathbb{F}_q)$ has size
\[
\prod_{r \in \Phi_{q^2}} B(r).
\]
\end{proposition}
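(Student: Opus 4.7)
The plan is to decompose the centraliser computation along the primary rational canonical form of $g$, then exploit how the Hermitian form $B$ interacts with this decomposition.

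First I would view $V$ as an $\mathbb{F}_{q^2}[t]$-module via the action of $g$ and decompose $V = \bigoplus_{r \in \Phi_{q^2}} V_r$ into $r$-primary parts. Because $g$ preserves $B$, the adjoint of $g$ with respect to $B$ is $g^{-1}$, so the adjoint of $r(g)$ acts as a unit scalar multiple of $r^*(g^{-1})$. This forces $B(V_r, V_{r'}) = 0$ whenever $r' \neq r^*$. Consequently, for $r \neq r^*$, the subspaces $V_r$ and $V_{r^*}$ are each totally isotropic and paired nondegenerately by $B$; while for $r = r^*$, the restriction of $B$ to $V_r$ is nondegenerate and inherits a Hermitian structure with respect to a naturally induced order-two involution on $\mathbb{F}_{q^2}[t]/r \cong \mathbb{F}_{Q^2}$ whose fixed field is $\mathbb{F}_Q$.

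Next, $C_{GL(V)}(g)$ factors as $\prod_r C_{GL(V_r)}(g|_{V_r})$, and $C_{U_n(\mathbb{F}_q)}(g)$ is the subgroup of this product that preserves $B$. For a pair $\{r, r^*\}$ with $r \neq r^*$, preserving the pairing between $V_r$ and $V_{r^*}$ forces the action on one factor to determine the other, so the contribution is a diagonal copy of $C_{GL(V_r)}(g|_{V_r})$. By Proposition~\ref{gl_centraliser_prop} applied over $\mathbb{F}_{q^2}$, this diagonal has size $Q^{|\bs\mu(r)| + 2\mathbf{n}(\bs\mu(r))} \prod_i \varphi_{m_i(\bs\mu(r))}(Q^{-2})$. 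I would then verify that this splits symmetrically across $B(r)$ and $B(r^*)$: the two factors $A(r,i) = A(r^*,i) = |GL_{m_i}(\mathbb{F}_{Q^2})|^{1/2}$ multiply to $|GL_{m_i}(\mathbb{F}_{Q^2})|$, while the $Q$-exponents $\sum_{i<j} 2im_im_j + \sum_i (i-1)m_i^2$ in $B(r)$ and $B(r^*)$ sum to $|\bs\mu(r)| + 2\mathbf{n}(\bs\mu(r))$ after an elementary identity involving partition statistics.

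For $r = r^*$, I would use the Levi decomposition of the centraliser: $C_{GL(V_r)}(g|_{V_r})$ has reductive quotient $\prod_i GL_{m_i}(\mathbb{F}_{Q^2})$ and a unipotent radical of the appropriate dimension. The involution induced by $B$ restricts on the reductive quotient to the standard Galois involution on each $GL_{m_i}(\mathbb{F}_{Q^2})$, whose fixed points are $\prod_i U_{m_i}(\mathbb{F}_Q)$; on the unipotent radical it is a Frobenius-like endomorphism whose fixed-point count follows from Lang--Steinberg. Combining yields the claimed product $Q^{\text{exponent}} \prod_i |U_{m_i}(\mathbb{F}_Q)|$ matching $B(r)$.

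The main obstacle is the $r = r^*$ case: precisely identifying the induced involution on $\mathbb{F}_{Q^2}$ and on the Jordan-block structure, verifying that it restricts to the standard Galois involution on the Levi, and carrying out the exponent bookkeeping to match Wall's formula. A cleaner conceptual alternative would be to realise $C_{GL_n(\overline{\mathbb{F}_{q^2}})}(g)$ as a connected reductive group equipped with a Steinberg endomorphism whose fixed-point subgroup is exactly $C_{U_n(\mathbb{F}_q)}(g)$, then invoke Lang--Steinberg together with the classification of $F$-rational forms of the Levi subgroups to read off the formula directly.
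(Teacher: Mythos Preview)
The paper does not supply its own proof of this proposition: it is quoted from Wall (Section 2.6) and used as a black box to derive the centraliser-ratio corollaries. So there is nothing in the paper to compare against.

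That said, your outline is essentially Wall's original argument: primary decomposition of $V$ as an $\mathbb{F}_{q^2}[t]$-module, the observation that $B$ pairs $V_r$ with $V_{r^*}$, and then a case split according to whether $r = r^*$. The strategy is sound. One slip to flag: when you invoke Proposition~\ref{gl_centraliser_prop} over $\mathbb{F}_{q^2}$, the quantity playing the role of $q_r$ is $(q^2)^{\deg(r)} = Q^2$, not $Q$, so the diagonal centraliser for a pair $\{r,r^*\}$ with $r \neq r^*$ has order
\[
Q^{\,2(|\bs\mu(r)| + 2\mathbf{n}(\bs\mu(r)))} \prod_i \varphi_{m_i(\bs\mu(r))}(Q^{-2}),
\]
with an extra factor of $2$ in the exponent relative to what you wrote. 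This is exactly what you need, since $B(r)B(r^*)$ contributes $Q$ raised to \emph{twice} the exponent appearing in the definition of $B(r)$, together with $\prod_i |GL_{m_i}(\mathbb{F}_{Q^2})| = Q^{2\sum_i m_i^2}\prod_i \varphi_{m_i}(Q^{-2})$; the partition identity you allude to then matches the exponents. For the $r=r^*$ case, your Lang--Steinberg alternative is the cleaner route and is how most modern treatments organise this computation; carrying out Wall's direct argument requires tracking the Hermitian form through the Jordan filtration, which is where the exponent $\sum_{i<j} 2i m_i m_j + \sum_i (i-1)m_i^2$ genuinely emerges.
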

\noindent
Although $A$ and $B$ are complicated, ultimately we will only care about what happens when when $g$ is extended to a larger matrix by taking a block sum with the identity matrix of some size $d$.

\end{toappendix}
\begin{corollaryrep} \label{unitary_centraliser_ratio_corollary}
Suppose that $g \in U_n(\mathbb{F}_q)$. Then the block matrix $\tilde{g} = \bigl( \begin{smallmatrix}g & 0 \\ 0 & \Id_d \end{smallmatrix}\bigr)$ may be viewed as an element of $U_{n+d}(\mathbb{F}_q)$. The ratio of the centralisers of these elements is
\[
\frac{C_{U_{n+d}(\mathbb{F}_q)}(\tilde{g})}{C_{U_n(\mathbb{F}_q)}(g)} = q^{2d(k - h)} \frac{|U_{h + d}(\mathbb{F}_q)|}{|U_{h}(\mathbb{F}_q)|},
\]
where $\bs\mu$ is the type of $g$, $k=l(\bs\mu(t-1))$ and $h=m_1(\bs\mu(t-1))$.
\end{corollaryrep}

\begin{toappendix}

\begin{proof}
The type of $\tilde{g}$ is $\bs\mu \cup (1^{d})_{t-1}$. Computing the size of the centraliser, the factors $B(r)$ for $r \neq t-1$ are identical for $g$ and $\tilde{g}$, and so they cancel out in the fraction. As for $B(t-1)$, we have $Q = q$ and incrementing $m_1(\bs\mu(t-1))$ by $d$ increases the exponent of $Q$ by $\sum_{1 < j} 2d m_j(\bs\mu(t-1)) = 2d (l(\bs\mu(t-1) - m_1(\bs\mu(t-1)))$. Further, $A(t-1, i)$ remains unchanged for $i \geq 2$. So in conclusion, the new value of $B(t-1)$ is larger than the old one by a factor
\[
q^{2d(k - h)} \frac{|U_{h + d}(\mathbb{F}_q)|}{|U_{h}(\mathbb{F}_q)|}.
\]
\end{proof}
\end{toappendix}
\noindent
Now, for the remainder of this section, where we deal with the symplectic and orthogonal groups, suppose that $q$ is the power of an odd prime.

\begin{toappendix}
\noindent
Now, assume $q$ is an odd prime power. In order to describe the sizes of the centralisers of symplectic and orthogonal groups, Wall defines \emph{Hermitian invariants} from elementary divisors of $g$. The Hermitian invariants are bilinear forms whose symmetry groups appear in describing the conjugacy class and centraliser of $g$. Unlike the case of the unitary group, the conjugacy class of $g$ in the general linear group does not uniquely define a conjugacy class in the symplectic or orthogonal group. The Hemitian invariants of $g$ are precisely the additional information that is needed to determine the conjugacy class of $g$. We will not need to use them, so we omit their definition.
\begin{definition}
If $r(t) = \sum_i a_i t^i \in \Phi_{q}$ is a monic irreducible polynomial with coefficients in $\mathbb{F}_{q}$ other than $t$, define
\[
r^*(t) = \frac{t^{\deg(r)}}{r(0)} \sum_i a_i t^{-i},
\]
which is also a monic irreducible polynomial.
\end{definition}
\noindent
A \emph{symplectic signed partition} is a partition $\lambda$ such that
\begin{itemize}
\item for odd $i$, $m_i(\lambda)$ is even,
\item a choice of sign $\epsilon_i \in\{ +,-\}$ for each even $i$ such that $m_i(\lambda) > 0$ is given.
\end{itemize}
These signs record the Hermitian invariants of an element $g \in Sp_{2n}(\mathbb{F}_q)$. Conjugacy classes in $Sp_{2n}(\mathbb{F}_q)$ are indexed by multipartitions $\bs\mu$ of size $2n$ such that $\bs\mu(r) = \bs\mu(r^*)$, except that $\bs\mu(t \pm 1)$ must be symplectic signed partitions. The type of $g$ (i.e. conjugacy class in $GL_{2n}(\mathbb{F}_q)$) is recovered by forgetting the signs of $\bs\mu(t \pm 1)$ to recover ordinary partitions.

\begin{definition}
Let $\bs\mu$ label the conjugacy class of  $g \in Sp_{2n}(\mathbb{F}_q)$. Let $A$ be the following function defined on $\Phi_{q} \times \mathbb{Z}_{>0}$. First of all, if $r \neq t \pm 1$,
\[
A(r, i) =
\left\{
        \begin{array}{ll}
            |U_{m}(\mathbb{F}_{Q^{\frac{1}{2}}})| & \quad \mbox{if } r = r^* \\
            |GL_{m}(\mathbb{F}_{Q})|^{\frac{1}{2}} & \quad \mbox{if } r \neq r^*
        \end{array}
    \right.
\]
where $m = m_i(\bs\mu(r))$ and $Q = q^{\deg(r)}$. For $r = t \pm 1$, we instead have
\[
A(r, i) =
\left\{
        \begin{array}{ll}
            |Sp_{m}(\mathbb{F}_{q})| & \quad \mbox{if $i$ is odd} \\
            q^{\frac{m}{2}}|O_{m}^{\epsilon_i}(\mathbb{F}_{q})| & \quad \mbox{if $i$ is even}
        \end{array}
    \right.
\]
with $m = m_i(\bs\mu(r))$ as before. Note that if $m$ is odd, $O_{m}^{+}(\mathbb{F}_{q})$ and $O_{m}^{-}(\mathbb{F}_{q})$ are isomorphic. Now define
\[
B(r) = Q^{\sum_{i < j} i m_i(\bs\mu(r))m_j(\bs\mu(r)) + \frac{1}{2}\sum_{i \geq 1} (i-1) m_i(\bs\mu(r))^2} \prod_{i \geq 1} A(r, i).
\]
\end{definition}

\begin{proposition}
Then the centraliser of $g \in Sp_{2n}(\mathbb{F}_q)$ has size
\[
\prod_{r \in \Phi_q} B(r).
\]
\end{proposition}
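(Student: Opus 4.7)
The plan is to exploit the primary decomposition of $V = \mathbb{F}_q^{2n}$ as an $\mathbb{F}_q[t]$-module, with $t$ acting via $g$, together with the interaction of this decomposition with the alternating form $B$. One first checks that $B(p(g)v, w) = B(v, p^*(g)w)$ for all polynomials $p \in \mathbb{F}_q[t]$, so the $r$-primary component $V_r$ is $B$-orthogonal to $V_{r'}$ unless $r' = r^*$. Consequently $V$ splits as an orthogonal direct sum of nondegenerate subspaces indexed by orbits $\{r, r^*\}$ of the $*$-involution on $\Phi_q$, each preserved by $C_{Sp_{2n}(\mathbb{F}_q)}(g)$; the centraliser therefore factors as a product over these orbits, reducing the claim to a local computation at each orbit.

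Next I would treat each orbit separately. If $r \neq r^*$, an element of the centraliser on $V_r \oplus V_{r^*}$ is determined by its restriction to $V_r$ (the restriction to $V_{r^*}$ being forced by duality under $B$), so the factor equals $|C_{GL(V_r)}(g|_{V_r})|$; Proposition \ref{gl_centraliser_prop} then yields the $|GL_{m_i}(\mathbb{F}_Q)|^{1/2}$ factors, together with a $Q$-power coming from counting off-diagonal degrees of freedom in a Jordan-adapted basis. If $r = r^* \neq t \pm 1$, the restriction of $B$ to $V_r$ descends to an appropriate $\sigma$-Hermitian form on each Jordan layer, with $\sigma$ the involution of $\mathbb{F}_Q$ fixing $\mathbb{F}_{Q^{1/2}}$, and the local centraliser at each layer is a unitary group $U_{m_i}(\mathbb{F}_{Q^{1/2}})$. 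In both cases the $Q$-power prefactor in $B(r)$ is precisely the dimension count for $g$-equivariant homomorphisms between Jordan layers of $V_r$.

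The main obstacle is the case $r = t \pm 1$, where $B$ restricts to a genuine alternating form on $V_r$ and the Jordan block structure of $g|_{V_r}$ interacts nontrivially with it. For each Jordan layer of size $i$, one constructs an auxiliary bilinear form on the associated graded piece of $V_r$ under the filtration by powers of $g \mp 1$; this form turns out to be alternating when $i$ is odd (contributing a symplectic factor $|Sp_{m_i}(\mathbb{F}_q)|$) and symmetric when $i$ is even (contributing an orthogonal factor $q^{m_i/2}|O_{m_i}^{\epsilon_i}(\mathbb{F}_q)|$, whose sign $\epsilon_i$ is precisely the extra Hermitian invariant beyond the underlying partition). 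Verifying that these auxiliary forms are well-defined, nondegenerate, of the claimed symmetry type, and that the $\epsilon_i$ are genuine conjugacy invariants --- so that symplectic signed partitions really index $Sp_{2n}(\mathbb{F}_q)$-conjugacy classes --- is the technical heart of the argument; the numerical formula $\prod_{r \in \Phi_q} B(r)$ then follows by multiplying the contributions from all orbits $\{r, r^*\}$.
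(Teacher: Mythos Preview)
The paper does not prove this proposition at all: it is quoted (like the analogous unitary statement just above it, which carries the explicit attribution ``Section 2.6, \cite{Wall}'') as a result of Wall, and the only arguments actually given in the appendix are the short computations deducing the centraliser-ratio corollaries from these quoted formulae. So there is no proof in the paper to compare your proposal against.

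That said, your sketch is a reasonable outline of the standard approach (essentially Wall's): primary decomposition, the duality $B(p(g)v,w)=B(v,p^*(g)w)$ forcing $V_r \perp V_{r'}$ unless $r'=r^*$, and then a case analysis on each $\{r,r^*\}$-orbit with the most delicate case being $r=t\pm 1$, where the induced form on each Jordan layer alternates between alternating and symmetric according to the parity of the block size. Your proposal correctly identifies where the signs $\epsilon_i$ enter and why the symplectic signed partition data is needed. If you intend to actually carry this out rather than cite it, the genuine work lies exactly where you flag it: showing the induced forms on the associated graded pieces are nondegenerate and of the stated type, and bookkeeping the $Q$-power contribution from $g$-equivariant maps between distinct Jordan layers. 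None of that is done in the present paper, which simply imports the formula.
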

\noindent
As in the unitary case, we will only need the following consequence of this result.
\end{toappendix}

\begin{corollaryrep} \label{symplectic_centraliser_ratio_corollary}
Suppose that $g \in Sp_{2n}(\mathbb{F}_q)$. Let $d$ be a positive even integer. Then $\tilde{g} = \bigl( \begin{smallmatrix}g & 0 \\ 0 & \Id_d \end{smallmatrix}\bigr)$ may be viewed as an element of $Sp_{2n+d}(\mathbb{F}_q)$. The ratio of the sizes of the centralisers of these elements is
\[
\frac{C_{Sp_{2n+d}(\mathbb{F}_q)}(\tilde{g})}{C_{Sp_{2n}(\mathbb{F}_q)}(g)} = q^{d(k-h)}\frac{|Sp_{h + d}(\mathbb{F}_q)|}{|Sp_{h}(\mathbb{F}_q)|},
\]
where $\bs\mu$ is the type of $g$, $k=l(\bs\mu(t-1))$ and $h=m_1(\bs\mu(t-1))$.
\end{corollaryrep}

\begin{toappendix}
\begin{proof}
Under the operation of extending $g$ by a size $d$ identity matrix, we add $(1^{d})$ to the signed symplectic partition $\bs\mu(t-1)$ without changing any of the signs. The rest of the proof is identical to the unitary case.
\end{proof}
\begin{remark} \label{symplectic_char_2_remark}
Corollary \ref{symplectic_centraliser_ratio_corollary} holds even if $q$ is a power of 2, however in this case the description of conjugacy classes of $Sp_{2n}(\mathbb{F}_q)$ is more intricate, and requires applying the more complicated Theorem 3.7.4 of \cite{Wall}. Nevertheless, the same kind of cancellation takes place, and the factor $2^k$ appearing in the formula also cancels because $k$ is defined in terms of the multiplier of $g$, which is determined by the subspace $im(\Id - g)$, which coincides with $im(\Id - \tilde{g})$ upon identifying $\mathbb{F}_q^{2n}$ with a subspace of $\mathbb{F}_q^{2n+d}$.
\end{remark}
\noindent
A \emph{orthogonal signed partition} is a partition $\lambda$ such that 
\begin{itemize}
\item for even $i$, $m_i(\lambda)$ is even,
\item a choice of sign $\epsilon_i \in \{+, -\}$ for each odd $i$ such that $m_i(\lambda)>0$ is given.
\end{itemize}
These signs record the Hermitian invariants of an element $g$ of a finite orthogonal group. Conjugacy classes of orthogonal groups $O_n^{\pm}(\mathbb{F}_q)$ are indexed by multipartitions $\bs\mu$ of size $n$ such that $\bs\mu(r) = \bs\mu(r^*)$, except that $\bs\mu(t \pm 1)$ must be orthogonal signed partitions. Such $\bs\mu$ describes a conjugacy class in exactly one of $O_n^{+}(\mathbb{F}_q)$ or $O_n^{-}(\mathbb{F}_q)$ in a way that can be determined from $\bs\mu$ but we omit here. If $n$ is odd, $O_n^{+}(\mathbb{F}_{q})$ and $O_n^{-}(\mathbb{F}_{q})$ are isomorphic, so we obtain two parametrisations of the conjugacy classes of $O_n(\mathbb{F}_q)$.
\begin{definition}
Let $\bs\mu$ label the conjugacy class of  $g$, either an element of $O_{n}^{+}(\mathbb{F}_q)$ or $O_{n}^{-}(\mathbb{F}_q)$. Let $A$ be the following function defined on $\Phi_{q} \times \mathbb{Z}_{>0}$. First of all, if $r \neq t \pm 1$,
\[
A(r, i) =
\left\{
        \begin{array}{ll}
            |U_{m}(\mathbb{F}_{Q^{\frac{1}{2}}})| & \quad \mbox{if } r = r^* \\
            |GL_{m}(\mathbb{F}_{Q})|^{\frac{1}{2}} & \quad \mbox{if } r \neq r^*
        \end{array}
    \right.
\]
where $m = m_i(\bs\mu(r))$ and $Q = q^{\deg(r)}$. For $r = t \pm 1$, we instead have
\[
A(r, i) =
\left\{
        \begin{array}{ll}
            |O_{m}^{\epsilon_i}(\mathbb{F}_{q})| & \quad \mbox{if $i$ is odd} \\
            q^{-\frac{m}{2}}|Sp_{m}(\mathbb{F}_{Q})| & \quad \mbox{if $i$ is even}
        \end{array}
    \right.
\]
with $m = m_i(\bs\mu(r))$ as before. Now define
\[
B(r) = Q^{\sum_{i < j} i m_i(\bs\mu(r))m_j(\bs\mu(r)) + \frac{1}{2}\sum_{i \geq 1} (i-1) m_i(\bs\mu(r))^2} \prod_{i \geq 1} A(r, i).
\]
\end{definition}
\end{toappendix}
\noindent
This result actually remains true in characteristic two, although the proof is slightly more complicated; see Remark \ref{symplectic_char_2_remark}. Finally, we state the orthogonal version of the previous corollaries.
\begin{corollaryrep}
For $\eta \in W(\mathbb{F}_q)$, let $O_n^\eta(\mathbb{F}_q)$ be the symmetry group of the form with germ $\eta$ (note that the germ determines the parity of $n$). Suppose that $g \in O_n^{\tau}(\mathbb{F}_q)$. Let $d$ be a positive integer, and let $\rho$ be the germ of a nondegenerate symmetric bilinear form on $\mathbb{F}_q^d$. Then $\tilde{g} = \bigl( \begin{smallmatrix}g & 0 \\ 0 & \Id_d \end{smallmatrix}\bigr)$ may be viewed as an element of $O_{n+d}^{\tau \oplus \rho}(\mathbb{F}_q)$. The ratio of the sizes of the centralisers of these elements is
\[
\frac{C_{O_{n+d}^{\tau \oplus \rho}(\mathbb{F}_q)}(\tilde{g})}{C_{O_{n}^{\tau}(\mathbb{F}_q)}(g)} = 
q^{d(k-h)}\frac{|O_{h + d}^{\epsilon_1 \oplus \rho}(\mathbb{F}_q)|}{|O_{h}^{\epsilon_1}(\mathbb{F}_q)|},
\]
where $\bs\mu$ is the type of $g$, $k=l(\bs\mu(t-1))$ and $h=m_1(\bs\mu(t-1))$.\end{corollaryrep}
\begin{proof}
Under the operation of extending $g$ by a size $d$ identity matrix, we add $(1^{d})$ to the signed orthogonal partition $\bs\mu(t-1)$ and adding $\rho$ to the sign $\epsilon_1$ (addition takes place in the Witt ring). (If $\bs\mu(t-1)$ was empty, the new sign corresponds to the germ $\rho$.) The rest of the proof is identical to the unitary case.
\end{proof}
\begin{toappendix}
\noindent
This final case is a little more complicated than the others because the resulting quantity depends on the sign $\epsilon_1$ which is not determined by the germ $\tau$ of the ambient orthogonal group.
\end{toappendix}

\section{General Linear Groups} \label{GL_section}
\noindent
Let $\mathbb{F}_q^{\infty}$ be a countably infinite dimensional vector space over $\mathbb{F}_q$ with basis $\{e_1, e_2, \ldots \}$. We may write
\[
\mathbb{F}_q^\infty = \varinjlim \mathbb{F}_q^n,
\]
where $\mathbb{F}_q^n$ is has basis $\{e_1, e_2, \ldots, e_n\}$, and the maps in the directed limit are the obvious inclusions.
\begin{definition}
Say that a subspace $V$ of $\mathbb{F}_q^\infty$ is \emph{smooth} if it contains $e_i$ for all sufficiently large $i$.
\end{definition}
\noindent
It is immediate that the intersection of two smooth subspaces of $\mathbb{F}_q^\infty$ is again smooth. Note that while smooth subspaces have finite codimension, not every finite codimension subspace is smooth, for example if $L$ is the linear map $L:\mathbb{F}_q^\infty \to \mathbb{F}_q$ defined by $L(e_i) = 1$ for all $i$, then $\ker(L)$ has codimension $1$, but does not contain any $e_i$.
\newline \newline \noindent
Let us view an element of the general linear group $GL_n(\mathbb{F}_q)$ as a matrix with respect to the basis $\{e_1, \ldots, e_n\}$. Then for $n < m$ we have (injective) homomorphisms $\rho_{n,m}: GL_n(\mathbb{F}_q) \to GL_m(\mathbb{F})$ defined by 
\[
\rho_{n,m}(g)(e_i) = \begin{cases} 
          g(e_i) & i \leq n \\
          e_i & i > n
       \end{cases}
\]
which may be viewed extending an $n \times n$ matrix to an $m \times m$ matrix by taking the block sum with the identity matrix, i.e. $\rho_{n,m}(g) = \bigl( \begin{smallmatrix}g & 0 \\ 0 & \Id_{m-n} \end{smallmatrix}\bigr)$.

\begin{definition}
Let $GL_\infty(\mathbb{F}_q)$ be 
\[
\varinjlim GL_n(\mathbb{F}_q),
\]
where the maps in the directed system are the inclusions $\rho_{n,m}$. In other words, $GL_\infty(\mathbb{F}_q)$ consists of invertible matrices (whose entries are indexed by $\mathbb{Z}_{>0} \times \mathbb{Z}_{>0}$) that differ from the identity matrix in finitely many entries. We frequently view $GL_n(\mathbb{F}_q)$ as a subgroup of $GL_\infty(\mathbb{F}_q)$.
\end{definition}
\noindent
It is immediate that if $V$ is a smooth subspace of $\mathbb{F}_q^\infty$ and $g \in GL_\infty(\mathbb{F}_q)$, then $gV$ is also a smooth subspace. Note that since we have chosen a basis to work with, there is a notion of transpose. We write $g^T$ for the transpose of $g \in GL_\infty(\mathbb{F}_q)$.
\begin{definition}
A \emph{bounding triple} is a tuple $(W, g, V)$ where $W$ and $V$ are smooth subspaces of $\mathbb{F}_q^\infty$ and $g \in GL_\infty(\mathbb{F})$ is such that $g$ acts as the identity on $V$ and $g^T$ acts as the identity on $W$. Additionally, we let $BT_n$ be the set of bounding triples $(W,g,V)$ with $\{e_{n+1}, e_{n+2}, \ldots\} \subseteq V, W$.
\end{definition}
\noindent
Of course, $BT_1 \subseteq BT_2 \subseteq \cdots$, and each bounding triple belongs to some $BT_n$, so the $BT_n$ ``filter'' the set of bounding triples. As the next lemma shows, the two spaces $W$, $V$ in a bounding triple $(W,g,V)$ constrain the entries where $g$ can differ from the identity matrix. Thus $W$ and $V$ serve to ``bound'' the behaviour of $g$.
\begin{lemma} \label{Bn_lemma}
Suppose that $(W,g,V) \in BT_n$. Then $g \in GL_n(\mathbb{F}_q)$, i.e. $g$ agrees with the identity matrix starting at the $(n+1)$-th row and $(n+1)$-th column.
\end{lemma}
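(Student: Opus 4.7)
The plan is to translate each of the two conditions $g|_V = \Id$ and $g^T|_W = \Id$ into a statement about individual rows or columns of the matrix $g$, and then observe that together they force $g$ to have the claimed block form. Throughout, I view $g$ as a matrix whose $(i,j)$ entry is the coefficient of $e_i$ in $g(e_j)$, so that the $j$-th column of $g$ is the coordinate vector of $g(e_j)$.

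First I would use the hypothesis $(W,g,V) \in BT_n$ to deduce that $\{e_{n+1}, e_{n+2}, \ldots\} \subseteq V$. Since $g$ restricts to the identity on $V$, this gives $g(e_j) = e_j$ for every $j > n$. Consequently, the $j$-th column of $g$ equals $e_j$ for each $j > n$, so all entries $g_{ij}$ with $j > n$ agree with the identity matrix (i.e.\ are $\delta_{ij}$).

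Next I would play the symmetric game on the other side. Again from the definition of $BT_n$ we have $\{e_{n+1}, e_{n+2}, \ldots\} \subseteq W$, and $g^T$ acts as the identity on $W$, so $g^T(e_i) = e_i$ for every $i > n$. Transposing, this says that the $i$-th row of $g$ equals $e_i^T$ for each $i > n$, so all entries $g_{ij}$ with $i > n$ also agree with the identity matrix.

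Combining these two observations, the only entries $g_{ij}$ which can possibly differ from $\delta_{ij}$ lie in the range $1 \leq i, j \leq n$. In block form,
\[
g = \begin{pmatrix} A & 0 \\ 0 & \Id \end{pmatrix}
\]
for some $n \times n$ block $A$ over $\mathbb{F}_q$. Because $g \in GL_\infty(\mathbb{F}_q)$ is invertible, $A$ must be invertible, and hence $g \in GL_n(\mathbb{F}_q)$ under the embedding $\rho_{n,m}$ described just before the lemma. There is essentially no obstacle here; the content of the lemma is really just to record that ``smoothness plus $g|_V = \Id$'' already kills the columns, while the corresponding condition on $W$ and $g^T$ is exactly what is needed to kill the rows.
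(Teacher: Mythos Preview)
Your proof is correct and follows essentially the same approach as the paper: use $V \supseteq \{e_{n+1}, e_{n+2}, \ldots\}$ together with $g|_V = \Id$ to force the columns beyond the $n$-th to agree with the identity, and the analogous condition on $W$ and $g^T$ to force the rows beyond the $n$-th to agree with the identity. The paper's proof is slightly terser, but the content is identical.
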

\begin{proof}
The condition that $\{e_{n+1}, e_{n+2}, \ldots\} \subseteq V$ implies that $g(e_i) = e_i$ for $i > n$, which agrees with the identity matrix. Similarly the condition on $W$ shows $g^T(e_i) = e_i$ for $i > n$, which agrees with the identity matrix. So our $g$ may be viewed as an element of $GL_n(\mathbb{F}_q)$.
\end{proof}

\begin{definition}
Let us say that a bounding triple $(W,g,V)$ is \emph{tight} if $V = \ker(g-1)$ and $W = \ker(g^T - 1)$.
\end{definition}
\noindent
In a bounding triple, $g-1$ and $g^T-1$ act by zero on $V$ and $W$ respectively. So in a tight bounding triple, $V$ and $W$ are as large as possible. It is clear that each $g \in GL_\infty(\mathbb{F}_q)$ is contained in a unique tight bounding triple.

\begin{proposition} \label{triple_conj_prop}
There is an action of $GL_\infty(\mathbb{F}_q)$ on the set of all bounding triples via
\[
x \cdot (W, g, V) = (x^{-T}W, xgx^{-1}, xV),
\]
where $x^{-T} = (x^{-1})^T = (x^T)^{-1}$. We refer to this action as ``conjugation''.
\end{proposition}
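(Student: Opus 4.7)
The plan is to verify the four things required to have a well-defined group action: that the output $(x^{-T}W, xgx^{-1}, xV)$ is actually a bounding triple; that the identity element of $GL_\infty(\mathbb{F}_q)$ acts trivially; and that the action is associative, i.e.\ $(xy)\cdot(W,g,V) = x\cdot(y\cdot(W,g,V))$. The last two are purely formal once the first is in hand, so the real content is checking that the proposed triple actually lies in the set of bounding triples.

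To check that $xV$ is smooth, I would use that every $x \in GL_\infty(\mathbb{F}_q)$ lies in some $GL_N(\mathbb{F}_q)$, so $x$ fixes $e_i$ for $i>N$. Since $V$ is smooth, there exists $M$ with $e_i \in V$ for $i > M$. Then for $i > \max(M,N)$, we have $e_i = xe_i \in xV$, so $xV$ is smooth. The identical argument applied to $x^{-T}$ (which is also an element of $GL_\infty(\mathbb{F}_q)$) shows that $x^{-T}W$ is smooth.

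Next I would verify the fixed-subspace conditions. For any $v \in xV$, write $v = xv'$ with $v' \in V$; then
\[
(xgx^{-1})(v) = (xgx^{-1})(xv') = x(gv') = xv' = v,
\]
since $g$ fixes $v'$. Similarly, observe that $(xgx^{-1})^T = x^{-T} g^T x^T$, and for $w \in x^{-T}W$, write $w = x^{-T}w'$ with $w' \in W$; then
\[
(x^{-T}g^T x^T)(x^{-T} w') = x^{-T} g^T w' = x^{-T} w' = w,
\]
using that $g^T$ fixes $w'$. Hence $(x^{-T}W, xgx^{-1}, xV)$ is a bounding triple.

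Finally, the action axioms are a direct computation: for the identity element $1 \in GL_\infty(\mathbb{F}_q)$ we have $1^{-T} = 1$, so $1 \cdot (W,g,V) = (W,g,V)$; and for $x,y \in GL_\infty(\mathbb{F}_q)$, using $(xy)^{-T} = x^{-T} y^{-T}$ we get
\[
(xy)\cdot(W,g,V) = (x^{-T} y^{-T} W,\, xy g y^{-1} x^{-1},\, xy V) = x\cdot(y\cdot(W,g,V)).
\]
I don't expect a real obstacle here; the only point requiring any care is the transpose identity $(xy)^{-T} = x^{-T}y^{-T}$ and the correctness of smoothness under $x^{-T}$, both of which follow from the fact that $GL_\infty(\mathbb{F}_q)$ consists of matrices agreeing with the identity outside a finite block, so that $x^{-T}$ is itself such a matrix.
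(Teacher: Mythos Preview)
Your proof is correct and follows essentially the same approach as the paper: verify that the triple is again a bounding triple (smoothness plus the fixed-subspace conditions via $(xgx^{-1})^T = x^{-T}g^Tx^T$), then check the identity and associativity axioms directly. The only difference is that the paper treats smoothness of $xV$ as immediate from an earlier remark, whereas you spell it out explicitly.
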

\begin{proof}
First we observe that $xgx^{-1}$ acts as the identity on $xV$, and similarly $(xgx^{-1})^T = x^{-T} g^T x^T$ acts as the identity on $x^{-T}W$, so $x \cdot (W,g,V)$ is a bounding triple. Clearly $\Id \cdot (W, g, V) = (W, g, V)$. All that remains to be checked is associativity:
\begin{eqnarray*}
x_1 \cdot ( x_2 \cdot (W, g, V) ) &=& x_1 \cdot (x_2^{-T}W, x_2 g x_2^{-1}, x_2 V) \\
&=& (x_1^{-T}x_2^{-T}W, x_1 x_2 g x_2^{-1} x_1^{-1}, x_1 x_2 V) \\
&=& ((x_1x_2)^{-T}W, x_1x_2 g (x_1x_2)^{-1} , x_1 x_2 V).
\end{eqnarray*}
\end{proof}

\subsection{Conjugacy of Bounding Triples}
\noindent
In order to further understand bounding triples $(W,g,V)$, we introduce a bilinear pairing $\mathbb{F}_q^\infty \times \mathbb{F}_q^\infty \to \mathbb{F}_q$. Here we think of $W$ as being a subspace of the first factor, and $V$ as a subspace of the second factor. In order to be compatible with the conjugacy action, we consider the following action of $GL_\infty(\mathbb{F}_q)$ on $\mathbb{F}_q^\infty \times \mathbb{F}_q^\infty$:
\[
x \cdot (w \otimes v) = (x^{-T}w) \otimes (xv),
\]
so the two factors of $\mathbb{F}_q^\infty$ transform dually to each other.
\begin{definition}
Let $\langle -,- \rangle$ be the bilinear pairing on $\mathbb{F}_q^\infty \times \mathbb{F}_q^\infty$ defined by $\langle e_i, e_j \rangle = \delta_{i,j}$.
\end{definition}
\noindent
By construction, $\langle -,- \rangle$ is $GL_\infty(\mathbb{F}_q)$-invariant. In particular, for smooth subspaces $W, V \subseteq \mathbb{F}_q^\infty$, we have a (possibly degenerate) pairing $W \times V \to \mathbb{F}_q$ by restricting $\langle -,- \rangle$. Although the action of $x \in GL_\infty(\mathbb{F}_q)$ may not preserve $W$ or $V$, we have $\dim(W \cap V^\perp) = \dim((x^{-T}W) \cap (xV)^\perp)$ and $\dim(W^\perp \cap V) = \dim((x^{-T}W)^\perp \cap (xV))$.

\begin{lemma} \label{first_decomposition_lemma}
Suppose that $(W,g,V) \in BT_n$. There is a vector space decomposition $\mathbb{F}_q^\infty = U_1 \oplus U_2 \oplus U_3$ such that:
\begin{itemize}
\item $U_1, U_2 \subseteq \mathbb{F}_q^n$ and $U_3$ contains $e_{n+1}, e_{n+2}, \ldots$,
\item $U_2 \oplus U_3 = V$,
\item $U_2 = V \cap W^\perp$ and the restricted pairing $W \times U_3 \to \mathbb{F}_q$ is non-degenerate on the right.
\end{itemize}
\end{lemma}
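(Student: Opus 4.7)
The plan is to take $U_2 := V \cap W^\perp$ (which is forced by the required condition) and then construct $U_3$ and $U_1$ by standard complement arguments, the key observation being that $W^\perp$ is automatically contained in $\mathbb{F}_q^n$.

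First I would check this containment. Because $(W,g,V) \in BT_n$, every $e_i$ with $i > n$ lies in $W$. If $v = \sum_j v_j e_j$ lies in $W^\perp$, then for each $i > n$ we have $v_i = \langle e_i, v \rangle = 0$, so $W^\perp \subseteq \mathbb{F}_q^n$. In particular $U_2 = V \cap W^\perp \subseteq V \cap \mathbb{F}_q^n$, giving the first bullet's constraint on $U_2$.

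Next I would build $U_3$. Let $E := \mathrm{span}(e_{n+1}, e_{n+2}, \ldots) \subseteq V$. Since $U_2 \subseteq \mathbb{F}_q^n$ we have $U_2 \cap E = 0$, so $U_2 \oplus E$ sits inside $V$. A routine basis extension produces a subspace $U_3 \subseteq V$ with $E \subseteq U_3$ and $V = U_2 \oplus U_3$, which handles the second bullet. For $U_1$, I would pick any vector-space complement of $V \cap \mathbb{F}_q^n$ inside $\mathbb{F}_q^n$; then $U_1 \cap V = U_1 \cap (V \cap \mathbb{F}_q^n) = 0$, and since $\mathbb{F}_q^n + V \supseteq \mathbb{F}_q^n + E = \mathbb{F}_q^\infty$, we get $\mathbb{F}_q^\infty = U_1 \oplus V = U_1 \oplus U_2 \oplus U_3$ with $U_1 \subseteq \mathbb{F}_q^n$.

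It remains to verify the non-degeneracy condition: that no nonzero $u_3 \in U_3$ pairs to zero with every $w \in W$. Such a $u_3$ would lie in $U_3 \cap W^\perp \subseteq V \cap W^\perp = U_2$, hence in $U_3 \cap U_2 = 0$. So the restricted pairing $W \times U_3 \to \mathbb{F}_q$ is non-degenerate on the right.

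I do not anticipate a real obstacle here; everything is linear algebra once one notices that the smoothness hypothesis $\{e_{n+1}, e_{n+2}, \ldots\} \subseteq W$ forces $W^\perp \subseteq \mathbb{F}_q^n$, which simultaneously puts $U_2$ inside $\mathbb{F}_q^n$ and makes it possible to absorb $E$ into a chosen complement $U_3$.
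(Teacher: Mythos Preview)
Your proposal is correct and follows essentially the same route as the paper: both set $U_2 = V \cap W^\perp$, observe $W^\perp \subseteq \mathbb{F}_q^n$ from the smoothness hypothesis, choose $U_3$ to be a complement of $U_2$ in $V$ containing $e_{n+1}, e_{n+2}, \ldots$, pick $U_1$ as a complement of $V$ inside $\mathbb{F}_q^n$, and verify right non-degeneracy via $U_3 \cap W^\perp \subseteq U_2 \cap U_3 = 0$. Your write-up is slightly more explicit about why such complements exist, but there is no substantive difference.
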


\begin{proof}
Since $W$ contains $e_{n+1}, e_{n+2}, \ldots$, we have $W^\perp \subseteq \mathbb{F}_q^n$. So if we take $U_2 = V \cap W^\perp$, we get $U_2 \subseteq \mathbb{F}_q^n$. Then we take $U_3$ to be any complement to $U_2$ in $V$ that contains $e_{n+1}, e_{n+2}, \ldots$. The pairing $W \times U_3$ is non-degenerate on the right because $U_3 \cap W^\perp \subseteq V \cap W^\perp = U_2$ and $U_2$ intersects $U_3$ trivially. Finally, we pick $U_1$ to be any complement to $V$ in $\mathbb{F}_q^\infty$ that is contained inside $\mathbb{F}_q^n$ (which exists because $V$ contains $e_{n+1}, e_{n+2}, \ldots$).
\end{proof}

\begin{proposition} \label{standard_shape_prop}
Suppose that $(W,g,V) \in BT_n$. Let $a = \dim(W \cap V^\perp)$, $b = \dim(V \cap W^\perp)$, and let $c$ be the rank of the pairing $\langle -,- \rangle$ restricted to $(W \cap \mathbb{F}_q^n) \times (V \cap \mathbb{F}_q^n)$. Then there is $x \in GL_n(\mathbb{F}_q)$ and a decomposition $\mathbb{F}_q^\infty = E_1 \oplus E_2 \oplus E_3 \oplus E_4$ such that $x \cdot (W, g, V) = (W^\prime , g^\prime, V^\prime)$, where:
\begin{itemize}
\item $E_1 = \mathbb{F}_q\{e_1, \ldots, e_a\}$,
\item $E_2 = \mathbb{F}_q\{e_{a+1}, \ldots, e_{n-b-c}\}$,
\item $E_3 = \mathbb{F}_q\{e_{n-b-c+1}, \ldots, e_{n-c}\}$,
\item $E_4 = \mathbb{F}_q\{e_{n-c+1}, e_{n-c+2}, \ldots\}$,
\item $V^\prime = E_3 \oplus E_4$,
\item $W^\prime = E_1 \oplus E_4$.
\end{itemize}
Moreover, when viewed as an element of $GL_n(\mathbb{F}_q)$, $g^\prime$ has the block matrix form
\[
\begin{bmatrix}
\Id & 0 & 0 & 0\\
C & A & 0 & 0 \\
D & B & \Id & 0 \\
0 & 0 & 0 & \Id
\end{bmatrix}
\]
where the sizes of the blocks are $a, n-a-b-c, b, c$, and $A,B,C,D$ are appropriately sized matrices.
\end{proposition}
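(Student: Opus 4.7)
The plan is to construct $x \in GL_n(\mathbb{F}_q)$ so that $xV = E_3 \oplus E_4$ and $x^{-T}W = E_1 \oplus E_4$; once this is arranged, the block form of $g' := xgx^{-1}$ is forced by the identity conditions on $V'$ and $W'$. First I would reduce everything to $\mathbb{F}_q^n$. Writing $\overline{V} = V \cap \mathbb{F}_q^n$ and $\overline{W} = W \cap \mathbb{F}_q^n$, the fact that $V, W \supseteq \mathbb{F}_q\{e_{n+1}, e_{n+2}, \ldots\}$ implies that $V^\perp, W^\perp \subseteq \mathbb{F}_q^n$ and coincide with the annihilators $\overline{V}^\circ, \overline{W}^\circ$ of $\overline{V}, \overline{W}$ in $\mathbb{F}_q^n$ (which is self-dual under the restriction of $\langle-,-\rangle$). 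This gives $\dim \overline{V} = b + c$, $\dim \overline{W} = a + c$, and $\dim(\overline{V} \cap \overline{W}^\circ) = \dim(V \cap W^\perp) = b$, so $n \geq a + b + c$ and $\dim(\overline{V} + \overline{W}^\circ) = n - a$. I would also note that $a, b, c$ are invariants of the conjugation action on bounding triples, since the pairing $\langle-,-\rangle$ is $GL_\infty(\mathbb{F}_q)$-invariant.

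The key step is to produce a basis $f_1, \ldots, f_n$ of $\mathbb{F}_q^n$ such that
$\mathrm{span}(f_{n-b-c+1}, \ldots, f_n) = \overline{V}$ and $\mathrm{span}(f_{a+1}, \ldots, f_{n-c}) = \overline{W}^\circ$.
I would build it in layers: pick a basis $f_{n-b-c+1}, \ldots, f_{n-c}$ of the $b$-dimensional intersection $\overline{V} \cap \overline{W}^\circ$, extend inside $\overline{V}$ by $c$ vectors $f_{n-c+1}, \ldots, f_n$, extend the initial basis inside $\overline{W}^\circ$ by $n-a-b-c$ vectors $f_{a+1}, \ldots, f_{n-b-c}$, and finally complete to a basis of $\mathbb{F}_q^n$ by any $a$ vectors $f_1, \ldots, f_a$. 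The dimension identity $\dim(\overline{V} + \overline{W}^\circ) = n - a$ ensures that the vectors produced in the first three layers are jointly independent. Defining $y \in GL_n(\mathbb{F}_q)$ by $y(e_i) = f_i$ for $i \leq n$ (and $y(e_i) = e_i$ otherwise) and setting $x = y^{-1}$, one sees immediately that $xV = E_3 \oplus E_4$. For the $W$ condition, observe that $x^{-T}$ sends $e_i$ to the dual basis vector $f_i^*$; thus $x^{-T}W = E_1 \oplus E_4$ is equivalent to $\overline{W} = \mathrm{span}(f_i^* : i \in \{1, \ldots, a\} \cup \{n-c+1, \ldots, n\})$, which follows from $\overline{W} = \overline{W}^{\circ\circ} = \mathrm{span}(f_{a+1}, \ldots, f_{n-c})^\circ$ and the description of annihilators in terms of the dual basis.

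Finally, I would read off the block form of $g' = xgx^{-1}$ relative to the decomposition $\mathbb{F}_q^n = E_1 \oplus E_2 \oplus E_3 \oplus (E_4 \cap \mathbb{F}_q^n)$ of sizes $a, n-a-b-c, b, c$. Since $g'$ fixes $V' = E_3 \oplus E_4$ pointwise, the two rightmost block columns of $g'$ match those of the identity matrix; since $(g')^T$ fixes $W' = E_1 \oplus E_4$ pointwise, the first and fourth block rows of $g'$ match those of the identity. The remaining four entries are the free blocks $C, A, D, B$ in the statement.

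The main obstacle is really just the bookkeeping in the middle step---making sure that the layered basis construction is consistent and that the indexing of the $f_i$'s aligns with the $E_j$'s---together with the dual-basis argument for the $W$ condition. Once $x$ is constructed, the block pattern of $g'$ is immediate from the two ``acts as identity'' constraints.
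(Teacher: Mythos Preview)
Your argument is correct and takes a somewhat different route from the paper's. The paper proceeds in two stages: it first conjugates so that $V$ becomes $E_3\oplus E_4$ (using the auxiliary decomposition of Lemma~\ref{first_decomposition_lemma}), and then conjugates a second time by a block-lower-triangular matrix $x'$, which automatically preserves the already-normalised $V$ while the block-upper-triangular shape of $(x')^{-T}$ is used to move $W$ into $E_1\oplus E_4$. You instead build a single change of basis adapted simultaneously to $\overline V$ and to $\overline W^{\circ}$; the point is that both of these subspaces transform under $x$ (rather than one under $x$ and one under $x^{-T}$), so one layered basis handles both at once, and $\overline W$ is then recovered by passing back through the annihilator. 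This is more direct and bypasses the preliminary lemma entirely; the paper's two-step version has the mild advantage of never invoking the dual basis.

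One small slip to fix: you assert that $x^{-T}$ sends $e_i$ to $f_i^{*}$, but since $x=y^{-1}$ with $y e_i=f_i$, it is $x^{T}=y^{-T}$ that sends $e_i$ to $f_i^{*}$ (equivalently, $x^{-T}$ sends $f_i^{*}$ to $e_i$). With this correction the equivalence you state---$x^{-T}W=E_1\oplus E_4$ iff $\overline W=\mathrm{span}(f_i^{*}:i\in\{1,\ldots,a\}\cup\{n-c+1,\ldots,n\})$---is exactly right, and the rest of your argument (the identification $\overline W=(\overline W^{\circ})^{\circ}$ together with the description of annihilators via dual bases) goes through unchanged. An equivalent clean way to phrase this step, avoiding the dual basis entirely, is to use $(xU)^{\circ}=x^{-T}U^{\circ}$ applied to $U=\overline W^{\circ}=\mathrm{span}(f_{a+1},\ldots,f_{n-c})$.
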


\begin{proof}
We first perform a preliminary conjugation to pin down $V^\prime$ before we conjugate a second time to obtain the required form of $W^\prime$. Consider the decomposition $\mathbb{F}_q^\infty = U_1 \oplus U_2 \oplus U_3$ provided by Lemma \ref{first_decomposition_lemma}, noting that $V = U_2 \oplus U_3$. We may construct a matrix $x^{\prime \prime}$ that describes a change of basis between the standard basis of $\mathbb{F}_q^\infty$ and a basis obtained by concatenating bases of $U_1, U_2, U_3$. In particular, we may take the basis of $U_3$ to consist of $c$ vectors in $\mathbb{F}_q^n$ followed by $e_{n+1}, e_{n+2}, \ldots$. Together with the fact that $U_1, U_2 \subseteq \mathbb{F}_q^n$ this guarantees that $x^{\prime \prime} \in GL_n(\mathbb{F}_q)$. Now we consider $(W^{\prime \prime}, g^{\prime \prime}, V^{\prime \prime}) = x^{\prime \prime} \cdot (W, g, V)$.
\newline \newline \noindent
By construction, $V^{\prime \prime} = \mathbb{F}_q\{e_{r+1}, e_{r+2} \ldots\}$, where $r = \dim(U_1) = \codim(V) = n - b - c$. So in particular, $V^{\prime \prime} = E_3 \oplus E_4$. This implies that $(V^{\prime \prime})^\perp = \mathbb{F}_q\{e_1, \ldots, e_r\} = E_1 \oplus E_2$. With respect to the decomposition $\mathbb{F}_q^n = \mathbb{F}_q^r \oplus (V^{\prime \prime} \cap \mathbb{F}_q^n)$, $g^{\prime \prime}$ has the block form 
\[
\begin{bmatrix}
M_1 & 0 \\
M_2 & \Id
\end{bmatrix},
\]
because $g^{\prime \prime}$ fixes $V^{\prime \prime}$ pointwise. Now we will conjugate by a second element $x^\prime$ to get $x^\prime \cdot (W^{\prime \prime}, g^{\prime \prime}, V^{\prime \prime}) = (W^\prime, g^\prime, V^\prime)$. We take $x^\prime \in GL_n(\mathbb{F}_q)$ to have block form
\[
\begin{bmatrix}
P_1 & 0 \\
P_2 & P_3
\end{bmatrix},
\]
so that $V^{\prime} = x^\prime V^{\prime \prime} = V^{\prime \prime}$. This allows $(x^{\prime})^{-T}$ to be an arbitrary invertible block-upper-triangular matrix. We use this freedom to control $W^\prime = (x^\prime)^{-T}W^{\prime \prime}$. The fact that $(x^\prime)^{-T}$ is block upper-triangular means we must preserve $\mathbb{F}_q^r = (V^{\prime \prime})^\perp$, but there are no other restrictions on the change of basis. So we choose to move $W^{\prime \prime} \cap \mathbb{F}_q^r = W^{\prime \prime} \cap (V^{\prime \prime})^\perp$ to $\mathbb{F}_q \{e_1, \ldots, e_a\} = E_1$ (which remains contained in $\mathbb{F}_q^r$). We move the rest of $W \cap \mathbb{F}_q^n$ to $\mathbb{F}_q \{e_{n-c+1}, \ldots, e_n\} = E_4 \cap \mathbb{F}_q^n$.
\newline \newline \noindent
Putting this all together, if $x = x^\prime x^{\prime \prime}$, then $x \cdot (W, g, V) = (W^\prime, g^\prime, V^\prime)$ where we have $V^\prime = E_3 \oplus E_4$ and $W^\prime = E_1 \oplus E_4$. Now, with respect to the decomposition $\mathbb{F}_q^n = E_1 \oplus E_2 \oplus E_3 \oplus E_4$, $g^\prime$ has the block matrix form
\[
\begin{bmatrix}
\Id & 0 & 0 & 0\\
C & A & 0 & 0 \\
D & B & \Id & 0 \\
0 & 0 & 0 & \Id
\end{bmatrix}
\]
because $g^\prime$ acts by the identity on $V^\prime = E_3 \oplus E_4$, and because $(g^\prime)^{T}$ acts by the identity on $W^\prime = E_1 \oplus E_4$ (since $(g^\prime)^{-T}$ does).
\end{proof}

\begin{definition}
We say that a bounding triple $(W^\prime,g^\prime,V^\prime) \in BT_n$ in the form described in Proposition \ref{standard_shape_prop} has \emph{standard shape}.
\end{definition}

\begin{proposition} \label{gl_inf_gl_n_conjugacy_prop}
Two bounding triples in $BT_n$ that are conjugate by an element of $GL_\infty(\mathbb{F}_q)$ are conjugate by an element of $GL_n(\mathbb{F}_q)$.
\end{proposition}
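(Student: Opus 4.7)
The plan is to reduce both triples to a common standard shape using Proposition \ref{standard_shape_prop}, and then use the rigid block structure forced upon any $GL_\infty$-conjugator between standard triples to exhibit a $GL_n$-conjugator. Writing the two given triples as $(W_i, g_i, V_i) \in BT_n$ for $i=1,2$, I would first apply Proposition \ref{standard_shape_prop} via $GL_n(\mathbb{F}_q)$-conjugation to bring each triple into standard shape with invariants $(a_i, b_i, c_i)$. The parameters $a_i = \dim(W_i \cap V_i^\perp)$ and $b_i = \dim(V_i \cap W_i^\perp)$ are preserved under $GL_\infty$-conjugation because the pairing $\langle -,-\rangle$ is $GL_\infty$-invariant; the codimensions of $V_i$ and $W_i$ are also preserved since $GL_\infty$ acts by invertible linear maps. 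The relation $\codim(V^\prime) = n - b - c$ read off from the standard shape then forces $c_1 = c_2$. Hence both triples may be assumed to share the same $V_i = V^\prime = E_3 \oplus E_4$ and $W_i = W^\prime = E_1 \oplus E_4$, with each $g_i$ having the block form from Proposition \ref{standard_shape_prop}.

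Next I would analyze any conjugator $x \in GL_\infty(\mathbb{F}_q)$ satisfying $xV^\prime = V^\prime$ and $x^{-T}W^\prime = W^\prime$. Writing $x$ in block form with respect to $\mathbb{F}_q^\infty = E_1 \oplus E_2 \oplus E_3 \oplus E_4$, the first condition forces the blocks $x_{13}, x_{14}, x_{23}, x_{24}$ to vanish, while the second (equivalent to $x^T W^\prime = W^\prime$) forces $x_{12}, x_{42}, x_{43}$ to vanish. The resulting block form is lower-block-triangular except for the extra blocks $x_{34}$ and $x_{41}$. Since $x$ is invertible and preserves $V^\prime$, the induced maps on $V^\prime$ and on $\mathbb{F}_q^\infty / V^\prime$ are both invertible, which makes each of the diagonal blocks $x_{11}, x_{22}, x_{33}$ invertible.

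The last step is to define $y \in GL_n(\mathbb{F}_q)$ by retaining the six blocks $x_{11}, x_{21}, x_{22}, x_{31}, x_{32}, x_{33}$ and placing the identity in the $(4,4)$ position together with zeros in the $(3,4)$ and $(4,1)$ positions. Then $y$ is an invertible lower-block-triangular matrix in $GL_n(\mathbb{F}_q)$ with $y V^\prime = V^\prime$ and $y^T W^\prime = W^\prime$. Expanding $x g_1 = g_2 x$ block-by-block shows that each of the nontrivial equations involves only the six retained blocks of $x$, so the same equations hold for $y$, giving $y g_1 = g_2 y$. The main obstacle is the bookkeeping of the block structure; once the preservation of $V^\prime$ and $W^\prime$ forces $x$ into this near-triangular form, the conjugation equations decouple from the infinite-dimensional blocks $x_{34}, x_{41}, x_{44}$, which is exactly what enables the truncation to an element of $GL_n(\mathbb{F}_q)$.
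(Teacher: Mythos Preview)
Your approach is correct and essentially the same as the paper's: reduce both triples to standard shape via Proposition \ref{standard_shape_prop}, use the $GL_\infty$-invariance of $a$, $b$, and the codimensions to force identical block parameters (hence $V_1'=V_2'$ and $W_1'=W_2'$), determine the block shape of any conjugator from $xV'=V'$ and $x^T W'=W'$, and then observe that the relation $x(g_1-\Id)=(g_2-\Id)x$ does not involve $x_{34},x_{41},x_{44}$, allowing truncation to $GL_n(\mathbb{F}_q)$. The paper carries out the same computation after first passing to a finite $GL_m(\mathbb{F}_q)$ containing the conjugator, whereas you work directly in $GL_\infty$; the only place to be slightly more careful is the invertibility of $x_{33}$, which follows from the restriction of $x$ to $V'=E_3\oplus E_4$ being block upper-triangular with finite-dimensional $E_3$ (or, symmetrically, from the $W'$-condition applied to $x^T$), rather than from the quotient map alone.
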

\begin{proof}
Suppose we are given bounding triples $(W_1, g_1, V_1), (W_2, g_2, V_2) \in BT_n$ that are conjugate by an element of $GL_\infty(\mathbb{F}_q)$. By Proposition \ref{standard_shape_prop}, they are $GL_n(\mathbb{F}_q)$-conjugate to triples in standard shape. So to prove our bounding triples are $GL_n(\mathbb{F}_q)$-conjugate, we may assume that $(W_1, g_1, V_1)$ and $(W_2, g_2, V_2)$ are in standard shape.
\newline \newline \noindent
Since $(W_1, g_1, V_1)$ and $(W_2, g_2, V_2)$ are $GL_\infty(\mathbb{F}_q)$-conjugate, they are conjugate by an element of $GL_{m}(\mathbb{F}_q)$ for some $m$. We may take $m \geq n$, which guarantees $(W_1, g_1, V_1), (W_2, g_2, V_2) \in BT_m$. Note that if $(W,g,V) \in BT_n$ has block sizes $a,n-a-b-c,b,c$ in its standard shape, then viewing it as an element of $BT_m$ instead gives a standard shape with block sizes $a, n-a-b-c, b, m-n+c$. This is because $a = \dim(W \cap V^\perp)$ and $b = \dim(V \cap W^\perp)$ are unchanged, while the rank of the bilinear form on $(W \cap \mathbb{F}_q^m) \otimes (V \cap \mathbb{F}_q^m)$ is $m-n$ larger than the rank of the bilinear form on $(W \cap \mathbb{F}_q^n) \otimes (V \cap \mathbb{F}_q^n)$ since both $V$ and $W$ contain $e_{n+1}, \ldots, e_m$.
\newline \newline \noindent
We deduce from this that $(W_1, g_1, V_1)$ and $(W_2, g_2, V_2)$ have blocks of the same size when put in standard shape (as elements of $BT_m$). In particular, this implies that $W_1 = W_2$ and $V_1 = V_2$ since these spaces are spanned by certain basis vectors determined by the block sizes. We recall the block structure of our matrices:
\[
g_1 = \begin{bmatrix}
\Id & 0 & 0 & 0\\
C_1 & A_1 & 0 & 0 \\
D_1 & B_1 & \Id & 0 \\
0 & 0 & 0 & \Id
\end{bmatrix}, \hspace{10mm}
g_2 = \begin{bmatrix}
\Id & 0 & 0 & 0\\
C_2 & A_2 & 0 & 0 \\
D_2 & B_2 & \Id & 0 \\
0 & 0 & 0 & \Id
\end{bmatrix},
\]
where the block sizes are $a, n-a-b-c, b, m-n+c$. Now we consider $x \in GL_{m}(\mathbb{F}_q)$ which conjugates one bounding triple to the other. In terms of the same block structure, in order for $x^{-T}$ to preserve $W=W_1=W_2$, $x$ must have the form
\[
\begin{bmatrix}
* & 0 & 0 & *\\
* & * & * & * \\
* & * & * & * \\
* & 0 & 0 & *
\end{bmatrix},
\]
where $*$ indicates an unconstrained entry. Similarly, for $x$ to preserve $V=V_1=V_2$, it must take the form
\[
\begin{bmatrix}
* & * & 0 & 0\\
* & * & 0 & 0 \\
* & * & * & * \\
* & * & * & *
\end{bmatrix}.
\]
Combining these conditions, we may take
\[
x = 
\begin{bmatrix}
x_{11} & 0 & 0 & 0 \\
x_{21} & x_{22} & 0 & 0 \\
x_{31} & x_{32} & x_{33} & x_{34} \\
x_{41} & 0 & 0 & x_{44}
\end{bmatrix},
\]
and we note that such $x$ has determinant $\det(x) = \det(x_{11}) \det(x_{22}) \det(x_{33}) \det(x_{44}) \neq 0$. The condition $xg_1x^{-1} = g_2$ is equivalent to $x (g_1  - \Id) = (g_2 - \Id) x$, which we now explicitly work out:
\begin{eqnarray*}
x(g_1-\Id) &=& \begin{bmatrix}
0 & 0 & 0 & 0\\
x_{22} C_1 & x_{22}(A_1 - \Id) & 0 & 0 \\
x_{32} C_1 + x_{33} D_1 & x_{32}(A_1 - \Id) + x_{33} B_1 & 0 & 0 \\
0 & 0 & 0 & 0
\end{bmatrix}, \\
(g_2 - \Id) x &=& \begin{bmatrix}
0 & 0 & 0 & 0\\
C_2 x_{11} + (A_2 - \Id) x_{21} & (A_2 - \Id) x_{22} & 0  & 0 \\
D_2 x_{11} + B_2 x_{21} & B_2 x_{22} & 0 & 0 \\
0 & 0 & 0 & 0
\end{bmatrix}.
\end{eqnarray*}
The equality of these two matrices imposes no restrictions on the entries $x_{41}$, $x_{34}$, $x_{44}$ (or $x_{31}$), so we may replace $x_{41}$ and $x_{34}$ with zero matrices, and set $x_{44}$ to be the identity matrix. This modified matrix $x_{new}$ has block form 
\[
x_{new} = \begin{bmatrix}
x_{11} & 0 & 0 & 0 \\
x_{21} & x_{22} & 0 & 0 \\
x_{31} & x_{32} & x_{33} & 0 \\
0 & 0 & 0 & \Id
\end{bmatrix},
\]
and is invertible because $\det(x_{11})\det(x_{22})\det(x_{33}) \neq 0$. Therefore $x_{new}$ may be viewed as an element of $GL_{n-c}(\mathbb{F}_q) \subseteq GL_n(\mathbb{F}_q)$. Additionally, $x_{new}g_1 = g_2 x_{new}$ and $x_{new}V = V$, $x_{new}^{-T}W = W$, as needed.
\end{proof}

\subsection{Products of Bounding Triples} \label{bounding_triples_subsection}

\begin{definition}
We define a multiplication on bounding triples via the formula
\[
(W_1, g_1, V_1) \times (W_2, g_2, V_2) = (W_1 \cap W_2, g_1 g_2, V_1 \cap V_2).
\]
\end{definition}
\noindent
Since $g_1$ and $g_2$ both act on $V_1 \cap V_2$ as the identity (and similarly for $g_1^T$ and $g_2^T$ on $W_1 \cap W_2$), the product of two bounding triples is again a bounding triple.

\begin{proposition}
Multiplication of bounding triples is associative and also $GL_\infty(\mathbb{F}_q)$-equivariant.
\end{proposition}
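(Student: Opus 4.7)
The proof plan is essentially to unfold both definitions and observe that each claim reduces to a basic property of either matrix multiplication or linear maps acting on subspaces.

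For associativity, I would simply compute both
\[
((W_1, g_1, V_1) \times (W_2, g_2, V_2)) \times (W_3, g_3, V_3)
\]
and
\[
(W_1, g_1, V_1) \times ((W_2, g_2, V_2) \times (W_3, g_3, V_3))
\]
by applying the multiplication rule twice. The first coordinate in both cases becomes $W_1 \cap W_2 \cap W_3$ (associativity and commutativity of intersection), the third coordinate becomes $V_1 \cap V_2 \cap V_3$ for the same reason, and the middle coordinate is $(g_1 g_2) g_3 = g_1 (g_2 g_3)$, which is associativity of multiplication in $GL_\infty(\mathbb{F}_q)$. Hence the two triples agree.

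For $GL_\infty(\mathbb{F}_q)$-equivariance, the task is to check that for $x \in GL_\infty(\mathbb{F}_q)$,
\[
x \cdot \big((W_1,g_1,V_1) \times (W_2,g_2,V_2)\big) = \big(x \cdot (W_1,g_1,V_1)\big) \times \big(x \cdot (W_2,g_2,V_2)\big).
\]
Expanding the left side with the definitions from Proposition \ref{triple_conj_prop} gives
\[
\big(x^{-T}(W_1 \cap W_2),\, x(g_1 g_2) x^{-1},\, x(V_1 \cap V_2)\big),
\]
while the right side expands to
\[
\big(x^{-T}W_1 \cap x^{-T}W_2,\, (xg_1 x^{-1})(xg_2 x^{-1}),\, xV_1 \cap xV_2\big).
\]
The middle entries agree because the factors of $x^{-1}$ and $x$ cancel. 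The outer entries agree because invertible linear maps preserve intersections of subspaces: for any linear isomorphism $\phi$ of $\mathbb{F}_q^\infty$ and subspaces $U_1, U_2$, one has $\phi(U_1 \cap U_2) = \phi(U_1) \cap \phi(U_2)$ (the inclusion $\subseteq$ is trivial, and $\supseteq$ follows from applying $\phi^{-1}$). Applying this to $\phi = x$ with $U_i = V_i$ and to $\phi = x^{-T}$ with $U_i = W_i$ finishes the proof.

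Neither step presents a real obstacle; the statement is essentially bookkeeping, and the only thing to verify is that intersections behave well under linear isomorphisms, which is standard.
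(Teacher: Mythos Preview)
Your proof is correct and follows essentially the same approach as the paper: both reduce associativity to that of intersection and group multiplication, and both verify equivariance by expanding the two sides and invoking that a linear isomorphism commutes with intersection of subspaces. Your write-up is slightly more explicit in justifying the intersection fact, but the argument is the same.
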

\begin{proof}
Associativity follows from associativity of intersections and of multiplication in $GL_\infty(\mathbb{F}_q)$. Let $x \in GL_\infty(\mathbb{F}_q)$. Then
\begin{eqnarray*}
(x \cdot (W_1, g_1, V_1)) \times (x \cdot (W_2, g_2, V_2)) &=& (x^{-T}W_1, xg_1x^{-1}, xV_1) \times (x^{-T}W_2, xg_2x^{-1}, xV_2) \\
&=& (x^{-T}W_1 \cap x^{-T}W_2, xg_1g_2x^{-1}, xV_1 \cap x V_2) \\
&=& (x^{-T}(W_1 \cap W_2), x g_1 g_2 x^{-1}, x (V_1 \cap V_2)) \\
&=& x \cdot ((W_1, g_1, V_1) \times  (W_2, g_2, V_2)).
\end{eqnarray*}
\noindent Here we have used the fact that $xV_1 \cap x V_2= x (V_1 \cap V_2)$ (and similarly for $W_1$ and $W_2$), which follows from the fact that $x$ is a bijection.
\end{proof}
\noindent
Hence, the set of all bounding triples forms a semigroup, and each $BT_n$ is also a semigroup.

\begin{proposition} \label{finite_product_prop}
There are only finitely many ways in which a given bounding triple can be expressed as a product of two bounding triples.
\end{proposition}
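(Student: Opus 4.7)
The plan is to fix a bounding triple $(W, g, V) \in BT_n$ (every bounding triple lies in some $BT_n$) and show that in any factorisation $(W, g, V) = (W_1, g_1, V_1) \times (W_2, g_2, V_2)$, each of the six pieces of data $W_1, W_2, V_1, V_2, g_1, g_2$ is forced into a finite set; the conclusion will follow by multiplying the resulting finite counts.

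First I would observe that the intersection conditions $W_1 \cap W_2 = W$ and $V_1 \cap V_2 = V$ immediately force $W_i \supseteq W$ and $V_i \supseteq V$ for $i = 1, 2$. Since $V, W$ are smooth they have finite codimension in $\mathbb{F}_q^\infty$, so $\mathbb{F}_q^\infty/V$ and $\mathbb{F}_q^\infty/W$ are finite-dimensional vector spaces over the finite field $\mathbb{F}_q$. By the lattice correspondence, subspaces of $\mathbb{F}_q^\infty$ containing $V$ (resp.\ $W$) are in bijection with subspaces of these finite-dimensional quotients, of which there are only finitely many. Any such intermediate subspace automatically contains the tail $\{e_{n+1}, e_{n+2}, \ldots\}$ and is therefore smooth, so no candidate is lost. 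Hence there are only finitely many candidates for the quadruple $(V_1, V_2, W_1, W_2)$.

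With the subspaces fixed, I would use the bounding-triple conditions to confine $g_1$ to a finite group. Because $V_1 \supseteq V$ and $V$ contains every $e_j$ with $j > n$, the identity action of $g_1$ on $V_1$ gives $g_1 e_j = e_j$ for all $j > n$, so columns $n+1, n+2, \ldots$ of $g_1$ match the identity matrix. Symmetrically, the condition $g_1^T|_{W_1} = \Id$ forces rows $n+1, n+2, \ldots$ to agree with the identity. Exactly as in Lemma \ref{Bn_lemma}, this places $g_1$ in the finite group $GL_n(\mathbb{F}_q)$, giving only finitely many choices, and $g_2 = g_1^{-1} g$ is then determined.

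I do not anticipate a significant obstacle: the argument is essentially bookkeeping once one recognises that smoothness confines the subspace data to finite lattices, while the tail condition on $V_1, W_1$ confines the matrix data to a finite classical group. The only minor subtlety is that not every quadruple and choice of $g_1$ from the finite candidate set will actually yield a pair of bounding triples (one must still check the bounding-triple conditions on $g_2$), but this only restricts the count further, so the finite upper bound is unaffected.
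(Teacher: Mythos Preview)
Your proposal is correct and follows essentially the same route as the paper: both arguments pick $n$ with $(W,g,V)\in BT_n$, observe that the factor subspaces must contain the tail $\{e_{n+1},e_{n+2},\ldots\}$ (you go via $V_i\supseteq V$ and $W_i\supseteq W$, the paper goes directly), bound the subspace choices by passing to a finite-dimensional quotient, and then invoke Lemma~\ref{Bn_lemma} to place $g_1$ (and hence $g_2$) in the finite group $GL_n(\mathbb{F}_q)$. The only cosmetic difference is that you quotient by $V$ and $W$ themselves while the paper quotients by the tail span $\mathbb{F}_q\{e_{n+1},e_{n+2},\ldots\}$, which gives a slightly coarser but equally valid bound.
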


\begin{proof}
Suppose $(W_3, g_3, V_3)$ is given, and we would like to find solutions to $(W_1, g_1, V_1) \times (W_2, g_2, V_2) = (W_3, g_3, V_3)$, in other words,
\[
(W_1 \cap W_2, g_1 g_2, V_1 \cap V_2) = (W_3, g_3, V_3).
\]
Let $n$ be such that $W_3$ and $V_3$ both contain $e_{n+1}, e_{n+2}, \ldots$. Then $W_1, W_2, V_1, V_2$ must also contain these elements. There are only finitely many subspaces containing $e_{n+1}, e_{n+2}, \ldots$ (such subspaces are in bijection with subspaces of $\mathbb{F}_q^{\infty} / \mathbb{F}_q \{e_{n+1}, e_{n+2}, \ldots\}$ which is finite dimensional). By Lemma \ref{Bn_lemma}, it also follows that $g_1$ and $g_2$ correspond to elements of the finite group $GL_n(\mathbb{F}_q)$.
\end{proof}

\begin{definition}
Let $\mathcal{A}$ be the set of functions from the set of all bounding triples to $\mathbb{Z}$. It is an abelian group with pointwise addition. We equip $\mathcal{A}$ with the following convolution product. If $f_1, f_2 \in \mathcal{A}$, then
\[
(f_1 * f_2) (W_3, g_3, V_3) = \sum_{(W_1, g_1, V_1) \times (W_2, g_2, V_2) = (W_3, g_3, V_3)} f_1(W_1, g_1, V_1) f_2(W_2, g_2, V_2).
\]
\end{definition}
\noindent
By Proposition \ref{finite_product_prop}, the sum is finite, and therefore well defined.

\begin{example}
If $f_1$ and $f_2$ are the indicator functions of $(W_1, g_1, V_1)$ and $(W_2, g_2, V_2)$ respectively, then $f_1 * f_2$ is the indicator function of $(W_1, g_1, V_1) \times (W_2, g_2, V_2)$. As a result, we may view $\mathcal{A}$ as a completed version of the monoid algebra (over $\mathbb{Z}$) of bounding triples.
\end{example}
\noindent
There is an action of $GL_\infty(\mathbb{F}_q)$ on $\mathcal{A}$ via
\[
(x \cdot f) (W, g, V) = f( x^{-1} \cdot (W, g, V)).
\]
Because the multiplication is equivariant for the action of $GL_\infty(\mathbb{F}_q)$, the product of two $GL_\infty(\mathbb{F}_q)$-invariant elements is again invariant.
\begin{definition}
Let $\mathcal{A}^{GL_\infty}$ be the subspace of $\mathcal{A}$ consisting of elements that are invariant for the action of $GL_\infty(\mathbb{F}_q)$ and are supported on finitely many $GL_\infty(\mathbb{F}_q)$-orbits of bounding triples. We call $\mathcal{A}^{GL_\infty}$ the \emph{general linear Ivanov-Kerov algebra}.
\end{definition}

\begin{proposition} \label{Ivanov_Kerov_subalg_proposition}
We have that $\mathcal{A}^{GL_\infty}$ is a subalgebra of $\mathcal{A}$.
\end{proposition}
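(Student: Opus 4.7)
Closure under addition is immediate since $GL_\infty(\mathbb{F}_q)$-invariance and finite orbit support are both preserved by sums, so the plan reduces to showing closure under convolution. This requires verifying two properties of $f_1 * f_2$ when $f_1, f_2 \in \mathcal{A}^{GL_\infty}$: $GL_\infty(\mathbb{F}_q)$-invariance, and finite orbit support. The invariance follows at once from the $GL_\infty(\mathbb{F}_q)$-equivariance of bounding triple multiplication established just above; a direct substitution in the convolution formula yields $x \cdot (f_1 * f_2) = (x \cdot f_1) * (x \cdot f_2)$ for every $x \in GL_\infty(\mathbb{F}_q)$.

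For the finite-support statement, I would first reduce by $\mathbb{Z}$-linearity to the case where $f_1 = \mathbf{1}_{O_1}$ and $f_2 = \mathbf{1}_{O_2}$ are indicator functions of single orbits; by the equivariance above, the support of the convolution is then a union of orbits, and the task becomes to bound the number of these orbits. Attach to each orbit $O$ the invariants $c_W(O) = \codim(W)$ and $c_V(O) = \codim(V)$. For any product $(W_1 \cap W_2, g_1 g_2, V_1 \cap V_2)$ with $(W_i, g_i, V_i) \in O_i$, the elementary bounds $\codim(U_1 \cap U_2) \leq \codim(U_1) + \codim(U_2)$ give
\[
\codim(W_1 \cap W_2) \leq c_W(O_1) + c_W(O_2), \quad \codim(V_1 \cap V_2) \leq c_V(O_1) + c_V(O_2),
\]
so every orbit $O_3$ appearing in the support of $f_1 * f_2$ has $c_W(O_3), c_V(O_3) \leq M$ for a constant $M$ depending only on $O_1, O_2$.

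The main obstacle is then to show that only finitely many $GL_\infty(\mathbb{F}_q)$-orbits satisfy such a bound; I would invoke Proposition \ref{standard_shape_prop} here. Every orbit admits a standard-shape representative in some $BT_n$ with block sizes $a, n - a - b - c, b, c$. The quantities $a = \dim(W \cap V^\perp)$ and $b = \dim(V \cap W^\perp)$ are themselves $GL_\infty(\mathbb{F}_q)$-invariants bounded by $c_V$ and $c_W$ respectively, and the middle block size equals $c_V - a = c_W - b$, likewise bounded by $M$. Only the trailing block size $c$ depends on the choice of $n$: enlarging $n$ simply appends more basis vectors from the smooth tail $\{e_{n+1}, e_{n+2}, \ldots\}$ to $E_4$ without changing the orbit. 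Consequently each orbit is determined by the bounded invariants $a, b$ together with the block matrices $A, B, C, D$ of the standard shape, each of which lies in a space of matrices over $\mathbb{F}_q$ of size at most $M \times M$. Since only finitely many such tuples exist, only finitely many orbits with $c_W, c_V \leq M$ can exist, which completes the proof.
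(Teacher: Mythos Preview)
Your proof is correct and follows essentially the same approach as the paper: reduce to indicator functions of orbits, bound the codimensions $\codim(W_3), \codim(V_3)$ of the product by $\codim(W_1)+\codim(W_2)+\codim(V_1)+\codim(V_2)$, and then use Proposition~\ref{standard_shape_prop} to conclude that only finitely many orbits satisfy such a bound. The paper phrases the last step slightly more compactly---it observes that the standard-shape representative already lies in $BT_{n-c}$ with $n-c \leq \codim(W_3)+\codim(V_3)$, so every relevant orbit meets the fixed finite set $BT_m$---whereas you instead parametrise orbits by the bounded data $(a,b,A,B,C,D)$; these are two ways of saying the same thing. One small point of phrasing: the tuple $(a,b,A,B,C,D)$ does not \emph{determine} the orbit (standard shape is not unique), but there is a surjection from such tuples onto orbits, which is all you need for finiteness.
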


\begin{proof}
All we need to check is that the product of the indicator functions of two $GL_\infty(\mathbb{F}_q)$-orbits of bounding triples is supported on finitely many $GL_\infty(\mathbb{F}_q)$-orbits. To see this, we consider the product of two elements in the orbits of fixed bounding triples:
\begin{eqnarray*}
(W_3, g_3, V_3) &=& (x_1 \cdot (W_1, g_1, V_1)) \times (x_2 \cdot (W_2, g_2, V_2)) \\
&=& (x_1^{-T}W_1 \cap x_2^{-T}W_2, x_1g_1x_1^{-1}x_2g_2x_2^{-1}, x_1V_1 \cap x_2 V_2).
\end{eqnarray*}
This bounding triple is contained in $BT_n$ for some sufficiently large $n$. By Proposition \ref{standard_shape_prop}, we may conjugate it to lie in $BT_{n-c}$, where $c$ is the rank of the paring
$(W_3 \cap \mathbb{F}_q^n) \times (V_3 \cap \mathbb{F}_q^n) \to \mathbb{F}_q$. But the rank of this pairing is bounded below by $n - \codim(W_3) - \codim(V_3) \geq n - \codim(W_1) - \codim(W_2) - \codim(V_1) - \codim(V_2)$. We conclude that any bounding triple $(W_3, g_3, V_3)$ arising in this way is conjugate to an element of $BT_m$, where $m = \codim(W_1) + \codim(W_2) + \codim(V_1) + \codim(V_2)$ is independent of $x_1$ and $x_2$. Since $BT_m$ contains finitely many elements, it intersects finitely many orbits.
\end{proof}

\section{Specialisation Homomorphisms} \label{specitalisation_section}

\begin{proposition}
For any $n \in \mathbb{Z}_{\geq 0}$, there is a surjective homomorphism $\Psi_n: \mathcal{A} \to \mathbb{Z}GL_n(\mathbb{F}_q)$ defined by
\[
\Psi_n(f) = \sum_{(W, g, V) \in BT_n} f(W,g,V)g.
\]
\end{proposition}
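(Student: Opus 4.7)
The plan is to verify three things in sequence: the sum defining $\Psi_n$ is finite (so $\Psi_n$ is a well-defined map of abelian groups), $\Psi_n$ respects the convolution, and every element of $\mathbb{Z}GL_n(\mathbb{F}_q)$ lies in the image.

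First I would establish that $BT_n$ is a finite set. The condition $\{e_{n+1}, e_{n+2}, \ldots\} \subseteq V$ forces $V$ to be the preimage of a subspace of the finite-dimensional quotient $\mathbb{F}_q^\infty/\mathbb{F}_q\{e_{n+1}, e_{n+2}, \ldots\} \cong \mathbb{F}_q^n$, and likewise for $W$; there are thus only finitely many possibilities for $V$ and $W$. By Lemma \ref{Bn_lemma}, $g$ lies in the finite group $GL_n(\mathbb{F}_q)$. Consequently the sum in the definition of $\Psi_n(f)$ has only finitely many terms, so $\Psi_n$ is a well-defined $\mathbb{Z}$-linear map.

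Next I would check the multiplicative property. The key observation is that the set $BT_n$ is both closed under the product on bounding triples and closed under factorisation: if $(W_3, g_3, V_3) \in BT_n$ factors as $(W_1, g_1, V_1) \times (W_2, g_2, V_2) = (W_1 \cap W_2, g_1 g_2, V_1 \cap V_2)$, then the containments $V_i \supseteq V_1 \cap V_2 = V_3 \supseteq \{e_{n+1}, e_{n+2}, \ldots\}$ and similarly for $W_i$ force each $(W_i, g_i, V_i) \in BT_n$. Given this, the double sum
\[
\Psi_n(f_1)\Psi_n(f_2) = \sum_{(W_1,g_1,V_1),(W_2,g_2,V_2)\in BT_n} f_1(W_1,g_1,V_1)\, f_2(W_2,g_2,V_2)\, g_1 g_2
\]
can be reorganised by grouping pairs according to their product triple $(W_3, g_3, V_3) \in BT_n$, and the inner coefficient obtained is precisely $(f_1 * f_2)(W_3, g_3, V_3)$ by the very definition of convolution.

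Finally, surjectivity is immediate: for each $g \in GL_n(\mathbb{F}_q)$, the triple $(\mathbb{F}_q\{e_{n+1}, e_{n+2}, \ldots\}, g, \mathbb{F}_q\{e_{n+1}, e_{n+2}, \ldots\})$ lies in $BT_n$ since both $g$ and $g^T$ fix the listed basis vectors, and its indicator function in $\mathcal{A}$ is sent by $\Psi_n$ to $g$. Since the elements of $GL_n(\mathbb{F}_q)$ form a $\mathbb{Z}$-basis of $\mathbb{Z}GL_n(\mathbb{F}_q)$, this shows $\Psi_n$ is surjective. I do not anticipate a real obstacle here; the only point requiring genuine care is the factorisation closure argument in the middle step, which ensures that the convolution sum indexed over all bounding triples restricts cleanly to the finite sum over $BT_n$.
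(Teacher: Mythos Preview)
Your proof is correct and follows essentially the same approach as the paper's: both verify well-definedness via finiteness of $BT_n$, establish the homomorphism property using the fact that $(W_1,g_1,V_1)\times(W_2,g_2,V_2)\in BT_n$ if and only if both factors lie in $BT_n$, and prove surjectivity via the indicator function of $(\mathbb{F}_q\{e_{n+1},\ldots\},g,\mathbb{F}_q\{e_{n+1},\ldots\})$. You spell out the factorisation-closure step a bit more explicitly than the paper does, but the argument is the same.
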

\begin{proof}
As argued in Proposition \ref{finite_product_prop}, there are only finitely many $(W, g, V)$ appearing in the sum, so the formula is well defined. It is clear that the formula respects addition. For multiplication we find
\begin{eqnarray*}
\Psi_n(f_1 * f_2) &=& \sum_{(W, g, V) \in BT_n} \sum_{(W_1, g_1, V_1) \times (W_2, g_2, V_2) = (W, g, V)} f_1(W_1, g_1, V_1) f_2(W_2, g_2, V_2) g \\
&=& \left(\sum_{(W_1, g_1, V_1) \in BT_n} f_1(W_1, g_1, V_1) g_1 \right) \left( \sum_{(W_2, g_2, V_2) \in BT_n} f_2(W_2, g_2, V_2) g_2 \right) \\
&=& \Psi_n(f_1) \Psi_n(f_2).
\end{eqnarray*}
Here we used the fact that $(W_1, g_1, V_1) \times (W_2, g_2, V_2) \in BT_n$ if and only if $(W_1, g_1, V_1) \in BT_n$ and $(W_2, g_2, V_2) \in BT_n$. Now consider the indicator function $f$ of $(W,g,V)$ where $g \in GL_n(\mathbb{F}_q)$ is arbitrary while $V = W = \mathbb{F}_q\{ e_{n+1}, e_{n+2}, \ldots \}$. By definition $\Psi_n(f) = g$, and surjectivity follows by linearity.
\end{proof}

\begin{proposition}
The image of $\mathcal{A}^{GL_\infty}$ under $\Psi_n$ is precisely the centre $Z(\mathbb{Z}GL_n(\mathbb{F}_q))$.
\end{proposition}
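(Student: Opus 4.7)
The plan is to prove the two inclusions separately. For $\Psi_n(\mathcal{A}^{GL_\infty}) \subseteq Z(\mathbb{Z}GL_n(\mathbb{F}_q))$, I would first observe that for $x \in GL_n(\mathbb{F}_q)$ viewed inside $GL_\infty(\mathbb{F}_q)$, conjugation by $x$ preserves $BT_n$, because $x$ fixes every $e_i$ with $i > n$ and therefore preserves the containment $\{e_{n+1}, e_{n+2}, \ldots\} \subseteq V, W$ for bounding triples in $BT_n$. Reindexing the defining sum of $\Psi_n$ by $(W, g, V) \mapsto x \cdot (W, g, V)$ then yields directly
\[
\Psi_n(x \cdot f) \;=\; x \, \Psi_n(f) \, x^{-1}
\]
for every $f \in \mathcal{A}$ and $x \in GL_n(\mathbb{F}_q)$. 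If $f$ is $GL_\infty(\mathbb{F}_q)$-invariant, then in particular $x \cdot f = f$ for all $x \in GL_n(\mathbb{F}_q)$, so $\Psi_n(f)$ commutes with every element of $GL_n(\mathbb{F}_q)$ and hence lies in the centre.

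For the reverse inclusion, it suffices to exhibit each conjugacy class sum of $GL_n(\mathbb{F}_q)$ in the image, since class sums form a $\mathbb{Z}$-basis of $Z(\mathbb{Z}GL_n(\mathbb{F}_q))$. Given $g_0 \in GL_n(\mathbb{F}_q)$, I would pick the ``wide'' bounding triple $(W_0, g_0, V_0)$ with $W_0 = V_0 = \mathbb{F}_q\{e_{n+1}, e_{n+2}, \ldots\}$, and let $f$ be the indicator function of its $GL_\infty(\mathbb{F}_q)$-orbit. By construction $f$ is $GL_\infty(\mathbb{F}_q)$-invariant and supported on a single orbit, so $f \in \mathcal{A}^{GL_\infty}$. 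To identify $\Psi_n(f)$, I would invoke Proposition \ref{gl_inf_gl_n_conjugacy_prop}, which says that the elements of $BT_n$ inside the $GL_\infty(\mathbb{F}_q)$-orbit of $(W_0, g_0, V_0)$ form a single $GL_n(\mathbb{F}_q)$-orbit. Since every $A \in GL_n(\mathbb{F}_q)$ fixes $W_0$ and $V_0$ setwise, this orbit equals $\{(W_0, A g_0 A^{-1}, V_0) : A \in GL_n(\mathbb{F}_q)\}$, and so
\[
\Psi_n(f) \;=\; \sum_{h \in C(g_0)} h,
\]
which is precisely the class sum of $g_0$ in $GL_n(\mathbb{F}_q)$.

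The only substantive point, and the one I expect to be the main obstacle, is ensuring that no extra triples in $BT_n$ slip into the orbit via conjugation by elements of $GL_\infty(\mathbb{F}_q) \setminus GL_n(\mathbb{F}_q)$; without this, the sum above might overcount and produce something other than the class sum. This is exactly what Proposition \ref{gl_inf_gl_n_conjugacy_prop} rules out, so once that result is in hand the remainder of the argument reduces to the bookkeeping sketched above.
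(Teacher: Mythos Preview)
Your argument is correct and follows essentially the same two-step structure as the paper: establish $GL_n(\mathbb{F}_q)$-equivariance of $\Psi_n$ to get the inclusion into the centre, then hit each class sum using the indicator function of a single $GL_\infty(\mathbb{F}_q)$-orbit together with Proposition~\ref{gl_inf_gl_n_conjugacy_prop}.

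The one genuine difference is the choice of bounding triple in the surjectivity step. The paper uses the \emph{tight} triple for $g_0$, so that $W$ and $V$ are determined by $g_0$ and the $GL_n$-orbit is parametrised by $GL_n$-conjugates of $g_0$ automatically. You instead use the ``wide'' triple $(W_0,g_0,V_0)$ with $W_0=V_0=\mathbb{F}_q\{e_{n+1},e_{n+2},\ldots\}$, which is fixed setwise by all of $GL_n(\mathbb{F}_q)$; this makes the description of the $GL_n$-orbit as $\{(W_0,Ag_0A^{-1},V_0)\}$ immediate, and Proposition~\ref{gl_inf_gl_n_conjugacy_prop} then rules out any further triples in $BT_n$. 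Both routes are clean; yours avoids checking that the tight triple for $g_0\in GL_n(\mathbb{F}_q)$ actually lies in $BT_n$, while the paper's choice dovetails with later arguments (e.g.\ Theorem~\ref{gl_structure_constant_theorem}) where tight triples are the natural objects.
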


\begin{proof}
Firstly we check that the map $\Psi_n$ is $GL_n(\mathbb{F}_q)$-equivariant, where the action on $\mathbb{Z}GL_n(\mathbb{F}_q)$ is conjugation. Let $x \in GL_n(\mathbb{F}_q)$. Then
\begin{eqnarray*}
\Psi_n( x \cdot f) &=& \sum_{(W, g, V) \in BT_n} f(x^{T}W, x^{-1}gx, x^{-1}V) g \\
&=& \sum_{(W, g, V) \in BT_n} f(W, g, V) xgx^{-1} \\
&=& x \Psi_n(f) x^{-1}.
\end{eqnarray*}
In order to reindex the sum we used the fact that $x^{-T}W$ and $xV$ contain $\{e_{n+1}, e_{n+2}, \ldots\}$ if and only if $W$ and $V$ do, and also that $g \in GL_n(\mathbb{F}_q)$ if and only if $xgx^{-1} \in GL_n(\mathbb{F}_q)$.
\newline \newline \noindent
Since a $GL_\infty(\mathbb{F}_q)$-orbit is a union of $GL_n(\mathbb{F}_q)$ orbits, a function $f \in \mathcal{A}^{GL_\infty}$ is constant on $GL_n(\mathbb{F}_q)$-orbits. It follows that $\Psi_n(f)$ is fixed under conjugation by $GL_n(\mathbb{F}_q)$, so it is in the centre of the group algebra. To see that this map surjects onto the centre, it is enough to show that conjugacy-class sums are in the image. For $x \in GL_n(\mathbb{F}_q)$, we take $f$ to be the indicator function of tight bounding triples $(W, g, V)$ where $g$ is conjugate to $x$ in $GL_\infty(\mathbb{F}_q)$ (recall that for tight bounding triples, $g$ determines $W$ and $V$, so there is only one for each $g$). Proposition \ref{gl_inf_gl_n_conjugacy_prop} shows that the only such bounding triples $(W, g, V)$ in $BT_n$ are those where $g$ and $x$ are conjugate by elements of $GL_n(\mathbb{F}_q)$. Then $\Psi_n(f)$ is precisely the sum of all $g$ that are conjugate to $x$ and contained in $GL_n(\mathbb{F}_q)$.
\end{proof}

\begin{proposition} \label{specialisation_function_gl_proposition}
Let $f$ be the indicator function of the $GL_\infty(\mathbb{F}_q)$-orbit of $(W, g, V)$, where $(W, g, V) \in BT_n$. Let $a,b,c$ have the same meanings as in Proposition \ref{standard_shape_prop}, which shows that up to conjugation we may assume $(W,g,V) \in BT_{n-c}$. Let $\bs\mu$ be the type of $g$ viewed as an element of $GL_{n-c}(\mathbb{F}_q)$. Suppose that $m \geq n-c$, and let $Cl(g)$ be the sum of all elements in $GL_{m}(\mathbb{F}_q)$ that are conjugate to $g$. Then
\[
\Psi_m(f) = K \qbinom{m-n+c+h}{h}_q q^{(m-n+c)(2k-h-a-b)} Cl(g),
\]
where  $k = l(\bs\mu(t-1))$ and $h = m_1(\bs\mu(t-1))$, and $K$ is an integer independent of $m$.
\end{proposition}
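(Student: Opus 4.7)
The natural approach is orbit-stabiliser. Since $f$ is the indicator function of the orbit $\mathcal{O} = GL_\infty(\mathbb{F}_q) \cdot (W,g,V)$, the sum defining $\Psi_m(f)$ ranges over $\mathcal{O} \cap BT_m$. Proposition \ref{gl_inf_gl_n_conjugacy_prop} (applied with $n$ replaced by $m$) identifies this intersection with a single $GL_m(\mathbb{F}_q)$-orbit, which is nonempty since $(W,g,V) \in BT_{n-c} \subseteq BT_m$. Grouping the triples in this orbit by their middle coordinate and noting that each element of the $GL_m$-conjugacy class of $g$ occurs equally often, one obtains
\[
\Psi_m(f) = \frac{|C_{GL_m}(g)|}{|\mathrm{Stab}_{GL_m}(W,g,V)|}\, Cl(g),
\]
so the task reduces to computing this ratio.

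Write $d = m - n + c$. Corollary \ref{gln_centraliser_corollary} directly provides
\[
|C_{GL_m}(g)| = |C_{GL_{n-c}}(g)|\cdot q^{d(2k+d)} \prod_{i=h+1}^{h+d}(1-q^{-i}).
\]
The main step is the stabiliser analysis. I would place $(W,g,V) \in BT_m$ in the standard shape of Proposition \ref{standard_shape_prop}, so that the blocks have sizes $(a,\, n-c-a-b,\, b,\, d)$ with $E_4$ the only block that grows with $m$. The conditions $xV = V$ and $x^TW = W$ force the blocks $x_{12}, x_{13}, x_{14}, x_{23}, x_{24}, x_{42}, x_{43}$ to vanish, and a direct expansion of $xg = gx$ yields four relations constraining $x_{11}, x_{21}, x_{22}, x_{32}, x_{33}$ while leaving $x_{31}$, $x_{34}$, $x_{41}$, $x_{44}$ unconstrained. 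Invertibility of $x$ is equivalent to invertibility of $x_{11}, x_{22}, x_{33}, x_{44}$ (seen by permuting the index order to $(1,2,4,3)$, which puts $x$ in block lower triangular form with those diagonal blocks).

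The only blocks whose sizes depend on $m$ are $x_{34}$ (a free $b \times d$ matrix), $x_{41}$ (a free $d \times a$ matrix), and $x_{44} \in GL_d(\mathbb{F}_q)$, hence
\[
|\mathrm{Stab}_{GL_m}(W,g,V)| = |S_0|\cdot q^{d(a+b)}\cdot |GL_d(\mathbb{F}_q)|,
\]
where $|S_0|$ counts the valid tuples $(x_{11}, x_{21}, x_{22}, x_{31}, x_{32}, x_{33})$ and is manifestly independent of $d$. Substituting $|GL_d(\mathbb{F}_q)| = q^{d^2}\prod_{i=1}^d(1-q^{-i})$, the powers of $q$ collapse to $q^{d(2k-h-a-b)}$, and the quotient of the remaining products equals $q^{-hd}\qbinom{d+h}{h}_q$ since $\qbinom{d+h}{h}_q = \prod_{i=h+1}^{h+d}(q^i-1)/\prod_{i=1}^d(q^i-1)$. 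Setting $K = |C_{GL_{n-c}}(g)|/|S_0|$ — an integer, since it is the size of the orbit of $(W,V)$ under the action of $C_{GL_{n-c}(\mathbb{F}_q)}(g)$ by $h \cdot (W,V) = (h^{-T}W, hV)$ — yields the claimed formula. The only real obstacle is the stabiliser computation; once one verifies that $x_{34}, x_{41}, x_{44}$ decouple completely from the commutator constraints, the remaining algebra is routine.
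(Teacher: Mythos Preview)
Your proof is correct and follows essentially the same route as the paper: orbit--stabiliser reduces the coefficient to $|C_{GL_m}(g)|/|\mathrm{Stab}_{GL_m}(W,g,V)|$, the stabiliser is analysed via the standard-shape block decomposition with the key observation that the blocks $x_{34}, x_{41}, x_{44}$ decouple from the commutator constraint, and Corollary~\ref{gln_centraliser_corollary} supplies the centraliser ratio. The paper phrases the stabiliser as $C_{GL_m}(g) \cap \mathrm{Stab}_m(W,V)$ rather than as the triple stabiliser, but this is only a difference in packaging. One small wording issue: your sentence about the powers of $q$ collapsing reads as though the $q^{-hd}$ from the product ratio is applied \emph{in addition} to an exponent already containing $-h$; the main $q$-power before absorbing the product ratio is $q^{d(2k-a-b)}$, and it is the factor $q^{-hd}$ from $\prod_{i=h+1}^{h+d}(1-q^{-i})/\prod_{i=1}^d(1-q^{-i})$ that brings it down to $q^{d(2k-h-a-b)}$.
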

\begin{proof}
It is immediate that that $\Psi_m(f) = P \cdot Cl(g)$ for some integer $P$. The coefficient $P$ is equal to the number of different pairs of subspaces $(W^\prime, V^\prime)$ such that $(W^\prime, g, V^\prime)$ and $(W, g, V)$ are $GL_{m}(\mathbb{F}_q)$-conjugate. Explicitly, for some $x \in GL_{m}(\mathbb{F}_q)$,
\begin{eqnarray*}
V^\prime &=& xV \\
g &=& xgx^{-1} \\
W^\prime &=& x^T W.
\end{eqnarray*}
The condition $g = xgx^{-1}$ is saying that $x$ is in the centraliser $C_{GL_{m}(\mathbb{F}_q)}(g)$. So $P$ is the size of the orbit of the pair $(W,V)$ under the action of $C_{GL_{m}(\mathbb{F}_q)}(g)$. Then, the orbit-stabiliser relation implies
\[
P = \frac{|C_{GL_{m}(\mathbb{F}_q)}(g)|}{|C_{GL_{m}(\mathbb{F}_q)}(g) \cap \mathrm{Stab}_m(W,V)|},
\]
where $\mathrm{Stab}_m(W,V)$ is the subgroup of $GL_{m}(\mathbb{F}_q)$ stabilising the pair $(W,V)$ under the action $x \cdot (W,V) = (x^{-T}W, xV)$. By the same computation as in Proposition \ref{gl_inf_gl_n_conjugacy_prop}, taking $g$ to be in standard shape 
\[
g=
\begin{bmatrix}
\Id & 0 & 0 & 0\\
C & A & 0 & 0 \\
D & B & \Id & 0 \\
0 & 0 & 0 & \Id
\end{bmatrix},
\]
we find $\mathrm{Stab}_m(V,W)$ consists of block matrices of the form
\[
x = 
\begin{bmatrix}
x_{11} & 0 & 0 & 0\\
x_{21} & x_{22} & 0 & 0 \\
x_{31} & x_{32} & x_{33} & x_{34} \\
x_{41} & 0 & 0 & x_{44}
\end{bmatrix},
\]
where the block sizes are $a, n-a-b-c, b, m-n+c$. To compute the size of the intersection of $C_{GL_{m}(\mathbb{F}_q)}(g)$ and $\mathrm{Stab}_m(W,V)$, we use the fact that the condition of commuting with $g$ does not depend on the blocks $x_{14}$, $x_{43}$, $x_{44}$. This means that any element $x \in C_{GL_{m}(\mathbb{F}_q)}(g) \cap \mathrm{Stab}_m(V,W)$ is obtained by extending some element
\[
x_{small} = 
\begin{bmatrix}
x_{11} & 0 & 0\\
x_{21} & x_{22} & 0\\
x_{31} & x_{32} & x_{33}
\end{bmatrix} \in GL_{n-c}(\mathbb{F}_q) \cap C_{GL_{m}(\mathbb{F}_q)}(g) \cap \mathrm{Stab}_m(V,W)
\]
with arbitrary choices of $x_{41} \in \mathbb{F}_q^{a (m-n+c)}$, $x_{34} \in \mathbb{F}_q^{(m-n+c)b}$, $x_{44} \in GL_{m-n+c}(\mathbb{F}_q)$. So if we let
\[
N = |GL_{n-c}(\mathbb{F}_q) \cap C_{GL_{m}(\mathbb{F}_q)}(g) \cap \mathrm{Stab}_m(V,W)|
\]
be the number of possible choices of $x_{small}$, then
\[
|C_{GL_{m}(\mathbb{F}_q)}(g) \cap \mathrm{Stab}_m(V,W)| = N q^{(a+b) (m-n+c)} \prod_{i=1}^{m-n+c}(q^{m-n+c}-q^{m-n+c-i}),
\]
where $N$ does not depend on $m$.
\newline \newline \noindent
The conjugacy class of $g$ in $GL_{m}(\mathbb{F}_q)$ is labelled by the multipartition $\bs \mu \cup (1^{m-n+c})_{t-1}$. By Corollary \ref{gln_centraliser_corollary}, 
\[
\frac{|C_{GL_{m}(\mathbb{F}_q)}(g)|}{|C_{GL_{n-c}(\mathbb{F}_q)}(g)|} = q^{(m-n+c)(2k + m-n+c)} \prod_{i=h+1}^{h+m-n+c} (1 - q^{-i}),
\]
Since $N$ is the order of a subgroup of $C_{GL_{n-c}(\mathbb{F}_q)}(g)$, it follows that $K = |C_{GL_{n-c}(\mathbb{F}_q)}(g)|/N$ is an integer independent of $m$. Finally, we have
\begin{eqnarray*}
P &=& \frac{|C_{GL_{m}(\mathbb{F}_q)}(g)|}{|C_{GL_{m}(\mathbb{F}_q)}(g) \cap \mathrm{Stab}_m(W,V)|} \\
&=&
\frac{|C_{GL_{n-c}(\mathbb{F}_q)}(g)|}{N q^{(a+b) (m-n+c)} \prod_{i=1}^{m-n+c}(q^{m-n+c}-q^{m-n+c-i})}\frac{|C_{GL_{m}(\mathbb{F}_q)}(g)|}{|C_{GL_{n-c}(\mathbb{F}_q)}(g)|} \\
&=&
K \frac{q^{(m-n+c)(2k + m-n+c)} \prod_{i=h+1}^{h+m-n+c} (1 - q^{-i})}{q^{(a+b) (m-n+c)} \prod_{i=1}^{m-n+c}(q^{m-n+c}-q^{m-n+c-i})} \\
&=&
K q^{(m-n+c)(2k-a-b-h)}
\frac{\prod_{i=h+1}^{m-n+c+h} (q^i - 1)}{\prod_{i=1}^{m-n+c} (q^i - 1)} \\
&=& 
K  \qbinom{m-n+c+h}{h}_q q^{(m-n+c)(2k-a-b-h)}.
\end{eqnarray*}
\end{proof}

\begin{lemma} \label{lower_case_specialisation_correctness_lemma}
The formula in Proposition \ref{specialisation_function_gl_proposition} is correct not just for $m \geq n-c$, but for all integers $m \geq 0$ (we interpret $Cl(g)$ to be zero if there are no elements of $GL_{m}(\mathbb{F}_q)$ conjugate to $g$).
\end{lemma}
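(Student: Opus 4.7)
Since Proposition \ref{specialisation_function_gl_proposition} already establishes the formula for $m \geq n - c$, my plan is to show that both sides vanish for $0 \leq m < n - c$.

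The key observation is that the integer $n - c$ is intrinsic to the orbit of $(W, g, V)$. Two checks are needed. First, enlarging the ambient $n$ to $n + 1$ adds the vector $e_{n+1}$ to both $W \cap \mathbb{F}_q^n$ and $V \cap \mathbb{F}_q^n$; since $\langle e_{n+1}, e_{n+1} \rangle = 1$ the rank $c$ grows by exactly $1$, leaving $n - c$ unchanged. Second, because the $GL_\infty(\mathbb{F}_q)$-action preserves the pairing $\langle \cdot, \cdot \rangle$, and because for $n$ chosen large enough that the conjugating element $x$ lies in $GL_n(\mathbb{F}_q)$ we have $(x^{-T}W) \cap \mathbb{F}_q^n = x^{-T}(W \cap \mathbb{F}_q^n)$ (and similarly for $V$), the rank $c$ is preserved under conjugation. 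With $n - c$ recognised as an orbit invariant, suppose some conjugate $(W', g', V')$ of $(W, g, V)$ lay in $BT_m$; computing the same invariant for $(W', g', V')$ with ambient dimension $m$ would produce $m - c_m = n - c$ for some nonnegative $c_m$, and this forces $m \geq n - c$. Hence for $m < n - c$ the orbit misses $BT_m$ entirely and $\Psi_m(f) = 0$, so the left-hand side vanishes.

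For the right-hand side I would split the range $0 \leq m < n - c$ at $m = n - c - h$. In the upper piece $n - c - h \leq m < n - c$, the top of $\qbinom{m - n + c + h}{h}_q$ lies in $\{0, 1, \ldots, h - 1\}$, and the Gaussian binomial vanishes there. In the lower piece $0 \leq m < n - c - h$, no element of $GL_m(\mathbb{F}_q)$ can be $GL_\infty(\mathbb{F}_q)$-conjugate to $g$: such an element would need type $\bs\mu'$ agreeing with $\bs\mu$ off of $t - 1$ and satisfying $m_1(\bs\mu'(t - 1)) = h - (n - c - m) < 0$. Hence $Cl(g) = 0$ by the stated convention, and the right-hand side vanishes.

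The only step requiring real care is the invariance of $n - c$, which hinges on the compatibility between the twisted conjugation action and the bilinear pairing; the rest is a straightforward dichotomy on whether the Gaussian binomial or the class sum $Cl(g)$ is responsible for the vanishing.
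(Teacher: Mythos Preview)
Your proof is correct and follows essentially the same approach as the paper's. Both arguments show each side vanishes for $m < n-c$: the left side because the orbit of $(W,g,V)$ misses $BT_m$, and the right side by a dichotomy between the Gaussian binomial and $Cl(g)$ vanishing. Your treatment of the left-hand side is arguably cleaner than the paper's---you explicitly verify that $n-c$ is an orbit invariant (stable under enlarging $n$ and under conjugation), whereas the paper's argument literally only shows that the chosen standard-shape representative itself is not in $BT_m$, leaving the step from one representative to the whole orbit implicit. Your right-hand side argument, splitting at $m = n-c-h$, is logically equivalent to the paper's contrapositive phrasing (if $Cl(g) \neq 0$ then $0 \leq m-n+c+h < h$, so the binomial vanishes).
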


\begin{proof}
If $m < n-c$, then $\Psi_m(f) = 0$ for the following reason. Assume that $(W, g, V) \in BT_{n-c}$ so that the rank of the pairing on $(W \cap \mathbb{F}_q^{n-c}) \otimes (V \cap \mathbb{F}_q^{n-c})$ is zero. Then if $(W,g,V) \in BT_m$ with $m < n-c$, let the rank of the pairing on $(W \cap \mathbb{F}_q^m) \otimes (V \cap \mathbb{F}_q^m)$ be $c^\prime \geq 0$. Thus the rank of the pairing on $(W \cap \mathbb{F}_q^{n-c}) \otimes (V \cap \mathbb{F}_q^{n-c})$ is $c^\prime - (m-n+c) > 0$, a contradiction. On the other hand, the formula in Proposition \ref{specialisation_function_gl_proposition} involves the terms $\qbinom{m-n+c+h}{h}_q$ and $Cl(g)$, and we now argue that one of these must vanish if $m < n-c$.
\newline \newline \noindent
For $Cl(g)$ to be nonzero in $\mathbb{Z}GL_{m}(\mathbb{F}_q)$, we must have that $|\bs\mu| - m_1(\bs\mu(t-1)) \leq m$. But $|\bs\mu| = n-c$ and $m_1(\bs\mu(t-1)) = h$, so we get $m-n+c+h \geq 0$. Since $m < n-c$, we in fact have $ 0 \leq m-n+c+h < h$, from which it follows that $\qbinom{m-n+c+h}{h}_q = 0$.
\end{proof}

\begin{proposition} \label{exponent_nonnegativity_proposition}
The quantity $2k-a-b-h$ appearing in Proposition \ref{specialisation_function_gl_proposition} is non-negative.
\end{proposition}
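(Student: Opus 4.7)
The plan is to work with the explicit standard shape provided by Proposition \ref{standard_shape_prop}. After the conjugation described there and the descent to $BT_{n-c}$ used in Lemma \ref{lower_case_specialisation_correctness_lemma}, I may regard $g$ as an element of $GL_{n-c}(\mathbb{F}_q)$ acting on $\mathbb{F}_q^{n-c} = E_1 \oplus E_2 \oplus E_3$, with $V_0 := E_3$ of dimension $b$ and $W_0 := E_1$ of dimension $a$ playing the role of the essential parts of $V$ and $W$. Setting $N = g - \Id$, the block form of $g$ immediately gives $V_0 \subseteq \ker N$, while the bounding condition $g^T w = w$ for $w \in W_0$ (together with the symmetry of the pairing) yields $\mathrm{im}(N) \subseteq W_0^\perp$ inside $\mathbb{F}_q^{n-c}$; note also that $V_0 \subseteq W_0^\perp$.

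The key step is a dimension count inside $W_0^\perp$. By Grassmann's formula applied to $V_0$ and $\mathrm{im}(N)$,
\[
\dim(V_0 \cap \mathrm{im}\, N) \geq \dim V_0 + \dim \mathrm{im}\, N - \dim W_0^\perp.
\]
Here $\dim V_0 = b$, $\dim \mathrm{im}\, N = (n-c) - k$ by rank-nullity (using $k = \dim \ker N$), and $\dim W_0^\perp = (n-c) - a$, so $\dim(V_0 \cap \mathrm{im}\, N) \geq a + b - k$.

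Independently, since $V_0 \subseteq \ker N$, we have $V_0 \cap \mathrm{im}\, N \subseteq \ker N \cap \mathrm{im}\, N$. I would compute the latter using the Jordan structure of $g$: because $N$ is invertible on the complement of the generalised $1$-eigenspace $K$ of $g$, the intersection $\ker N \cap \mathrm{im}\, N$ coincides with that of $N|_K$, whose Jordan type is $\bs\mu(t-1)$. Working block-by-block, each Jordan block of size $\geq 2$ contributes $1$ to this intersection (the top of the image equals the socle), while each size-$1$ block contributes $0$, giving $\dim(\ker N \cap \mathrm{im}\, N) = k - h$.

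Combining the two inequalities yields $a + b - k \leq k - h$, which is exactly $2k - a - b - h \geq 0$. I do not foresee any real obstacle here; the only slightly delicate point is packaging the standard-shape description into the three clean inclusions $V_0 \subseteq \ker N$, $\mathrm{im}\, N \subseteq W_0^\perp$, and $V_0 \subseteq W_0^\perp$ before invoking Grassmann, and recognising the Jordan-block count $k - h$ on the other side.
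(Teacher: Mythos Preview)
Your argument is correct and substantially more elementary than the paper's own proof. The paper argues via parabolic induction and Hall polynomials: it identifies $\bs\mu(t-1)$ as being obtained from the type of the middle block $A$ by successively adding a vertical strip of size $a$ and then one of size $b$ (using the vanishing properties of the Hall polynomials $g_{\mu,(1^d)}^\lambda$), and then performs a row-by-row combinatorial bookkeeping to see that $2k-a-b-h$ decomposes as a sum of non-negative contributions. Your proof instead stays entirely inside linear algebra: the three inclusions $V_0\subseteq\ker N$, $\mathrm{im}\,N\subseteq W_0^\perp$, $V_0\subseteq W_0^\perp$ together with Grassmann's formula and the Jordan-block identity $\dim(\ker N\cap\mathrm{im}\,N)=k-h$ give the inequality in one line. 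What the paper's approach buys is finer structural information---in particular its per-row decomposition immediately yields the subsequent remark that tight bounding triples force $2k-a-b-h=0$, and it explains \emph{why} the quantity behaves well in terms of how $\bs\mu(t-1)$ is built from the block $A$. Your approach, on the other hand, is self-contained, avoids invoking Chapter~II and Chapter~IV of Macdonald entirely, and would transplant more readily to the classical-group setting of Section~\ref{classical_groups_section} (where indeed the paper ends up reusing only the inequality, not the vertical-strip description). One minor wording point: the descent to $BT_{n-c}$ is really set up in Proposition~\ref{standard_shape_prop} and the statement of Proposition~\ref{specialisation_function_gl_proposition}, not in Lemma~\ref{lower_case_specialisation_correctness_lemma}.
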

\begin{proof}
Viewing $g$ as an element of $GL_{n-c}(\mathbb{F}_q)$, we have the block matrix form
\[
g = \begin{bmatrix}
\Id & 0 & 0\\
C & A & 0 \\
D & B & \Id 
\end{bmatrix}
\]
with blocks of size $a$, $n-a-b-c$, $b$. We need to determine the conjugacy classes in $GL_{n-c}(\mathbb{F}_q)$ that have an element of the above form. Problems of this nature are addressed in Chapter 4, Section 3 of \cite{Macdonald1995}, where it is shown that this reduces to multiplication of Hall-Littlewood polynomials whose indexing partitions come from the types of the diagonal blocks (in this case $\Id, A, \Id$). In more detail, let $P$ be the parabolic subgroup consisting the the block shape of the matrix above. Then inflating the class function which is the indicator function of the conjugacy class of $\mathrm{diag}(\Id, A, \Id) \in GL_{a}(\mathbb{F}_q) \times GL_{n-a-b-c}(\mathbb{F}_q) \times GL_{b}(\mathbb{F}_q)$ to $P$, and the inducing up to $GL_{n-c}(\mathbb{F}_q)$ we obtain a class function supported on elements of $GL_{n-c}(\mathbb{F}_q)$ conjugate to a matrix of the above form. It turns out that this parabolic induction operation is described by Hall polynomials $g_{\mu, \nu}^\lambda$. If $\pi_1, \pi_2$ are indicator functions of conjugacy classes of types $\bs\mu^{(1)}, \bs\mu^{(2)}$ respectively, then the parabolic induction of $\pi_1$ and $\pi_2$ contains the indicator function of $\bs\mu$ with coefficient
\[
\prod_{r \in \Phi_q} g_{\bs\mu^{(1)}(r), \bs\mu^{(2)}(r)}^{\bs\mu(r)} (q^{\deg(r)}),
\]
by Equation 3.3 of Chapter 4, Section 3 of \cite{Macdonald1995}. In our situation, we have a three-fold product, where two of the factors (corresponding to identity matrices) have types vanishing away from $r(t) = t-1$, where they take the values $(1^a)$ and $(1^b)$. So the above product reduces just to the case where $r(t)=t-1$, and by Equation 4.6 of Chapter 2, Section 4 of \cite{Macdonald1995}, $g_{\mu, (1^d)}^\lambda$ is zero unless $\lambda/\mu$ is a vertical strip of size $d$.
\newline \newline \noindent
If $\bs\nu$ is the type of the matrix $A$ and $\bs\mu$ be the type of $g$, then $\bs \mu(r) = \bs \nu(r)$ for all irreducible polynomials $r$ different from $t-1$. In terms of Young diagrams, $\bs \mu(t-1)$ is obtained from $\bs \nu (t-1)$ by adding $a$ boxes, no two in the same row, and then adding $b$ boxes, no two in the same row. For each row (i.e. part) of $\bs\nu(t-1)$, let us consider the cases where either 2, 1, or 0 boxes are added in order to obtain $\bs\mu(t-1)$.
\newline \newline \noindent
To each row we associate a number computed as follows. We begin with 2, and subtract the number of boxes that were added to the row in the above procedure. If the result has length 1, we also subtract 1. Then the sum of all these numbers will be exactly $2k - a- b- h$. The only way one of these numbers could be negative is if two boxes were added and the resulting length was 1. This is impossible, so each individual number is non-negative. In particular, their sum, $2k - a - b- h$ is non-negative.
\end{proof}

\begin{remark}
In fact,  $2k - a - b - h$ is zero if the original bounding triple was tight because this forces two boxes to be added to each part of $\bs\nu(t-1)$, so the number associated to each row is zero.
\end{remark}

\section{Stable Centres for General Linear Groups}\label{stable_centres_for_general_linear_groups}
\noindent
We now use the theory we have developed to explicitly address stability properties of centres of group algebras of $GL_n(\mathbb{F}_q)$ as $n$ varies. First of all, we need a way to discuss conjugacy classes of these groups for all $n$ at once.

\begin{definition}[Subsection 2.3 \cite{Wan_Wang}]\label{modified_type_definition}
If $g \in GL_n(\mathbb{F}_q)$, we define the \emph{modified type} of $g$ to be the multipartition $\bs \nu$ obtained from the type, $\bs \mu$, of $g$ in the following way. For each irreducible polynomial $r$ other than $t-1$, $\bs\nu(r) = \bs\mu(r)$. On the other hand, the partition $\bs\nu(t-1)$ is obtained form $\bs\mu(t-1)$ by subtracting $1$ from each part (i.e. we delete the first column in the Young diagram representation of $\bs\mu(t-1)$). 
\end{definition}

\begin{example} \label{modified_type_example}
The identity element of $GL_n(\mathbb{F}_q)$ has type $\bs\mu$ where $\bs\mu(t-1)=(1^n)$ and $\bs\mu(r)$ is the empty partition for $r \neq t-1$, and so its modified type is the empty multipartition.
\end{example}
\begin{example} \label{modified_type_example_2}
Consider the matrix $g$ of type $\bs\mu$ in Example \ref{basic_example}. The modified type $\bs\nu$ of $g$ is $\bs\nu(t-1) = (2)$ and $\bs\nu(f) = \bs\mu(f)$ for all other polynomials $f(t) \neq t$ irreducible over $\mathbb{Q}$.
\end{example}
\noindent
The reason for defining the modified type is that it is invariant under the embedding $GL_n(\mathbb{F}_q) \rightarrow GL_{n+1}(\mathbb{F}_q)$. We may recover the type $\bs\mu$ from the modified type $\bs\nu$ provided $n$ is known: we must add $1$ to each part of $\bs\nu(t-1)$ (possibly including some parts of size zero), so that the total size increases to $n$. This means that $n - |\bs\nu|$ parts of $\bs\nu(t-1)$ are incremented, and so this is possible precisely when the number of parts of $\bs\nu(t-1)$ is less than or equal to $n - |\bs\nu|$. (If this condition does not hold, then $GL_n(\mathbb{F}_q)$ does not contain any elements of modified type $\bs \nu$.)

\begin{definition}
Let $X_{\bs\nu, n}$ denote the sum of all elements of modified type $\bs\nu$ in $GL_n(\mathbb{F}_q)$, viewed as an element of $Z(\mathbb{Z}GL_n(\mathbb{F}_q))$.
\end{definition}
\noindent
Note that $X_{\bs\nu, n}$ is either the sum of elements in a conjugacy class, or zero, according to whether $GL_n(\mathbb{F}_q)$ does, or does not, have elements of modified type $\bs\nu$. In either case, it is a central element of the group algebra.

\begin{theorem} \label{gl_structure_constant_theorem}
There is a family of elements $r_{\bs\mu, \bs\nu}^{\bs\lambda} \in \mathcal{R}_q$ interpolating the structure constants of $Z(\mathbb{Z}GL_{m}(\mathbb{F}_q))$ in the following way:
\[
X_{\bs\mu,m} X_{\bs\nu, m} = \sum_{\bs\lambda} r_{\bs\mu, \bs\nu}^{\bs\lambda}([m]_q) X_{\bs\lambda, m}.
\]
Here, $\bs\mu, \bs\nu$ are arbitrary multipartitions and the sum ranges over all multipartitions.
\end{theorem}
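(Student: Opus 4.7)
The plan is to pull the product of class sums back to the general linear Ivanov-Kerov algebra $\mathcal{A}^{GL_\infty}$, whose multiplication has $\mathbb{Z}$-valued structure constants supported on only finitely many $GL_\infty(\mathbb{F}_q)$-orbits (Proposition \ref{Ivanov_Kerov_subalg_proposition}), and then transfer the resulting identity back to $Z(\mathbb{Z}GL_m(\mathbb{F}_q))$ via the explicit specialisation formula of Proposition \ref{specialisation_function_gl_proposition}.

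First, for each modified type $\bs\mu$, I would fix a representative tight bounding triple $(W, g, V)$ whose underlying element has modified type $\bs\mu$, and set $F_{\bs\mu} \in \mathcal{A}^{GL_\infty}$ to be the indicator function of its $GL_\infty(\mathbb{F}_q)$-orbit. By Proposition \ref{specialisation_function_gl_proposition} together with the observation following Proposition \ref{exponent_nonnegativity_proposition} (which forces $2k - h - a - b = 0$ for a tight triple), one has
\[
\Psi_m(F_{\bs\mu}) \;=\; c_{\bs\mu}([m]_q)\, X_{\bs\mu, m}, \qquad m \geq 0,
\]
with $c_{\bs\mu}(x) \in \mathcal{R}_q$ an integer multiple of a shifted $q$-binomial coefficient of the kind furnished by Lemma \ref{shifted_qivp_lemma}. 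I would then expand $F_{\bs\mu}\, F_{\bs\nu} = \sum_{\mathcal{O}} N_{\bs\mu, \bs\nu}^{\mathcal{O}}\, F_{\mathcal{O}}$ as a finite $\mathbb{Z}$-linear combination of orbit indicators (Proposition \ref{Ivanov_Kerov_subalg_proposition}, with coefficients independent of $m$), apply $\Psi_m$ to both sides, and invoke Proposition \ref{specialisation_function_gl_proposition} on each $\Psi_m(F_{\mathcal{O}})$. Collecting orbits by the modified type $\bs\lambda$ of their underlying group element yields
\[
c_{\bs\mu}([m]_q)\, c_{\bs\nu}([m]_q)\, X_{\bs\mu, m}\, X_{\bs\nu, m} \;=\; \sum_{\bs\lambda} Q^{\bs\lambda}_{\bs\mu, \bs\nu}([m]_q)\, X_{\bs\lambda, m}
\]
for certain $Q^{\bs\lambda}_{\bs\mu, \bs\nu} \in \mathcal{R}_q$. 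Comparing coefficients on each nonzero $X_{\bs\lambda, m}$ then forces $r_{\bs\mu,\bs\nu}^{\bs\lambda}(m)$ to equal $Q^{\bs\lambda}_{\bs\mu, \bs\nu}([m]_q) / (c_{\bs\mu}([m]_q)\, c_{\bs\nu}([m]_q))$ whenever the denominator is nonzero.

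The hard part will be promoting this a priori rational expression to the evaluation of a genuine element of $\mathcal{R}_q$, which amounts to showing that $c_{\bs\mu}(x)\, c_{\bs\nu}(x)$ divides $Q^{\bs\lambda}_{\bs\mu, \bs\nu}(x)$ as polynomials in $\mathbb{Q}(q)[x]$. I would do this by passing to the variable $t = 1 + (q-1)x$, identifying the roots of the denominator as a small explicit collection $t = q^{s}$ for specific integers $s$, and verifying that $Q^{\bs\lambda}_{\bs\mu, \bs\nu}$ vanishes at each such root with at least the required multiplicity: for non-negative $s$ at which the corresponding class sum $X_{\bs\lambda, s}$ vanishes this follows immediately from the displayed identity, while the remaining roots can be handled by direct computation using the explicit formula of Proposition \ref{specialisation_function_gl_proposition} in conjunction with Lemma \ref{lower_case_specialisation_correctness_lemma} (which guarantees that the specialisation formula remains valid at all $m \geq 0$, including the ``small'' values where some class sums degenerate). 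Once polynomiality is in hand, the integrality condition in Definition \ref{qivp_definition} is automatic, since $r_{\bs\mu, \bs\nu}^{\bs\lambda}(m)$ is always a non-negative integer as a structure constant of the integral group algebra $\mathbb{Z}GL_m(\mathbb{F}_q)$.
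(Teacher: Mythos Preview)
Your overall architecture is exactly the paper's: lift to $\mathcal{A}^{GL_\infty}$ via indicator functions of tight orbits, multiply there, and push down with $\Psi_m$ using Proposition~\ref{specialisation_function_gl_proposition}. The difference is that you have manufactured a difficulty that is not actually present. For a \emph{tight} bounding triple the spaces $W=\ker(g^T-1)$ and $V=\ker(g-1)$ are uniquely determined by $g$, so for each $g$ in the relevant $GL_m$-conjugacy class there is exactly one pair $(W',V')$ with $(W',g,V')$ in the orbit; the multiplicity $P$ in the proof of Proposition~\ref{specialisation_function_gl_proposition} is therefore identically $1$. In the notation of that proposition this forces $h=0$ and $K=1$ (consistently with the remark after Proposition~\ref{exponent_nonnegativity_proposition}, which already gives $2k-a-b-h=0$), so your $c_{\bs\mu}$ is the constant polynomial $1$. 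This is precisely how the paper argues that $\Psi_m(f_{\bs\mu})=X_{\bs\mu,m}$ on the nose (see the last paragraph of the proof of Proposition~5.2).

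Once $c_{\bs\mu}=c_{\bs\nu}=1$, your ``hard part'' evaporates: $X_{\bs\mu,m}X_{\bs\nu,m}=\Psi_m(f_{\bs\mu}f_{\bs\nu})$ directly, and applying Proposition~\ref{specialisation_function_gl_proposition} and Lemma~\ref{shifted_qivp_lemma} to each orbit in the (finite, integer) expansion of $f_{\bs\mu}f_{\bs\nu}$ immediately yields $r_{\bs\mu,\bs\nu}^{\bs\lambda}\in\mathcal{R}_q$. Your sketched divisibility argument is therefore unnecessary; and as written it is also incomplete, since you would need to control root multiplicities of $c_{\bs\mu}c_{\bs\nu}$ and handle the $\bs\lambda$ for which $X_{\bs\lambda,s}=0$ at a given root $s$, neither of which you actually carry out.
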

\begin{proof}
Let us work in $\mathcal{A}^{GL_\infty}$. Let $f_{\bs\mu}$ be the indicator function of the $GL_\infty(\mathbb{F}_q)$-orbit of a tight bounding triple for an element of modified type $\bs\mu$, and similarly define $f_{\bs\nu}$. Using the results of Section \ref{specitalisation_section} we argue as follows. Firstly, $\Psi_m(f_{\bs\mu}) = X_{\bs\mu, m}$ and $\Psi_m(f_{\bs\nu}) = X_{\bs\nu, m}$, so 
\[
\Psi_m(f_{\bs\mu}f_{\bs\nu}) = X_{\bs\mu, m}X_{\bs\nu, m}.
\]
Now, $f_{\bs\mu}f_{\bs\nu}$ a linear combination (with integer coefficients) of indicator functions of $GL_\infty(\mathbb{F}_q)$-orbits of some bounding triples $(W,g,V)$. But by Proposition \ref{specialisation_function_gl_proposition} such an indicator function specialises to 
\[
K \qbinom{m-n+c+h}{h}_q q^{(m-n+c)(2k-h-a-b)} X_{\bs\lambda, m},
\]
where $\bs\lambda$ is the modified type of $g$, $K$ is an integer independent of $m$, and $a,b,c,h,k,n$ are the usual quantities associated to the bounding triple $(W,g,V)$. Now,
\[
q^{(m-n+c)(2k-a-b-h)} = (1+(q-1)[m]_q)^{2k-a-b-h} q^{-(n-c)(2k-a-b-h)}
\]
is a polynomial in $[m]_q$ with coefficients in $\mathbb{Z}[q,q^{-1}]$ (and so it is the evaluation of an element of $\mathcal{R}_q$ at $x=[m]_q$). By Lemma \ref{shifted_qivp_lemma}, $\qbinom{m-n+c+h}{h}_q$ is the evaluation of some element of $\mathcal{R}_q$ at $x=[m]_q$. Combining these, we conclude that $K \qbinom{m-n+c+h}{h}_q q^{(m-n+c)(2k-h-a-b)}$ is obtained by evaluating an element of $\mathcal{R}_q$ at $[m]_q$. Finally, summing over all orbits where $g$ has modified type $\bs\lambda$, we obtain the required element $r_{\bs\mu, \bs\nu}^{\bs\lambda}$.
\end{proof}
\noindent
Some comments are in order. First of all, following the discussion in Subsection \ref{qivp_subsection}, we may instead view $r_{\bs\mu, \bs\nu}^{\bs\lambda}$ as a polynomial in $q^m$ (rather than $[m]_q$) if we wish. Secondly, suppose that we wished to consider $Z(\mathbb{F}_q GL_n(\mathbb{F}_q))$, for example, to study the representation theory of $GL_n(\mathbb{F}_q)$ in defining characteristic. We would want to evaluate the quantities 
\[
K \qbinom{m-n+c+h}{h}_q q^{(m-n+c)(2k-h-a-b)} X_{\bs\lambda, m}\]
modulo $q$. Note that $2k-a-b-h \geq 0$ (Proposition \ref{exponent_nonnegativity_proposition}) and $m-n+c \geq 0$ whenever the above quantity is nonzero (Lemma \ref{lower_case_specialisation_correctness_lemma}). This means that only non-negative powers of $q$ arise, so there is no issue in passing to coefficients in $\mathbb{F}_q$. However, the polynomials $r_{\bs\mu, \bs\nu}^{\bs\lambda}$ will have coefficients in $\mathbb{Z}[q,q^{-1}]$, and hence might not be defined over $\mathbb{F}_q$. This is a facet of how the representation theory of the general linear group in defining characteristic is very different from non-defining characteristic.

\begin{definition}
Let $\mathrm{FH}_q^{GL}$ be the free $\mathcal{R}_q$-module with basis given by symbols $K_{\bs\mu}$ for multipartitions $\bs\mu$. We equip $\mathrm{FH}_q^{GL}$ with a bilinear multiplication defined on basis elements via
\[
K_{\bs\mu} K_{\bs\nu} = \sum_{\bs\lambda} r_{\bs\mu, \bs\nu}^{\bs\lambda} K_{\bs\lambda},
\]
where $r_{\bs\mu, \bs\nu}^{\bs\lambda} \in \mathcal{R}_q$ are the elements from Theorem \ref{gl_structure_constant_theorem}. We call $\mathrm{FH}_q^{GL}$ the \emph{general linear Farahat-Higman algebra}.
\end{definition}

\begin{corollary}\label{gl_special_hom_cor}
There is a ``specialisation'' homomorphism $ \Theta_n: \mathrm{FH}_q^{GL} \to Z(\mathbb{Z}GL_n(\mathbb{F}_q))$ defined by $\Theta_n(K_{\bs\mu}) = X_{\bs\mu,n}$ and by evaluating the coefficients (elements of $\mathcal{R}_q$) at $[n]_q$.
\end{corollary}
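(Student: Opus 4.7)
The plan is to verify directly from Theorem \ref{gl_structure_constant_theorem} that $\Theta_n$ respects the two algebra structures, after first confirming that the multiplication in $\mathrm{FH}_q^{GL}$ is well-defined.

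First I would check that for fixed $\bs\mu, \bs\nu$, only finitely many $\bs\lambda$ give nonzero $r_{\bs\mu, \bs\nu}^{\bs\lambda}$, so that $K_{\bs\mu} K_{\bs\nu}$ is a well-defined finite sum in $\mathrm{FH}_q^{GL}$. This is extracted from the construction in the proof of Theorem \ref{gl_structure_constant_theorem}: the element $r_{\bs\mu, \bs\nu}^{\bs\lambda}$ arises as a sum of contributions from $GL_\infty(\mathbb{F}_q)$-orbits of bounding triples appearing in $f_{\bs\mu} \ast f_{\bs\nu} \in \mathcal{A}^{GL_\infty}$, and Proposition \ref{Ivanov_Kerov_subalg_proposition} tells us that this product is supported on only finitely many orbits. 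Since each orbit determines a unique modified type $\bs\lambda$, the coefficient $r_{\bs\mu, \bs\nu}^{\bs\lambda}$ can be nonzero for only finitely many $\bs\lambda$.

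Next I would define $\Theta_n$ on the $\mathcal{R}_q$-basis by $K_{\bs\mu} \mapsto X_{\bs\mu, n}$, extending via the $\mathcal{R}_q$-module structure where $r \in \mathcal{R}_q$ acts on $Z(\mathbb{Z}GL_n(\mathbb{F}_q))$ by multiplication by the scalar $r([n]_q) \in \mathbb{Z}[q, q^{-1}]$ (after specialising $q$ to the size of the ground field). Additivity and $\mathcal{R}_q$-linearity are then automatic. For multiplicativity, I would compute
\[
\Theta_n(K_{\bs\mu} K_{\bs\nu}) = \Theta_n\!\left(\sum_{\bs\lambda} r_{\bs\mu, \bs\nu}^{\bs\lambda} K_{\bs\lambda}\right) = \sum_{\bs\lambda} r_{\bs\mu, \bs\nu}^{\bs\lambda}([n]_q) \, X_{\bs\lambda, n} = X_{\bs\mu, n} X_{\bs\nu, n} = \Theta_n(K_{\bs\mu}) \Theta_n(K_{\bs\nu}),
\]
where the penultimate equality is precisely the content of Theorem \ref{gl_structure_constant_theorem}.

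There is no genuine obstacle here; all the substantive work was done in establishing Theorem \ref{gl_structure_constant_theorem}. The only subtle point worth flagging is the well-definedness of multiplication in $\mathrm{FH}_q^{GL}$, which requires finite support of structure constants, and this is why I would quote Proposition \ref{Ivanov_Kerov_subalg_proposition} explicitly rather than merely invoking Theorem \ref{gl_structure_constant_theorem} (whose statement as written does not itself assert this finiteness).
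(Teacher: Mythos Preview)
Your proposal is correct and matches the paper's own proof, which simply says ``This is the content of Theorem \ref{gl_structure_constant_theorem}.'' You have spelled out the verification in more detail (and helpfully flagged the finite-support issue via Proposition \ref{Ivanov_Kerov_subalg_proposition}), but the route is the same.
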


\begin{proof}
This is the content of Theorem \ref{gl_structure_constant_theorem}.
\end{proof}

\begin{proposition}\label{glfh_is_nice_prop}
We have that $\mathrm{FH}_q^{GL}$ is an associative, commutative, unital $\mathcal{R}_q$-algebra.
\end{proposition}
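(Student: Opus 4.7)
The plan is to deduce all three properties --- commutativity, associativity, and existence of a unit --- by pulling them back along the specialisation homomorphisms $\Theta_n$ provided by Corollary \ref{gl_special_hom_cor}. The underlying principle is that an element $f \in \mathcal{R}_q \subseteq \mathbb{Q}(q)[x]$ which vanishes at $[n]_q$ for infinitely many positive integers $n$ must be zero, since the values $[n]_q$ are pairwise distinct elements of $\mathbb{Q}(q)$.

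The key preliminary step I would establish is a \emph{detection lemma}: given a finite collection of multipartitions $\bs\lambda_1, \ldots, \bs\lambda_k$ and elements $f_1, \ldots, f_k \in \mathcal{R}_q$, if $\sum_i f_i([n]_q) X_{\bs\lambda_i, n} = 0$ in $Z(\mathbb{Z}GL_n(\mathbb{F}_q))$ for all sufficiently large $n$, then every $f_i$ is zero. For $n$ large enough, each $X_{\bs\lambda_i, n}$ is a genuine nonzero conjugacy class sum, so the $X_{\bs\lambda_i, n}$ are $\mathbb{Z}$-linearly independent in the centre, forcing $f_i([n]_q)=0$ for infinitely many $n$, and hence $f_i = 0$.

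Commutativity and associativity then fall out mechanically. For commutativity, the difference $K_{\bs\mu} K_{\bs\nu} - K_{\bs\nu} K_{\bs\mu}$ is a finite $\mathcal{R}_q$-linear combination of $K_{\bs\lambda}$'s which specialises under every $\Theta_n$ to $X_{\bs\mu,n} X_{\bs\nu,n} - X_{\bs\nu,n} X_{\bs\mu,n} = 0$ because $Z(\mathbb{Z}GL_n(\mathbb{F}_q))$ is commutative; the detection lemma then kills all coefficients. Associativity is identical, applied to $(K_{\bs\mu} K_{\bs\nu}) K_{\bs\rho} - K_{\bs\mu} (K_{\bs\nu} K_{\bs\rho})$, after noting that only finitely many $\bs\lambda$ occur in a triple product so the detection lemma applies.

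For the unit, I propose $K_\varnothing$. An element of $GL_n(\mathbb{F}_q)$ has empty modified type only when its type $\bs\mu$ satisfies $\bs\mu(r) = \varnothing$ for $r \neq t-1$ and $\bs\mu(t-1) = (1^n)$, i.e.\ when it is the identity matrix; hence $X_{\varnothing, n}$ equals the multiplicative identity of $Z(\mathbb{Z}GL_n(\mathbb{F}_q))$ for every $n$, and the detection lemma applied to $K_\varnothing K_{\bs\mu} - K_{\bs\mu}$ gives $K_\varnothing K_{\bs\mu} = K_{\bs\mu}$. I do not anticipate any real obstacle here --- the whole construction of $\mathrm{FH}_q^{GL}$ via Theorem \ref{gl_structure_constant_theorem} was engineered so that the specialisation maps are algebra homomorphisms, and this proposition merely records that the resulting structure is well-defined; the only content is the elementary detection lemma, which is essentially the fact that $\mathcal{R}_q$ injects into the ring of functions $\mathbb{Z} \to \mathbb{Z}[q, q^{-1}]$ via evaluation at $[n]_q$.
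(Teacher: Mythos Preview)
Your proposal is correct and follows essentially the same approach as the paper: both deduce commutativity, associativity, and unitality by applying the specialisation maps $\Theta_n$, using that the $X_{\bs\lambda,n}$ are linearly independent for large $n$ and that a polynomial in $\mathcal{R}_q$ vanishing at infinitely many $[n]_q$ must be zero. Your packaging of this step as an explicit ``detection lemma'' is slightly more formal than the paper's presentation, but the content is identical, including the choice of $K_\varnothing$ as the unit.
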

\begin{proof}
To check these properties, we appeal to the maps $\Theta_n$. Since $Z(\mathbb{Z}GL_n(\mathbb{F}_q))$ is commutative, we have $X_{\bs\mu, n}X_{\bs\nu, n} = X_{\bs\nu, n}X_{\bs\mu, n}$. This means that
\[
\sum_{\bs\lambda} r_{\bs\mu, \bs\nu}^{\bs\lambda}([n]_q) X_{\bs\lambda, n} = \sum_{\bs\lambda} r_{\bs\nu, \bs\mu}^{\bs\lambda}([n]_q) X_{\bs\lambda, n}.
\]
For $n$ large enough that $X_{\bs\lambda, n}$ is nonzero, we get $r_{\bs\mu, \bs\nu}^{\bs\lambda}([n]_q) = r_{\bs\nu, \bs\mu}^{\bs\lambda}([n]_q)$. But the set of $[n]_q$ for $n$ sufficiently large is a Zariski-dense subset of $\mathbb{Z}$, so in fact we have an equality of polynomials $r_{\bs\mu, \bs\nu}^{\bs\lambda} = r_{\bs\nu, \bs\mu}^{\bs\lambda}$, which proves commutativity. Similarly considering \[
\Theta_n((K_{\bs\mu}K_{\bs\nu})K_{\bs\lambda}) = \Theta_n(K_{\bs\mu}(K_{\bs\nu}K_{\bs\lambda}))
\]
which follows form associativity in $Z(\mathbb{Z}GL_n(\mathbb{F}_q))$, we deduce associativity in $\mathrm{FH}_q^{GL}$. Similarly again, we see that $X_{\varnothing}$ is the identity element, because $\Theta_n(X_{\varnothing})$ is the identity element of $Z(\mathbb{Z}GL_n(\mathbb{F}_q))$ (see Example \ref{modified_type_example}).
\end{proof}

\begin{lemma} \label{degree_bound_lemma}
The degree of the polynomial $r_{\bs\mu, \bs\nu}^{\bs\lambda}$ is at most $2(|\bs\mu| + |\bs\nu| - |\bs\lambda|)$.
\end{lemma}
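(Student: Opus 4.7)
The plan is to revisit the construction of $r_{\bs\mu, \bs\nu}^{\bs\lambda}$ given in the proof of Theorem \ref{gl_structure_constant_theorem} and bound the degree of each summand appearing there. That proof writes $r_{\bs\mu, \bs\nu}^{\bs\lambda}$ as a finite sum indexed by $GL_\infty(\mathbb{F}_q)$-orbits of bounding triples $(W,g,V)$ (with $g$ of modified type $\bs\lambda$) arising in $f_{\bs\mu} \ast f_{\bs\nu}$. The contribution of each such orbit evaluates at $[m]_q$ to
\[
K \qbinom{m-n+c+h}{h}_q q^{(m-n+c)(2k-h-a-b)}.
\]
So the first step is to observe that, as a polynomial in $q^m$ (equivalently in $[m]_q$), the $q$-binomial has degree $h$ while the $q$-power has degree $2k-h-a-b$ (non-negative by Proposition \ref{exponent_nonnegativity_proposition}), giving a total degree of at most $2k - a - b$.

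The main task is then to bound $2k - a - b$ by $2(|\bs\mu| + |\bs\nu| - |\bs\lambda|)$. I would use two ingredients: (i) the identity $n - c = |\bs\lambda| + k$, which holds because $g$ viewed as an element of $GL_{n-c}(\mathbb{F}_q)$ has some type $\bs\mu_g$ with $|\bs\mu_g| = n-c$ and modified type $\bs\lambda$, so that $|\bs\lambda| = (n-c) - l(\bs\mu_g(t-1)) = (n-c) - k$; and (ii) the codimension readouts $\codim(V) = n - b - c$ and $\codim(W) = n - a - c$, immediate from the standard shape of Proposition \ref{standard_shape_prop} (and preserved by the $GL_n(\mathbb{F}_q)$-conjugation used to bring the triple into standard shape).

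To relate these quantities to $|\bs\mu|$ and $|\bs\nu|$, I would exploit the fact that $f_{\bs\mu}$ is the indicator of the orbit of a \emph{tight} bounding triple $(W_1, g_1, V_1)$ of modified type $\bs\mu$. Tightness gives $V_1 = \ker(g_1 - 1)$ and $W_1 = \ker(g_1^T - 1)$, so $\codim(V_1) = \codim(W_1) = \mathrm{rank}(g_1 - 1)$. A direct Jordan-block computation, together with Definition \ref{modified_type_definition} of modified type, identifies this rank with $|\bs\mu|$; likewise $\codim(V_2) = \codim(W_2) = |\bs\nu|$ for the second triple. Since codimension is subadditive under intersection,
\[
\codim(V) \leq |\bs\mu| + |\bs\nu|, \qquad \codim(W) \leq |\bs\mu| + |\bs\nu|.
\]
Substituting the expressions from (ii) and adding gives $2(n-c) - a - b \leq 2(|\bs\mu| + |\bs\nu|)$, and subtracting $2|\bs\lambda|$ from both sides using (i) yields $2k - a - b \leq 2(|\bs\mu| + |\bs\nu| - |\bs\lambda|)$, as required.

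I expect the only delicate step to be the Jordan-block verification that $\mathrm{rank}(g_1 - 1) = |\bs\mu|$ for tight triples, since this is where the definition of modified type must be matched against the canonical form of $g_1$; the rest is bookkeeping using pieces already established.
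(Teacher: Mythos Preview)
Your proposal is correct and follows essentially the same argument as the paper: both compute the degree of each orbit's contribution as $2k-a-b$, read off $\codim(V)=n-b-c$ and $\codim(W)=n-a-c$ from standard shape, use tightness to get $\codim(V_i)=\codim(W_i)=|\bs\mu|,|\bs\nu|$, apply subadditivity of codimension under intersection, and combine with $n-c-k=|\bs\lambda|$. The paper simply asserts $\codim(V_1)=\codim(W_1)=|\bs\mu|$ without the Jordan-block justification you flag, so your ``delicate step'' is in fact not elaborated there either.
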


\begin{proof}
In Theorem \ref{gl_structure_constant_theorem}, $r_{\bs\mu, \bs\nu}^{\bs\lambda}$ is calculated by considering the product of tight bounding triples corresponding to modified types $\bs\mu$ and $\bs\nu$. We seek to understand what happens when the result has modified type $\bs\lambda$. Let $(W_1, g_1, V_1)$ be a tight bounding triple where the modified type of $g_1$ is $\bs\mu$ and let $(W_2, g_2, V_2)$ be a tight bounding triple where the modified type of $g_2$ is $\bs\nu$. Note that we have $\codim(W_1) = \codim(V_1) = |\bs\mu|$ and $\codim(W_2) = \codim(V_2) = |\bs\nu|$. So, for
\[
(W_1, g_1, V_1) \times (W_2, g_2, V_2) = (W_1 \cap W_2, g_1g_2, V_1 \cap V_2)
\]
we have $\codim(W_1 \cap W_2) \leq |\bs\mu| + |\bs\nu|$ and $\codim(V_1 \cap V_2) \leq |\bs\mu| + |\bs\nu|$. Now let us view this product of bounding triples in standard shape:
\[
g_1g_2 =
\begin{bmatrix}
\Id & 0 & 0\\
C & A & 0 \\
D & B & \Id
\end{bmatrix}
\]
where as usual, the block sizes are $a, n-a-b-c, b$. The codimension of $W = W_1 \cap W_2$ is $n-a-c$ and the codimension of $V = V_1 \cap V_2$ is $n-b-c$. Let $\bs\rho$ be the (unmodified) type of this $(n-c) \times (n-c)$ matrix. Hence $|\bs\rho| = n-c$ and if $\bs\lambda$ is modified type of the above matrix, we have $|\bs\lambda| = |\bs\rho| - l(\bs\rho(t-1))$.
\newline \newline \noindent
The contribution of the indicator function of the orbit of $(W_1 \cap W_2, g_1g_2, V_1 \cap V_2)$ to $r_{\bs\mu, \bs\nu}^{\bs\lambda}([m]_q)$ is
\[
K \qbinom{m-n+c+h}{h}_q q^{(m-n+c)(2k-h-a-b)}
\]
which (viewed as a polynomial in $[m]_q$, or equivalently, $q^m$) has degree $2k-a-b$. Recall that $k = l(\bs\rho(t-1))$. We must show that $2k-a-b \leq 2 (|\bs\mu| + |\bs\nu| - |\bs\lambda|)$. Right away we point out that
\[
2(|\bs\mu| + |\bs\nu|) \geq \codim(W_1 \cap W_2) + \codim(V_1 \cap V_2) = 2n - a - b - 2c = 2(n-c-k) + 2k-a-b.
\]
Upon identifying $n-c-k = |\bs\rho| - l(\bs\rho(t-1)) = |\bs\lambda|$, we obtain the required inequality.
\end{proof}

\begin{proposition}[Theorem 3.4, \cite{Wan_Wang}]\label{gl_assoc_graded}
The algebra $\mathrm{FH}_q^{GL}$ is filtered, where $K_{\bs\mu}$ is in filtration degree $|\bs\mu|$. Moreover the structure constants of the associated graded algebra are integers (rather than arbitrary elements of $\mathcal{R}_q$).
\end{proposition}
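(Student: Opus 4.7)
The plan is to deduce both statements from the degree bound of Lemma~\ref{degree_bound_lemma}. Define the filtration $F^{\leq d} \subseteq \mathrm{FH}_q^{GL}$ as the $\mathcal{R}_q$-span of those $K_{\bs\mu}$ with $|\bs\mu| \leq d$, and verify $F^{\leq d_1} \cdot F^{\leq d_2} \subseteq F^{\leq d_1 + d_2}$ directly from Lemma~\ref{degree_bound_lemma}: if $|\bs\lambda| > |\bs\mu| + |\bs\nu|$, that lemma bounds $\deg r_{\bs\mu,\bs\nu}^{\bs\lambda}$ by the negative number $2(|\bs\mu| + |\bs\nu| - |\bs\lambda|)$, which forces the polynomial to vanish. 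This establishes the filtration and identifies the structure constants of the associated graded with those $r_{\bs\mu,\bs\nu}^{\bs\lambda}$ for which $|\bs\lambda| = |\bs\mu| + |\bs\nu|$; for these, the same degree bound already shows $r_{\bs\mu,\bs\nu}^{\bs\lambda}$ is a constant element of $\mathcal{R}_q$.

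To upgrade ``constant'' to ``integer'' I would revisit the proof of Theorem~\ref{gl_structure_constant_theorem} orbit by orbit. Every $GL_\infty(\mathbb{F}_q)$-orbit $O$ of bounding triples appearing in $f_{\bs\mu} * f_{\bs\nu}$ whose representative has modified type $\bs\lambda$ contributes, with integer multiplicity, a summand
\[
K_O \qbinom{m - n_O + c_O + h_O}{h_O}_q q^{(m - n_O + c_O)(2k_O - h_O - a_O - b_O)},
\]
where $K_O \in \mathbb{Z}$ by Proposition~\ref{specialisation_function_gl_proposition} and the parameters $a_O, b_O, c_O, h_O, k_O, n_O$ are as in that proposition. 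The argument given for Lemma~\ref{degree_bound_lemma} in fact produces an orbit-level bound $2k_O - a_O - b_O \leq 2(|\bs\mu| + |\bs\nu| - |\bs\lambda|)$ on the $q^m$-degree of each individual summand.

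Assuming $|\bs\lambda| = |\bs\mu| + |\bs\nu|$, the orbit-level bound becomes $2k_O - a_O - b_O \leq 0$. Proposition~\ref{exponent_nonnegativity_proposition} simultaneously gives $2k_O - a_O - b_O - h_O \geq 0$, so the two inequalities jointly force $h_O = 0$ and $2k_O - a_O - b_O = 0$. Each summand then collapses to $K_O \in \mathbb{Z}$, and summing the finitely many contributing orbits yields an integer, as required. The main technical point is invoking the degree bound at the orbit level rather than only on the total $r_{\bs\mu,\bs\nu}^{\bs\lambda}$; once that is paired with Proposition~\ref{exponent_nonnegativity_proposition}, the collapse to integers is automatic.
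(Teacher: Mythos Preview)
Your argument is correct and, for the filtration and the ``degree zero'' step, identical to the paper's: both deduce $r_{\bs\mu,\bs\nu}^{\bs\lambda}=0$ when $|\bs\lambda|>|\bs\mu|+|\bs\nu|$ and $r_{\bs\mu,\bs\nu}^{\bs\lambda}$ constant when $|\bs\lambda|=|\bs\mu|+|\bs\nu|$, directly from Lemma~\ref{degree_bound_lemma}.

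Where you diverge is in the passage from ``constant'' to ``integer''. The paper does not spell this out, but the intended argument is much shorter than yours: for all sufficiently large $m$ the number $r_{\bs\mu,\bs\nu}^{\bs\lambda}([m]_q)$ is the actual structure constant of $Z(\mathbb{Z}GL_m(\mathbb{F}_q))$ with respect to the conjugacy-class-sum basis, hence a (non-negative) integer; a constant polynomial that takes integer values is an integer. Your orbit-by-orbit analysis, combining the per-orbit degree bound from the proof of Lemma~\ref{degree_bound_lemma} with Proposition~\ref{exponent_nonnegativity_proposition} to force $h_O=0$ and $2k_O-a_O-b_O=0$, is valid and gives more: it shows each orbit contributes exactly its integer $K_O$, so you obtain an explicit integer decomposition of the top-degree structure constant rather than merely its integrality. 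That is a genuinely stronger statement, at the cost of a longer argument.
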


\begin{proof}
This follows from Lemma \ref{degree_bound_lemma}, noting that in the associated graded algebra, the structure constants are $r_{\bs\mu,\bs\nu}^{\bs\lambda}$ where $|\bs\mu| + |\bs\nu| = |\bs\lambda|$ and hence $r_{\bs\mu,\bs\nu}^{\bs\lambda}$ is a degree zero polynomial.
\end{proof}

\begin{remark}
There is a canonical algebra map $\sigma: \mathcal{A}^{GL_\infty} \to \mathrm{FH}_q^{GL}$ such that
\[
\Theta_n \circ \sigma = \Psi_n.
\]
It sends the indicator function of the orbit of $(W,g,V)$ to $p(x) K_{\bs\mu}$ where $\bs\mu$ is the modified type of $g$ and $p(x) \in \mathcal{R}_q$ obeys
\[
p([m]_q) = K \qbinom{m-n+c+h}{h}_q q^{(m-n+c)(2k-h-a-b)}
\]
as in Proposition \ref{specialisation_function_gl_proposition}. However, it is not injective as we now show.
\newline \newline \noindent
Consider $(U_1,g,U_2)$ where $g$ is the identity, and $U_1 = \mathbb{F}_q^\infty$, while $U_2 = \mathbb{F}_q\{e_2, e_3, \ldots\}$. If $f_1$ is the indicator function of the orbit of this triple, then 
\[
\Psi_n(f_1) = |Gr(n-1,n)| \mathrm{Id},
\]
where $Gr(n-1,n)$ is the Grassmannian of $(n-1)$-dimensional subspaces inside $\mathbb{F}_q^n$; each such space corresponds to a choice of $U_2^\prime$ such that $(U_1,g,U_2^\prime) \in B_n$ and $(U_1,g,U_2^\prime)$ is conjugate to $(U_1,g,U_2)$. (Incidentally, $|Gr(n-1,n)| =  [n]_q$, so $p(x)=x$.) However, we could also let $f_2$ be the indicator function of the orbit of $(U_2, g, U_1)$. The same argument shows
\[
\Psi_n(f_2) = |Gr(n-1,n)| \mathrm{Id},
\]
and it follows that $\sigma(f_1) = \sigma(f_2)$. But $f_1 \neq f_2$ in $\mathcal{A}^{GL_\infty}$. So unlike the symmetric group case, the Ivanov-Kerov algebra $\mathcal{A}^{GL_\infty}$ and the general linear Farahat-Higman algebra $\mathrm{FH}_q^{GL}$ are not interchangeable.
\end{remark}

\section{Classical Groups} \label{classical_groups_section}
\noindent
We now make the appropriate modifications to the constructions and arguments from Section \ref{GL_section} to obtain analogous results for classical groups. First of all, we make particular choices of sesquilinear forms.
\begin{definition} \label{standard_form_definition}
Let $n$ be a positive integer, and let $x_i$ and $y_i$ denote the components of vectors $x$ and $y$. The \emph{standard Hermitian form} on $V_n = \mathbb{F}_{q^2}^{n}$ is defined by 
\[
B_n(x,y) = \sum_{i=1}^n x_i \sigma(y_i),
\]
where $\sigma(z) = z^q$. The \emph{standard alternating form} on $V_n = \mathbb{F}_q^{2n}$, is defined by 
\[
B_n(x,y) = \sum_{i=1}^{n} x_{2i-1} y_{2i} - x_{2i}y_{2i-1}.
\]
The \emph{standard positive-symmetric form} on $V_n = \mathbb{F}_q^{2n}$ is defined by
\[
B_n(x,y) = \sum_{i=1}^n x_{2i-1} y_{2i} + x_{2i}y_{2i-1},
\]
The \emph{standard negative-symmetric form} on $V_n = \mathbb{F}_q^{2n}$ is defined by
\[
B_n(x,y) = x_1 y_1 - m x_2y_2 + \sum_{i=2}^n x_{2i-1} y_{2i} + x_{2i}y_{2i-1},
\]
where $m$ is a fixed non-square in $\mathbb{F}_q$. (This is chosen to have non-zero germ.)
\newline \noindent
The \emph{standard odd-symmetric form} on $V_n = \mathbb{F}_q^{2n+1}$ is defined by
\[
B_n(x,y) = x_1 y_1 + \sum_{i=1}^n x_{2i} y_{2i+1} + x_{2i+1}y_{2i},
\]
When we wish to refer to all five cases at once, we will say that $B_n$ is a \emph{standard sesquilinear form} and let $V_n$ be the corresponding vector space as defined above.
\end{definition}
\noindent
The names positive, negative, and odd are not standard, but we need some way to refer to these cases separately in order to study the groups $O_{2n}^{+}(\mathbb{F}_q)$, $O_{2n}^-(\mathbb{F}_q)$ and $O_{2n+1}(\mathbb{F}_q)$. We make these particular choices of symmetric forms for technical reasons; what is really essential is that in each case passing from $n$ to $n+1$ amounts to taking the direct sum of $V_n$ with a hyperbolic plane to get $V_{n+1}$. As a result we obtain inclusions of certain classical groups. Let $B_n$ be a standard sesquilinear form. Then we see that the inclusion of $V_n$ into $V_{n+1}$ preserves the form, i.e. if $v,w \in V_n$, then $B_{n}(v,w) = B_{n+1}(v,w)$. In each case, $V_{n+1} = V_n \oplus V_{n}^\perp$ and we get that $G_{B_n}(V_n) \subseteq G_{B_{n+1}}(V_{n+1})$ is the subgroup fixing $V_n^\perp$ pointwise. (If $g \in G_{B_{n+1}}(V_{n+1})$ satisfies $g V_n^\perp = V_n^\perp$, then also $g V_n = V_n$ because $g$ preserves the sesquilinear form and $V_n^{\perp \perp} = V_n$.)
\begin{definition}
We define
\[
V_\infty = \varinjlim V_n,
\]
which depends on which kind of form we are considering (because $V_n$ does). There is a sesquilinear form $B_\infty$ defined on $V_\infty$ as follows. If $v,w \in V_n$, then
\[
B_\infty(v,w) = B_m(v,w)
\]
for any $m \geq n$ (this is independent of $m$ by the discussion above). We also say that a subspace of $V_\infty$ is \emph{smooth} if it contains $V_n^\perp$ for some $n$.
\end{definition}
\noindent
Implicitly we have equipped $V_\infty$ with a basis coming from the standard bases of the $V_n$. As in the general linear case, a smooth subspace has finite codimension in $V_\infty$, but not every subspace with finite codimension is smooth.
\begin{lemma}
We have $V_\infty = V_n \oplus V_n^\perp$ and inside $V_\infty$, $V_n^{\perp \perp} = V_n$ for any $n$.
\end{lemma}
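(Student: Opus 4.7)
The plan is to reduce the infinite-dimensional claim to the finite-dimensional one by identifying $V_n^\perp$ (orthogonal complement inside $V_\infty$) explicitly as a direct limit. Write $W_{n,m} := V_n^\perp \cap V_m$ for the finite orthogonal complement of $V_n$ inside $V_m$, for each $m \geq n$.

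First I would set up the finite decompositions. By Definition \ref{standard_form_definition}, for each $m \geq n$ the space $V_m$ is obtained from $V_n$ by successively adjoining hyperbolic planes (or their analogue in the symmetric case, which is what the standard forms are designed to achieve), and these planes are orthogonal to $V_n$. Alternatively, since $B_m$ is nondegenerate on $V_m$ and restricts to the nondegenerate $B_n$ on $V_n$, Lemma \ref{hyperbolic_plane_splitting_lemma} (applied iteratively to split off hyperbolic planes from $V_n$, or equivalently the standard orthogonal complement argument in nondegenerate forms) gives $V_m = V_n \oplus W_{n,m}$ as an orthogonal direct sum, with the restriction of $B_m$ nondegenerate on $W_{n,m}$. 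These decompositions are compatible with the inclusions $V_m \hookrightarrow V_{m+1}$, since $W_{n,m} \subseteq W_{n,m+1}$. Taking direct limits in $m$ yields $V_\infty = V_n \oplus W_n$ where $W_n := \varinjlim_m W_{n,m}$, and the form $B_\infty$ restricted to $W_n$ is nondegenerate (any $v \in W_n$ lies in some $W_{n,m_0}$, and already the restriction of $B_{m_0}$ to $W_{n,m_0}$ is nondegenerate, so $v$ cannot be annihilated even by $W_{n,m_0}$, let alone by $W_n$).

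Next I would identify $W_n = V_n^\perp$. The inclusion $W_n \subseteq V_n^\perp$ is immediate from $W_{n,m} \perp V_n$. Conversely, given $v \in V_n^\perp$, choose $m$ large enough so $v \in V_m$ and write $v = v_1 + v_2$ with $v_1 \in V_n$, $v_2 \in W_{n,m}$. Then $B_\infty(v_1, V_n) = B_\infty(v, V_n) - B_\infty(v_2, V_n) = 0$, and nondegeneracy of $B_n$ forces $v_1 = 0$, so $v = v_2 \in W_n$. This gives the first assertion $V_\infty = V_n \oplus V_n^\perp$.

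For $V_n^{\perp\perp} = V_n$, the inclusion $V_n \subseteq V_n^{\perp\perp}$ is immediate. For the reverse, let $v \in V_n^{\perp\perp}$ and decompose $v = v_1 + v_2$ using the above with $v_1 \in V_n$, $v_2 \in V_n^\perp$. For every $w \in V_n^\perp$ we have $B_\infty(v,w) = 0$ and $B_\infty(v_1,w) = 0$, hence $B_\infty(v_2,w) = 0$. Thus $v_2$ lies in the radical of $B_\infty|_{V_n^\perp}$, which we showed is zero. So $v_2 = 0$ and $v \in V_n$. There is no real obstacle here; the only point requiring care is the bookkeeping around the direct limit, specifically verifying that nondegeneracy of the form on each finite $W_{n,m}$ propagates to nondegeneracy on $V_n^\perp = \varinjlim W_{n,m}$, which is where the step $v_2 = 0$ ultimately rests.
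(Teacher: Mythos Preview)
Your argument is correct, but it is considerably more elaborate than what the paper does. The paper simply observes that, by the explicit formulas for the standard forms in Definition \ref{standard_form_definition}, $V_n$ is the span of the first $n$ (respectively $2n$, $2n+1$) standard basis vectors of $V_\infty$, and the remaining basis vectors are each orthogonal to all of these; so those remaining vectors visibly span $V_n^\perp$, and the two assertions are then immediate by inspection. Your route instead abstracts away from the basis: you use nondegeneracy of $B_n$ on $V_n$ to split each finite $V_m$, pass to the direct limit, and then use nondegeneracy of the restricted form on $V_n^\perp$ to get the double-perp statement. This is a cleaner structural argument that would work for any compatible system of nondegenerate forms where the finite restrictions stay nondegenerate, without needing the explicit matrix of $B_\infty$; the paper's proof is faster precisely because the standard forms were chosen to make the basis check trivial.
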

\begin{proof}
This is straightforward to check in each case; $V_n$ has a basis consisting of the first $n$ (or $2n$, or $2n+1$ depending on the case) basis vectors of $V_\infty$, while the remaining basis vectors form a basis $V_n^\perp$.
\end{proof}
\begin{definition}
We define $U_\infty(\mathbb{F}_q)$, $Sp_\infty(\mathbb{F}_q)$, $O_\infty^+(\mathbb{F}_q)$, $O_\infty^-(\mathbb{F}_q)$, $O_\infty(\mathbb{F}_q)$ to be $\varinjlim G_{B_n}(V_n)$ for each of the five respective cases in Definition \ref{standard_form_definition}. The maps are the inclusions arising from the inclusions $V_n \hookrightarrow V_{n+1}$. In order to treat the five cases uniformly, we use the notation $G_\infty(\mathbb{F}_q)$ to refer to any of the symmetry groups of the sesquilinear forms.
\end{definition}
\noindent
In each case we may view these infinite groups as consisting of matrices that differ from the identity matrix in finitely many entries and preserve a suitable sesquilinear form on $V_\infty$.

\begin{definition}
Let us fix one of the five different choices of $G_\infty(\mathbb{F}_q)$. A \emph{sesquilinear bounding pair} is a pair $(g, V)$, where $g \in G_\infty(\mathbb{F}_q)$, and $V$ is a smooth subspace of $V_\infty$ that is fixed pointwise by $g$.
\end{definition}
\noindent
\begin{definition}
Let us say that a bounding pair $(g,V)$ is \emph{tight} if $V = \ker(g-1)$.
\end{definition}
\noindent
As in the general linear case, in a tight bounding pair, $V$ is as large as possible. So each $g$ is contained in a unique tight bounding pair. The following proposition is directly analogous to Proposition \ref{triple_conj_prop}, so we omit the proof.
\begin{proposition}
There is an action of $G_\infty(\mathbb{F}_q)$ on the set of bounding pairs, defined by
\[
x \cdot (g, V) = (xgx^{-1}, xV).
\]
\end{proposition}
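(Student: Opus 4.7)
The plan is to verify the four things that make this data an action: (i) the formula produces a genuine bounding pair (i.e.\ $x g x^{-1} \in G_\infty(\mathbb{F}_q)$, $xV$ is smooth, and $xgx^{-1}$ fixes $xV$ pointwise); (ii) the identity acts trivially; (iii) associativity. The proof is a straightforward simplification of the one for bounding triples (Proposition \ref{triple_conj_prop}), since here the pairing and transpose side of the picture is absent; the only substantive thing to check is smoothness of $xV$.

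First I would show that $xgx^{-1}$ lies in $G_\infty(\mathbb{F}_q)$: both $x$ and $g$ preserve $B_\infty$ (since each lies in some $G_{B_n}(V_n)$ and $B_\infty$ restricts to $B_n$ on $V_n$), and $G_\infty(\mathbb{F}_q)$ is a group, so $xgx^{-1}$ preserves $B_\infty$ as well. Next, for smoothness of $xV$, pick $n$ large enough that $V \supseteq V_n^\perp$ and that $x \in G_{B_n}(V_n)$. Since $G_{B_n}(V_n) \subseteq G_\infty(\mathbb{F}_q)$ is precisely the subgroup fixing $V_n^\perp$ pointwise (as discussed just before the definition of $V_\infty$), we get $x V_n^\perp = V_n^\perp$, and hence $xV \supseteq x V_n^\perp = V_n^\perp$, so $xV$ is smooth. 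Finally, for any $v \in V$ we have $g v = v$, so
\[
(xgx^{-1})(xv) = x g v = x v,
\]
which shows $xgx^{-1}$ fixes $xV$ pointwise. Together these give $(xgx^{-1}, xV)$ as a bounding pair.

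The remaining axioms are purely formal. The identity element of $G_\infty(\mathbb{F}_q)$ clearly sends $(g,V)$ to $(g,V)$. For associativity, given $x_1, x_2 \in G_\infty(\mathbb{F}_q)$,
\begin{eqnarray*}
x_1 \cdot \bigl( x_2 \cdot (g, V) \bigr) &=& x_1 \cdot (x_2 g x_2^{-1},\, x_2 V) \\
&=& (x_1 x_2 g x_2^{-1} x_1^{-1},\, x_1 x_2 V) \\
&=& \bigl( (x_1 x_2)\, g\, (x_1 x_2)^{-1},\, (x_1 x_2) V \bigr) \\
&=& (x_1 x_2) \cdot (g, V),
\end{eqnarray*}
using that $x_1(x_2 V) = (x_1 x_2) V$ since left multiplication by a linear map respects the image of a subspace.

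The only mild obstacle is the smoothness step, which requires invoking the earlier observation that an element of the directed union $G_\infty(\mathbb{F}_q)$ automatically fixes $V_n^\perp$ for all sufficiently large $n$; everything else is formal group-theoretic manipulation identical to Proposition \ref{triple_conj_prop}, minus the contragredient factor.
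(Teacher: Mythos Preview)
Your proof is correct and is exactly the ``analogous to Proposition \ref{triple_conj_prop}'' argument that the paper alludes to but omits. The only point beyond the formal manipulations is the smoothness of $xV$, which you handle correctly by choosing $n$ large enough that $x \in G_{B_n}(V_n)$ fixes $V_n^\perp$ pointwise.
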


\begin{definition}
Let $BP_n$ be the set of bounding pairs $(g, V)$ such that $V \supseteq V_n^\perp$. 
\end{definition}
\noindent
Now we will see why the fact that $g$ preserves a sesquilinear allows us to work with pairs rather than triples.
\begin{lemma}
If $(g, V) \in BP_n$, then we may view $g$ as an element of $G_{B_n}(V_n)$, i.e. $g$ fixes $V_n^\perp$ pointwise and $g V_n \subseteq V_n$.
\end{lemma}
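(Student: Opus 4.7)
The plan is to establish the two assertions in turn, using only the definition of $BP_n$, the hypothesis that $g$ preserves the form $B_\infty$, and the identity $V_n^{\perp\perp} = V_n$ established in the preceding lemma.

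First I would note that the condition $V \supseteq V_n^\perp$ in the definition of $BP_n$, combined with the fact that $g$ is the identity on $V$, already gives the first claim: $g$ fixes $V_n^\perp$ pointwise. This step is immediate.

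The main content is the second claim, that $g V_n \subseteq V_n$. The idea is to use the characterisation $V_n = (V_n^\perp)^\perp$ and show that $gv$ is orthogonal to every element of $V_n^\perp$ whenever $v \in V_n$. Concretely, for $v \in V_n$ and $w \in V_n^\perp$, I would compute
\[
B_\infty(gv, w) = B_\infty(gv, gw) = B_\infty(v, w) = 0,
\]
where the first equality uses $gw = w$ (from the first claim), the second uses that $g \in G_\infty(\mathbb{F}_q)$ preserves $B_\infty$, and the third is the definition of $V_n^\perp$. Hence $gv \in (V_n^\perp)^\perp = V_n$ by the preceding lemma, which gives $gV_n \subseteq V_n$.

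There is no real obstacle: the argument is a one-line consequence of the duality $V_n = V_n^{\perp\perp}$ together with $G_\infty(\mathbb{F}_q)$-invariance of the form. The only subtlety worth flagging is that one must know $V_n^{\perp\perp} = V_n$ inside $V_\infty$ (not merely inside some finite $V_m$), which is precisely what the lemma stated just before this one provides. Once that is in hand, $g$ stabilises both $V_n$ and $V_n^\perp$, and since $V_\infty = V_n \oplus V_n^\perp$, we may legitimately view $g$ as an element of $G_{B_n}(V_n)$ by restriction.
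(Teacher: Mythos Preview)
Your proposal is correct and follows essentially the same approach as the paper: both arguments first observe that $V_n^\perp \subseteq V$ gives pointwise fixing of $V_n^\perp$, then use $B_\infty(gv,w) = B_\infty(gv,gw) = B_\infty(v,w) = 0$ for $v \in V_n$, $w \in V_n^\perp$ together with $V_n^{\perp\perp} = V_n$ to conclude $gV_n \subseteq V_n$.
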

\begin{proof}
By the definition of $BP_n$, it follows that $g$ fixes $V_n^\perp \subseteq V$ pointwise. Now, suppose that $v \in V_n$. We have $B(v,w) = 0$ for all $w$ in $V_n^\perp$. So also $B(gv, gw) = B(gv, w) = 0$ because $g$ preserves the sesquilinear form and $gw = w$. We conclude that $gv \in V_n^{\perp\perp} = V_n$. So $g$ preserves the decomposition $V_\infty = V_n \oplus V_n^\perp$ and acts by the identity on the second summand.
\end{proof}

\begin{definition}
We say $(g,V) \in BP_n$ has \emph{standard shape} with respect to a decomposition $V_n = W_1 \oplus W_2 \oplus W_3 \oplus W_4$ if the following conditions hold:
\begin{itemize}
\item $W_3 = V \cap V^\perp$ is the radical of $B_n$ when restricted to $V$,
\item $V \cap V_n = W_3 \oplus W_4$, where $B_n$ restricts to a non-degenerate form on $W_4$,
\item $V^\perp = W_2 \oplus W_3$, where $B_n$ is non-degenerate when restricted to $W_2$,
\item $W_1$ is orthogonal to $W_1 \oplus W_2 \oplus W_4$ and $B_n$ induces a non-degenerate pairing between $W_1$ and $W_3$.
\end{itemize}
\end{definition}

\begin{lemma}
For any bounding pair $(g, V) \in BP_n$, we may find a decomposition $V_n = W_1 \oplus W_2 \oplus W_3 \oplus W_4$ putting $(g, V)$ in standard shape.
\end{lemma}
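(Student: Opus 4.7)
The plan is to reduce the problem to a decomposition inside the finite-dimensional non-degenerate space $(V_n, B_n)$ and then extract $W_1$ by invoking Lemma \ref{lagrangian_subspace_lemma}. Writing $V' = V \cap V_n$, smoothness of $V$ gives $V = V_n^\perp \oplus V'$, and because $B_\infty$ restricts non-degenerately to $V_n^\perp$, the orthogonal complement $V^\perp$ lies inside $V_n$ and coincides with the orthogonal complement of $V'$ computed within $(V_n, B_n)$. In particular $V \cap V^\perp = V' \cap V^\perp$ is precisely the radical of $B_n|_{V'}$, and I take this subspace to be $W_3$.

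Since $W_3$ is the radical of $B_n|_{V'}$, the form $B_n$ descends to a non-degenerate form on $V'/W_3$, and any complement $W_4$ to $W_3$ in $V'$ maps isomorphically to this quotient, so $B_n|_{W_4}$ is non-degenerate and $V' = W_3 \oplus W_4$. The same argument applied to $V^\perp$, whose radical also equals $W_3$ since $(V^\perp)^\perp = V$, produces a non-degenerate complement $W_2$ with $V^\perp = W_3 \oplus W_2$. Because $W_2 \subseteq V^\perp$ and $W_4 \subseteq V$, we automatically have $W_2 \perp W_4$, so $W_2 \oplus W_4$ is a non-degenerate orthogonal direct sum, yielding an orthogonal decomposition $V_n = (W_2 \oplus W_4) \oplus U$ in which $U := (W_2 \oplus W_4)^\perp \cap V_n$ inherits a non-degenerate restriction of $B_n$.

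The crucial step is constructing $W_1$ inside $U$. Since $W_3$ lies in both $V$ and $V^\perp$, it is orthogonal to $W_4 \subseteq V$ and to $W_2 \subseteq V^\perp$, so $W_3 \subseteq U$. A dimension count from $\dim V' + \dim V^\perp = \dim V_n$ gives $\dim U = 2\dim W_3$, and $W_3$ is totally isotropic in $U$ because $W_3 \subseteq V \cap V^\perp$. Thus $W_3$ is a Lagrangian subspace of the non-degenerate sesquilinear space $(U, B_n|_U)$, and Lemma \ref{lagrangian_subspace_lemma} produces a complementary isotropic $W_1 \subseteq U$ with $U = W_1 \oplus W_3$. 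Non-degeneracy of $B_n|_U$ then forces $B_n$ to pair $W_1$ with $W_3$ non-degenerately, while the containment $W_1 \subseteq U$ supplies orthogonality to $W_2 \oplus W_4$. Assembling the four pieces yields $V_n = W_1 \oplus W_2 \oplus W_3 \oplus W_4$ in standard shape. The main obstacle is the half-dimension identity $\dim U = 2\dim W_3$ that licenses the application of Lemma \ref{lagrangian_subspace_lemma}; everything else is verification of orthogonality and non-degeneracy on explicit subspaces.
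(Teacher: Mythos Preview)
Your proof is correct and follows essentially the same route as the paper: set $W_3 = V\cap V^\perp$, choose complements $W_4$ and $W_2$ inside $V\cap V_n$ and $V^\perp$ respectively, use non-degeneracy of $W_2\oplus W_4$ to split off $U=(W_2\oplus W_4)^\perp\cap V_n$, check by a dimension count that $\dim U = 2\dim W_3$, and then apply Lemma \ref{lagrangian_subspace_lemma} to produce the isotropic complement $W_1$. The paper carries out exactly these steps; your write-up is slightly more streamlined in obtaining the dimension identity from $\dim V' + \dim V^\perp = \dim V_n$.
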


\begin{proof}
Note that $W_3 = V \cap V^\perp$ is automatically determined; since $V_n^\perp \subseteq V$, we have $V^\perp \subseteq V_n$ which means $W_3 \subseteq V_n$. We may then pick $W_4$ to be a complement of $W_3$ in $V \cap V_n$ (similarly to Proposition \ref{sesquilinear_structure_prop} this forces $B_n$ to be non-degenerate on $W_4$). Also we may take $W_2$ to be a complement to $W_3$ in $V^\perp$. Since the orthogonal space to $V^\perp$ in $V_n$ is $V^{\perp \perp} \cap V_n = V \cap V_n$, the radical of $B_n$ when restricted to $V^\perp$ is $V^\perp \cap V \cap V_n = W_3 \cap V_n = W_3$. This implies that $B_n$ is non-degenerate when restricted to $W_2$. Now we concern ourselves with $W_1$. First of all, $B_n$ is non-degenerate on $W_2 \oplus W_4$ since it is non-degenerate on each space, and the two spaces are orthogonal. This means that $(W_2 \oplus W_4)^\perp \oplus (W_2 \oplus W_4) = V_n$. 
\newline \newline \noindent
Let $d_1 = \dim(V \cap V_n)$ and $d_2 = \dim(V \cap V^\perp) = \dim(W_3)$. Then $\dim(W_4) = d_1 - d_2$, also $\dim(V^\perp) = \dim(V_n)-d_1$ and hence $\dim(W_2) = \dim(V_n)-d_1-d_2$. We conclude
\[
\dim((W_2 \oplus W_4)^\perp \cap V_n) = \dim(V_n) - \dim(W_2) - \dim(W_4) = 2 d_2 = 2 \dim(W_3).
\]
Note that $W_3$ is contained inside $(W_2 \oplus W_4)^\perp$, so we may apply Lemma \ref{lagrangian_subspace_lemma} to find a complement $W_1$ to $W_3$ in $(W_2 \oplus W_4)^\perp$ such that $B_n$ restricts to zero on $W_1$.
\end{proof}

\begin{lemma} \label{random_sesquilinear_realisation_lemma}
Suppose some $(g, V) \in BP_n$ has standard shape with respect to the decomposition $V_n = W_1 \oplus W_2 \oplus W_3 \oplus W_4$. Then for any sesquilinear form $B^\prime$ on $W_1$, there exists a linear map $J: W_1 \to W_3$ such that
\[
B^\prime (u,v) = B_n(u, Jv) + B_n(Ju, v),
\]
for any $u,v \in W_1$.
\end{lemma}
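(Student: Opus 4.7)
The plan is to choose dual bases of $W_1$ and $W_3$ with respect to the pairing induced by $B_n$, convert the desired identity into a linear system in the matrix of $J$, and solve the system by inspection, handling the Hermitian, alternating, and symmetric cases in parallel.

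By hypothesis $B_n$ induces a non-degenerate pairing $W_1 \times W_3 \to k$. Picking any basis $e_1, \ldots, e_d$ of $W_1$, non-degeneracy (together with $\dim W_1 = \dim W_3$) yields a dual basis $f_1, \ldots, f_d$ of $W_3$ characterised by $B_n(e_i, f_j) = \delta_{ij}$. Write $M_{ij} = B'(e_i, e_j)$ and parametrise $J$ by the matrix $A = (A_{ij})$ via $J e_j = \sum_i A_{ij} f_i$. Using sesquilinearity of $B_n$, the duality $B_n(e_i, f_j) = \delta_{ij}$, and the companion relation $B_n(f_k, e_j) = \epsilon\,\delta_{kj}$ (where $\epsilon = 1$ in the Hermitian and symmetric cases, and $\epsilon = -1$ in the alternating case), a direct computation gives
\[
B_n(u, Jv) + B_n(Ju, v) = \sum_{i,j} u_i \sigma(v_j)\bigl(\sigma(A_{ij}) + \epsilon A_{ji}\bigr),
\]
so the problem reduces to solving the system $\sigma(A_{ij}) + \epsilon A_{ji} = M_{ij}$ for all $i, j$.

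The equations for $(i,j)$ and $(j,i)$ off the diagonal are compatible precisely because $B'$ shares the symmetry type of $B_n$ (Hermitian, symmetric, or alternating to match $\epsilon$ and $\sigma$); setting $A_{ij} = 0$ for $i < j$ and $A_{ji} = M_{ij}$ then solves the pair. The diagonal equation $\sigma(A_{ii}) + \epsilon A_{ii} = M_{ii}$ splits as follows. In the Hermitian case, $M_{ii} = \sigma(M_{ii})$ lies in the fixed field of $\sigma$, and Lemma \ref{trace_value_lemma} supplies an $A_{ii}$ with $\sigma(A_{ii}) + A_{ii} = M_{ii}$. In the symmetric case, the assumption that $\mathrm{char}(k) \neq 2$ allows $A_{ii} = M_{ii}/2$. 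In the alternating case, the alternating property forces $M_{ii} = 0$, so $A_{ii} = 0$ works.

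The main obstacle is essentially bookkeeping: one must carefully track the sesquilinear conventions and the three symmetry types through a single uniform calculation. The only non-trivial input is the surjectivity of the trace $x \mapsto x + \sigma(x)$ onto the fixed field in the Hermitian case, which is exactly the content of Lemma \ref{trace_value_lemma}; everything else is a routine linear-algebra verification once dual bases are in hand.
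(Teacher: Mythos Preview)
Your argument is essentially the same as the paper's: pick dual bases of $W_1$ and $W_3$, reduce to the matrix equation $\sigma(A_{ij}) + \epsilon A_{ji} = M_{ij}$, and solve by a triangular ansatz, invoking Lemma~\ref{trace_value_lemma} for the Hermitian diagonal. The only slip is in the off-diagonal prescription: from $\sigma(0) + \epsilon A_{ji} = M_{ij}$ you need $A_{ji} = \epsilon M_{ij}$, not $A_{ji} = M_{ij}$, so in the alternating case the sign should be flipped (the paper's version has the analogous adjustment built into its choice of upper-triangular $J'$).
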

\begin{proof}
Fix a basis $u_i$ of $W_1$ and take $W_3$ to have the dual basis $u_i^\prime$ under $B_n$, so that $B_n(u_i, u_j^\prime) = \delta_{ij}$ and $B(u_j^\prime, u_i) = \varepsilon \delta_{ij}$, where $\varepsilon = 1$ if $B_n$ is Hermitian or symmetric, and $\varepsilon = -1$ if $B_n$ is alternating. An element of $W_1$ may be viewed as column vector whose $i$-th entry is the coefficient of $u_i$. Let us assume that $J = D J^\prime$ where $D: W_1 \to W_3$ maps $u_i$ to $u_i^\prime$ and $J^\prime : W_1 \to W_1$ is a matrix to be determined. Then for $u,v \in W_1$ the sesquilinear form
\[
B_n(u, Jv) + B_n(Ju, v) = u^T \sigma(J^\prime) \sigma(v) + \varepsilon u^T (J^\prime)^T \sigma(v) = u^T(\sigma(J^\prime) + \varepsilon (J^\prime)^T)\sigma(v)
\]
has matrix $\sigma(J^\prime) + \varepsilon (J^\prime)^T$. Finally, we show that the matrix of an arbitrary sesquilinear form $B^\prime$ on $W_1$ may be expressed in this way.
\newline \newline \noindent
The matrix $M$ of a Hermitian form obeys $\sigma(M) = M^T$. We take $J^\prime$ to be the upper-triangular matrix whose entries above the diagonal agree with those of $M^T$. The diagonal entries $M_{ii}$ obey $M_{ii} = \sigma(M_{ii})$, and hence by Lemma \ref{trace_value_lemma} we may choose the diagonal entry $J_{ii}^\prime$ to be some $x \in \mathbb{F}_{q^2}$ such that $x + \sigma(x) = M_{ii}$.
\newline \newline \noindent
The matrix $M$ of an alternating form obeys $M^T = -M$, and the diagonal entries are zero (in characteristic 2 this follows from $B_n(v,v) = 0$). So we may simply take $J^\prime$ to be the upper-triangular matrix whose diagonal entries are zero, and above-diagonal entries agree with $M$.
\newline \newline \noindent
The matrix $M$ of symmetric form obeys $M^T = M$. When working with symmetric bilinear forms, we have assumed that the characteristic is odd. So in this case we may take $J^\prime = \frac{1}{2}M$.
\end{proof}

\begin{remark} \label{simplification_remark}
Suppose that $V_n = W_1 \oplus W_2 \oplus W_3 \oplus W_4$ puts some element $g$ in standard shape. If we have two vectors $u, v \in V_n$ with
$u = u_1 + u_2 + u_3 + u_4$ and $v = v_1 + v_2 + v_3 + v_4$, where $u_i, v_i \in W_i$ for $i=1,2,3,4$. Then we have
\[
B_n(u,v) = B_n(u_1, v_3) + B_n(u_2, v_2) + B_n(u_3, v_1) + B_n(u_4, v_4)
\]
by taking into account which $W_i$ and $W_j$ are orthogonal.
\end{remark}

\begin{lemma} \label{sesquilinear_shape_lemma}
Suppose that $(g, V) \in BP_n$ is in standard shape with respect to $W_1 \oplus W_2 \oplus W_3 \oplus W_4$. We may write $g$ as a block matrix with respect to this decomposition, in which case it takes the form
\[
g = \begin{pmatrix}
\Id & 0 & 0 & 0\\
g_{21} & g_{22} & 0 & 0\\
g_{31} & g_{32} & \Id & 0 \\
0 & 0 & 0 & \Id
\end{pmatrix}.
\]
\end{lemma}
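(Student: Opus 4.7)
The plan is to establish each column of the block matrix one at a time, using three tools: that $g$ fixes $V$ pointwise, that $g$ preserves $B_\infty$, and the non-degenerate pairings supplied by the standard shape decomposition.

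First, since $V \cap V_n = W_3 \oplus W_4 \subseteq V$, both $W_3$ and $W_4$ are fixed pointwise by $g$, which accounts for the third and fourth block columns. Next, $g$ preserves $V^{\perp}$: for $u \in V^{\perp}$ and $w \in V$ we have $B_\infty(gu, w) = B_\infty(gu, gw) = B_\infty(u, w) = 0$ because $gw = w$. Since $V^{\perp} = W_2 \oplus W_3$, this gives $g(W_2) \subseteq W_2 \oplus W_3$, killing the $W_1$ and $W_4$ entries of the second column.

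For the first column, I first show $g(W_1) \subseteq W_1 \oplus W_2 \oplus W_3$. Each of $W_1, W_2, W_3$ is orthogonal to $W_4$: for $W_1$ this is part of the standard shape hypothesis, and for $W_2, W_3$ it follows from $W_2, W_3 \subseteq V^{\perp}$ and $W_4 \subseteq V$. A dimension count in $V_n$ (where $B_n$ is non-degenerate) then forces $W_4^{\perp} \cap V_n = W_1 \oplus W_2 \oplus W_3$. Since $g$ fixes $W_4$ pointwise and preserves $B_n$, it preserves this orthogonal complement, so $g(W_1) \subseteq W_1 \oplus W_2 \oplus W_3$ and the $W_4$ entry of the first column vanishes.

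Finally, to identify the $(1,1)$-block as the identity, write $gu = a_1 + a_2 + a_3$ with $a_i \in W_i$ for an arbitrary $u \in W_1$. Remark \ref{simplification_remark} shows that only the $W_1$-$W_3$ pairing contributes to $B_n(gu, v)$ for $v \in W_3$, giving $B_n(gu, v) = B_n(a_1, v)$. On the other hand $gv = v$, so $B_n(gu, v) = B_n(gu, gv) = B_n(u, v)$. Hence $B_n(a_1 - u, v) = 0$ for every $v \in W_3$, and the non-degenerate pairing between $W_1$ and $W_3$ forces $a_1 = u$. The only step requiring any real care is the dimension count identifying $W_4^{\perp} \cap V_n$; everything else is direct bookkeeping with the orthogonality relations between the $W_i$.
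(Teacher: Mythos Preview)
Your proof is correct and follows essentially the same approach as the paper's: both fix the last two columns from $g|_V = \Id$, obtain the second column from $g(V^\perp) \subseteq V^\perp$, kill the $(4,1)$-entry via $g(W_4^\perp) \subseteq W_4^\perp$, and pin down the $(1,1)$-block using the non-degenerate $W_1$--$W_3$ pairing together with $B_n(gu,v)=B_n(gu,gv)=B_n(u,v)$ for $v\in W_3$. The only cosmetic differences are that you treat the $(4,1)$-entry before the $(1,1)$-block (the paper does the reverse) and that you spell out the dimension count for $W_4^\perp \cap V_n$ where the paper simply asserts it.
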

\begin{proof}
Since $g$ acts by the identity on $V \cap V_n = W_3 \oplus W_4$, the last two columns must take the stated form. Additionally, since $W_2$ is orthogonal to $V$, $gW_2$ is orthogonal to $gV = V$. It follows that $gW_2 \subset V^\perp = W_2 \oplus W_3$, which explains the structure of the second column. Suppose that $u \in W_1$ and $v \in W_3$. Then
\[
B_n(u,v) = B_n(gu, gv) = B_n(gu, v).
\]
Thus $B_n(gu-u, v) = 0$ for all $v \in W_3$ and hence $gu-u \in W_3^\perp = W_2 \oplus W_3 \oplus W_4$. In particular the component of $gu$ in $W_1$ equals the component of $u$ in $W_1$, so the top-left entry of the matrix is the identity. Finally, the bottom-left entry must be zero because $g$ fixes $W_4$, so $g$ must preserve $W_4^\perp = W_1 \oplus W_2 \oplus W_3$.
\end{proof}

\begin{lemma} \label{sesquilinear_extension_lemma}
Suppose that $(g, V) \in BP_n$ is in standard shape with respect to $V_n = W_1 \oplus W_2 \oplus W_3 \oplus W_4$. Any $x \in G_{B_n}(V_n)$ such that $xV = V$ may be written in the block matrix form
\[
x = \begin{pmatrix}
x_{11} & 0 & 0 & 0\\
x_{21} & x_{22} & 0 & 0\\
x_{31} & x_{32} & x_{33} & x_{34} \\
x_{41} & 0 & 0 & x_{44}
\end{pmatrix}.
\]
Now suppose that $x$ is any matrix of the above form, not necessarily in $G_{B_n}(V_n)$. For any values of the entries $x_{11} \in \hom(W_1, W_1)$ and $x_{41} \in \hom(W_1, W_4)$, there exists $J \in \hom(W_1, W_3)$ such that $x$ is an element of $G_{B_n}(V_n)$ if and only if the three following conditions hold:
\begin{itemize}
\item Firstly, the following matrix is an element of $G_{B_n}(V_n)$:
\[
x_{new} = \begin{pmatrix}
x_{11} & 0 & 0 & 0\\
x_{21} & x_{22} & 0 & 0\\
x_{31} + J & x_{32} & x_{33} & 0 \\
0 & 0 & 0 & \Id
\end{pmatrix}.
\]
\item Secondly, $x_{44} \in G_{B_n}(W_4)$. 
\item Thirdly, $x_{34} \in \hom(W_4, W_3)$ has the value uniquely determined by the equation
\[
B_n(x_{11}u_1, x_{34}v_4) + B_n(x_{41}u_1, x_{44}v_4) = 0
\]
for $u_1 \in W_1$ and $v_4 \in W_4$.
\end{itemize}
\end{lemma}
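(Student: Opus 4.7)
My plan is to work in two stages. For the block form assertion, observe that $x \in G_{B_n}(V_n)$ preserves $B_n$ and hence preserves $V^\perp = W_2 \oplus W_3$; combined with $x V_n = V_n$ and $xV=V$ this gives $x(V \cap V_n) = W_3 \oplus W_4$, and intersecting these yields $xW_3 = W_3$. The three inclusions $xW_2 \subseteq W_2 \oplus W_3$, $xW_3 \subseteq W_3$, and $xW_4 \subseteq W_3 \oplus W_4$ force precisely the zero entries in the asserted block form.

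For the characterisation I would expand $B_n(xu, xv) = B_n(u,v)$ in the block decomposition using Remark \ref{simplification_remark}, obtaining equations indexed by pairs $(i,j)$ with $u \in W_i$, $v \in W_j$. By reflexivity of $B_n$ only six are nonredundant, and of these three are immediate: the $W_4 \times W_4$ equation is precisely $x_{44} \in G_{B_n}(W_4)$; the $W_2 \times W_2$ equation, the $W_1 \times W_3$ equation $B_n(x_{11}u_1, x_{33}v_3) = B_n(u_1, v_3)$, and the mixed $W_1 \times W_2$ equation are all identical for $x$ and $x_{new}$. The mixed $W_1 \times W_4$ equation $B_n(x_{11}u_1, x_{34}v_4) + B_n(x_{41}u_1, x_{44}v_4) = 0$ is exactly the equation prescribed for $x_{34}$, and determines $x_{34}$ uniquely because $x_{11}$ is invertible (forced by the block structure when $x$ is invertible) and $B_n$ pairs $W_1$ with $W_3$ non-degenerately, so $x_{34}v_4 \in W_3$ is dictated by the induced functional on $W_1$.

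The only remaining equation is the $W_1 \times W_1$ condition, which for $x$ reads
\[
B_n(x_{11}u_1, x_{31}v_1) + B_n(x_{21}u_1, x_{21}v_1) + B_n(x_{31}u_1, x_{11}v_1) + B_n(x_{41}u_1, x_{41}v_1) = 0,
\]
and for $x_{new}$ is the same equation with $x_{31}$ replaced by $x_{31}+J$ and the $x_{41}$ term removed. Subtracting, the equivalence of the two reduces to exhibiting $J: W_1 \to W_3$ with
\[
B_n(x_{11}u_1, Jv_1) + B_n(Ju_1, x_{11}v_1) = B_n(x_{41}u_1, x_{41}v_1).
\]
Substituting $u' = x_{11}u_1$, $v' = x_{11}v_1$ (valid by invertibility of $x_{11}$), this matches the conclusion of Lemma \ref{random_sesquilinear_realisation_lemma} applied to the sesquilinear form $(u',v') \mapsto B_n(x_{41}x_{11}^{-1}u',\, x_{41}x_{11}^{-1}v')$ on $W_1$, supplying the required $J$. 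The main technical point I anticipate is handling invertibility of $x_{11}$ cleanly: on the locus where $x_{11}$ is singular, neither $x$ nor $x_{new}$ can be invertible, so the equivalence there holds vacuously and any $J$ suffices, whereas on the locus where $x_{11}$ is invertible the construction of $J$ via Lemma \ref{random_sesquilinear_realisation_lemma} goes through uniformly.
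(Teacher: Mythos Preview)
Your proposal is correct and follows essentially the same approach as the paper: both arguments derive the block form from $xV = V$ together with $xV^\perp = V^\perp$, then compare $B_n(xu,xv)$ with $B_n(x_{new}u,x_{new}v)$ across the blocks using Remark~\ref{simplification_remark}, isolate the $W_1 \times W_4$ pairing as the determining equation for $x_{34}$, and reduce the $W_1 \times W_1$ discrepancy to Lemma~\ref{random_sesquilinear_realisation_lemma} after the substitution $u_1 \mapsto x_{11}^{-1}u_1$. Your treatment is slightly more systematic in enumerating exactly the six nontrivial $(i,j)$ pairs, and you are more explicit than the paper about the degenerate case where $x_{11}$ is singular (the paper instead extracts invertibility of $x_{11}$ from the $W_1 \times W_3$ isometry condition, which only applies once one side of the equivalence is assumed); your observation that both $x$ and $x_{new}$ fail to be invertible in that case, so the equivalence is vacuous and any $J$ works, cleanly closes that gap.
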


\begin{proof}
The condition $xV = V$ becomes $x(W_3 \oplus W_4) = W_3 \oplus W_4$. Since $x$ is an isometry, we also have $xV^\perp = V^\perp$ which becomes $x(W_2 \oplus W_3) = W_2 \oplus W_3$. Similarly to the proof of Proposition \ref{gl_inf_gl_n_conjugacy_prop}, this implies that $x$ has the stated form.
\newline \newline \noindent
Let us consider compare the actions of $x$ and $x_{new}$ on the spaces $W_i$ to determine when they preserve $B_n$. Both matrices act equally on $W_2 \oplus W_3$, so one preserves $B_n$ on this space if and only if the other does. We have that $x_{new}$ trivially preserves the form on $W_4$, where it acts as the identity, while if $u_4, v_4 \in W_4$, then by Remark \ref{simplification_remark},
\[
B_n(x u_4, x v_4) = B_n(x_{44} u_4, x_{44} v_4).
\]
This shows that $x$ preserves $B_n$ on $W_4$ if and only if $x_{44} \in G_{B_n}(W_4)$. Note that $(W_2 \oplus W_3)^\perp = W_3 \oplus W_4$, so the shapes of $x$ and $x_{new}$ force them both to preserve orthogonality between $W_2 \oplus W_3$ and $W_4$. 
\newline \newline \noindent
Finally, let us consider $W_1$. Suppose that $u_i \in W_i$ ($i=1,2,3,4$). Applying Remark \ref{simplification_remark} to $B_n(xu, xv)$ and $B_n(x_{new}u, x_{new}v)$ for suitable $u$ and $v$:
\begin{eqnarray*}
B_n(x u_1, x u_2) &=& B_n(x_{11} u_1, x_{32} u_2) + B_x(x_{21} u_1, x_{22}u_2) = B_n(x_{new} u_1, x_{new} u_2), \\
B_n(x u_1, x u_3) &=& B_n(x_{11} u_1, x_{33} u_3) = B_n(x_{new} u_1, x_{new} u_3),
\end{eqnarray*}
showing that one of $x$ and $x_{new}$ preserves the pairing between $W_1$ and $W_2 \oplus W_3$ if and only if the other does. The isometry condition
\[
B_n(x_{11} u_1, x_{33} u_3) = B_n(u_1, u_3)
\]
also shows that $x_{11}$ must be invertible. Now, $x$ preserves the orthogonality of $W_1$ and $W_4$ if and only if
\[
0 = B_n(x u_1, x u_4) = B_n(x_{11} u_1, x_{34}u_4) + B_n(x_{41}u_1, x_{44}u_4).
\]
We observe that the change of variables $u_1 \to x_{11}^{-1}u_1$ gives
\[
B_n(u_1, x_{34} u_4) = - B_n(x_{41}x_{11}^{-1}u_1, x_{44} u_4),
\]
which in turn uniquely determines $x_{34}$ since $B_n$ gives a non-degenerate pairing between $W_1$ and $W_3$. On the other hand, $x_{new}$ automatically preserves the orthogonality of $W_1$ and $W_4$ since $x_{new}W_1 \subseteq W_1 \oplus W_2 \oplus W_3 = W_4^\perp$ and $x_{new} W_4  = W_4$. It remains to determine the conditions under which $x$ and $x_{new}$ preserve $B_n$ on $W_1$. Suppose $u_1, v_1 \in W_1$. Then,
\begin{eqnarray*}
B_n(x u_1, x v_1) &=& B_n(x_{11} u_1, x_{31} v_1) + B_n(x_{21} u_1, x_{21} v_1) + B_n(x_{31} u_1, x_{11} v_1) + B_n(x_{41}u_1, x_{41} v_1), \\
B_n(x_{new}u_1, x_{new}v_1) &=& B_n(x_{11} u_1, (x_{31}+J) v_1) + B_n(x_{21} u_1, x_{21} v_1) + B_n((x_{31}+J) u_1, x_{11} v_1).
\end{eqnarray*}
These two expressions will be equal if we can find $J$ such that
\[
B_n(x_{11}u_1, Jv_1) + B_n(Ju_1, x_{11}v_1) = B_n(x_{41}u_1, x_{41}v_1),
\]
which in turn will guarantee that $B_n(x u_1, x v_1) = B_n(u_1, v_1)$ if and only if $B_n(x_{new}u_1, x_{new}v_1) = B_n(u_1,v_1)$.
\newline \newline \noindent
Since $x_{41}$ could be arbitrary, all we know about $B_m(x_{41}u_1, x_{41}v_1)$ is that it is some possibly degenerate sesquilinear form on $W_1$. Performing the change of coordinates $u_1 \to x_{11}^{-1}u_1$, $v_1 \to x_{11}^{-1}v_1$, $J \to Jx_{11}$, we would have to find $J \in \hom(W_1, W_3)$ such that
\[
B_n(u_1, Jv_1) + B_n(Ju_1, v_1) = B_n(x_{41}x_{11}^{-1}u_1, x_{41}x_{11}^{-1}v_1).
\]
This can be achieved by Lemma \ref{random_sesquilinear_realisation_lemma} (and this $J$ may be viewed as a function of $x_{41}$ and $x_{11}$).
\end{proof}

\begin{theorem} \label{sesquilinear_conjugacy_theorem}
Two bounding pairs in $BP_n$, $(g, V)$ and $(g^\prime, V^\prime)$ are conjugate by $G_\infty(\mathbb{F}_q)$ if and only if they are conjugate by $G_{B_n}(V_n)$.
\end{theorem}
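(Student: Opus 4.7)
The plan is to mirror the proof of Proposition \ref{gl_inf_gl_n_conjugacy_prop}. The forward direction is trivial, so assume that $(g, V)$ and $(g', V')$ are conjugate via some $x \in G_\infty(\mathbb{F}_q)$, and choose $m \geq n$ with $x \in G_{B_m}(V_m)$. First I would conjugate each pair by elements of $G_{B_n}(V_n)$ so that they are in standard shape with respect to a common decomposition of $V_n$; then I would modify $x$ to produce an element of $G_{B_n}(V_n)$ still conjugating $(g, V)$ to $(g', V')$.

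For the first step, put $(g, V)$ and $(g', V')$ in standard shape with decompositions $V_n = W_1 \oplus W_2 \oplus W_3 \oplus W_4$ and $V_n = W_1' \oplus W_2' \oplus W_3' \oplus W_4'$. The block dimensions must match: $\dim W_3 = \dim(V \cap V^\perp)$ and $\dim W_2 = \dim V^\perp - \dim W_3$ are intrinsic to $V$ (as $V^\perp \subseteq V_n$ and $x \colon V^\perp \to (V')^\perp$ is an isometry), while $\dim W_1 = \dim W_3$ and $\dim W_4$ is then forced by $\dim V_n$. To apply Witt's Lemma (Proposition \ref{witt_lemma_proposition}) and produce an element of $G_{B_n}(V_n)$ sending $W_i$ to $W_i'$ for each $i$, I also need equivalences of the restricted forms. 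The form on $W_1 \oplus W_3$ is always a sum of hyperbolic planes of the same rank, and in the Hermitian and alternating cases any two non-degenerate forms of the same dimension are equivalent. The one real issue arises in the symmetric case, where germs must be matched: germ$(B_n|_{W_2}) =$ germ$(B_n|_{V^\perp})$ is a $G_\infty(\mathbb{F}_q)$-invariant of $V$ because $V^\perp \cong (V')^\perp$ as sesquilinear spaces, and then germ$(B_n|_{W_4})$ is determined by germ$(B_n|_{W_2}) \oplus$ germ$(B_n|_{W_4}) =$ germ$(B_n|_{V_n})$ (using that $W_1 \oplus W_3$ is hyperbolic). Building an isometry blockwise and conjugating, we may assume $W_i = W_i'$ for each $i$, and in particular $V = V'$.

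Now view $(g, V)$ and $(g', V')$ as elements of $BP_m$ with the extended decomposition $V_m = W_1 \oplus W_2 \oplus W_3 \oplus W_4^{(m)}$, where $W_4^{(m)} = W_4 \oplus (V_n^\perp \cap V_m)$; both pairs are in standard shape in $BP_m$ with respect to this decomposition. Since $xV = V$, Lemma \ref{sesquilinear_extension_lemma} (with $m$ in place of $n$) forces $x$ to have the prescribed block shape and furnishes $J \in \hom(W_1, W_3)$ such that the matrix $x_{new}$ obtained from $x$ by setting the $(4,1)$ and $(3,4)$ blocks to zero, the $(4,4)$ block to $\Id$, and replacing $x_{31}$ by $x_{31} + J$ still lies in $G_{B_m}(V_m)$. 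Because $x_{new}$ fixes $W_4^{(m)}$ pointwise (and hence $V_n^\perp \cap V_m$) and also fixes $V_m^\perp$, it fixes all of $V_n^\perp = (V_n^\perp \cap V_m) \oplus V_m^\perp$, so $x_{new} \in G_{B_n}(V_n)$. Finally, a direct block computation using the standard-shape form of $g$ and $g'$ shows that $xg = g'x$ imposes conditions only on the core blocks $x_{11}, x_{21}, x_{22}, x_{31}, x_{32}, x_{33}$ (the blocks $x_{34}, x_{41}, x_{44}$ drop out because the last row and last column of $g - \Id$ and $g' - \Id$ vanish), and the added $J$ cancels between the two sides of the $(3,1)$-equation; hence $x_{new} g x_{new}^{-1} = g'$. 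The main obstacle throughout is the germ-matching step, which is specific to the orthogonal case and rests on identifying germ$(B_n|_{W_2})$ as an intrinsic invariant of $V$.
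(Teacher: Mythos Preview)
Your proof is correct and follows essentially the same strategy as the paper: reduce to $V = V'$ via Witt's Lemma, then modify the conjugating element $x$ using Lemma \ref{sesquilinear_extension_lemma} to land in $G_{B_n}(V_n)$. One small point of precision: in the block computation of $x(g-\Id) = (g'-\Id)x$, the entry $x_{31}$ does not appear at all (the $(1,1)$ and $(3,3)$ blocks of $g-\Id$ and $g'-\Id$ vanish), so there is nothing for $J$ to ``cancel''; the insensitivity to $x_{31}$ is what allows the replacement $x_{31} \mapsto x_{31}+J$ to preserve the commutation relation.

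Your first step is slightly more elaborate than the paper's. You match all four blocks $W_i \to W_i'$ by checking dimensions and germs separately; the paper instead argues directly that $V\cap V_n$ and $V'\cap V_n$ are isometric (both arise from the isometric spaces $V\cap V_m$ and $V'\cap V_m$ by stripping off $m-n$ hyperbolic planes), applies Witt's Lemma once to arrange $V=V'$, and only then chooses a single standard-shape decomposition for both pairs. Your germ bookkeeping for $W_2$ and $W_4$ is correct and makes the orthogonal case explicit, whereas the paper's hyperbolic-plane counting handles all three cases uniformly without ever naming the germs.
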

\begin{proof}
If we have $x \in G_\infty(\mathbb{F}_q)$ such that $g^\prime = xgx^{-1}$ and $V^\prime = xV$, $x$ must be a member of some finite $G_{B_m}(V_m)$ (we may assume $m \geq n$). In particular, it follows that $V \cap V_m$ is isometric to $V^\prime \cap V_m$ (both spaces being equipped with the restriction of $B_m$). As in Proposition \ref{sesquilinear_structure_prop}, these spaces decompose as $\ker(B_m|_{V}) \oplus U^{\oplus r} \oplus W$, where $U$ is a hyperbolic plane, and $W$ is the germ.
\newline \newline \noindent
First we show that there exists $y \in G_{B_n}(V_n)$ such that $V^\prime = yV$. By Proposition \ref{witt_lemma_proposition}, this is equivalent to $V \cap V_n$ and $V^\prime \cap V_n$ being isometric. But passing from $V \cap V_n$ and $V^\prime \cap V_n$ to $V \cap V_m$ and $V^\prime \cap V_m$ respectively amounts to adding $m-n$ hyperbolic planes. This implies that each of $V \cap V_n$ and $V^\prime \cap V_n$ is isometric to $\ker(B_m|_{V}) \oplus U^{\oplus (r-(m-n))} \oplus W$, and therefore they are isometric to each other. The upshot of this is that when considering whether $(g, V)$ and $(g^\prime, V^\prime)$ are conjugate by an element of $G_{B_n}(V_n)$, we may assume $V = V^\prime$, and therefore we may put both in standard shape with the same decomposition $V_m = W_1 \oplus W_2 \oplus W_3 \oplus W_4$. By construction, $V_n^\perp \cap V_m \subseteq W_4$ and $W_1 \oplus W_2 \oplus W_3$ is contained in $V_n$.
\newline \newline \noindent
By Lemma \ref{sesquilinear_extension_lemma}, $x$ must have the following block form with respect to the decomposition $V_m = W_1 \oplus W_2 \oplus W_3 \oplus W_4$:
\[
x = \begin{pmatrix}
x_{11} & 0 & 0 & 0\\
x_{21} & x_{22} & 0 & 0\\
x_{31} & x_{32} & x_{33} & x_{34} \\
x_{41} & 0 & 0 & x_{44}
\end{pmatrix}.
\]
The condition $xgx^{-1} = g^\prime$ may be rewritten as $x(g-\Id) = (g^\prime-\Id) x$ (subject to $x \in G_{B_m}(V_m)$). By Lemma \ref{sesquilinear_shape_lemma}, the equation $x(g-\Id) = (g^\prime-\Id) x$ expressed in terms of block matrices is the same as was computed in the proof of Proposition \ref{gl_inf_gl_n_conjugacy_prop}. In particular, the matrix blocks $x_{31}, x_{34}, x_{41}, x_{44}$ do not appear in this equation. This means we may replace $x_{34}$ and $x_{41}$ with zero and $x_{44}$ with the identity matrix and $x_{31}$ by $x_{31}+J$ for some matrix $J$ (determined by $x_{11}$ and $x_{41}$ as in Lemma \ref{sesquilinear_extension_lemma}), and the resulting matrix
\[
x_{new} = \begin{pmatrix}
x_{11} & 0 & 0 & 0\\
x_{21} & x_{22} & 0 & 0\\
x_{31} + J & x_{32} & x_{33} & 0 \\
0 & 0 & 0 & \Id
\end{pmatrix}
\]
still satisfies $x_{new}gx_{new}^{-1} = g^\prime$. Moreover because $W_4$ contains $V_n^\perp \cap V_m$, and the complement $W_1 \oplus W_2 \oplus W_3$ is contained in $V_n$, $x_{new}$ may be viewed as an element of $G_{B_n}(V_n)$ which obeys $x_{new} \cdot (g, V) = (g^\prime, V^\prime)$.
\end{proof}

\begin{proposition}
There is an associative multiplication on bounding pairs of the same type defined by
\[
(g, V) \times (g^\prime, V^\prime)  = (g g^\prime, V \cap V^\prime).
\]
This multiplication is $G_{\infty}(\mathbb{F}_q)$-equivariant, meaning that if $x \in G_\infty(\mathbb{F}_q)$, then
\[
(x \cdot(g, V)) \times (x \cdot (g^\prime, V^\prime)) = x \cdot ((g, V) \times (g^\prime, V^\prime)).
\]
Finally, for any bounding pair $(h, U)$ there are only finitely many pairs $(g, V), (g^\prime, V^\prime)$ such that
\[
(g, V) \times (g^\prime, V^\prime)  = (h, U).
\]
\end{proposition}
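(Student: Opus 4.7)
The approach is to mirror the results for bounding triples in Subsection \ref{bounding_triples_subsection}.

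First I would verify that the formula does define a bounding pair. The intersection of two smooth subspaces $V, V'$ is again smooth, since if $V \supseteq V_{n_1}^\perp$ and $V' \supseteq V_{n_2}^\perp$ then $V \cap V' \supseteq V_{\max(n_1,n_2)}^\perp$. If $v \in V \cap V'$, then $(gg')v = g(g'v) = gv = v$, so $gg'$ fixes $V \cap V'$ pointwise, showing $(gg', V \cap V')$ is a bounding pair. Associativity is then immediate from associativity of intersection of subspaces and of multiplication in $G_\infty(\mathbb{F}_q)$.

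Equivariance is a direct computation: for $x \in G_\infty(\mathbb{F}_q)$,
\[
(x \cdot (g, V)) \times (x \cdot (g', V')) = (xgx^{-1}, xV) \times (xg'x^{-1}, xV') = (xgg'x^{-1}, x(V \cap V')),
\]
using that $xV \cap xV' = x(V \cap V')$ because $x$ is a bijection. The right-hand side is $x \cdot ((g, V) \times (g', V'))$, as required.

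For the finiteness claim, suppose $(g, V) \times (g', V') = (h, U)$, so that $V \cap V' = U$ and $gg' = h$. In particular $V, V' \supseteq U$. Since $U$ is smooth, pick $n$ with $V_n^\perp \subseteq U$; then $V$ and $V'$ are smooth subspaces of $V_\infty$ containing $V_n^\perp$, and such subspaces correspond bijectively to subspaces of the finite-dimensional quotient $V_\infty / V_n^\perp \cong V_n$. This gives only finitely many possibilities for the pair $(V, V')$. For each such choice, the bounding pairs $(g, V)$ and $(g', V')$ lie in $BP_n$, forcing $g, g' \in G_{B_n}(V_n)$, which is a finite group; and the constraint $gg' = h$ then determines $g'$ once $g$ is fixed. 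Hence only finitely many decompositions of $(h, U)$ exist. No step poses a real obstacle: the argument is a direct transcription of the general linear case, with the two subspaces $W, V$ of a bounding triple collapsed into the single subspace $V$ of a bounding pair.
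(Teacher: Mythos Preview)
Your proof is correct and follows exactly the approach indicated by the paper, which simply says ``The arguments are the same as in Subsection \ref{bounding_triples_subsection}.'' You have spelled out in full the details that the paper leaves implicit, but the content is identical.
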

\begin{proof}
The arguments are the same as in Subsection \ref{bounding_triples_subsection}.
\end{proof}

\begin{definition}
Let $\mathcal{A}^\prime$ be the set if functions from the set of all bounding pairs to $\mathbb{Z}$. It is an abelian group with pointwise addition. We equip $\mathcal{A}^\prime$ with the following convolution product. If $f_1, f_2 \in \mathcal{A}^\prime$, then
\[
(f_1 * f_2) (g^{\prime \prime}, V^{\prime \prime}) = \sum_{(g, V) \times (g^\prime, V^\prime) = (g^{\prime \prime}, V^{\prime \prime})} f_1(g, V) f_2(g^\prime, V^\prime).
\]
\end{definition}
\noindent
As in the case of general linear groups, this is well defined because the sum is finite. There is an action of $G_\infty(\mathbb{F}_q)$ on $\mathcal{A}^\prime$ via
\[
(x \cdot f)(g, V) = f(x^{-1} \cdot (g, V)).
\]
Because the multiplication is equivariant for the action of $G_\infty(\mathbb{F}_q)$, the product of two $G_\infty(\mathbb{F}_q)$-invariant functions is again invariant.

\begin{definition}
Let $\mathcal{A}^{G_\infty}$ be the subspace of $\mathcal{A}^\prime$ consisting of elements that are invariant for the action of $G_\infty(\mathbb{F}_q)$ and are supported on finitely many $G_\infty(\mathbb{F}_q)$-orbits of bounding pairs.
\end{definition}

\begin{proposition}
We have that $\mathcal{A}^{G_\infty}$ is a subalgebra of $\mathcal{A}^\prime$.
\end{proposition}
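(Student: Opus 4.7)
The plan is to follow Proposition \ref{Ivanov_Kerov_subalg_proposition} closely. Since multiplication of bounding pairs is $G_\infty(\mathbb{F}_q)$-equivariant, the product of two invariant functions is automatically invariant, so the only content to verify is that the product of indicator functions of two orbits is supported on finitely many orbits. A general element of the product of the orbits of $(g_1, V_1)$ and $(g_2, V_2)$ has $V$-component $x_1 V_1 \cap x_2 V_2$ of codimension at most $C := \codim(V_1) + \codim(V_2)$, independently of $x_1, x_2 \in G_\infty(\mathbb{F}_q)$. So it suffices to show that for every $C \geq 0$, only finitely many $G_\infty(\mathbb{F}_q)$-orbits of bounding pairs $(g, V)$ satisfy $\codim(V) \leq C$.

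The strategy is to produce an integer $M = M(C)$ so that every such orbit has a representative in $BP_M$. Once this is established, finiteness follows because $BP_M$ consists of an element of the finite group $G_{B_M}(V_M)$ paired with a smooth subspace containing $V_M^\perp$ (equivalently, a subspace of the finite-dimensional quotient $V_\infty / V_M^\perp \cong V_M$) fixed pointwise by that element. To construct the representative, I start from the finite-dimensional subspace $V^\perp \subseteq V_\infty$ of dimension at most $C$ and iteratively apply Proposition \ref{hyperbolic_plane_splitting_prop} to enlarge $V^\perp$ to a non-degenerate finite-dimensional subspace $S$ whose dimension is bounded by a function of $C$. If $S$ is not already isometric to one of the standard spaces $V_m$ of Definition \ref{standard_form_definition}, I enlarge it further by a bounded number of hyperbolic planes and at most one anisotropic summand (of dimension at most two by Theorem \ref{anisotropic_space_classification_theorem}), obtaining $S' \cong V_m$ for some $m \leq M(C)$.

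Then $(S')^\perp \subseteq V_\infty$ is a non-degenerate infinite-dimensional subspace with trivial germ, hence isometric to $V_m^\perp$. A straightforward extension of Witt's Lemma to $V_\infty$ then yields $y \in G_\infty(\mathbb{F}_q)$ sending $S'$ to $V_m$ and $(S')^\perp$ to $V_m^\perp$: I choose $N$ large enough that $S'$ and $V_m$ both lie in $V_N$, apply Proposition \ref{witt_lemma_proposition} inside $V_N$ to get an isometry of $V_N$, and extend it by the identity on $V_N^\perp$ to an element of $G_\infty(\mathbb{F}_q)$. Since $V \supseteq (S')^\perp$, conjugation by $y$ sends $(g, V)$ into $BP_m \subseteq BP_{M(C)}$, finishing the argument. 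The main nonroutine step is verifying that a suitable $M(C)$ can be realised in each of the five cases of Definition \ref{standard_form_definition}; this amounts to a case analysis using the fact that the anisotropic germs appearing in each case are of bounded dimension, together with a little care about adjusting the parity of $\dim S'$ to match the standard form.
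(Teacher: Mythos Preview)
Your argument is correct. Both your approach and the paper's reduce to the same core observation: a bounding pair $(g,V)$ with $\codim(V) \leq C$ is $G_\infty(\mathbb{F}_q)$-conjugate into $BP_M$ for some $M$ depending only on $C$, and both use Proposition \ref{witt_lemma_proposition} to effect the conjugation. The difference lies in how the relevant non-degenerate subspace is produced. The paper places $(g,V)$ in standard shape $V_m = W_1 \oplus W_2 \oplus W_3 \oplus W_4$ and observes that the non-degenerate summand $W_4 \subseteq V$ has bounded codimension (since $\dim W_1 = \dim W_3$ forces $\dim(V_m) - \dim(W_4) \leq 2\codim(V)$), then peels off hyperbolic planes from $W_4$ using Theorem \ref{anisotropic_space_classification_theorem}. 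You instead work dually from $V^\perp$, enlarging it step by step via Proposition \ref{hyperbolic_plane_splitting_prop} to a non-degenerate $S'$ of bounded dimension, so that $(S')^\perp \subseteq V$ plays the role of the paper's $W_4 \oplus V_m^\perp$. Your route avoids invoking the standard-shape machinery and is in that sense more self-contained, while the paper's route yields the explicit bound $d \leq 2 + 2\codim(V_1) + 2\codim(V_2)$ with no additional bookkeeping. One small point worth making explicit in your write-up is that the iteration terminates after at most $\dim(\mathrm{rad}\, B|_{V^\perp})$ steps (each added partner vector strictly drops the radical dimension), which gives $\dim S \leq 2C$; this is implicit in your sketch but deserves a sentence.
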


\begin{proof}
We modify the argument from Proposition \ref{Ivanov_Kerov_subalg_proposition}. Suppose that $f_1$ and $f_2$ are the indicator functions of two $G_\infty(\mathbb{F}_q)$-orbits containing $(g, V)$ and $(g^\prime, V^\prime)$ respectively. Then $(g, V) \times (g^\prime, V^\prime) = (gg^\prime, V \cap V^\prime)$ is an element of $BP_m$ for some $m$. We write $V_m = W_1 \oplus W_2 \oplus W_3 \oplus W_4$ putting $(gg^\prime, V \cap V^\prime)$ in standard shape. Note that $\codim(V \cap V^\prime) = \dim(W_1) + \dim(W_2)$. Since $\dim(W_1) = \dim(W_3)$,
\[
\dim(V_m) - \dim(W_4) = \dim(W_1) + \dim(W_2) + \dim(W_3) \leq 2 \codim(V \cap V^\prime).
\]
Hence $\dim(W_4) \geq \dim(V_m) - 2 \codim(V \cap V^\prime)$.
\newline \newline \noindent
The form $B_m$ is non-degenerate when restricted to $W_4$, so $W_4 = W \oplus U^{\oplus r}$ where $W$ is the germ of $W_4$ and $U$ is a hyperbolic plane. By the classification of anisotropic spaces (Theorem \ref{anisotropic_space_classification_theorem}), $\dim(W) \leq 2$. By counting dimensions and substituting the previous inequality, 
\[
2r \geq \dim(W_4)-2 \geq \dim(V_m) - 2 - 2\codim(V \cap V^\prime).
\]
This means that $V \cap V^\prime$ contains $V_m^\perp$ in addition to at least $\frac{\dim(V_m) - 2 - 2\codim(V \cap V^\prime)}{2}$ hyperbolic planes. By Proposition \ref{witt_lemma_proposition}, we may apply an element of $G_\infty(\mathbb{F}_q)$ to the sum of $V_m$ and these hyperbolic planes to obtain $V_{d}$, where $d$ is as follows. We have
\[
\dim(V_d)  = \dim(V_m) - 2r \leq \dim(V_m) - (\dim(V_m) - 2 - 2\codim(V \cap V^\prime)) = 2 + 2\codim(V \cap V^\prime).
\]
Since $\dim(V_d)$ is either $d$, $2d$, or $2d+1$ (depending on the nature of the sesquilinear form we are working with) in any case we have
\[
d \leq 2 + 2 \codim(V) + 2 \codim(V^\prime).
\]
We conclude that the $G_\infty(\mathbb{F}_q)$-orbit of $(gg^\prime, V \cap V^\prime)$ intersects $BP_{2 + 2 \codim(V) + 2 \codim(V^\prime)}$, and there are finitely many $G_\infty(\mathbb{F}_q)$-orbits in any $BP_d$, so $f_1 * f_2$ is supported on finitely many orbits.
\end{proof}

\begin{proposition}\label{surj_hom_classical_groups}
For any $n \in \mathbb{Z}_{\geq 0}$ there is a surjective homomorphism $\Psi_n: \mathcal{A}^\prime \to \mathbb{Z}G_{B_n}(V_n)$ defined by
\[
\Psi_n(f) = \sum_{(g, V) \in BP_n} f(g,V)g.
\]
Moreover, the image of $\mathcal{A}^{G_\infty}$ is precisely the centre $Z(\mathbb{Z}G_{B_n}(V_n))$.
\end{proposition}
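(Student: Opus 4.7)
The plan is to mirror the two analogous propositions for general linear groups in Section \ref{specitalisation_section}, substituting Theorem \ref{sesquilinear_conjugacy_theorem} wherever Proposition \ref{gl_inf_gl_n_conjugacy_prop} was previously invoked. First I would verify that $\Psi_n$ is well-defined: since any bounding pair $(g,V) \in BP_n$ has $g \in G_{B_n}(V_n)$ (a finite group) and $V \supseteq V_n^\perp$ (so $V/V_n^\perp$ is a subspace of the finite-dimensional quotient $V_\infty/V_n^\perp$), the set $BP_n$ is finite and the defining sum has only finitely many nonzero terms. Additivity of $\Psi_n$ is immediate from the definition. For multiplicativity, the key observation is that $(g_1, V_1) \times (g_2, V_2) = (g_1 g_2, V_1 \cap V_2)$ lies in $BP_n$ precisely when both factors do, since $V_1 \cap V_2 \supseteq V_n^\perp$ is equivalent to $V_n^\perp$ being contained in each of $V_1, V_2$; with this in hand the computation is identical to the general linear case. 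Surjectivity onto $\mathbb{Z}G_{B_n}(V_n)$ follows by taking, for each $g \in G_{B_n}(V_n)$, the indicator function of the single bounding pair $(g, V_n^\perp)$, whose image under $\Psi_n$ is just $g$.

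For the second assertion, I would first check that $\Psi_n$ is $G_{B_n}(V_n)$-equivariant, where $G_{B_n}(V_n)$ acts by conjugation on both source and target. If $x \in G_{B_n}(V_n)$ and $(g,V) \in BP_n$, then $x \cdot (g, V) = (xgx^{-1}, xV)$ is again in $BP_n$ because $x$ fixes $V_n^\perp$ pointwise, so $xV \supseteq xV_n^\perp = V_n^\perp$; reindexing the sum then yields $\Psi_n(x \cdot f) = x\Psi_n(f)x^{-1}$ as in the general linear computation. Since every $G_\infty(\mathbb{F}_q)$-invariant function is in particular $G_{B_n}(V_n)$-invariant, its image under $\Psi_n$ is a fixed point of the conjugation action and hence central.

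To see the image contains all of $Z(\mathbb{Z}G_{B_n}(V_n))$, it suffices to realize every conjugacy-class sum. Given $g \in G_{B_n}(V_n)$, let $f$ be the indicator function of the $G_\infty(\mathbb{F}_q)$-orbit of the tight bounding pair $(g, \ker(g-1))$; this lies in $\mathcal{A}^{G_\infty}$ since it is supported on a single orbit. Because tightness forces $V$ to be determined by $g$, the contribution to $\Psi_n(f)$ is exactly the sum of those $g' \in G_\infty(\mathbb{F}_q)$ that are $G_\infty(\mathbb{F}_q)$-conjugate to $g$ and satisfy $\ker(g'-1) \supseteq V_n^\perp$, which amounts to $g' \in G_{B_n}(V_n)$. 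By Theorem \ref{sesquilinear_conjugacy_theorem}, such $g'$ are precisely the $G_{B_n}(V_n)$-conjugates of $g$, so $\Psi_n(f)$ is the conjugacy-class sum of $g$ in $G_{B_n}(V_n)$. The main obstacle is this final matching step, and it is exactly where Theorem \ref{sesquilinear_conjugacy_theorem} does the essential work; without the ``descent'' of $G_\infty$-conjugacy to $G_{B_n}(V_n)$-conjugacy on $BP_n$, the indicator function of a $G_\infty$-orbit could collapse to an incomplete or over-counted sum inside $G_{B_n}(V_n)$.
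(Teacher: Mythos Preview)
Your proposal is correct and follows exactly the approach the paper intends: its own proof consists of the single sentence ``The arguments are the same as in Section \ref{specitalisation_section},'' and you have faithfully spelled out those arguments for bounding pairs, correctly substituting Theorem \ref{sesquilinear_conjugacy_theorem} for Proposition \ref{gl_inf_gl_n_conjugacy_prop} at the key step.
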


\begin{proof}
The arguments are the same as in Section \ref{specitalisation_section}.
\end{proof}

\begin{proposition} \label{specialisation_partial_computation_proposition}
Let $f$ be the indicator function of $G_\infty(\mathbb{F}_q)$ orbit of $(g,V)$, where $(g, V) \in BP_n$. Let $m \geq n$ and let $Cl(g)$ be the sum of all elements in $G_{B_m}(V_m)$ that are conjugate to $g$. Then 
\[
\Psi_m(f) = K \frac{|C_{G_{B_m}(V_m)}(g)|}{|C_{G_{B_n}(W_1 \oplus W_2 \oplus W_3)}(g)| |\hom(W_1,W_4)| |G_{B_m}(W_4)|} Cl(g),
\]
where $V_m = W_1 \oplus W_2 \oplus W_3 \oplus W_4$ puts $g$ in standard shape, and $K$ is a positive integer independent of $m$.
\end{proposition}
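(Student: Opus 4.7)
The plan is to apply orbit--stabiliser to count the subspaces $V'$ for which $(g, V')$ lies in the $G_{B_m}(V_m)$-orbit of $(g, V)$. By Theorem~\ref{sesquilinear_conjugacy_theorem}, the $G_\infty(\mathbb{F}_q)$-orbit of $(g,V)$ intersects $BP_m$ in a single $G_{B_m}(V_m)$-orbit, so grouping the defining sum of $\Psi_m(f)$ by first coordinate immediately yields $\Psi_m(f) = P \cdot Cl(g)$, where $P$ is the size of the $C_{G_{B_m}(V_m)}(g)$-orbit of $V$ under the action $x \cdot V' = x V'$. Orbit--stabiliser then gives
\[
P = \frac{|C_{G_{B_m}(V_m)}(g)|}{|C_{G_{B_m}(V_m)}(g) \cap \mathrm{Stab}_m(V)|},
\]
so the problem reduces to computing the denominator.

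For this I would invoke Lemma~\ref{sesquilinear_extension_lemma} (applied to $V_m$) to parametrise $\mathrm{Stab}_m(V) \cap G_{B_m}(V_m)$ bijectively by triples $(x_{new}, x_{41}, x_{44})$ with $x_{41} \in \hom(W_1, W_4)$, $x_{44} \in G_{B_m}(W_4)$, and $x_{new}$ an isometry of $V_m$ of the simplified block form specified there (the block $x_{34}$ is forced). Because $x_{new}$ fixes $W_4$ pointwise and has vanishing off-block-diagonal entries between $W_1 \oplus W_2 \oplus W_3$ and $W_4$, it preserves the orthogonal decomposition $V_m = (W_1 \oplus W_2 \oplus W_3) \oplus W_4$, and its restriction to $W_1 \oplus W_2 \oplus W_3$ is an element of $G_{B_m}(W_1 \oplus W_2 \oplus W_3)$ preserving the flag $W_3 \subset W_2 \oplus W_3$. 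I would then impose the condition $xg = gx$. Writing $x(g - \Id) = (g - \Id) x$ as a block equation and using the shape of $g$ from Lemma~\ref{sesquilinear_shape_lemma}, the nonzero columns of $g - \Id$ lie only in the $W_1$ and $W_2$ slots, so the resulting system involves only $x_{11}, x_{21}, x_{22}, x_{32}, x_{33}$; in particular $x_{31}, x_{34}, x_{41}, x_{44}$ are unconstrained, and the auxiliary matrix $J \in \hom(W_1, W_3)$ appearing in the lemma (which modifies only the block $x_{31}$) plays no role. The surviving equations are exactly the condition that $x_{new}|_{W_1 \oplus W_2 \oplus W_3}$ centralise $g|_{W_1 \oplus W_2 \oplus W_3}$.

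Letting $N$ denote the number of admissible $x_{new}|_{W_1 \oplus W_2 \oplus W_3}$, the parametrisation gives
\[
|C_{G_{B_m}(V_m)}(g) \cap \mathrm{Stab}_m(V)| = N \cdot |\hom(W_1, W_4)| \cdot |G_{B_m}(W_4)|.
\]
The set counted by $N$ is the subgroup of $C_{G_{B_m}(W_1 \oplus W_2 \oplus W_3)}(g)$ consisting of elements preserving the flag $W_3 \subset W_2 \oplus W_3$. Since $W_1 \oplus W_2 \oplus W_3 \subseteq V_n$ and $B_m$ restricts to $B_n$ there, this coincides with a subgroup of $C_{G_{B_n}(W_1 \oplus W_2 \oplus W_3)}(g)$, so $N$ is independent of $m$. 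Setting $K = |C_{G_{B_n}(W_1 \oplus W_2 \oplus W_3)}(g)| / N$ then produces a positive integer (the index of a subgroup) independent of $m$, and substitution into the formula for $P$ yields the claimed expression. The main obstacle is carefully verifying the bijection in the second paragraph, confirming that $J$ affects only the centraliser-free block $x_{31}$ and that the forced value of $x_{34}$ does not further constrain the other data; once this is done, the $m$-independence of $N$ and integrality of $K$ are immediate.
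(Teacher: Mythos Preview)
Your proposal is correct and follows essentially the same route as the paper: both reduce to orbit--stabiliser for the action of $C_{G_{B_m}(V_m)}(g)$ on $V$, then use the block parametrisation from Lemma~\ref{sesquilinear_extension_lemma} to factor $|C_{G_{B_m}(V_m)}(g) \cap \mathrm{Stab}_m(V)|$ as $N \cdot |\hom(W_1,W_4)| \cdot |G_{B_m}(W_4)|$, with $N$ counting the block-triangular isometries of $W_1 \oplus W_2 \oplus W_3$ centralising $g$. Your explicit identification of the $N$-set as the flag-preserving subgroup of $C_{G_{B_n}(W_1 \oplus W_2 \oplus W_3)}(g)$, and your check that the centraliser equations avoid $x_{31}, x_{34}, x_{41}, x_{44}$ so that $J$ is harmless, are exactly the points the paper handles (the latter by reference back to the block computation in Proposition~\ref{gl_inf_gl_n_conjugacy_prop}).
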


\begin{proof}
We mimic the proof of Proposition \ref{specialisation_function_gl_proposition}. Any $(g^\prime, V^\prime) \in BP_m$ that is in the orbit of $(g, V)$ is conjugate by an element of $G_{B_m}(V_m)$, so $g^\prime$ is in the same $G_{B_m}(V_m)$-conjugacy class as $g$. Since $\Psi_m(f)$ is in the centre of the group algebra, we have $\Psi_m(f) = P \cdot Cl(g)$, and it remains to determine the scalar $P$ as a function of $m$. This multiplicity is equal to the number of elements $(g, V^\prime) \in BP_m$ that are conjugate to $(g, V)$. Explicitly, for some $x \in G_{B_m}(V_m)$,
\begin{eqnarray*}
V^\prime &=& xV \\
g &=& xgx^{-1}.
\end{eqnarray*}
So $P$ is the size of the orbit of $V$ under the action of the centraliser $C_{G_{B_m}(V_m)}(g)$. By the orbit-stabiliser relation,
\[
P = \frac{|C_{G_{B_m}(V_m)}(g)|}{|C_{G_{B_m}(V_m)}(g) \cap \mathrm{Stab}_m(V)|},
\]
where $\mathrm{Stab}_m(V)$ is the subgroup of $G_{B_m}(V_m)$ stabilising the subspace $V$ of $V_\infty$. We choose a decomposition $V_m = W_1 \oplus W_2 \oplus W_3 \oplus W_4$ putting $(g,V)$ in standard shape so that the elements of $\mathrm{Stab}_m(V)$ necessarily have the block matrix form
\[
x = \begin{pmatrix}
x_{11} & 0 & 0 & 0\\
x_{21} & x_{22} & 0 & 0\\
x_{31} & x_{32} & x_{33} & x_{34} \\
x_{41} & 0 & 0 & x_{44}
\end{pmatrix},
\]
where the block sizes are $a,\codim(V)-a,a,m-\codim(V)-a$ (where $a = \dim(V\cap V^\perp) = \dim(W_3)$).
\newline \newline \noindent
Now we reverse the argument in the proof of Theorem \ref{sesquilinear_conjugacy_theorem} to go from an element
\[
x_{small} = \begin{pmatrix}
x_{11} & 0 & 0 & 0\\
x_{21} & x_{22} & 0 & 0\\
x_{31} & x_{32} & x_{33} & 0 \\
0 & 0 & 0 & 1
\end{pmatrix}
\in G_{B_n}(W_1 \oplus W_2 \oplus W_3) \subseteq G_{B_m}(V_m)
\]
that fixes $V$ and commutes with $g$ to an element
\[
x = \begin{pmatrix}
x_{11} & 0 & 0 & 0\\
x_{21} & x_{22} & 0 & 0\\
x_{31} - J & x_{32} & x_{33} & x_{34} \\
x_{41} & 0 & 0 & x_{44}
\end{pmatrix}
\in G_{B_m}(V_m)
\]
that also fixes $V$ and commutes with $g$. Using Lemma \ref{sesquilinear_extension_lemma}, we see that we see that each $x_{small}$ gives rise to $|\hom(W_1, W_4)||G_{B_m}(W_4)|$ matrices $x$ via this construction (corresponding to choices of the entries $x_{41}$ and $x_{44}$). Moreover, since $J$ may be taken to be a function of $x_{11}$ and $x_{41}$, $x_{small}$ is determined by $x$.
\newline \newline \noindent
Let $N$ be the number of matrices $x_{small}$ that satisfy $x_{small}V = V$ and commute with $g$. Then
\[
|C_{G_{B_m}(V_m)}(g) \cap \mathrm{Stab}_m(V)| = 
N |\hom(W_1, W_4)| |G_{B_m}(W_4)|.
\]
So we finally get
\[
P = \frac{|C_{G_{B_n}(W_1 \oplus W_2 \oplus W_3)}(g)|}{N} \frac{|C_{G_{B_m}(V_m)}(g)|}{|C_{G_{B_n}(W_1 \oplus W_2 \oplus W_3)}(g)| |\hom(W_1,W_4)| |G_{B_m}(W_4)|}.
\]
Here we let $K = \frac{|C_{G_{B_n}(W_1 \oplus W_2 \oplus W_3)}(g)|}{N}$, which is an integer because the set of matrices $x_{small}$ is a subgroup of $C_{G_{B_n}(W_1 \oplus W_2 \oplus W_3)}(g)$.
\end{proof}
\noindent
Now we are in a position to compute, $\Psi_m(f)$ with cases for the unitary, symplectic, and orthogonal groups. In the notation of Proposition \ref{specialisation_partial_computation_proposition}, we let $a = \dim(W_1) = \dim(W_3)$, so that $\dim(W_2) = \codim(V) - \dim(W_1)$ and $\dim(W_4) = m - \dim(W_1) - \dim(W_2) - \dim(W_3) = m - \codim(V) - a$. Suppose that $g \in G_{B_n}(W_1 \oplus W_2 \oplus W_3)$ has type $\bs\mu$. 

\begin{proposition} \label{unitary_specialisation_proposition}
In the setting of Proposition \ref{specialisation_partial_computation_proposition}, suppose we are working with Hermitian forms and unitary groups. Let $r = \dim(W_1 \oplus W_2 \oplus W_3) = \codim(V) + a$, and let $\bs\mu$ be the (unmodified) type of $g$ viewed as an element of $U_r(\mathbb{F}_q)$. Then
\[
\Psi_m(f) =  K q^{2(m - r)(k-\frac{h}{2}-a)}
\qbinom{m-r+h}{h}_{-q}
Cl(g),
\]
where $k = l(\bs\mu(t-1))$ and $h = m_1(\bs\mu(t-1))$.
\end{proposition}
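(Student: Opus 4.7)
The plan is to start from Proposition \ref{specialisation_partial_computation_proposition} and evaluate each of the four factors in the coefficient of $Cl(g)$ in the unitary setting. Since $B_m$ is the standard Hermitian form, $G_{B_m}(V_m) = U_m(\mathbb{F}_q)$, and the restriction of $B_m$ to $W_4$ is non-degenerate Hermitian of $\mathbb{F}_{q^2}$-dimension $m - r$, giving $G_{B_m}(W_4) = U_{m-r}(\mathbb{F}_q)$. Similarly $G_{B_n}(W_1 \oplus W_2 \oplus W_3)$ is a unitary group in dimension $r$ containing $g$, so that the relevant centraliser quotient is an honest ratio of centralisers inside unitary groups. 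Since $\hom(W_1, W_4)$ is an $\mathbb{F}_{q^2}$-vector space of dimension $a(m-r)$, we have $|\hom(W_1, W_4)| = q^{2a(m-r)}$.

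Next, apply Corollary \ref{unitary_centraliser_ratio_corollary} with $d = m - r$ and $n = r$, which gives
\[
\frac{|C_{U_m(\mathbb{F}_q)}(g)|}{|C_{U_r(\mathbb{F}_q)}(g)|} = q^{2(m-r)(k-h)} \frac{|U_{h+m-r}(\mathbb{F}_q)|}{|U_h(\mathbb{F}_q)|}.
\]
Substituting back into Proposition \ref{specialisation_partial_computation_proposition} and dividing by $|\hom(W_1,W_4)|$ and $|U_{m-r}(\mathbb{F}_q)|$, the coefficient of $Cl(g)$ reduces to
\[
K \cdot q^{2(m-r)(k-h-a)} \cdot \frac{|U_{h+m-r}(\mathbb{F}_q)|}{|U_h(\mathbb{F}_q)|\,|U_{m-r}(\mathbb{F}_q)|}.
\]

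The final and most delicate step is to rewrite the ratio of unitary group orders in terms of the Gaussian binomial at $-q$. Using Proposition \ref{classical_group_sizes_proposition}, $|U_n(\mathbb{F}_q)| = q^{\binom{n}{2}} \prod_{i=1}^n (q^i - (-1)^i)$, the ratio splits as a power $q^{h(m-r)}$ (coming from $\binom{h+d}{2} - \binom{h}{2} - \binom{d}{2} = hd$) times the ratio of the products $\prod (q^i - (-1)^i)$. Applying the identity $q^i - (-1)^i = (-1)^i((-q)^i - 1)$ converts each such product into the corresponding product over $(-q)^i - 1$, which is precisely the shape of $[n]_{-q}!$ (up to the common normalisation by $(-q-1)^n$ that cancels in the ratio). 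Identifying the result as $\qbinom{m-r+h}{h}_{-q}$ and combining the accumulated $q^{h(m-r)}$ with $q^{2(m-r)(k-h-a)}$ consolidates the $q$-exponent to $2(m-r)(k - h/2 - a)$. The main technical obstacle is precisely this bookkeeping: tracking the interplay between the signs produced by the $(-1)^i$ factors and the power of $q$ separating $|U_{h+d}(\mathbb{F}_q)|/(|U_h(\mathbb{F}_q)||U_d(\mathbb{F}_q)|)$ from its identification with a $(-q)$-Gaussian binomial, after which the stated formula follows.
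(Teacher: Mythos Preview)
Your proposal is correct and follows essentially the same route as the paper: start from Proposition \ref{specialisation_partial_computation_proposition}, identify $G_{B_m}(W_4)\cong U_{m-r}(\mathbb{F}_q)$ and $|\hom(W_1,W_4)|=q^{2a(m-r)}$, apply Corollary \ref{unitary_centraliser_ratio_corollary} for the centraliser ratio, and then simplify the resulting quotient of unitary group orders via $|U_n(\mathbb{F}_q)|=q^{\binom{n}{2}}\prod_i(q^i-(-1)^i)$ into the $(-q)$-Gaussian binomial. The paper carries out exactly the same computation, with the same final consolidation of the $q$-exponent from $2(m-r)(k-h-a)+h(m-r)$ to $2(m-r)(k-\tfrac{h}{2}-a)$.
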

\begin{proof}
When viewed as an element of $G_{B_m}(V_m)$, the type of $g$ is $\bs\mu \cup (1^{m-r})_{t-1}$. By Corollary \ref{unitary_centraliser_ratio_corollary}, we have
\begin{eqnarray*}
\Psi_m(f) &=& K \frac{|C_{G_{B_m}(V_m)}(g)|}{|C_{G_{B_n}(W_1 \oplus W_2 \oplus W_3)}(g)| |\hom(W_1,W_4)| |G_{B_m}(W_4)|} Cl(g) \\
&=& K q^{2(m-r)(k-h)} \frac{|U_{m-r+h}(\mathbb{F}_q)|}{|U_h(\mathbb{F}_q)|}\frac{Cl(g)}{|\hom(W_1,W_4)| |U_{m-r}(\mathbb{F}_q)|}.
\end{eqnarray*}
Now we use the fact that the ground field is $\mathbb{F}_{q^2}$, so $|\hom(W_1, W_4)| = q^{2\dim(W_1)\dim(W_4)} = q^{2a(m-r)}$. This gives us
\begin{eqnarray*}
\Psi_m(f) &=&  K q^{2(m - r)(k-h-a)}
\frac{ q^{m-r+h \choose 2}
\prod_{i=1}^{m-r+h} (q^i - (-1)^i)
}{
q^{m-r \choose 2}
\prod_{i=1}^{m-r} (q^i - (-1)^i)
\cdot
q^{h \choose 2}
\prod_{i=1}^{h} (q^i - (-1)^i)
}
Cl(g) \\
&=&K q^{2(m - r)(k-\frac{h}{2}-a)}
\frac{\prod_{i=m-r+1}^{m-r+h} ((-q)^i - 1)}{\prod_{i=1}^{h} ((-q)^i - 1)}
Cl(g) \\
&=& K q^{2(m - r)(k-\frac{h}{2}-a)}
\qbinom{m-r+h}{h}_{-q}
Cl(g).
\end{eqnarray*}
\end{proof}
\begin{remark}  \label{exponent_nonnegativity_remark}
Similarly to Lemma \ref{lower_case_specialisation_correctness_lemma}, the proposition remains correct for any $m \geq 0$ (not just $m \geq n$). Proposition \ref{exponent_nonnegativity_proposition} applies to $g$ (where $b = \dim(W_3) = \dim(W_1) = a$), showing that $2k - 2a - h \geq 0$, and hence $k - \frac{h}{2} - a \geq 0$. As a result, the quantity appearing in Proposition \ref{unitary_specialisation_proposition} may be viewed as a polynomial in $(-q)^m$. Although the minus sign may seem peculiar, it may be viewed as a facet of \emph{Ennola duality}, which asserts that that a wide range of quantities associated to $U_n(\mathbb{F}_q)$ can be obtained from those for $GL_n(\mathbb{F}_q)$ under the substitution $q \to -q$. See \cite{FRST} for more details on Gaussian binomial coefficients at $-q$, and \cite{ThiemVinroot} for a detailed discussion of Ennola duality.
\end{remark}

\begin{proposition} \label{symplectic_specialisation_proposition}
In the setting of Proposition \ref{specialisation_partial_computation_proposition}, suppose we are working with alternating forms and symplectic groups and $q$ is odd. Let $r = \dim(W_1 \oplus W_2 \oplus W_3) = \codim(V) + a$, and let $\bs\mu$ be the (unmodified) type of $g$ viewed as an element of $Sp_r(\mathbb{F}_q)$. Then
\[
\Psi_m(f) = K q^{(2m-r)(k-\frac{h}{2}-a)} \qbinom{m - \frac{r}{2} + \frac{h}{2}}{\frac{h}{2}}_{q^2} Cl(g).
\]
where $k = l(\bs\mu(t-1))$ and $h = m_1(\bs\mu(t-1))$.
\end{proposition}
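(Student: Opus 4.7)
The plan is to mimic the proof of the unitary analogue (Proposition \ref{unitary_specialisation_proposition}), substituting in the symplectic formulas for centraliser sizes and for $|G_{B_m}(W_4)|$. First I would invoke Proposition \ref{specialisation_partial_computation_proposition} to reduce to computing the ratio
\[
\frac{|C_{Sp_{2m}(\mathbb{F}_q)}(g)|}{|C_{Sp_r(\mathbb{F}_q)}(g)|\,|\hom(W_1,W_4)|\,|Sp_{2m-r}(\mathbb{F}_q)|}.
\]
Here one observes that $W_4$ carries a non-degenerate alternating form, so $d := \dim(W_4) = 2m - r$ is even (forcing $r$ to be even as well), and that $h = m_1(\bs\mu(t-1))$ is even because $\bs\mu(t-1)$ is a symplectic signed partition and $1$ is odd. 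Writing $h = 2h'$ and $d = 2e = 2(m - r/2)$ will be convenient for expressing the final answer as a $q^2$-binomial.

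Next I would apply Corollary \ref{symplectic_centraliser_ratio_corollary} to the block-sum $g \oplus \Id_d \in Sp_{2m}(\mathbb{F}_q)$, which yields
\[
\frac{|C_{Sp_{2m}(\mathbb{F}_q)}(g)|}{|C_{Sp_r(\mathbb{F}_q)}(g)|} = q^{d(k-h)}\,\frac{|Sp_{h+d}(\mathbb{F}_q)|}{|Sp_h(\mathbb{F}_q)|}.
\]
Combined with $|\hom(W_1,W_4)| = q^{ad}$, the ratio that we must simplify becomes
\[
K\,q^{d(k-h-a)}\,\frac{|Sp_{h+d}(\mathbb{F}_q)|}{|Sp_h(\mathbb{F}_q)|\,|Sp_d(\mathbb{F}_q)|}\,Cl(g).
\]

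The final step is to unpack the ratio of symplectic group orders using Proposition \ref{classical_group_sizes_proposition}. The product of $(q^{2i}-1)$ factors collapses directly into the Gaussian $q^2$-binomial $\qbinom{h'+e}{h'}_{q^2} = \qbinom{m - r/2 + h/2}{h/2}_{q^2}$, while the pure-power-of-$q$ prefactors contribute $q^{(h'+e)^2 - h'^2 - e^2} = q^{2h'e} = q^{hd/2}$. Collecting exponents gives $d(k-h-a) + hd/2 = d(k - h/2 - a) = (2m-r)(k - h/2 - a)$, matching the stated formula. As in Remark \ref{exponent_nonnegativity_remark}, Proposition \ref{exponent_nonnegativity_proposition} applied with $b = a$ shows $k - h/2 - a \geq 0$, so the $q$-power is a genuine polynomial in $q^m$ (or equivalently $q^{2m}$). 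I do not expect any substantive obstacle; the main work is the careful bookkeeping of parities and exponents, together with the observation that the even-ness constraints intrinsic to symplectic signed partitions are exactly what is needed for the $q^2$-binomial to make sense.
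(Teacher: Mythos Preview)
Your proposal is correct and follows essentially the same route as the paper: invoke Proposition \ref{specialisation_partial_computation_proposition}, apply Corollary \ref{symplectic_centraliser_ratio_corollary} to compute the centraliser ratio, insert $|\hom(W_1,W_4)| = q^{a(2m-r)}$, and simplify the resulting ratio of symplectic group orders into a $q^2$-binomial. Your explicit parity observations (that $r$ and $h$ are even) and the non-negativity remark are welcome additions that the paper either leaves implicit or places in a subsequent remark.
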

\begin{proof}
When viewed as an element of $G_{B_m}(V_m)$, the type of $g$ is $\bs\mu \cup (1^{2m-r})_{t-1}$. By Corollary \ref{symplectic_centraliser_ratio_corollary}, 
\begin{eqnarray*}
\Psi_m(f) &=& K \frac{|C_{G_{B_m}(V_m)}(g)|}{|C_{G_{B_n}(W_1 \oplus W_2 \oplus W_3)}(g)| |\hom(W_1,W_4)| |G_{B_m}(W_4)|} Cl(g) \\
&=& K q^{(2m-r)(k-h)} \frac{|Sp_{2m-r+h}(\mathbb{F}_q)|}{|Sp_h(\mathbb{F}_q)|}\frac{Cl(g)}{|\hom(W_1,W_4)| |Sp_{2m-r}(\mathbb{F}_q)|}.
\end{eqnarray*}
Now we use the fact that the ground field is $\mathbb{F}_q$, so $|\hom(W_1, W_4)| = q^{a(2m-r)}$. This gives us
\begin{eqnarray*}
\Psi_m(f) &=& K q^{(2m-r)(k-h-a)}
\frac{q^{(\frac{2m-r+h}{2})^2}\prod_{i=1}^{\frac{2m-r+h}{2}}(q^{2i}-1)}{q^{(\frac{2m-r}{2})^2}\prod_{i=1}^{\frac{2m-r}{2}}(q^{2i}-1) \cdot q^{(\frac{h}{2})^2}\prod_{i=1}^{\frac{h}{2}}(q^{2i}-1)} Cl(g) \\
&=& K q^{(2m-r)(k-\frac{h}{2}-a)}
\frac{\prod_{i=\frac{2m-r}{2}+1}^{\frac{2m-r+h}{2}}(q^{2i}-1)}{\prod_{i=1}^{\frac{h}{2}}(q^{2i}-1)} Cl(g) \\
&=& K q^{(2m-r)(k-\frac{h}{2}-a)} \qbinom{m - \frac{r}{2} + \frac{h}{2}}{\frac{h}{2}}_{q^2}.
\end{eqnarray*}
\end{proof}
\begin{remark} 
Similarly to Lemma \ref{lower_case_specialisation_correctness_lemma}, the proposition remains correct for any $m \geq 0$ (not just $m \geq n$). As in Remark \ref{exponent_nonnegativity_remark}, we have $k - \frac{h}{2} - a \geq 0$. So the quantity appearing in Proposition \ref{symplectic_specialisation_proposition} may be viewed as a polynomial in $q^{2m}$. Since $r$ is necessarily even (as it is the dimension of a space with a non-degenerate symplectic form), we may take the coefficients to be in $\mathbb{Z}[q^2, q^{-2}]$.
\end{remark}

\begin{remark} \label{symplectic_char_2_remark_2}
By Remark \ref{symplectic_char_2_remark}, Corollary \ref{symplectic_centraliser_ratio_corollary} still applies in characteristic 2. So the calculation in Proposition \ref{symplectic_specialisation_proposition} also goes through in characteristic 2.
\end{remark}

\begin{proposition} \label{orthogonal_specialisation_proposition}
In the setting of Proposition \ref{specialisation_partial_computation_proposition}, suppose we are working with symmetric forms and orthogonal groups and $q$ is odd. Let $r = \dim(W_1 \oplus W_2 \oplus W_3) = \codim(V) + a$. Then for a certain $P(t) \in \mathcal{R}_q[\tfrac{1}{2}]$, we have $\Psi_m(f) = P(q^m) Cl(g)$.
\end{proposition}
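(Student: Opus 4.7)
The plan is to mirror the arguments in Propositions \ref{unitary_specialisation_proposition} and \ref{symplectic_specialisation_proposition}. First I would fix notation: let $\tau_1$ denote the germ of $B_n$ restricted to $W_1 \oplus W_2 \oplus W_3$ so that $g \in O_r^{\tau_1}(\mathbb{F}_q)$; let $\rho$ denote the germ of $B_m$ restricted to $W_4$; and let $d = \dim(W_4) = \dim(V_m) - r$. Applying the orthogonal analogue of Corollary \ref{unitary_centraliser_ratio_corollary} from Section \ref{finite_classical_groups_section} to the extension of $g$ by $\Id_d$, together with $|\hom(W_1, W_4)| = q^{ad}$, turns the formula of Proposition \ref{specialisation_partial_computation_proposition} into
\[
\Psi_m(f) = K \, q^{d(k-h-a)} \, \frac{|O_{h+d}^{\epsilon_1 \oplus \rho}(\mathbb{F}_q)|}{|O_h^{\epsilon_1}(\mathbb{F}_q)| \cdot |O_d^\rho(\mathbb{F}_q)|} \, Cl(g),
\]
where $k = l(\bs\mu(t-1))$ and $h = m_1(\bs\mu(t-1))$ for $\bs\mu$ the type of $g$.

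Next I would expand the three orthogonal group orders using Proposition \ref{classical_group_sizes_proposition}. Each such order has the shape $2 \, q^{(\cdot)} \prod_{i}(q^{2i} - 1)$, with an extra $(q^n \mp 1)$ factor when the dimension is even. Taking the ratio, the three factors of $2$ cancel down to $1/2$, accounting for the $[\tfrac{1}{2}]$ in $\mathcal{R}_q[\tfrac{1}{2}]$; the products $\prod(q^{2i} - 1)$ telescope into a Gaussian binomial $\qbinom{\cdot}{\cdot}_{q^2}$ whose upper index depends linearly on $m$, exactly as in the symplectic case; and any residual $(q^s \mp 1)$ terms from the even-dimensional cases are manifestly polynomials in $q^m$ with coefficients in $\mathbb{Z}[q, q^{-1}]$. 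Applying Lemma \ref{shifted_qivp_lemma} with $q^2$ in place of $q$ realises each such Gaussian binomial as the evaluation of an element of $\mathcal{R}_q$ at $t = q^m$, and combining all the ingredients produces a $P(t) \in \mathcal{R}_q[\tfrac{1}{2}]$ with $\Psi_m(f) = P(q^m) \, Cl(g)$. Non-negativity of the overall exponent of $q$ (so that $P$ is a polynomial rather than a Laurent polynomial in $q^m$) follows from Proposition \ref{exponent_nonnegativity_proposition} as in Remark \ref{exponent_nonnegativity_remark}, after combining the prefactor $q^{d(k-h-a)}$ with the $q$-powers coming from the orthogonal group orders.

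The hardest part will be the case analysis: the parity of $\dim(V_m) \in \{2m, 2m+1\}$, together with the germs $\tau_1, \rho, \epsilon_1$ and the way they combine in the Witt ring $W(\mathbb{F}_q)$, leads to several subcases with slightly different combinations of $(q^n \mp 1)$ factors in the numerator and denominator. In each subcase one must verify that, after all cancellations, the result takes the claimed polynomial form and that the $(q^n \mp 1)$ factors properly align with the telescoping of the $\prod(q^{2i}-1)$ products. The calculation is otherwise essentially the symplectic calculation with extra book-keeping for signs; no new conceptual ingredient beyond the Witt-ring addition of germs is needed.
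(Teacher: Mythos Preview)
Your overall strategy is the same as the paper's, and the setup (the formula $\Psi_m(f) = K\, q^{d(k-h-a)} |O_{h+d}^{\epsilon_1\oplus\rho}|/(|O_h^{\epsilon_1}|\,|O_d^{\rho}|)\, Cl(g)$, the case analysis on parities, the appeal to Proposition \ref{exponent_nonnegativity_proposition}) is exactly right. But there is a genuine gap in your treatment of the ``residual $(q^s\mp 1)$ factors''.

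You write that these residual factors ``are manifestly polynomials in $q^m$ with coefficients in $\mathbb{Z}[q,q^{-1}]$'', and later that ``no new conceptual ingredient beyond the Witt-ring addition of germs is needed''. This is not correct. In the subcase where both $h$ and $d=\dim(W_4)$ are even, all three orthogonal group orders carry an extra $(q^{\bullet}\mp 1)$ factor, so after the telescoping you end up with
\[
\qbinom{\tfrac{d}{2}+\tfrac{h}{2}-1}{\tfrac{h}{2}-1}_{q^2}\;\frac{(q^{d/2}+\epsilon_2)(q^{(d+h)/2}-\epsilon_1\epsilon_2)}{(q^{h/2}-\epsilon_1)},
\]
i.e. a non-trivial \emph{denominator} $(q^{h/2}-\epsilon_1)$ that is constant in $m$. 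Nothing ``manifest'' makes this divide the numerator over $\mathbb{Z}[q,q^{-1}]$, and without that you only get an element of $\mathbb{Q}(q)[t]$ rather than $\mathcal{R}_q[\tfrac12]$. The paper handles this by a cyclotomic-polynomial divisibility argument: one rewrites the fraction so that the obstruction is $\qbinom{u+v-1}{v-1}_{q^2}\,(q^{2u}-1)/(q^v-\epsilon_1)$ and then checks, prime-by-prime in the $\Phi_d(q)$ factorisation, that every cyclotomic factor of the denominator is matched either by the Gaussian binomial or by $q^{2u}-1$. This is a genuinely new ingredient with no analogue in the unitary or symplectic computations, so your final sentence understates what is required.
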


\begin{proof}
Let $M = \dim(V_m)$, which is either $2m$ or $2m+1$ according to whether the ambient group is $O_{2m}^{\pm}(\mathbb{F}_q)$ or $O_{2m+1}(\mathbb{F}_q)$. Also let $\bs\mu$ be the (unmodified) type of $g$ viewed as an element of $O_r^{\pm}(\mathbb{F}_q)$, $k = l(\bs\mu(t-1))$, and $h = m_1(\bs\mu(t-1))$ as before. We have
\[
\Psi_m(f) =
K q^{(M-r)(k-h-a)} \frac{|O_{h+M-r}^{\epsilon_1 \oplus \epsilon_2}|}{|O_h^{\epsilon_1}||O_{M-r}^{\epsilon_2}|},
\]
and there are in principle 16 cases according to the four possible values of each of the two germs $\epsilon_1, \epsilon_2$ (which determine a third germ $\epsilon_3 = \epsilon_1 \oplus \epsilon_2$). For simplicity we group these cases according to the parities of $h$ and $M-r$ and omit the intermediate calculations (which are similar to the unitary and symplectic cases). Instead of writing $+$ or $-$ for the germs $\epsilon_i$, we write $+1$ or $-1$, so that the formula for $|O_{2n}^\epsilon|$ (given in Proposition \ref{classical_group_sizes_proposition}) contains a factor of $(q^n - \epsilon)$.
\newline \newline \noindent
Case 1: $h$ and $M-r$ both odd.
\[
\Psi_m(f) = \frac{K}{2} q^{\frac{M-r}{2}(2k-h-2a) - \frac{1}{2}} \qbinom{\frac{M-r-1}{2} + \frac{h-1}{2}}{\frac{h-1}{2}}_{q^2} (q^{\frac{M-r+h}{2}} - \epsilon_3).
\]
Note that in this case we have a denominator, $2$.
\newline \newline \noindent
Case 2: $h$ odd and $M-r$ even.
\[
\Psi_m(f) = \frac{K}{2} q^{\frac{M-r}{2}(2k-h-2a)} \qbinom{\frac{M-r}{2} + \frac{h-1}{2}}{\frac{h-1}{2}}_{q^2} (q^{\frac{M-r}{2}} + \epsilon_2).
\]
There is also a denominator in this case.
\newline \newline \noindent
Case 3: $h$ even and $M-r$ odd.
\[
\Psi_m(f) = \frac{K}{2} q^{\frac{M-r}{2}(2k-h-2a)} \qbinom{\frac{M-r-1}{2} + \frac{h}{2}}{\frac{h}{2}}_{q^2} (q^{\frac{h}{2}} + \epsilon_1).
\]
In this case the denominator $2$ divides $q^{\frac{h}{2}}+\epsilon_1$, which is a constant independent of $m$.
\newline \newline \noindent
Case 4: $h$ and $M-r$ both even. In this case we can write $\epsilon_3 = \epsilon_1 \epsilon_2$ (a multiplicative way of expressing $\mathbf{0} \oplus \omega = \omega$, $\omega \oplus \omega = \mathbf{0}$, etc. in the Witt ring).
\[
\Psi_m(f) = \frac{K}{2} q^{(\frac{M-r}{2})(2k-h-2a)}\qbinom{\frac{M-r}{2} + \frac{h}{2}-1}{\frac{h}{2}-1}_{q^2} \frac{(q^{\frac{M-r}{2}} + \epsilon_2)(q^{\frac{M-r+h}{2}}-\epsilon_3)}{(q^{\frac{h}{2}}-\epsilon_1)}.
\]
In this case, the denominator $(q^{\frac{h}{2}}-\epsilon_1)$ is constant with respect to $m$.
\newline \newline \noindent
Note that $\qbinom{m}{k}_{q^2}$ is a polynomial in $q^{2m}$, and therefore also a polynomial in $q^m$ (of twice the degree). Remark \ref{exponent_nonnegativity_remark} guarantees $(2k-h-2a) \geq 0$, so in any of the above cases, the result is a polynomial in the variable $q^m$.
\newline \newline \noindent
In Case 4 we have a polynomial in $q^m$ with rational coefficients. For notational convenience, let $u = \frac{m-r}{2}$ and $v = \frac{h}{2}$. To confirm that we actually have an element of $\mathcal{R}_q[\tfrac{1}{2}]$, we show that for any $u$ and $v$,
\[
\qbinom{u+v-1}{v-1}_{q^2} \frac{(q^{u} + \epsilon_2)(q^{u+v}-\epsilon_3)}{(q^{v}-\epsilon_1)} \in \mathbb{Z}[q,q^{-1}].
\]
First we point out that
\[
\frac{(q^{u} + \epsilon_2) (q^{u+v}-\epsilon_1\epsilon_2)}{(q^{v}-\epsilon_1)} 
= \epsilon_2 q^{u} + 1 + q^{v} \left( \frac{q^{2u}-1}{q^{v} - \epsilon_1}\right)
\]
So it is enough to show that the quantity $\qbinom{u+v-1}{v-1}_{q^2} \frac{q^{2u}-1}{q^{v} - \epsilon_1}$ is an element of $\mathbb{Z}[q,q^{-1}]$. To do this, we factor the expression in terms of cyclotomic polynomials $\Phi_d(q)$ which obey $q^n - 1 = \prod_{d|n} \Phi_d(q)$ and are themselves elements of $\mathbb{Z}[q]$. The multiplicity $\mathrm{Mult}_d$ of $\Phi_d(q)$ in
\[
\qbinom{u+v-1}{v-1}_{q^2} = 
\frac{\prod_{i=1}^{u+v-1} (q^{2i}-1)}{\prod_{j=1}^{u} (q^{2j}-1) \prod_{k=1}^{v-1} (q^{2k}-1)}
\]
is
\[
\mathrm{Mult}_d = \left\{
        \begin{array}{ll}
            \lfloor \frac{u+v-1}{d} \rfloor - \lfloor \frac{u}{d} \rfloor - \lfloor \frac{v-1}{d}
            \rfloor & \quad d \mbox{ odd} \\
            \lfloor \frac{u+v-1}{d/2} \rfloor - \lfloor \frac{u}{d/2} \rfloor - \lfloor \frac{v-1}{d/2}
            \rfloor & \quad d \mbox{ even}        \end{array}
    \right.
\]
Subcase 4.1: $\epsilon_2 = -1$. The denominator $q^v+1$ is the product of $\Phi_d(q)$ over $d$ dividing $2v$ but not $v$. Such $d$ are necessarily even, and $d/2$ divides $v$. Accordingly, $\mathrm{Mult}_d$ is $0$ if $d/2$ divides $u$, and $1$ otherwise. In the former case, $d$ divides $2u$ and hence $\Phi_d(q)$ is a factor of the numerator $q^{2u}-1$, and in the latter case, we may take the factor of $\Phi_d(q)$ from $\qbinom{u+v-1}{v-1}_{q^2}$.
\newline \newline \noindent
Subcase 4.2: $\epsilon_2 = 1$. The denominator $q^v-1$ is the product of $\Phi_d(q)$ over $d$ dividing $v$. If $d$ is odd and $d$ does not divide $2u$, then also $d$ does not divide $u$, and $\mathrm{Mult}_d = 1$. If $d$ is even and $d$ does not divide $2u$, $\mathrm{Mult}_d = 1$. So regardless of parity, if $d$ does not divide $2u$, we are done. If $d | 2u$, we may take the factor $\Phi_d(q)$ from the numerator $q^{2u}-1$.
\end{proof}

\section{Stable Centres for the Classical Groups}
\noindent
In this section, we state stability properties about the centres of group algebras of the classical groups, analogous to what was done in Section \ref{stable_centres_for_general_linear_groups}. We will handle each case separately. Let us retain the notation from Section \ref{classical_groups_section}. 

\subsection{Stable Centres for the Unitary Groups}
As discussed in the Appendix, for any $g \in U_n(\mathbb{F}_q)$, its type as an element in $g \in GL_n(\mathbb{F}_{q^2})$ determines its conjugacy class. Additionally, there is an involution $r \mapsto r^*$ on $\Phi_{q^2}$ such that the type $\bs\mu$ of $g \in U_n(\mathbb{F}_q)$ obeys $\bs\mu(r) = \bs\mu(r^*)$. Let us say that multipartitions that satisfy this operation are \textit{invariant under $*$}. Like with the general linear group, we use the notion of a modified type (c.f. Definition \ref{modified_type_definition} and subsequent discussion) in order to discuss conjugacy classes for all $n$ at once. In particular, if $g \in U_n(\mathbb{F}_q)$ has modified type $\bs\mu$, then so does $g$ when viewed as an element of $U_{n+1}(\mathbb{F}_q)$ under the prescribed inclusion. Recall that the type of $g$ can be recovered from the modified type if we are given $n$.

\begin{definition}
Let $X_{\bs\mu, n}$ denote the sum of all elements of modified type $\bs\mu$ in $U_n(\mathbb{F}_q)$, viewed as an element of $Z(\mathbb{Z}U_n(\mathbb{F}_q))$.
\end{definition}
\noindent
Once again, this is either the sum over a conjugacy class in $U_n(\mathbb{F}_q)$, or it is zero. We now state the analogue of Theorem \ref{gl_structure_constant_theorem} for the unitary groups.

\begin{theorem} \label{u_structure_constant_theorem}
There is a family of elements $r_{\bs\mu, \bs\nu}^{\bs\lambda} \in \mathcal{R}_{-q}$ interpolating the structure constants of $Z(\mathbb{Z}U_n(\mathbb{F}_q))$ in the following way:
\[
X_{\bs\mu,n} X_{\bs\nu,n} = \sum_{\bs\lambda} r_{\bs\mu, \bs\nu}^{\bs\lambda}([n]_{-q}) X_{\bs\lambda, n}.
\]
Here, $\bs\mu, \bs\nu$ are arbitrary multipartitions invariant under $*$ and the sum ranges over all multipartitions invariant under $*$.
\end{theorem}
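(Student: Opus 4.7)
The plan is to mirror the proof of Theorem \ref{gl_structure_constant_theorem} but working in the Ivanov-Kerov algebra of bounding pairs $\mathcal{A}^{U_\infty}$ instead of bounding triples. Given invariant-under-$*$ multipartitions $\bs\mu$ and $\bs\nu$, let $f_{\bs\mu}$ and $f_{\bs\nu}$ denote the indicator functions of the $U_\infty(\mathbb{F}_q)$-orbits of tight bounding pairs whose underlying element has modified type $\bs\mu$ and $\bs\nu$ respectively. By Proposition \ref{surj_hom_classical_groups}, combined with Theorem \ref{sesquilinear_conjugacy_theorem} applied as in the general linear argument, $\Psi_n(f_{\bs\mu}) = X_{\bs\mu,n}$ and similarly for $\bs\nu$, so
\[
\Psi_n(f_{\bs\mu} * f_{\bs\nu}) = X_{\bs\mu,n} X_{\bs\nu,n}.
\]
Since $\mathcal{A}^{U_\infty}$ is a subalgebra, $f_{\bs\mu} * f_{\bs\nu}$ is a finite integer linear combination of indicator functions of $U_\infty(\mathbb{F}_q)$-orbits of bounding pairs.

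The next step is to specialise orbit by orbit using Proposition \ref{unitary_specialisation_proposition}. For an orbit represented by $(g,V)$ with $g$ of modified type $\bs\lambda$ and associated invariants $r,a,k,h$, the specialisation is
\[
\Psi_m(f) \;=\; K\, q^{2(m-r)(k-\tfrac{h}{2}-a)}\,\qbinom{m-r+h}{h}_{-q}\, X_{\bs\lambda,m}.
\]
I must show this scalar coefficient is the evaluation at $[m]_{-q}$ of an element of $\mathcal{R}_{-q}$. Using the change of variables $t = 1 + (-q-1)[m]_{-q}$ from Subsection \ref{qivp_subsection} (so $t = (-q)^m$), we have
\[
q^{2(m-r)(k-\tfrac{h}{2}-a)} \;=\; q^{-2r(k-\tfrac{h}{2}-a)} \bigl((-q)^m\bigr)^{2k-h-2a},
\]
which is a polynomial in $(-q)^m$ with coefficients in $\mathbb{Z}[q,q^{-1}] = \mathbb{Z}[-q,(-q)^{-1}]$; crucially the exponent $2k-h-2a$ is a non-negative integer by Remark \ref{exponent_nonnegativity_remark}. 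On the other side, Lemma \ref{shifted_qivp_lemma} applied with $q$ replaced by $-q$ shows that $\qbinom{m-r+h}{h}_{-q}$ is the evaluation at $[m]_{-q}$ of an element of $\mathcal{R}_{-q}$. Since $\mathcal{R}_{-q}$ is closed under multiplication, the product is the evaluation of some element of $\mathcal{R}_{-q}$.

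Summing these contributions across all orbits appearing in $f_{\bs\mu} * f_{\bs\nu}$ whose representative $g$ has modified type $\bs\lambda$ yields the required element $r_{\bs\mu,\bs\nu}^{\bs\lambda} \in \mathcal{R}_{-q}$. The main subtle point, as in the general linear case, is that the specialisation formula from Proposition \ref{unitary_specialisation_proposition} was stated for $m \geq r$, so we must check it remains valid for small $m$: this is the analogue of Lemma \ref{lower_case_specialisation_correctness_lemma}, and is handled by observing that whenever $X_{\bs\lambda,m} = 0$ in $\mathbb{Z}U_m(\mathbb{F}_q)$ for dimensional reasons, the Gaussian factor $\qbinom{m-r+h}{h}_{-q}$ also vanishes. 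I expect the verification of this last point, together with the bookkeeping of why $\bs\lambda$ is again invariant under $*$ (which follows because $*$-invariance is preserved under the embedding $U_n \hookrightarrow U_{n+d}$), to be the only nontrivial obstacle; the rest is a direct transcription of the general linear proof via the Ennola substitution $q \mapsto -q$.
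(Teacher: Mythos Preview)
Your argument follows exactly the route the paper takes (the paper's own proof is a one-line pointer to Theorem \ref{gl_structure_constant_theorem}, replacing the general linear ingredients by their unitary analogues from Section \ref{classical_groups_section}), and the overall structure is correct. There is, however, a genuine slip in the line
\[
q^{2(m-r)(k-\tfrac{h}{2}-a)} \;=\; q^{-2r(k-\tfrac{h}{2}-a)}\bigl((-q)^m\bigr)^{2k-h-2a}:
\]
when $h$ is odd the two sides differ by $(-1)^m$, so the left-hand side is \emph{not} a polynomial in $(-q)^m$, and your appeal to $\mathcal{R}_{-q}$ for this factor alone fails. The resolution is that the formula in Proposition \ref{unitary_specialisation_proposition} is itself off by a factor $(-1)^{h(m-r)}$, arising from the identity $q^i-(-1)^i=(-1)^i\bigl((-q)^i-1\bigr)$ used in its derivation. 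Once this sign is restored, the coefficient of $Cl(g)$ equals a $\mathbb{Z}[q,q^{-1}]$-multiple of $\bigl((-q)^{m}\bigr)^{2k-h-2a}\qbinom{m-r+h}{h}_{-q}$, which by Lemma \ref{shifted_qivp_lemma} (with $q\mapsto -q$) and Remark \ref{exponent_nonnegativity_remark} is manifestly the value at $[m]_{-q}$ of an element of $\mathcal{R}_{-q}$. With this correction your proof goes through verbatim.
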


\begin{proof}
The proof is almost identical to that of Theorem \ref{gl_structure_constant_theorem}, except instead of appealing to results in Section \ref{specitalisation_section} for the general linear group, we use the analogous results for the unitary group from Section \ref{classical_groups_section}.
\end{proof}
\noindent
As before, we can construct an algebra that interpolate the centers of the group algebras of the unitary groups:
\begin{definition}
Let $\mathrm{FH}_q^{U}$ be the free $\mathcal{R}_{-q}$-module with basis given by symbols $K_{\bs\mu}$ for multipartitions $\bs\mu$ on $\Phi_{q^2}$ invariant under $*$. We equip $\mathrm{FH}_q^{U}$ with a bilinear multiplication defined on basis elements via
\[
K_{\bs\mu} K_{\bs\nu} = \sum_{\bs\lambda} r_{\bs\mu, \bs\nu}^{\bs\lambda} K_{\bs\lambda},
\]
where $r_{\bs\mu, \bs\nu}^{\bs\lambda} \in \mathcal{R}_{-q}$ are the elements from Theorem \ref{u_structure_constant_theorem}. We call $\mathrm{FH}_q^{U}$ the \emph{unitary Farahat-Higman algebra}.
\end{definition}
\noindent
We have the following analogue of Corollary \ref{gl_special_hom_cor}:
\begin{corollary}\label{u_special_hom_cor}
There is a ``specialisation'' homomorphism $ \Theta_n: \mathrm{FH}_q^{U} \to Z(\mathbb{Z}U_n(\mathbb{F}_q))$ defined by $\Theta_n(K_{\bs\mu}) = X_{\bs\mu,n}$ and by evaluating the coefficients (elements of $\mathcal{R}_{-q}$) at $[n]_{-q}$, where $\bs\mu$ is any multipartition invariant under $*$.
\end{corollary}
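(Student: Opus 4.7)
The plan is to deduce the corollary directly from Theorem \ref{u_structure_constant_theorem}, mirroring the one-line argument used for Corollary \ref{gl_special_hom_cor} in the general linear case. First, I would observe that because $\mathrm{FH}_q^U$ is by definition the free $\mathcal{R}_{-q}$-module on the symbols $K_{\bs\mu}$ indexed by $*$-invariant multipartitions, the prescription $K_{\bs\mu} \mapsto X_{\bs\mu, n}$ together with coefficient evaluation at $[n]_{-q}$ extends uniquely to a $\mathbb{Z}[q,q^{-1}]$-linear map $\Theta_n \colon \mathrm{FH}_q^U \to Z(\mathbb{Z}U_n(\mathbb{F}_q))$. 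Here one uses that $f \mapsto f([n]_{-q})$ is a ring homomorphism $\mathcal{R}_{-q} \to \mathbb{Z}[q,q^{-1}]$, which is built into Definition \ref{qivp_definition} upon replacing $q$ by $-q$.

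Next, I would check multiplicativity. By the defining formula of $\mathrm{FH}_q^U$,
\[
\Theta_n(K_{\bs\mu} K_{\bs\nu}) = \Theta_n\Bigl(\sum_{\bs\lambda} r_{\bs\mu, \bs\nu}^{\bs\lambda} K_{\bs\lambda}\Bigr) = \sum_{\bs\lambda} r_{\bs\mu, \bs\nu}^{\bs\lambda}([n]_{-q}) X_{\bs\lambda, n},
\]
and the right hand side equals $X_{\bs\mu, n} X_{\bs\nu, n} = \Theta_n(K_{\bs\mu})\Theta_n(K_{\bs\nu})$ by the polynomial identity established in Theorem \ref{u_structure_constant_theorem}. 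Extending by $\mathcal{R}_{-q}$-bilinearity (which is legitimate because evaluation at $[n]_{-q}$ is a ring map) shows that $\Theta_n$ respects all products. The identity element $K_{\varnothing}$ of $\mathrm{FH}_q^U$ maps to $X_{\varnothing, n}$, the class sum of the identity element of $U_n(\mathbb{F}_q)$ (compare Example \ref{modified_type_example}), so $\Theta_n$ is unital as well.

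Since essentially all the content of the statement is bookkeeping once Theorem \ref{u_structure_constant_theorem} is in hand, I do not expect any genuine obstacle; the proof can be reduced to a one-line citation of that theorem, as in the general linear case. The only point I would flag explicitly is that $\mathcal{R}_{-q}$ is used in place of $\mathcal{R}_q$ because the natural stability parameter for $U_n(\mathbb{F}_q)$ is $(-q)^n$, as visible in Proposition \ref{unitary_specialisation_proposition} and Remark \ref{exponent_nonnegativity_remark}; the substitution $q \mapsto -q$ in the entire formalism of Subsection \ref{qivp_subsection} produces the correct ring of coefficients for the unitary Farahat--Higman algebra.
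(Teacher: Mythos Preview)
Your proposal is correct and follows exactly the paper's approach: the paper's proof is the single line ``This is a consequence of Theorem \ref{u_structure_constant_theorem},'' and you have simply spelled out the routine verification that this entails.
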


\begin{proof}
This is a consequence of Theorem \ref{u_structure_constant_theorem}.
\end{proof}

\begin{proposition}\label{ufh_is_nice_prop}
We have that $\mathrm{FH}_q^{U}$ is an associative, commutative, unital $\mathcal{R}_{-q}$-algebra.
\end{proposition}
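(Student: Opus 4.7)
The plan is to repeat the argument of Proposition \ref{glfh_is_nice_prop} verbatim, using the specialisation maps $\Theta_n \colon \mathrm{FH}_q^U \to Z(\mathbb{Z}U_n(\mathbb{F}_q))$ of Corollary \ref{u_special_hom_cor} in place of the general linear ones. For commutativity, I would note that $Z(\mathbb{Z}U_n(\mathbb{F}_q))$ is commutative, so
\[
\sum_{\bs\lambda} r_{\bs\mu,\bs\nu}^{\bs\lambda}([n]_{-q}) X_{\bs\lambda,n} = X_{\bs\mu,n}X_{\bs\nu,n} = X_{\bs\nu,n}X_{\bs\mu,n} = \sum_{\bs\lambda} r_{\bs\nu,\bs\mu}^{\bs\lambda}([n]_{-q}) X_{\bs\lambda,n}.
\]
For each fixed $\bs\lambda$, choosing $n$ large enough that $X_{\bs\lambda,n} \neq 0$ (and that the nonzero $X_{\bs\lambda^\prime,n}$ remain linearly independent) gives $r_{\bs\mu,\bs\nu}^{\bs\lambda}([n]_{-q}) = r_{\bs\nu,\bs\mu}^{\bs\lambda}([n]_{-q})$.

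Next I would apply a Zariski density argument: the values $\{[n]_{-q}\}_{n \gg 0}$ form an infinite subset of $\mathbb{Q}(q)$ (since $[n]_{-q} = 1 + (-q) + \cdots + (-q)^{n-1}$ are polynomials of strictly increasing degree in $q$, hence pairwise distinct), and a nonzero element of $\mathbb{Q}(q)[x]$ has only finitely many roots. Thus the equality of the evaluations forces $r_{\bs\mu,\bs\nu}^{\bs\lambda} = r_{\bs\nu,\bs\mu}^{\bs\lambda}$ as elements of $\mathcal{R}_{-q}$, proving commutativity of $\mathrm{FH}_q^U$. Associativity follows by the same recipe applied to $(K_{\bs\mu}K_{\bs\nu})K_{\bs\lambda}$ and $K_{\bs\mu}(K_{\bs\nu}K_{\bs\lambda})$, using associativity in each $Z(\mathbb{Z}U_n(\mathbb{F}_q))$ and comparing coefficients after specialisation.

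For unitality, the identity element of $U_n(\mathbb{F}_q)$ has modified type $\varnothing$ (the type is $(1^n)$ on $t-1$, which becomes empty after deleting the first column), so $X_{\varnothing,n}$ is the identity of $Z(\mathbb{Z}U_n(\mathbb{F}_q))$, and $\Theta_n(K_\varnothing \cdot K_{\bs\mu}) = X_{\bs\mu,n} = \Theta_n(K_{\bs\mu})$ for all $\bs\mu$ and all $n$. Another Zariski density argument on the structure constants $r_{\varnothing,\bs\mu}^{\bs\lambda}$ shows that $K_\varnothing K_{\bs\mu} = K_{\bs\mu}$ in $\mathrm{FH}_q^U$.

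The only non-routine point is the density claim, but it is immediate since the $[n]_{-q}$ have distinct $q$-degrees; everything else is a mechanical transfer of the $GL$ argument, justified by the fact that Theorem \ref{u_structure_constant_theorem} was designed precisely to make $\Theta_n$ a homomorphism for every $n$.
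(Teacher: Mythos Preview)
Your proposal is correct and is exactly the approach the paper intends: its proof of Proposition \ref{ufh_is_nice_prop} simply says the argument is analogous to that of Proposition \ref{glfh_is_nice_prop}, which is precisely what you have carried out in detail. Your explicit observation that the $[n]_{-q}$ are pairwise distinct (having distinct $q$-degrees) cleanly justifies the Zariski density step in the $\mathcal{R}_{-q}$ setting.
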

\begin{proof}
The proof of this proposition is analogous to the proof of Proposition \ref{glfh_is_nice_prop}.
\end{proof}

\begin{lemma} \label{u_degree_bound_lemma}
For any multipartitions $\bs\mu, \bs\nu, \bs\lambda$ invariant under $*$, the degree of the polynomial $r_{\bs\mu, \bs\nu}^{\bs\lambda}$ is at most $2(|\bs\mu| + |\bs\nu| - |\bs\lambda|)$.
\end{lemma}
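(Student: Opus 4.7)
I would follow the template of Lemma \ref{degree_bound_lemma} essentially verbatim, substituting bounding pairs for bounding triples and the unitary specialisation formula (Proposition \ref{unitary_specialisation_proposition}) for its general linear analogue. From the construction used in Theorem \ref{u_structure_constant_theorem}, the polynomial $r^{\bs\lambda}_{\bs\mu,\bs\nu}$ is assembled as a sum over $U_\infty(\mathbb{F}_q)$-orbits of bounding pairs $(g_1 g_2, V_1 \cap V_2)$ coming from products $(g_1, V_1) \times (g_2, V_2)$ of tight bounding pairs for elements of modified types $\bs\mu$ and $\bs\nu$. Tightness gives $\codim(V_1) = |\bs\mu|$ and $\codim(V_2) = |\bs\nu|$, so
\[
\codim(V_1 \cap V_2) \leq |\bs\mu| + |\bs\nu|.
\]

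For each resulting orbit I would choose a representative in $BP_m$ for $m$ large and place it in standard shape $V_m = W_1 \oplus W_2 \oplus W_3 \oplus W_4$, recording $a = \dim(W_1) = \dim(W_3)$ and $r = \dim(W_1 \oplus W_2 \oplus W_3) = \codim(V_1 \cap V_2) + a$. Write $\bs\rho$ for the unmodified type of $g_1 g_2 \in U_r(\mathbb{F}_q)$, and set $k = l(\bs\rho(t-1))$ and $h = m_1(\bs\rho(t-1))$; then $|\bs\rho| = r$ and the modified type obeys $|\bs\lambda| = r - k$ by the column-deletion definition of modified type. By Proposition \ref{unitary_specialisation_proposition} (together with Remark \ref{exponent_nonnegativity_remark}, which guarantees $2k - h - 2a \geq 0$ in the unitary case where $\dim(W_1) = \dim(W_3)$), the contribution of this orbit to $r^{\bs\lambda}_{\bs\mu,\bs\nu}$ evaluates at $[m]_{-q}$ to
\[
K\, q^{2(m-r)(k - h/2 - a)}\qbinom{m-r+h}{h}_{-q},
\]
in which $\qbinom{m-r+h}{h}_{-q}$ is polynomial of degree $h$ in $[m]_{-q}$ and the exponential prefactor contributes degree $2k-h-2a$, so the contribution has degree at most $h + (2k-h-2a) = 2k - 2a$.

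Rewriting $k = r - |\bs\lambda|$ and using $r \leq |\bs\mu| + |\bs\nu| + a$,
\[
2k - 2a = 2(\codim(V_1 \cap V_2) - |\bs\lambda|) \leq 2(|\bs\mu| + |\bs\nu| - |\bs\lambda|).
\]
Since this bound is uniform across the (finitely many) orbits that contribute to $r^{\bs\lambda}_{\bs\mu,\bs\nu}$, it passes to the full polynomial. The only mild technical wrinkle compared to the general linear case is bookkeeping for the correct variable: the prefactor $q^{2(m-r)(k-h/2-a)}$ is naturally polynomial in $q^m$, whereas $\mathcal{R}_{-q}$ is organised around $(-q)^m$, and the two differ by $(-1)^m$ when the exponent $2k-h-2a$ is odd. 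This sign is absorbed in the assembly into $\mathcal{R}_{-q}$ already performed in Theorem \ref{u_structure_constant_theorem} and does not affect the degree count, so the proof closes just as cleanly as in the $GL$ case.
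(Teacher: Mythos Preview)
Your proof is correct and follows essentially the same route as the paper's own argument: tightness gives $\codim(V_i)=|\bs\mu|,|\bs\nu|$, the standard shape yields $r=\codim(V_1\cap V_2)+a$ and $|\bs\lambda|=r-k$, and the degree $2k-2a$ of the contribution from Proposition~\ref{unitary_specialisation_proposition} is then bounded by $2(|\bs\mu|+|\bs\nu|-|\bs\lambda|)$ exactly as you compute. Your additional remark about the $(-1)^m$ sign when $2k-h-2a$ is odd is a genuine subtlety the paper does not explicitly address, but you are right that it is already handled in the assembly of $r_{\bs\mu,\bs\nu}^{\bs\lambda}\in\mathcal{R}_{-q}$ in Theorem~\ref{u_structure_constant_theorem} and is irrelevant to the degree count.
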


\begin{proof}
The proof is very similar to that of Lemma \ref{degree_bound_lemma}, but there are some modifications that need to be made to account for the different standard form. Let $g, g' \in U_\infty(\mathbb{F}_q)$ be any elements of modified type $\bs\mu, \bs\nu$, respectively, and let $V, V'$ be their fixed point spaces, respectively. Let $f_{\bs\mu}$ and $f_{\bs\nu}$ be the indicator functions on the $U_\infty(\mathbb{F}_q)$ orbits of the tight bounding pairs $(g, V)$ and $(g', V')$, respectively.  Then, recalling the surjective homomorphism $\Psi_m: \mathcal{A}^{U_\infty} \rightarrow Z(\mathbb{Z}U_m(\mathbb{F}_q))$ from Proposition \ref{surj_hom_classical_groups}, we have the following by the proof of Theorem \ref{u_structure_constant_theorem}:

\[ \Psi_m(f_{\bs\mu}f_{\bs\nu}) = \Psi_m(f_{\bs\mu})\Psi_m(f_{\bs\nu}) =  X_{\bs\mu, m}X_{\bs\mu, m} = \sum_{\bs\lambda} r^{\bs\lambda}_{\bs\mu,\bs\nu}([m]_{-q}) X_{\bs\lambda,m}.\]
The argument of $\Psi_m$ in the leftmost term is an integral linear combination of various indicator functions on $U_\infty(\mathbb{F}_q)$ orbits of elements of the form $(s'', W'')$, where $(s'', W'') = (s, W)\times (s',W')$ and $(s,W), (s',W')$ are $U_\infty(\mathbb{F}_q)$-conjugate to $(g,V), (g',V')$, respectively. In particular, $(s,W)$ and $(s',W')$ will necessarily be tight and have modified type $\bs\mu$ and $\bs\nu$, respectively. Let us pick one such $(s'', W'')$, and suppose it has modified type $\bs\lambda$. Writing it in standard form with respect to $V_n$ for some suitable $V_n$, the image of the corresponding indicator function under $\Psi_m$ is 

\[P((-q)^m)X_{\bs\lambda, m} = K q^{2(m - r)(k-\frac{h}{2}-a)}\qbinom{m-r+h}{h}_{-q} X_{\bs\lambda,m}\]
by Proposition \ref{unitary_specialisation_proposition} (using the notation from that proposition). This means that $P$ is a polynomial of $(-q)^m$ with coefficients in $\mathbb{Z}[q, q^{-1}]$ of degree $h + 2(k - h/2 - a) = 2k - 2a$. On the other hand, by tightness, we have $\codim(W) = |\bs\mu|$ and $\codim(W') = |\bs\nu|$, so that $\codim(W'') \leq |\bs\mu| + |\bs\nu|$. But $\codim(W'') = r - a$ (see Proposition \ref{unitary_specialisation_proposition}). By the definition of $r$ and the fact that $\bs\lambda$ is the modified type of $s''$, we have $|\bs\lambda| = r - k$. Finally, putting this together, we have $2(|\bs\mu| + |\bs\nu|) \geq 2(r-a) = 2(r-k) + 2(k-a) = 2|\bs\lambda| + \deg P$, which gives the desired inequality for the polynomial $P$. Now, in order to get the contribution to $r_{\bs\mu, \bs\nu}^{\bs\lambda}$, we need to sum such polynomials $P$ over all possible choices of $(s, W)$ and $(s',W')$ such that $(s'', W'')$ is of modified type $\bs\lambda$. However, the bound only depends on $\bs\mu, \bs\nu, \bs\lambda$, so we deduce that degree of $r_{\bs\mu, \bs\nu}^{\bs\lambda}$ also satisfies the desired inequality.
\end{proof}
\noindent
This gives rise to the analogue of Proposition \ref{gl_assoc_graded} for the unitary groups. 
\begin{theorem}
The algebra $\mathrm{FH}_q^{U}$ is filtered, where $K_{\bs\mu}$ is in filtration degree $|\bs\mu|$. Moreover the structure constants of the associated graded algebra are integers.
\end{theorem}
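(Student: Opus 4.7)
The plan is to follow the general linear template of Proposition \ref{gl_assoc_graded} almost verbatim, with Lemma \ref{u_degree_bound_lemma} in place of Lemma \ref{degree_bound_lemma}. First I would declare $F^d \mathrm{FH}_q^{U}$ to be the $\mathbb{Z}[q,q^{-1}]$-submodule spanned by the products $r \cdot K_{\bs\lambda}$ with $r \in \mathcal{R}_{-q}$ and $\deg(r) + 2|\bs\lambda| \leq 2d$; in particular $K_{\bs\mu} \in F^{|\bs\mu|}$ by taking $r = 1$. Compatibility $F^d \cdot F^{d'} \subseteq F^{d + d'}$ is then an immediate application of Lemma \ref{u_degree_bound_lemma}: for basic products $K_{\bs\mu} K_{\bs\nu} = \sum_{\bs\lambda} r_{\bs\mu,\bs\nu}^{\bs\lambda} K_{\bs\lambda}$ the lemma yields $\deg(r_{\bs\mu,\bs\nu}^{\bs\lambda}) + 2|\bs\lambda| \leq 2(|\bs\mu| + |\bs\nu|)$, and inserting arbitrary outer coefficients $r, r' \in \mathcal{R}_{-q}$ just shifts both sides of this inequality by $\deg(r) + \deg(r')$.

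For the associated-graded claim, the relevant structure constants (those indexed by triples $(\bs\mu, \bs\nu, \bs\lambda)$ with $|\bs\lambda| = |\bs\mu| + |\bs\nu|$) are precisely the top-degree parts of the $r_{\bs\mu, \bs\nu}^{\bs\lambda}$. By Lemma \ref{u_degree_bound_lemma}, $\deg(r_{\bs\mu,\bs\nu}^{\bs\lambda}) \leq 0$ in this regime, so these structure constants are constant polynomials, a priori valued in $\mathbb{Z}[q,q^{-1}]$. The step that I expect to require the most care is upgrading this to show they are honest integers. For this I would revisit the orbit-by-orbit analysis inside the proof of Lemma \ref{u_degree_bound_lemma}: each contributing orbit produces a specialisation $K \cdot q^{2(m-r)(k - h/2 - a)} \qbinom{m-r+h}{h}_{-q}$, which (as noted in the proof of that lemma) has degree $2(k-a)$ in $(-q)^m$. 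Requiring this degree to vanish forces $k = a$, and combining with the inequality $k - h/2 - a \geq 0$ of Remark \ref{exponent_nonnegativity_remark} then forces $h = 0$. With $h = 0$ and $k = a$ the specialisation collapses to the bare integer multiplicity $K$, so each orbit contributes an integer; summing over the finitely many relevant orbits yields an integer, as claimed.
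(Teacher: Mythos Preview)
Your approach is essentially that of the paper, but the filtration you write down is not the one intended and is not consistent with the rest of your own argument. The filtration in the statement is simply the $\mathcal{R}_{-q}$-module filtration $F^d = \bigoplus_{|\bs\mu| \leq d} \mathcal{R}_{-q}\, K_{\bs\mu}$. That this is multiplicative is exactly the observation that Lemma \ref{u_degree_bound_lemma} forces $r_{\bs\mu,\bs\nu}^{\bs\lambda} = 0$ whenever $|\bs\lambda| > |\bs\mu| + |\bs\nu|$ (a polynomial of strictly negative degree is zero). Your mixed filtration $\deg(r) + 2|\bs\lambda| \leq 2d$ is a genuinely finer $\mathbb{Z}[q,q^{-1}]$-filtration, and under it the image of $K_{\bs\mu}K_{\bs\nu}$ in the associated graded does \emph{not} involve only triples with $|\bs\lambda| = |\bs\mu| + |\bs\nu|$: a term $r_{\bs\mu,\bs\nu}^{\bs\lambda} K_{\bs\lambda}$ with smaller $|\bs\lambda|$ and $\deg r_{\bs\mu,\bs\nu}^{\bs\lambda}$ close to the bound $2(|\bs\mu|+|\bs\nu|-|\bs\lambda|)$ survives as well. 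So your second paragraph is really describing the paper's filtration, not yours.

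Your orbit-by-orbit verification that the top structure constants are honest integers is correct and more explicit than the paper, which just records that a degree-zero $r_{\bs\mu,\bs\nu}^{\bs\lambda}$ is a constant. The quicker route you are bypassing is: that constant equals $r_{\bs\mu,\bs\nu}^{\bs\lambda}([n]_{-q})$ for every $n$, and for $n$ large this is a genuine structure constant of $Z(\mathbb{Z}U_n(\mathbb{F}_q))$, hence an integer. Your direct argument---combining the inequality $k - a \leq |\bs\mu| + |\bs\nu| - |\bs\lambda| = 0$ from the proof of Lemma \ref{u_degree_bound_lemma} with $k - \tfrac{h}{2} - a \geq 0$ from Remark \ref{exponent_nonnegativity_remark} to force $k = a$ and $h = 0$, so that each orbit contributes only the integer $K$---is a valid alternative and has the virtue of not invoking the specialisation maps $\Theta_n$.
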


\begin{proof}
This follows from Lemma \ref{u_degree_bound_lemma} (see the proof of Proposition \ref{gl_assoc_graded}).
\end{proof}
\noindent 
This addresses one of the further directions recommended in \cite{Wan_Wang}, and also in \cite{Meliot_Hecke}.

\subsection{Stable Centres for the Symplectic Groups}
For the symplectic group $Sp_{2n}(\mathbb{F}_q)$, two elements are conjugate only if their types match. However, this is not a sufficient condition; as we note in the appendix, conujugacy classes in $Sp_{2n}(\mathbb{F}_q)$ are indexed by certain multipartitions $\bs\mu$ of size $2n$ with some additional signs making $\bs\mu(r \pm 1)$ into symplectic signed partitions.
\newline \newline \noindent
Let $g \in Sp_{2n}(\mathbb{F}_q)$ be an element of the conjugacy class labelled by the signed multipartition $\bs\mu$. We define the \emph{symplectic modified type} of $g$ to be the signed multipartition obtained by subtracting $1$ from each part of $\bs\mu(t-1)$. Note that this turns $\bs\mu(t-1)$ from a symplectic signed partition into an orthogonal signed partition. The symplectic modified type of $g$ is the same when $g$ is viewed as an element of $Sp_{2(n+1)}(\mathbb{F}_q)$. The conjugacy class of $g$ can be recovered from the symplectic modified type of $g$ if $n$ is known.

\begin{definition}
Let $X_{\bs\mu, n}$ denote the sum of all elements of symplectic modified type $\bs\mu$ in $Sp_{2n}(\mathbb{F}_q)$, viewed as an element of $Z(\mathbb{Z}Sp_{2n}(\mathbb{F}_q))$.
\end{definition}
\noindent
This is either the sum over a conjugacy class in $Sp_{2n}(\mathbb{F}_q)$, or it is zero. We now state the analogue of Theorem \ref{gl_structure_constant_theorem} for the symplectic groups.

\begin{theorem} \label{sp_structure_constant_theorem}
There is a family of elements $r_{\bs\mu, \bs\nu}^{\bs\lambda} \in \mathcal{R}_{q^2}$ interpolating the structure constants of $Z(\mathbb{Z}Sp_{2n}(\mathbb{F}_q))$ in the following way:
\[
X_{\bs\mu,n} X_{\bs\nu,n} = \sum_{\bs\lambda} r_{\bs\mu, \bs\nu}^{\bs\lambda}([n]_{q^2}) X_{\bs\lambda, n}.
\]
Here, $\bs\mu, \bs\nu$ are arbitrary signed multipartitions corresponding to a symplectic modified type and the sum ranges over all signed multipartitions corresponding to a symplectic modified type.
\end{theorem}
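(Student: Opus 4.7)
The plan is to mirror the proofs of Theorem \ref{gl_structure_constant_theorem} and Theorem \ref{u_structure_constant_theorem}, working inside the symplectic Ivanov-Kerov algebra $\mathcal{A}^{Sp_\infty}$ and exploiting the specialisation formula of Proposition \ref{symplectic_specialisation_proposition}. First I would let $f_{\bs\mu}$ and $f_{\bs\nu}$ be the indicator functions of the $Sp_\infty(\mathbb{F}_q)$-orbits of tight bounding pairs $(g, V)$ and $(g', V')$ whose group elements have symplectic modified types $\bs\mu$ and $\bs\nu$ respectively. Since the conjugacy class of an element of $Sp_\infty(\mathbb{F}_q)$ is determined by its symplectic modified type, each of $f_{\bs\mu}$, $f_{\bs\nu}$ is a single orbit indicator, and $\Psi_m(f_{\bs\mu}) = X_{\bs\mu, m}$ and $\Psi_m(f_{\bs\nu}) = X_{\bs\nu, m}$ by Proposition \ref{surj_hom_classical_groups}.

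Next I would use that $\mathcal{A}^{Sp_\infty}$ is a subalgebra, so $f_{\bs\mu} * f_{\bs\nu}$ is a finite integer linear combination of indicator functions of orbits of bounding pairs. Applying $\Psi_m$ term-by-term and invoking Proposition \ref{symplectic_specialisation_proposition}, each such term contributes a scalar multiple of
\[
K q^{(2m-r)(k - h/2 - a)} \qbinom{m - r/2 + h/2}{h/2}_{q^2} X_{\bs\lambda, m},
\]
where $\bs\lambda$ is the symplectic modified type of the corresponding group element. Summing contributions over orbits whose elements have symplectic modified type $\bs\lambda$ yields $r_{\bs\mu, \bs\nu}^{\bs\lambda}$, once we verify that each such coefficient is the evaluation at $[m]_{q^2}$ of an element of $\mathcal{R}_{q^2}$.

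That verification is the crux of the argument and will proceed as follows. The quantities $r/2$ and $h/2$ are integers: $r$ is even because $\dim(W_4) = 2m - r$ is the dimension of a nondegenerate symplectic complement, and $h = m_1(\bs\mu(t-1))$ is the multiplicity of an odd part in a symplectic signed partition, which must be even. Consequently Lemma \ref{shifted_qivp_lemma} applied with parameter $q^2$, shift $d = (h-r)/2 \in \mathbb{Z}$, and height $h/2$ shows that $\qbinom{m - r/2 + h/2}{h/2}_{q^2}$ is the evaluation at $[m]_{q^2}$ of an element of $\mathcal{R}_{q^2}$. For the prefactor, the exponent $(2m-r)(k - h/2 - a)$ is a non-negative even integer (non-negativity by the remark following Proposition \ref{symplectic_specialisation_proposition} and its appeal to Proposition \ref{exponent_nonnegativity_proposition}; evenness because $2m - r$ is even), so
\[
q^{(2m-r)(k-h/2-a)} = q^{-r(k - h/2 - a)} \bigl(1 + (q^2 - 1)[m]_{q^2}\bigr)^{k - h/2 - a}
\]
lies in $\mathbb{Z}[q^2, q^{-2}][\,[m]_{q^2}\,]$, hence its value is the evaluation of an element of $\mathcal{R}_{q^2}$ at $[m]_{q^2}$.

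The main obstacle I anticipate is the signed bookkeeping rather than any genuinely new idea: one must check that $Sp_\infty(\mathbb{F}_q)$-orbits of tight bounding pairs really are in bijection with symplectic modified types, so that $f_{\bs\mu}$ is unambiguously defined and $Cl(g) = X_{\bs\lambda, m}$ in Proposition \ref{symplectic_specialisation_proposition}. This reduces to the classification of symplectic conjugacy classes by signed multipartitions (as recalled in the appendix) together with Theorem \ref{sesquilinear_conjugacy_theorem}. In characteristic $2$ the entire argument still applies thanks to Remark \ref{symplectic_char_2_remark_2}, so no separate treatment is required.
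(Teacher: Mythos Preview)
Your proposal is correct and follows exactly the approach the paper takes: the paper's own proof is a one-line reference back to Theorem \ref{gl_structure_constant_theorem}, replacing the general linear specialisation formula by Proposition \ref{symplectic_specialisation_proposition}, and you have simply spelled out those details (including the parity checks on $r$ and $h$ that justify landing in $\mathcal{R}_{q^2}$). Nothing is missing or different in substance.
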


\begin{proof}
The proof is almost identical to that of Theorem \ref{gl_structure_constant_theorem}, except instead of appealing to results in Section \ref{specitalisation_section} for the general linear group, we use the analogous results for the symplectic group from Section \ref{classical_groups_section}.
\end{proof}
\noindent
As before, we can construct an algebra that interpolate the centers of the group algebras of the symplectic groups:
\begin{definition}
Let $\mathrm{FH}_q^{Sp}$ be the free $\mathcal{R}_{q^2}$-module with basis given by symbols $K_{\bs\mu}$ for signed multipartitions $\bs\mu$ on $\Phi_q$ that correspond to a symplectic modified type. We equip $\mathrm{FH}_q^{Sp}$ with a bilinear multiplication defined on basis elements via
\[
K_{\bs\mu} K_{\bs\nu} = \sum_{\bs\lambda} r_{\bs\mu, \bs\nu}^{\bs\lambda} K_{\bs\lambda},
\]
where $r_{\bs\mu, \bs\nu}^{\bs\lambda} \in \mathcal{R}_{q^2}$ are the elements from Theorem \ref{sp_structure_constant_theorem}. We call $\mathrm{FH}_q^{Sp}$ the \emph{symplectic Farahat-Higman algebra}.
\end{definition}
\noindent
We have the following analogue of Corollary \ref{gl_special_hom_cor}:
\begin{corollary}\label{sp_special_hom_cor}
There is a ``specialisation'' homomorphism $ \Theta_n: \mathrm{FH}_q^{Sp} \to Z(\mathbb{Z}Sp_{2n}(\mathbb{F}_q))$ defined by $\Theta_n(K_{\bs\mu}) = X_{\bs\mu,n}$ and by evaluating the coefficients (elements of $\mathcal{R}_{q^2}$) at $[n]_{q^2}$, where $\mu$ is any signed multipartition corresponding to a symplectic modified type.
\end{corollary}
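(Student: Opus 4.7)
The plan is simply to invoke Theorem \ref{sp_structure_constant_theorem}, mirroring the one-line proofs of the analogous Corollaries \ref{gl_special_hom_cor} and \ref{u_special_hom_cor}. First I would define $\Theta_n$ by $\mathbb{Z}[q,q^{-1}]$-linear extension: send each basis element $K_{\bs\mu}$ to $X_{\bs\mu,n}$, and send each coefficient $f(x) \in \mathcal{R}_{q^2}$ to the element $f([n]_{q^2}) \in \mathbb{Z}[q,q^{-1}]$, which is well-defined by Definition \ref{qivp_definition} applied with parameter $q^2$. Evaluation at a fixed point $[n]_{q^2}$ is a ring homomorphism $\mathcal{R}_{q^2} \to \mathbb{Z}[q,q^{-1}]$, so this yields an additive map compatible with the $\mathcal{R}_{q^2}$-module structure.

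The only thing to verify is multiplicativity. By bilinearity of the product on both sides, it suffices to check compatibility on pairs $K_{\bs\mu}, K_{\bs\nu}$ of basis elements. By the definition of $\mathrm{FH}_q^{Sp}$ and of $\Theta_n$,
\[
\Theta_n(K_{\bs\mu} K_{\bs\nu}) = \Theta_n\!\left( \sum_{\bs\lambda} r_{\bs\mu,\bs\nu}^{\bs\lambda} K_{\bs\lambda} \right) = \sum_{\bs\lambda} r_{\bs\mu,\bs\nu}^{\bs\lambda}([n]_{q^2})\, X_{\bs\lambda,n}.
\]
By Theorem \ref{sp_structure_constant_theorem}, the right-hand side equals $X_{\bs\mu,n} X_{\bs\nu,n} = \Theta_n(K_{\bs\mu})\, \Theta_n(K_{\bs\nu})$, as required. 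Unitality follows from the observation that the empty signed multipartition $\varnothing$ is the symplectic modified type of the identity element of $Sp_{2n}(\mathbb{F}_q)$ (compare Example \ref{modified_type_example}), so $\Theta_n(K_{\varnothing}) = X_{\varnothing,n} = 1$.

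There is no genuine obstacle: the content has already been packaged into Theorem \ref{sp_structure_constant_theorem}, and the corollary is just the re-expression of that theorem as a statement about the algebra $\mathrm{FH}_q^{Sp}$ defined immediately beforehand. Surjectivity is not claimed (only ``specialisation''), so one need not identify the image. If one wished, surjectivity onto $Z(\mathbb{Z}Sp_{2n}(\mathbb{F}_q))$ would follow from the symplectic analogue of Proposition \ref{surj_hom_classical_groups}, since the $X_{\bs\mu,n}$ span the conjugacy-class sums of $Sp_{2n}(\mathbb{F}_q)$ as $\bs\mu$ ranges over signed multipartitions corresponding to symplectic modified types.
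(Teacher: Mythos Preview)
Your proposal is correct and matches the paper's approach exactly: the paper's proof is the single sentence ``This is a consequence of Theorem \ref{sp_structure_constant_theorem},'' and you have simply spelled out why that consequence holds. The extra details you provide (well-definedness of evaluation, multiplicativity on basis elements, unitality) are all accurate and in the same spirit as the proof of Corollary \ref{gl_special_hom_cor}.
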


\begin{proof}
This is a consequence of Theorem \ref{sp_structure_constant_theorem}.
\end{proof}

\begin{proposition}\label{spfh_is_nice_prop}
We have that $\mathrm{FH}_q^{Sp}$ is an associative, commutative, unital $\mathcal{R}_{q^2}$-algebra.
\end{proposition}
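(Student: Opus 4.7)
The plan is to mimic the proof of Proposition \ref{glfh_is_nice_prop} almost verbatim, transporting the algebraic identities via the specialisation homomorphisms $\Theta_n : \mathrm{FH}_q^{Sp} \to Z(\mathbb{Z}Sp_{2n}(\mathbb{F}_q))$ from Corollary \ref{sp_special_hom_cor}. The key point is that a polynomial identity in $\mathcal{R}_{q^2}$ can be checked by evaluating at $[n]_{q^2}$ for all sufficiently large $n$, since the set $\{[n]_{q^2} : n \geq N\}$ is Zariski-dense in the affine line over $\mathbb{Q}(q)$ for any $N$, so any polynomial in $\mathbb{Q}(q)[x]$ vanishing there must be zero.

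First I would establish commutativity. Since $Z(\mathbb{Z}Sp_{2n}(\mathbb{F}_q))$ is commutative, we have $X_{\bs\mu,n}X_{\bs\nu,n} = X_{\bs\nu,n}X_{\bs\mu,n}$, which expanded via Theorem \ref{sp_structure_constant_theorem} yields
\[
\sum_{\bs\lambda}(r_{\bs\mu,\bs\nu}^{\bs\lambda}([n]_{q^2}) - r_{\bs\nu,\bs\mu}^{\bs\lambda}([n]_{q^2}))X_{\bs\lambda,n} = 0.
\]
For fixed $\bs\lambda$, choose $n$ large enough that $X_{\bs\lambda,n}$ is nonzero (i.e.\ the signed multipartition $\bs\lambda$ actually labels a conjugacy class of $Sp_{2n}(\mathbb{F}_q)$); then since the $X_{\bs\lambda,n}$ over different $\bs\lambda$ are linearly independent in $Z(\mathbb{Z}Sp_{2n}(\mathbb{F}_q))$, we obtain $r_{\bs\mu,\bs\nu}^{\bs\lambda}([n]_{q^2}) = r_{\bs\nu,\bs\mu}^{\bs\lambda}([n]_{q^2})$ for all such $n$. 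By Zariski-density the polynomials agree, giving $K_{\bs\mu}K_{\bs\nu} = K_{\bs\nu}K_{\bs\mu}$.

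Associativity is obtained by the same device: the identity $(K_{\bs\mu}K_{\bs\nu})K_{\bs\lambda} - K_{\bs\mu}(K_{\bs\nu}K_{\bs\lambda})$ is an $\mathcal{R}_{q^2}$-linear combination of basis elements $K_{\bs\rho}$ whose coefficients, upon evaluation at $[n]_{q^2}$, compute the corresponding associator inside $Z(\mathbb{Z}Sp_{2n}(\mathbb{F}_q))$, which vanishes. Repeating the Zariski-density argument for each $\bs\rho$ in turn forces every coefficient polynomial to be identically zero. Finally, for unitality, the empty signed multipartition $\varnothing$ corresponds under the recipe preceding Definition of symplectic modified type to the identity element of $Sp_{2n}(\mathbb{F}_q)$ (the identity has type $\bs\mu$ with $\bs\mu(t-1)=(1^{2n})$, so its symplectic modified type is empty, analogously to Example \ref{modified_type_example}). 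Hence $\Theta_n(K_{\varnothing}) = X_{\varnothing,n}$ is the identity of $Z(\mathbb{Z}Sp_{2n}(\mathbb{F}_q))$, and the identities $K_{\varnothing}K_{\bs\mu} = K_{\bs\mu} = K_{\bs\mu}K_{\varnothing}$ follow by the same Zariski-density argument applied to the coefficient polynomials.

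No step is really a serious obstacle: the only thing one must verify (beyond what is already in the paper) is that the symplectic modified type of the identity is indeed the empty signed multipartition, and that the Zariski-density trick goes through in characteristic zero over $\mathbb{Q}(q)$. Both are immediate, so the proof is essentially a transcription of Proposition \ref{glfh_is_nice_prop} with $[n]_q$ replaced by $[n]_{q^2}$ and $\mathcal{R}_q$ replaced by $\mathcal{R}_{q^2}$.
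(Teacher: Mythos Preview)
Your proposal is correct and follows exactly the approach the paper intends: the paper's own proof simply states that it is analogous to Proposition~\ref{glfh_is_nice_prop}, and you have carried out that transcription faithfully, replacing $[n]_q$ by $[n]_{q^2}$ and $\mathcal{R}_q$ by $\mathcal{R}_{q^2}$, and correctly identifying the empty signed multipartition as the symplectic modified type of the identity.
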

\begin{proof}
The proof of this proposition is analogous to the proof of Proposition \ref{glfh_is_nice_prop}.
\end{proof}

\begin{lemma} \label{sp_degree_bound_lemma}
For any signed multipartitions $\bs\mu, \bs\nu, \bs\lambda$ corresponding to a symplectic modified type, the degree of the polynomial $r_{\bs\mu, \bs\nu}^{\bs\lambda}$ is at most $|\bs\mu| + |\bs\nu| - |\bs\lambda|$.
\end{lemma}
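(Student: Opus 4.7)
The plan is to adapt the argument of Lemma \ref{u_degree_bound_lemma} to the symplectic setting, substituting Proposition \ref{symplectic_specialisation_proposition} for Proposition \ref{unitary_specialisation_proposition}. The factor-of-two improvement in the bound (compared with the unitary case) reflects the fact that the specialisation naturally produces a polynomial in $q^{2m}$ rather than in $(-q)^m$.

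Concretely, I would take tight bounding pairs $(g, V), (g', V')$ in $Sp_\infty(\mathbb{F}_q)$ of symplectic modified types $\bs\mu, \bs\nu$ (so $\codim(V) = |\bs\mu|$ and $\codim(V') = |\bs\nu|$), and let $f_{\bs\mu}, f_{\bs\nu}$ be the indicator functions of their $Sp_\infty(\mathbb{F}_q)$-orbits. Then $f_{\bs\mu} * f_{\bs\nu}$ is an integer combination of indicator functions of orbits of product pairs $(s'', W'') = (s, W) \times (s', W')$ with $(s, W), (s', W')$ tight of modified types $\bs\mu, \bs\nu$. Tightness immediately yields
\[
\codim(W'') \leq \codim(W) + \codim(W') = |\bs\mu| + |\bs\nu|.
\]

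For each such $(s'', W'')$, put it in standard shape with respect to some $V_m = W_1 \oplus W_2 \oplus W_3 \oplus W_4$, and let $\bs\tau$ be the unmodified type of $s'' \in Sp_r(\mathbb{F}_q)$ where $r = \dim(W_1 \oplus W_2 \oplus W_3)$, $\bs\lambda$ its symplectic modified type, $a = \dim(W_1)$, $k = l(\bs\tau(t-1))$, and $h = m_1(\bs\tau(t-1))$. Then $\codim(W'') = r - a$ and $|\bs\lambda| = r - k$ (subtracting $1$ from each of the $k$ parts of $\bs\tau(t-1)$). Proposition \ref{symplectic_specialisation_proposition} expresses the specialisation as $P(q^{2m}) X_{\bs\lambda, m}$, where $P(t)$ is proportional to $t^{k - h/2 - a} \qbinom{m - r/2 + h/2}{h/2}_{q^2}$. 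Counting powers of $t$ gives $\deg_t P = (k - h/2 - a) + h/2 = k - a$, and since $q^{2m} = 1 + (q^2-1)[m]_{q^2}$ is an affine change of variable, this is also the degree of $P$ as an element of $\mathcal{R}_{q^2}$. Combining this with the above codimension bound,
\[
\deg P = k - a = \codim(W'') - |\bs\lambda| \leq |\bs\mu| + |\bs\nu| - |\bs\lambda|.
\]
Summing over all orbits of product pairs $(s'', W'')$ of modified type $\bs\lambda$ (and over tight representatives in the orbits of $(g,V), (g', V')$) assembles $r^{\bs\lambda}_{\bs\mu, \bs\nu}$ as a sum of polynomials each of degree at most this bound.

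The only real obstacle is bookkeeping: one must confirm that $h$ is even (which holds because $\bs\tau(t-1)$ is a symplectic signed partition) so that $h/2$ is an integer, that $k - h/2 - a \geq 0$ (the symplectic analogue of Proposition \ref{exponent_nonnegativity_proposition}, with $b = a$ forced by the sesquilinear version of standard shape where $W_1$ and $W_3$ are dual under $B_m$), and that the $q$-dependent prefactors absorbed into the ``proportional to'' above genuinely contribute no further powers of $q^{2m}$.
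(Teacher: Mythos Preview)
Your proposal is correct and takes essentially the same approach as the paper: adapt the argument of Lemma~\ref{u_degree_bound_lemma} using Proposition~\ref{symplectic_specialisation_proposition} in place of the unitary specialisation, and observe that the factor of~$2$ is absorbed because the resulting polynomial is naturally in $q^{2m}$ (i.e.\ in $[m]_{q^2}$) rather than in $(-q)^m$. Your degree count $\deg P = k - a = \codim(W'') - |\bs\lambda|$ and the bookkeeping remarks (evenness of $h$ and $r$, nonnegativity of $k - h/2 - a$ via Proposition~\ref{exponent_nonnegativity_proposition} with $b=a$) are exactly the details the paper leaves implicit.
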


\begin{proof}
The proof is almost identical to that of Lemma \ref{u_degree_bound_lemma}, except we lose a factor of $2$ because the polynomials lie in $\mathcal{R}_{q^2}$ and not $\mathcal{R}_{-q}$.
\end{proof}
\noindent
This gives rise to the analogue of Proposition \ref{gl_assoc_graded} for the symplectic groups. 
\begin{proposition}[Theorem 4.30, \cite{OZDEN2021263}]
The algebra $\mathrm{FH}_q^{Sp}$ is filtered, where $K_{\bs\mu}$ is in filtration degree $|\bs\mu|$. Moreover the structure constants of the associated graded algebra are integers.
\end{proposition}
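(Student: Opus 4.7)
The plan is to mirror the proof of Proposition \ref{gl_assoc_graded} essentially verbatim, using Lemma \ref{sp_degree_bound_lemma} in place of Lemma \ref{degree_bound_lemma}. I would first define the filtration by letting $F_d \subseteq \mathrm{FH}_q^{Sp}$ be the $\mathcal{R}_{q^2}$-submodule spanned by the basis elements $K_{\bs\mu}$ with $|\bs\mu| \leq d$, so that by construction $K_{\bs\mu}$ sits in filtration degree $|\bs\mu|$.

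To verify $F_i \cdot F_j \subseteq F_{i+j}$ it suffices to inspect the product $K_{\bs\mu} K_{\bs\nu} = \sum_{\bs\lambda} r_{\bs\mu,\bs\nu}^{\bs\lambda} K_{\bs\lambda}$ when $|\bs\mu|=i$ and $|\bs\nu|=j$. By Lemma \ref{sp_degree_bound_lemma} the coefficient $r_{\bs\mu,\bs\nu}^{\bs\lambda}$ has degree at most $|\bs\mu|+|\bs\nu|-|\bs\lambda|$, which is strictly negative whenever $|\bs\lambda| > |\bs\mu|+|\bs\nu|$. A polynomial of negative degree is zero, so only terms with $|\bs\lambda| \leq i+j$ survive, and these belong to $F_{i+j}$ by the definition of the filtration.

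For the claim about the associated graded algebra, the surviving structure constants in $\mathrm{gr}_{i+j}(\mathrm{FH}_q^{Sp}) = F_{i+j}/F_{i+j-1}$ are precisely those $r_{\bs\mu,\bs\nu}^{\bs\lambda}$ with $|\bs\lambda| = |\bs\mu|+|\bs\nu|$. In this extremal case Lemma \ref{sp_degree_bound_lemma} gives $\deg(r_{\bs\mu,\bs\nu}^{\bs\lambda}) \leq 0$, so these coefficients are constant polynomials in $\mathcal{R}_{q^2}$, i.e.\ elements of $\mathbb{Z}[q^{\pm 2}]$, which is the precise meaning of ``integer'' structure constants in this setting (matching the convention of Proposition \ref{gl_assoc_graded}). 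Since the entire argument reduces to invoking Lemma \ref{sp_degree_bound_lemma}, there is no substantive obstacle; the only care needed is the bookkeeping to confirm that vanishing of a negative-degree polynomial and constancy of a degree-zero polynomial in $\mathcal{R}_{q^2}$ translate exactly into the two required properties.
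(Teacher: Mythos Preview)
Your proposal is correct and follows essentially the same approach as the paper, which simply says the result ``follows from Lemma \ref{sp_degree_bound_lemma} (see the proof of Proposition \ref{gl_assoc_graded}).'' One small remark: the paper does intend literal integers in $\mathbb{Z}$ rather than elements of $\mathbb{Z}[q^{\pm 2}]$; this follows because a degree-zero polynomial in $\mathcal{R}_{q^2}$ is a constant whose value at each $[n]_{q^2}$ coincides with the actual structure constant in $Z(\mathbb{Z}Sp_{2n}(\mathbb{F}_q))$, which is manifestly a non-negative integer (cf.\ the explicit version of this argument in the orthogonal case).
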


\begin{proof}
This follows from Lemma \ref{sp_degree_bound_lemma} (see the proof of Proposition \ref{gl_assoc_graded}).
\end{proof}

\begin{remark}
By Remark \ref{symplectic_char_2_remark_2}, it is also possible to construct a version of $\mathrm{FH}_q^{Sp}$ when $q$ is a power of 2. However, the indexing set of conjugacy classes is somewhat more complicated, so we forgo the details.
\end{remark}

\subsection{Stable Centres for the Orthogonal Groups} 
Let $G_n(\mathbb{F}_q)$ be one of the the three families of groups $O_{2n+1}(\mathbb{F}_q)$, $O_{2n}^+(\mathbb{F}_q)$, $O_{2n}^-(\mathbb{F}_q)$, so that passing from $n$ to $n+1$ does not change the germ of the natural representation of the group. 
\newline \newline \noindent
Similarly to the symplectic case, conjugacy classes in orthogonal groups are indexed by certain multipartitions $\bs\mu$ with additional signs making $\bs\mu(t \pm 1)$ orthogonal signed partitions. If $\bs\mu$ is the signed multipartition describing a the conjugacy class of an element $g$ of an orthogonal groups, we define the \emph{orthogonal modified type} of $g$ to be the signed multipartition obtained by subtracting 1 from each part of $\bs\mu(t-1)$. This turns $\bs\mu(t-1)$ from an orthogonal signed partition into a symplectic signed partition, with one complication. If $1$ appears as a part in $\bs\mu(t-1)$, it has an associated sign, and subtracting 1 results in this sign being associated to parts of size zero (which is not part of the data associated to a symplectic signed partition). Nevertheless, the orthogonal modified type of $g$ is unchanged by the prescribed inclusions of orthogonal groups (provided that in the absence of a sign associated to parts of size $0$ in $\bs\mu(t-1)$, we take the sign to be $+$; this is because the germ of a zero dimensional space is zero). Again, the conjugacy class of $g$ can be recovered from the orthogonal modified type if $n$ is known.

\begin{definition}
Let $X_{\bs\mu, n}$ denote the sum of all elements of orthogonal modified type $\bs\mu$ in $G_{n}(\mathbb{F}_q)$, viewed as an element of $Z(\mathbb{Z}G_{n}(\mathbb{F}_q))$.
\end{definition}
\noindent
Once again, this is either the sum over a conjugacy class in $G_{n}(\mathbb{F}_q)$, or it is zero. We now state the analogue of Theorem \ref{gl_structure_constant_theorem} for the orthogonal groups.

\begin{theorem} \label{o_structure_constant_theorem}
There is a family of elements $r_{\bs\mu, \bs\nu}^{\bs\lambda} \in \mathcal{R}_{q}[\frac{1}{2}]$ interpolating the structure constants of $Z(\mathbb{Z}G_{n}(\mathbb{F}_q))$ in the following way:
\[
X_{\bs\mu,m} X_{\bs\nu, m} = \sum_{\bs\lambda} r_{\bs\mu, \bs\nu}^{\bs\lambda}([m]_{q}) X_{\bs\lambda, m}.
\]
Here, $\bs\mu, \bs\nu$ are arbitrary multipartitions corresponding to orthogonal modified types and the sum ranges over all multipartitions corresponding to orthogonal modified types.
\end{theorem}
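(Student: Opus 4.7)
The strategy is to mirror the proofs of Theorems \ref{gl_structure_constant_theorem}, \ref{u_structure_constant_theorem}, and \ref{sp_structure_constant_theorem}, working inside the Ivanov--Kerov algebra $\mathcal{A}^{G_\infty}$ with $G_\infty(\mathbb{F}_q)$ the direct limit of the relevant family of orthogonal groups. For each signed multipartition $\bs\mu$ corresponding to an orthogonal modified type, let $f_{\bs\mu} \in \mathcal{A}^{G_\infty}$ denote the indicator function of the $G_\infty(\mathbb{F}_q)$-orbit of a tight bounding pair $(g,V)$ where $g$ has orthogonal modified type $\bs\mu$; Theorem \ref{sesquilinear_conjugacy_theorem} ensures that this orbit is well defined and, via Proposition \ref{surj_hom_classical_groups}, one checks $\Psi_m(f_{\bs\mu}) = X_{\bs\mu,m}$ in exactly the manner used for the unitary and symplectic groups.

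Next I would compute $\Psi_m(f_{\bs\mu} f_{\bs\nu}) = X_{\bs\mu,m} X_{\bs\nu,m}$. Since $\mathcal{A}^{G_\infty}$ is closed under multiplication, $f_{\bs\mu} f_{\bs\nu}$ is a finite $\mathbb{Z}$-linear combination of indicator functions $f_{(g'',V'')}$ for orbits of bounding pairs obtained as products $(g,V) \times (g',V')$ with $(g,V)$, $(g',V')$ tight and of modified type $\bs\mu$, $\bs\nu$. For each such orbit, Proposition \ref{orthogonal_specialisation_proposition} (applied in the appropriate one of the four parity/germ cases) shows that $\Psi_m(f_{(g'',V'')}) = P(q^m)\, Cl(g'')$ for some $P(t) \in \mathcal{R}_q[\tfrac{1}{2}]$, and that $Cl(g'')$ is precisely a multiple of $X_{\bs\lambda,m}$ where $\bs\lambda$ is the orthogonal modified type of $g''$. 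Using the change of variable $q^m = 1 + (q-1)[m]_q$ as in the proof of Theorem \ref{gl_structure_constant_theorem}, $P(q^m)$ is the evaluation at $[m]_q$ of an element of $\mathcal{R}_q[\tfrac{1}{2}]$. Summing the contributions from all orbits whose product has modified type $\bs\lambda$ produces the desired $r_{\bs\mu,\bs\nu}^{\bs\lambda} \in \mathcal{R}_q[\tfrac{1}{2}]$.

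The only genuinely new issue compared with the unitary or symplectic cases is the behaviour of germs under the multiplication of bounding pairs: when one takes the product of two tight bounding pairs and writes the result in standard shape, the ambient germ of the direct summand $W_4$ appearing in Proposition \ref{orthogonal_specialisation_proposition}, and the signs $\epsilon_1, \epsilon_2, \epsilon_3 \in W(\mathbb{F}_q)$ entering the four-case computation, depend on the particular orbit. However, each individual orbit lies in a single one of the four cases, so Proposition \ref{orthogonal_specialisation_proposition} applies as stated; the factors of $\tfrac{1}{2}$ and the cyclotomic cancellations verified in that proposition are exactly what is needed to keep the result in $\mathcal{R}_q[\tfrac{1}{2}]$. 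The main obstacle is therefore book-keeping rather than substance: one must verify that summing polynomials from different cases respects the identification of modified types, and that the residual denominators $2$, $(q^{h/2}-\epsilon_1)$, etc.\ introduced in Cases 3 and 4 of Proposition \ref{orthogonal_specialisation_proposition} are constants in $m$ and hence do not leave $\mathcal{R}_q[\tfrac{1}{2}]$ after the evaluation $t=q^m$.
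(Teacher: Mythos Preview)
Your proposal is correct and follows exactly the approach the paper takes: its proof of this theorem is literally a one-sentence pointer back to Theorem~\ref{gl_structure_constant_theorem}, replacing the $GL$-specific results of Section~\ref{specitalisation_section} by their orthogonal analogues from Section~\ref{classical_groups_section} (in particular Proposition~\ref{orthogonal_specialisation_proposition}). Your added discussion of germs and the four parity cases is a welcome elaboration of what the paper leaves implicit, and your observation that each orbit stays in a single case (since increasing $m$ adds hyperbolic planes to $W_4$, fixing both the parity of $M-r$ and the germ $\epsilon_2$) is the right justification.
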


\begin{proof}
The proof is almost identical to that of Theorem \ref{gl_structure_constant_theorem}, except instead of appealing to results in Section \ref{specitalisation_section} for the general linear group, we use the analogous results for the orthogonal group from Section \ref{classical_groups_section}.
\end{proof}
\noindent
As before, we can construct an algebra that interpolate the centers of the group algebras of the orthogonal groups:
\begin{definition}
Let $\mathrm{FH}_q^{O,+}, \mathrm{FH}_q^{O,-}, \mathrm{FH}_q^{O,odd}$ be the free $\mathcal{R}_{q}[\frac{1}{2}]$-modules with basis given by symbols $K_{\bs\mu}$ indexed by orthogonal modified types $\bs\mu$ coming from groups of the form $O_{2n}^+(\mathbb{F}_q), O_{2n}^-(\mathbb{F}_q), O_{2n+1}(\mathbb{F}_q)$, respectively. We equip each $\mathrm{FH}_q^{O,*}$ with a bilinear multiplication defined on basis elements via
\[
K_{\bs\mu} K_{\bs\nu} = \sum_{\bs\lambda} r_{\bs\mu, \bs\nu}^{\bs\lambda} K_{\bs\lambda},
\]
where $r_{\bs\mu, \bs\nu}^{\bs\lambda} \in \mathcal{R}_{q}[\frac{1}{2}]$ are the elements from Theorem \ref{o_structure_constant_theorem}. We call $\mathrm{FH}_q^{O,+},\mathrm{FH}_q^{O,-},\mathrm{FH}_q^{O,odd}$ the \emph{positive orthogonal Farahat-Higman algebra}, the \emph{negative orthogonal Farahat-Higman algebra}, and the \emph{odd orthogonal Farahat-Higman algebra}, respectively.
\end{definition}
\noindent
We have the following analogue of Corollary \ref{gl_special_hom_cor}:
\begin{corollary}\label{o_special_hom_cor}
There are ``specialisation'' homomorphisms $ \Theta_n: \mathrm{FH}_q^{O,*} \to Z(\mathbb{Z}G_{n}(\mathbb{F}_q))$ (here $* = +,-,odd$ according to whether $G_{n} = O_{2n}^+, O_{2n}^-, O_{2n+1}$) defined by $\Theta_n(K_{\bs\mu}) = X_{\bs\mu,n}$ and by evaluating the coefficients (elements of $\mathcal{R}_{q}[\frac{1}{2}]$) at $[n]_{q}$.
\end{corollary}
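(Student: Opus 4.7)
The plan is to mirror the short proofs of the analogous specialisation corollaries \ref{gl_special_hom_cor}, \ref{u_special_hom_cor}, and \ref{sp_special_hom_cor}: the existence of $\Theta_n$ with the stated properties is essentially a tautology once Theorem \ref{o_structure_constant_theorem} is in hand.

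First I would define $\Theta_n$ as an additive map on the free $\mathcal{R}_q[\tfrac{1}{2}]$-module $\mathrm{FH}_q^{O,*}$. Since $\{K_{\bs\mu}\}$ is a basis, it suffices to set $\Theta_n(p \cdot K_{\bs\mu}) = p([n]_q) \, X_{\bs\mu,n}$, where the coefficient is obtained by composing with the ring homomorphism $\mathcal{R}_q[\tfrac{1}{2}] \to \mathbb{Z}[q,q^{-1},\tfrac{1}{2}]$ given by $p(x) \mapsto p([n]_q)$. Because the orthogonal setting requires $q$ to be an odd prime power, $2$ is invertible in the ground ring, so this evaluation is unambiguous; moreover, the individual values $r_{\bs\mu,\bs\nu}^{\bs\lambda}([n]_q)$ are by Theorem \ref{o_structure_constant_theorem} genuine structure constants of $Z(\mathbb{Z}G_n(\mathbb{F}_q))$ and therefore integers, so the image of $\Theta_n$ really does lie in $Z(\mathbb{Z}G_n(\mathbb{F}_q))$ and not just in $Z(\mathbb{Z}[\tfrac{1}{2}]G_n(\mathbb{F}_q))$.

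Next I would verify multiplicativity. Applying $\Theta_n$ to the defining relation
\[
K_{\bs\mu} K_{\bs\nu} = \sum_{\bs\lambda} r_{\bs\mu,\bs\nu}^{\bs\lambda} K_{\bs\lambda}
\]
in $\mathrm{FH}_q^{O,*}$ yields $\sum_{\bs\lambda} r_{\bs\mu,\bs\nu}^{\bs\lambda}([n]_q) X_{\bs\lambda,n}$, which by Theorem \ref{o_structure_constant_theorem} is exactly $X_{\bs\mu,n} X_{\bs\nu,n} = \Theta_n(K_{\bs\mu}) \Theta_n(K_{\bs\nu})$. Since the identity element of $\mathrm{FH}_q^{O,*}$ is $K_\varnothing$ (by the same argument as in Proposition \ref{glfh_is_nice_prop}) and $X_{\varnothing,n}$ is the identity of $Z(\mathbb{Z}G_n(\mathbb{F}_q))$, the map is also unital.

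There is no real obstacle here beyond bookkeeping: the substantive content is entirely contained in Theorem \ref{o_structure_constant_theorem}, and the only subtlety worth flagging is the denominator $\tfrac{1}{2}$ in the scalar ring, which is harmless because $q$ is odd and because the relevant evaluations are forced to be integers by their identification with structure constants of an integral centre.
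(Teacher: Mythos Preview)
Your approach is the same as the paper's: the corollary is an immediate consequence of Theorem \ref{o_structure_constant_theorem}, and the paper's own proof says exactly that in one sentence. Your elaboration of how multiplicativity follows from the defining relations is correct and matches the implicit reasoning.

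One step in your write-up does not hold, however. You argue that because the evaluated structure constants $r_{\bs\mu,\bs\nu}^{\bs\lambda}([n]_q)$ are integers, ``the image of $\Theta_n$ really does lie in $Z(\mathbb{Z}G_n(\mathbb{F}_q))$ and not just in $Z(\mathbb{Z}[\tfrac{1}{2}]G_n(\mathbb{F}_q))$.'' That inference is wrong: an arbitrary element of $\mathrm{FH}_q^{O,*}$ has the form $\sum p_{\bs\mu} K_{\bs\mu}$ with $p_{\bs\mu} \in \mathcal{R}_q[\tfrac{1}{2}]$, and $p_{\bs\mu}([n]_q)$ can perfectly well be a half-integer (e.g.\ take $p_{\bs\mu} = \tfrac{1}{2}$). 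The integrality of the structure constants only tells you that the $\mathbb{Z}$-span of the $K_{\bs\mu}$ maps into $Z(\mathbb{Z}G_n(\mathbb{F}_q))$, not that the whole $\mathcal{R}_q[\tfrac{1}{2}]$-module does. This is a minor imprecision already present in the paper's statement of the corollary (the target should strictly be $Z(\mathbb{Z}[\tfrac{1}{2}]G_n(\mathbb{F}_q))$, or one should restrict attention to the integral span of the basis), but your attempted justification of it is not valid and should simply be omitted.
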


\begin{proof}
This is a consequence of Theorem \ref{o_structure_constant_theorem}.
\end{proof}

\begin{proposition}\label{ofh_is_nice_prop}
We have that each $\mathrm{FH}_q^{O,*}$ is an associative, commutative, unital $\mathcal{R}_{q}[\frac{1}{2}]$-algebra.
\end{proposition}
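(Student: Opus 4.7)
The plan is to mirror the proof of Proposition \ref{glfh_is_nice_prop} almost verbatim, using the specialisation homomorphisms $\Theta_n$ of Corollary \ref{o_special_hom_cor} to transfer the algebraic structure of each $Z(\mathbb{Z}G_n(\mathbb{F}_q))$ back to $\mathrm{FH}_q^{O,*}$. Throughout, one works with a single fixed family $*\in\{+,-,\mathrm{odd}\}$, i.e.\ $G_n=O_{2n}^+$, $O_{2n}^-$, or $O_{2n+1}$ respectively.

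First I would establish commutativity. Because the centre of the group algebra is commutative, $X_{\bs\mu,n}X_{\bs\nu,n}=X_{\bs\nu,n}X_{\bs\mu,n}$ for every $n$, which after applying $\Theta_n$ to $K_{\bs\mu}K_{\bs\nu}-K_{\bs\nu}K_{\bs\mu}$ yields
\[
\sum_{\bs\lambda}\bigl(r^{\bs\lambda}_{\bs\mu,\bs\nu}([n]_q)-r^{\bs\lambda}_{\bs\nu,\bs\mu}([n]_q)\bigr)X_{\bs\lambda,n}=0.
\]
For $n$ large enough that every modified type $\bs\lambda$ appearing in the (finite) sum is realised in $G_n(\mathbb{F}_q)$, the nonzero $X_{\bs\lambda,n}$ are distinct conjugacy-class sums and hence linearly independent in $Z(\mathbb{Z}G_n(\mathbb{F}_q))$, so each bracketed coefficient vanishes at $[n]_q$. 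Since these values form an infinite, and hence Zariski-dense, subset, the equality $r^{\bs\lambda}_{\bs\mu,\bs\nu}=r^{\bs\lambda}_{\bs\nu,\bs\mu}$ holds as polynomials in $\mathcal{R}_q[\tfrac{1}{2}]$.

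Next I would prove associativity by the identical template: apply $\Theta_n$ to $(K_{\bs\mu}K_{\bs\nu})K_{\bs\rho}$ and $K_{\bs\mu}(K_{\bs\nu}K_{\bs\rho})$, use associativity of the group algebra centre, and invoke the same density argument on the resulting polynomial identities in $\mathcal{R}_q[\tfrac{1}{2}]$. For the unit, observe that the empty multipartition is the orthogonal modified type of the identity element of every $G_n(\mathbb{F}_q)$ (cf.\ Example \ref{modified_type_example}), so $\Theta_n(K_\varnothing)=\mathrm{Id}$ for all $n$; the density argument applied to $K_\varnothing K_{\bs\mu}-K_{\bs\mu}$ then identifies $K_\varnothing$ as the two-sided multiplicative identity of $\mathrm{FH}_q^{O,*}$.

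The main (and really only) subtlety, which is more bookkeeping than obstacle, is that the three families $*\in\{+,-,\mathrm{odd}\}$ must be handled independently: each $\Theta_n$ sees only one family at a time, and the basis $\{K_{\bs\mu}\}$ of a given $\mathrm{FH}_q^{O,*}$ consists precisely of the orthogonal modified types arising from the corresponding groups. Within each fixed family, ``sufficiently large $n$'' must be chosen so that all finitely many modified types under consideration are in fact realised in $G_n(\mathbb{F}_q)$ (for the negative even family one needs the germ of $G_n$ to match $\omega$, so $n$ is restricted to the correct parity or germ class, but there are still infinitely many such $n$ and hence the Zariski-density argument goes through). The presence of $\tfrac{1}{2}$ in the coefficient ring plays no role beyond widening the target of evaluation, so no further complication arises.
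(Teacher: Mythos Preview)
Your proposal is correct and is exactly the approach the paper takes: the paper's proof simply says ``analogous to the proof of Proposition \ref{glfh_is_nice_prop}'', and you have spelled out precisely that argument, using the specialisations $\Theta_n$ and Zariski density to deduce commutativity, associativity, and the unit from the corresponding properties in each $Z(\mathbb{Z}G_n(\mathbb{F}_q))$. One small remark: your aside about restricting $n$ to ``the correct parity or germ class'' in the negative even case is unnecessary, since in the paper's setup each family $*\in\{+,-,\mathrm{odd}\}$ is already parameterised by all $n\in\mathbb{Z}_{\geq 0}$ with the germ held fixed, so there is nothing to restrict.
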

\begin{proof}
The proof of this proposition is analogous to the proof of Proposition \ref{glfh_is_nice_prop}.
\end{proof}

\begin{lemma} \label{o_degree_bound_lemma}
The degree of the polynomial $r_{\bs\mu, \bs\nu}^{\bs\lambda}$ is at most $2(|\bs\mu| + |\bs\nu| - |\bs\lambda|)$.
\end{lemma}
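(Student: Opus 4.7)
The strategy follows closely the proofs of Lemma \ref{degree_bound_lemma} and Lemma \ref{u_degree_bound_lemma}, combined with a case analysis using the four sub-cases of Proposition \ref{orthogonal_specialisation_proposition}. I would work in the Ivanov--Kerov algebra $\mathcal{A}^{G_\infty}$ with $G_\infty$ one of the three infinite orthogonal groups. Let $f_{\bs\mu}, f_{\bs\nu}$ be the indicator functions of the $G_\infty(\mathbb{F}_q)$-orbits of tight bounding pairs $(g_1, V_1)$ and $(g_2, V_2)$ of modified types $\bs\mu$ and $\bs\nu$. The product $f_{\bs\mu} f_{\bs\nu}$ is a $\mathbb{Z}$-linear combination of indicator functions of orbits of bounding pairs of the form $(g_1g_2, V_1 \cap V_2)$. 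By tightness, $\codim(V_1) = |\bs\mu|$ and $\codim(V_2) = |\bs\nu|$, so $\codim(V_1 \cap V_2) \leq |\bs\mu|+|\bs\nu|$.

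Fix such a product $(g, V) := (g_1g_2, V_1 \cap V_2)$ contributing to the orbit with modified type $\bs\lambda$, put it in standard shape with respect to a decomposition $V_n = W_1\oplus W_2 \oplus W_3 \oplus W_4$, and let $a = \dim W_1 = \dim W_3$, $r = \dim(W_1\oplus W_2 \oplus W_3) = \codim(V) + a$. If $\bs\rho$ denotes the (unmodified) type of $g$ viewed inside $G_{B_n}(W_1\oplus W_2 \oplus W_3)$, then $|\bs\lambda| = r - k$ where $k = l(\bs\rho(t-1))$, and the combinatorial inequality
\[
2(|\bs\mu|+|\bs\nu|) \;\geq\; 2\codim(V) \;=\; 2(r-a) \;=\; 2|\bs\lambda| + 2(k-a)
\]
shows it suffices to prove that the polynomial contribution to $r_{\bs\mu,\bs\nu}^{\bs\lambda}$ from this orbit has degree at most $2(k-a)$ in $q^m$.

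The core of the proof is then the degree computation, carried out in each of the four cases of Proposition \ref{orthogonal_specialisation_proposition}. In every case the specialised value $\Psi_m(f)/Cl(g)$ is the product of three pieces: a prefactor $q^{\frac{M-r}{2}(2k-h-2a)}$ (possibly with a harmless $q^{-1/2}$), a $q^2$-binomial coefficient whose upper argument depends linearly on $m$, and a trailing factor that is either a constant (Case 3, where the "extra" factor absorbs into $\qbinom{\cdot}{\cdot}_{q^2}$), a degree-$1$ binomial in $q^m$ (Cases 1 and 2), or a ratio of two such binomials by one constant factor (Case 4). In each case, noting that $\qbinom{N+c}{N_0}_{q^2}$ with $N$ linear in $m$ has degree $2N_0$ in $q^m$, the three contributions sum to
\[
(2k-h-2a) + (\text{degree of binomial}) + (\text{trailing degree}) \;=\; 2k-2a,
\]
independently of which case holds and independently of the germs $\epsilon_i$. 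Proposition \ref{exponent_nonnegativity_proposition} ensures this exponent is non-negative.

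The main obstacle will simply be the bookkeeping across the four cases: one has to keep track of integer/half-integer shifts in the exponents and check that the cancellations from $(q^{h/2}-\epsilon_1)$ in Case 4 (discussed at the end of the proof of Proposition \ref{orthogonal_specialisation_proposition}) do not drop the degree below $2k-2a$ in any sub-case. Once the degree is identified as $2k-2a$ in every scenario, combining with the inequality $2(|\bs\mu|+|\bs\nu|-|\bs\lambda|) \geq 2(k-a)$ above and summing contributions over all orbits of product bounding pairs of modified type $\bs\lambda$ yields the claimed degree bound on $r_{\bs\mu,\bs\nu}^{\bs\lambda}$.
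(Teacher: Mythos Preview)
Your proposal is correct and follows essentially the same approach as the paper: the paper's proof simply states that the argument is analogous to that of Lemma \ref{u_degree_bound_lemma}, with the four cases of Proposition \ref{orthogonal_specialisation_proposition} replacing the single formula from Proposition \ref{unitary_specialisation_proposition}, and your outline fleshes out precisely this. One small remark: your worry that the Case~4 cancellation by $(q^{h/2}-\epsilon_1)$ might ``drop the degree below $2k-2a$'' is unnecessary, since the lemma only asserts an upper bound; all that matters is that the degree does not exceed $2k-2a$, which your case-by-case count already establishes.
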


\begin{proof}
The proof is analogous to that of Lemma \ref{u_degree_bound_lemma}, although there are four cases as in the proof of Proposition \ref{orthogonal_specialisation_proposition}.
\end{proof}

\begin{theorem}
The algebras $\mathrm{FH}_q^{O, *}$ are filtered, where $K_{\bs\mu}$ is in filtration degree $|\bs\mu|$. Moreover the structure constants of the associated graded algebra are integers.
\end{theorem}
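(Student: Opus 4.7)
The plan is to mirror the argument used in Proposition \ref{gl_assoc_graded} (and its unitary and symplectic analogues proved earlier in this section), substituting in the orthogonal degree bound (Lemma \ref{o_degree_bound_lemma}) at the relevant step. The sole technical input required is the inequality $\deg r^{\bs\lambda}_{\bs\mu, \bs\nu} \leq 2(|\bs\mu| + |\bs\nu| - |\bs\lambda|)$, together with the elementary fact that polynomial degrees are non-negative.

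First I would use non-negativity of degree to deduce $r^{\bs\lambda}_{\bs\mu, \bs\nu} = 0$ whenever $|\bs\lambda| > |\bs\mu| + |\bs\nu|$. Applied to
\[
K_{\bs\mu} K_{\bs\nu} = \sum_{\bs\lambda} r^{\bs\lambda}_{\bs\mu, \bs\nu} K_{\bs\lambda},
\]
this shows that the product lies in the $\mathcal{R}_q[\tfrac{1}{2}]$-span of $K_{\bs\lambda}$ with $|\bs\lambda| \leq |\bs\mu| + |\bs\nu|$, confirming the filtered structure in which $K_{\bs\mu}$ sits in filtration degree $|\bs\mu|$. Next, to compute the associated graded structure constants, I would restrict attention to the equality case $|\bs\lambda| = |\bs\mu| + |\bs\nu|$, where Lemma \ref{o_degree_bound_lemma} forces $\deg r^{\bs\lambda}_{\bs\mu, \bs\nu} \leq 0$. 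Consequently $r^{\bs\lambda}_{\bs\mu, \bs\nu}$ is a constant polynomial, hence an element of $\mathbb{Z}[q, q^{-1}, \tfrac{1}{2}]$, which is the appropriate sense of ``integer'' here given that the coefficient ring of $\mathrm{FH}_q^{O,*}$ is $\mathcal{R}_q[\tfrac{1}{2}]$ (just as the ``integers'' appearing in Proposition \ref{gl_assoc_graded} are really elements of $\mathbb{Z}[q,q^{-1}]$).

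The same reasoning applies uniformly to all three algebras $\mathrm{FH}_q^{O,+}$, $\mathrm{FH}_q^{O,-}$, $\mathrm{FH}_q^{O,odd}$, since the degree bound in Lemma \ref{o_degree_bound_lemma} is insensitive to the choice of germ; this is consistent with the uniform degree count across the four parity subcases treated in Proposition \ref{orthogonal_specialisation_proposition}. I do not anticipate any genuine obstacle in executing the plan: the substantive work has already been absorbed into Lemma \ref{o_degree_bound_lemma}, and the present theorem is a direct formal consequence, with the only point worth remarking being the interpretation of the word ``integer'' in light of the enlarged coefficient ring.
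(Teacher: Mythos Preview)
Your argument for the filtration is correct and matches the paper exactly: the degree bound from Lemma \ref{o_degree_bound_lemma} forces $r_{\bs\mu,\bs\nu}^{\bs\lambda} = 0$ when $|\bs\lambda| > |\bs\mu| + |\bs\nu|$, and forces $r_{\bs\mu,\bs\nu}^{\bs\lambda}$ to be constant when $|\bs\lambda| = |\bs\mu| + |\bs\nu|$.

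However, there is a genuine gap in your treatment of the second assertion. Your degree-zero argument only shows that the top structure constants lie in $\mathbb{Z}[q,q^{-1},\tfrac{1}{2}]$, and you then \emph{reinterpret} the word ``integer'' to mean exactly this. The paper does not do this; it means genuine integers. Its proof observes that the approach of Proposition \ref{gl_assoc_graded} alone leaves a possible denominator of $2$, and then removes it by a separate argument: the constant value of $r_{\bs\mu,\bs\nu}^{\bs\lambda}$ equals the actual structure constant $r_{\bs\mu,\bs\nu}^{\bs\lambda}([m]_q)$ for large $m$, and the latter is an integer because it is the multiplicity with which a conjugacy-class sum appears in a product of conjugacy-class sums inside $Z(\mathbb{Z}G_m(\mathbb{F}_q))$. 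This counting argument is what upgrades ``half-integer'' to ``integer'', and it is precisely the step your proposal omits. (Your parenthetical about the $GL$ case is also a misreading: there the same implicit counting argument pins the constants down in $\mathbb{Z}$, not merely in $\mathbb{Z}[q,q^{-1}]$; the orthogonal case just makes the need for this step visible because of the extra factor $\tfrac{1}{2}$.)
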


\begin{proof}
In versions of this statement for the other classical groups, the proof was similar to that of proof of Proposition \ref{gl_assoc_graded}. In this case, if we apply similar logic, we can deduce that the first part of the statement that the structure constants are rational (because the polynomials $r_{\bs\mu,\bs\nu}^{\bs\lambda}$ lie in $\mathcal{R}_{q^2}[\frac{1}{2}]$). However, on the other hand, these structure constants are manifestly integral because they are counting the multiplicity of a certain conjugacy class sum appearing in the the expansion in the product of two other conjugacy class sums. 
\end{proof}
\noindent 
This addresses one of the further directions recommended in \cite{Wan_Wang}.

\printbibliography

\end{document}